\newtheorem{theorem}{Theorem}
\newtheorem{lemma}[theorem]{Lemma}
\newtheorem{corollary}[theorem]{Corollary}
\newtheorem{proposition}[theorem]{Proposition}
\newtheorem{remark}[theorem]{Remark}
\font\sc=rsfs10
\newcommand{\cC}{\sc\mbox{C}\hspace{1.0pt}}
\newcommand{\cI}{\sc\mbox{I}\hspace{1.0pt}}
\newcommand{\cS}{\sc\mbox{S}\hspace{1.0pt}}
\font\scc=rsfs7
\newcommand{\ccC}{\scc\mbox{C}\hspace{1.0pt}}
\newcommand{\ccS}{\scc\mbox{S}\hspace{1.0pt}}
\begin{document}

\title[Simple transitive 2-representations for Soergel bimodules]
{Simple transitive $2$-representations\\ of small quotients of Soergel bimodules}
\author[T.~Kildetoft, M.~Mackaay, V.~Mazorchuk and J.~Zimmermann]{Tobias 
Kildetoft, Marco Mackaay,\\  Volodymyr Mazorchuk and Jakob Zimmermann}

\begin{abstract}
In all finite Coxeter types but $I_2(12)$, $I_2(18)$ and $I_2(30)$, 
we classify simple transitive $2$-rep\-re\-sen\-ta\-ti\-ons 
for the quotient of the $2$-category of Soergel bimodules over the coinvariant 
algebra which is associated to the two-sided cell that is the closest one to the 
two-sided cell  containing the identity element.  It turns out that, in most of 
the cases, simple transitive $2$-representations are exhausted by cell $2$-representations.
However, in Coxeter types $I_2(2k)$, where $k\geq 3$, there exist simple transitive 
$2$-representations which are not equivalent to cell $2$-representations.
\end{abstract}

\maketitle

\section{Introduction and description of the results}\label{s1}

Classical representation theory makes a significant emphasis on problems and techniques
related to the classification of various classes of representations. The recent ``upgrade''
of classical representation theory, known as {\em $2$-representation theory}, has
its abstract origins in \cite{BFK,CR,KL,Ro}. The first classification 
result in $2$-representation theory was obtained in \cite[Subsection~5.4.2]{CR}
where certain ``minimal'' $2$-representations of the Chuang-Rouquier $2$-analogue of
$U(\mathfrak{sl}_2)$ were classified.

The series \cite{MM1,MM2,MM3,MM4,MM5,MM6} of papers initiated a systematic study of 
the so-called {\em finitary} $2$-categories which are natural $2$-analogues of finite
dimensional algebras. The penultimate paper \cite{MM5} of this series proposes the
notion of a {\em simple transitive $2$-representation} which seems to be a natural $2$-analogue
for the classical notion of a simple module. Furthermore, \cite{MM5,MM6} classifies
simple transitive $2$-representations for a certain class of finitary $2$-categories with a
weak involution which enjoy particularly nice combinatorial properties, the so-called
{\em (weakly) fiat} $2$-categories with {\em strongly regular two-sided cells}. Examples of 
the latter $2$-categories include projective functors for finite dimensional self-injective
algebras,  finitary quotients of finite type $2$-Kac-Moody algebras
and Soergel bimodules (over the coinvariant algebra) in type $A$. The classification  results of \cite{MM5,MM6} 
assert that, for such $2$-categories, every simple transitive $2$-representation
is, in fact, equivalent to a so-called {\em cell $2$-representation}, that is a natural
subquotient of the regular (principal) $2$-representation defined combinatorially in \cite{MM1,MM2}.
In \cite{KM1}, this classification was used to describe projective functors on 
parabolic category $\mathcal{O}$ in type $A$.

The problem of classifying simple transitive $2$-representations was recently studied for
several classes of finitary $2$-categories which are not covered by the results in \cite{MM5,MM6}.
In particular, in \cite{Zi} it is shown that cell $2$-representations exhaust 
the simple transitive $2$-representations for the $2$-category of Soergel bimodules in
Weyl type $B_2$. In \cite{MZ}, a similar  result was proved for the $2$-categories of projective
functors for the two smallest non-self-injective finite dimensional algebras. Some other related 
results can be found in \cite{GM1,GM2,Zh1,Zh2}.

An essential novel step in this theory was made in the recent paper \cite{MaMa} where 
a classification of the simple transitive $2$-representations was given for certain 
$2$-sub\-qu\-o\-ti\-ent categories of Soergel bimodules  in two dihedral Coxeter types. 
In one of the cases, it turned out that cell $2$-representations do not 
exhaust the simple transitive $2$-representations. A major part of \cite{MaMa} is devoted to an
explicit construction of the remaining simple transitive $2$-representation. This construction involves 
a subtle interplay of various category theoretic tricks.
 
The present paper explores to which extent the techniques developed in \cite{MM5,Zi,MaMa}
can be used to attack the problem of classification of the simple transitive $2$-representations
for $2$-categories of Soergel bimodules over coinvariant algebras 
in the general case of finite Coxeter systems. We develop the approach and intuition 
described in \cite{Zi,MaMa} further and single out a situation in which this approach seems to
be applicable. The combinatorial structure of the $2$-category of Soergel bimodules is roughly
captured by the Kazhdan-Lusztig combinatorics of the so-called {\em two-sided Kazhdan-Lusztig
cells}. The minimal, with respect to the two-sided order, two-sided cell corresponds to the
identity element. If we take this minimal two-sided cell out, in what remains there is again a unique minimal
two-sided cell. This is the two-sided cell which 
contains all simple reflections. The main object of study in the present
paper is the unique ``simple'' quotient of the $2$-category of Soergel bimodules in which
only these two smallest two-sided cells survive. This is the $2$-category which we call the
{\em small quotient} of the $2$-category of Soergel bimodules.
Our main result is the following statement that combines the statements of  
Theorems~\ref{thm1700}, \ref{thm49}, \ref{thm82},  \ref{thm84},  \ref{thm86} and \ref{thm88}.

\begin{theorem}\label{thmintro}
Let $\underline{\cS}$ be the small  quotient of the $2$-category of Soergel bimodules over the coinvariant algebra
associated to a finite Coxeter system $(W,S)$.
\begin{enumerate}[$($i$)$]
\item\label{thmintro.1} If $W$ has rank greater than two or is of Coxeter type $I_2(n)$, with 
$n=4$ or $n>1$ odd, then 
every simple transitive $2$-representation of $\underline{\cS}$ is equivalent to a cell $2$-rep\-re\-sen\-ta\-ti\-on.
\item\label{thmintro.2} If $W$ is of Coxeter type $I_2(n)$, with $n>4$ even, 
then, apart from cell $2$-representations, $\underline{\cS}$ has two extra equivalence classes of 
simple transitive $2$-representations, see the explicit construction in Subsection~\ref{s6.7}. If 
$n\neq 12,18,30$, these are all the simple transitive $2$-representations.
\end{enumerate}
\end{theorem}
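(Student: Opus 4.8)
The plan is to follow the blueprint of \cite{MM5,MM6,Zi,MaMa}: first reduce the classification to the internal combinatorics of a single two-sided cell, then determine the admissible decategorified actions, and finally decide which $2$-representations realize each of them. The opening move is a reduction to the apex. Every simple transitive $2$-representation $\mathbf{M}$ of a finitary $2$-category has a well-defined apex, namely the maximal two-sided cell that acts nontrivially on $\mathbf{M}$; and $\underline{\cS}$ has exactly two two-sided cells, $\mathcal{J}_{0}=\{e\}$ and the cell $\mathcal{J}=\{w:\mathbf{a}(w)=1\}$ containing all $B_{s}$ with $s\in S$. If the apex of $\mathbf{M}$ is $\mathcal{J}_{0}$, then every $B_{w}$ with $w\neq e$ acts as zero and transitivity forces $\mathbf{M}$ to be equivalent to the cell $2$-representation attached to $\mathcal{J}_{0}$, so one may from now on assume the apex is $\mathcal{J}$.

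The next step is to pass to the decategorified picture: the induced action of $\underline{\cS}$ on the split Grothendieck group of $\mathbf{M}$ is a representation of the positively based ring $[\underline{\cS}]$ by non-negative integer matrices, and writing $M_{s}$ for the matrix of $[B_{s}]$, the relations $[B_{s}]^{2}=2[B_{s}]$ and the braid-type relations of $[\underline{\cS}]$ (which in type $I_{2}(n)$ reduce to a single Chebyshev-type identity recording the vanishing of $B_{w_{0}}$, and in higher rank combine these over the rank-two parabolic subquotients with the commutativity relations for orthogonal simple reflections), together with the joint indecomposability of $\{M_{s}\}$ forced by transitivity, confine the data to a short, explicitly describable list. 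The key observation in higher rank is that, for an edge $\{s,t\}$ of the Coxeter graph with $m_{st}=m$, the $2$-subcategory of $\underline{\cS}$ generated by $B_{s}$ and $B_{t}$ is essentially the small quotient of type $I_{2}(m)$ --- the longest element of the dihedral parabolic has $\mathbf{a}$-value greater than one and hence already vanishes --- so that for $(W,S)$ irreducible of rank at least three (where $m\in\{3,4,5\}$) one restricts $\mathbf{M}$ along these subquotients, invokes part~\ref{thmintro.1} in the dihedral case, and propagates the conclusion across the connected Coxeter graph using transitivity. The remaining, genuinely dihedral, problem is of Perron--Frobenius type, its solutions being governed by connected graphs of spectral radius $2\cos(\pi/n)$, that is, by simply-laced Dynkin diagrams; this yields only the cell datum (type $A_{n-1}$) when $n=4$ or $n$ is odd, exactly two further data when $n>4$ is even, and --- since the diagrams $E_{6}$, $E_{7}$, $E_{8}$ then also enter the list --- possibly more when $n\in\{12,18,30\}$, which is the source of that restriction in part~\ref{thmintro.2}.

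It then remains to lift each admissible datum to a simple transitive $2$-representation and to show the lift is unique up to equivalence. Following \cite{MM3,MaMa}, one realizes $\mathbf{M}$ as the category of (co)modules over a (co)algebra $1$-morphism in $\underline{\cS}$ built from the internal hom attached to a generator of $\mathbf{M}$; the fiat structure of $\underline{\cS}$ together with compatibility with the cell filtration pins this (co)algebra down from the decategorified action, whence uniqueness of the lift. For the cell datum this recovers the cell $2$-representation $\mathbf{C}_{\mathcal{L}}$, for $\mathcal{L}\subseteq\mathcal{J}$ any left cell (these being mutually equivalent as $2$-representations); for $I_{2}(2k)$ with $k\geq 3$ the two remaining data are realized by the explicit construction of Subsection~\ref{s6.7}, whose simple transitivity must be verified by hand. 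Assembling the three steps over all finite Coxeter types then proves the theorem.

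The principal obstacle is twofold. First, making the combinatorial analysis of the second step run uniformly across all finite Coxeter types --- in particular isolating the genuinely new solutions in the even dihedral case and recognising that $n=12,18,30$ are exactly the borderline Coxeter numbers at which the argument becomes inconclusive. Second, and more seriously, the rigidity and construction of the third step: proving that an admissible datum determines $\mathbf{M}$ up to equivalence, explicitly building the two extra $2$-representations in type $I_{2}(2k)$, and checking that they are simple transitive rather than merely transitive. It is precisely here that the subtle categorical constructions pioneered in \cite{MaMa} are indispensable.
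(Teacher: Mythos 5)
Your overall strategy coincides with the paper's: reduce to the apex $\mathcal{J}$, determine the possible decategorified matrices in the dihedral case via the vanishing of $\theta_{w_0}$ (a Chebyshev/Fibonacci identity) and a spectral argument tied to simply laced Dynkin diagrams, reduce higher rank to the rank-two parabolic subquotients, and realize the even-dihedral extra data by the construction of Subsection~\ref{s6.7}. However, there is a genuine gap at the heart of your second step. The constraints you list --- $[B_s]^2=2[B_s]$, the identity recording $\theta_{w_0}=0$, non-negativity, integrality and joint irreducibility --- do \emph{not} confine the dihedral datum to graphs of small spectral radius, because a priori the matrix of $\theta_s$ is $\left(\begin{smallmatrix}2E&B\\0&0\end{smallmatrix}\right)$ and that of $\theta_t$ is $\left(\begin{smallmatrix}0&0\\C&2E\end{smallmatrix}\right)$ with $C$ unrelated to $B^{\mathrm{tr}}$; the relations only give $f_n(BC)=f_n(CB)=0$. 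For instance, in type $I_2(6)$ the $1\times 1$ blocks $B=(3)$, $C=(1)$ satisfy all of your constraints ($f_6(x)=x(x-1)(x-3)$ kills $BC=CB=(3)$, everything is positive and irreducible) yet correspond to no Dynkin diagram and to no actual $2$-representation. The ADE classification only becomes available after one knows the matrix of $\bigoplus_{s}\theta_s$ is \emph{symmetric}, so that $C=B^{\mathrm{tr}}$ and $BB^{\mathrm{tr}}$ has real spectrum in $[0,4)$. In the paper this symmetry is a consequence of the general theorem that $1$-morphisms in the apex act as projective functors (Theorem~\ref{thmminimal}), which yields $\mathrm{F}_{\mathrm{pr}}\,L_i\cong P_i$ and identifies the matrix with a Cartan matrix (Proposition~\ref{propn31}, Corollary~\ref{corn32}); the authors stress that without this input even the case $I_2(5)$ required pages of ad hoc inequalities. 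Your proposal nowhere supplies this ingredient or a substitute, and without it the Perron--Frobenius/Dynkin step does not start.

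A secondary weak point is your uniqueness-of-lift argument: you assert that the fiat structure and the cell filtration ``pin down'' the (co)algebra $1$-morphism from the decategorified action, but this is exactly the nontrivial part and is not automatic (distinct transitive $2$-representations can share decategorification data in general). The paper argues differently and more concretely: it maps the principal $2$-representation $\mathbf{P}_{\mathtt{i}}$ to $\overline{\mathbf{M}}$ via a suitable simple object, shows the map factors through the cell $2$-representation, and concludes by comparing Cartan matrices (Subsection~\ref{s17.6} and its analogues), while for the extra even-dihedral data it argues as in \cite[Subsection~5.10]{MaMa}. Your internal-(co)algebra route could in principle be made to work, but as stated it replaces the required argument by an assertion.
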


We note that Coxeter type $I_2(4)$ is dealt with  in \cite{Zi} and the result in this case is
similar to Theorem~\ref{thmintro}\eqref{thmintro.1}. In fact, the construction in Subsection~\ref{s6.7}
also works in type $I_2(4)$, however, it results in $2$-representations which turn out to be equivalent
to cell $2$-representations. The exceptional types $I_2(12)$, $I_2(18)$ and $I_2(30)$ exhibit 
some strange connection, which we do not really understand, to Dynkin diagrams of type $E$ that,
at the moment, does not allow us to complete the classification of simple transitive $2$-representations 
in these types.

Theorem~\ref{thmintro} is proved by a case-by-case analysis. Similarly to the general approach of 
\cite{MM5,MM6,Zi,MaMa}, in each case, the proof naturally splits into two major parts:
\begin{itemize}
\item the first part of the proof determines the non-negative integral matrices which represent the
action of Soergel bimodules corresponding to simple reflections;
\item to each particular case of matrices determined in the first part, the second part of the proof 
provides a classification of simple transitive $2$-rep\-re\-sen\-ta\-ti\-ons for which these particular
matrices are realized.
\end{itemize}
Although being technically more involved, the second part of the proof is fairly similar to the
corresponding arguments in \cite{MM5,MM6,Zi,MaMa}. For Theorem~\ref{thmintro}\eqref{thmintro.2},
an additional essential part is the construction of the two extra simple transitive $2$-representations.
This construction follows closely the approach of \cite[Section~5]{MaMa}. Apart from this,
the main general difficulty 
in the proof of Theorem~\ref{thmintro} lies in  the first part of the proof. 
Both the difficulty of this part and the approach we choose to deal with this part depends 
heavily on each particular  Coxeter type we study.

For Coxeter groups of rank higher than two, we use a reduction argument to the rank two case.
For this reduction to work, we need the statement of Theorem~\ref{thmintro}
in Coxeter types $I_2(n)$, where $n=3,4,5$. For $n=3,5$, we may directly refer to 
Theorem~\ref{thmintro}\eqref{thmintro.1}. As already mentioned above, 
the case $n=4$ is treated in \cite{Zi}.
Behind this reduction is the observation that Soergel bimodules corresponding to 
the longest elements in rank two
parabolic subgroups of $W$ do not survive projection onto $\underline{\cS}$. 

The argument for the first part of the proof which we employ in the rank two case is based on
an analysis of spectral properties of some integral matrices. We observe that Fibonacci 
polynomials are connected to minimal polynomials of certain integral matrices which 
encode the combinatorics of simple transitive $2$-representations. Using the 
factorization of Fibonacci polynomials over $\mathbb{Q}$, see \cite{Lev}, and estimating the
values of the maximal possible eigenvalues for the matrices which encode 
the action of Soergel bimodules corresponding to simple reflections in  
simple transitive $2$-representations, allows us to deduce that the only possibility 
for these matrices are the ones appearing in cell $2$-representations.

Apart from Theorem~\ref{thmintro}, we also prove the following 
general result. We refer to Subsection~\ref{s2.4} and \cite[Subsection~3.2]{CM} for the definition of 
the apex and to Subsection~\ref{s2.3} and \cite[Subsection~4.2]{MM2} for 
the definition of the abelianization $\overline{\mathbf{M}}$ of a $2$-representation $\mathbf{M}$.

\begin{theorem}\label{thmmintrotwo}
Let $\mathbf{M}$ be a simple transitive $2$-representation  of a fiat $2$-category $\cC$
with apex $\mathcal{I}$. Then, for every $1$-morphism $\mathrm{F}\in \mathcal{I}$ and
every object $X$ in any $\overline{\mathbf{M}}(\mathtt{i})$, the object $\mathrm{F}\,X$ is projective.
Moreover,  $\overline{\mathbf{M}}(\mathrm{F})$ is a projective functor.
\end{theorem}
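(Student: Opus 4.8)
The plan is to exploit the interplay between the apex $\mathcal{I}$ of the simple transitive $2$-representation $\mathbf{M}$ and the self-duality coming from the fiat structure. First I would recall that since $\cC$ is fiat, every $1$-morphism $\mathrm{F}$ has a left adjoint $\mathrm{F}^*$ and a right adjoint; in particular, for $\mathrm{F}\in\mathcal{I}$, both adjoints also lie in $\mathcal{I}$ (the apex is closed under taking adjoints, as it is defined via two-sided cells and the weak involution preserves two-sided order). Consequently, for any object $X$ in $\overline{\mathbf{M}}(\mathtt{i})$, the functor $\overline{\mathbf{M}}(\mathrm{F})$ has both a left and a right adjoint, hence it is exact on the abelian category $\overline{\mathbf{M}}(\mathtt{i})$. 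This already tells us that $\overline{\mathbf{M}}(\mathrm{F})$ preserves neither injectivity nor projectivity a priori, so the real content is the projectivity of $\mathrm{F}\,X$.

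The key step is to analyze the action of the "sandwich" $\mathrm{F}^*\mathrm{F}$ (or $\mathrm{F}\mathrm{F}^*$) together with the adjunction. Because $\mathcal{I}$ is the apex of the simple transitive $2$-representation, the $1$-morphisms in $\mathcal{I}$ act nontrivially, and by simple transitivity the subquotients of $\mathbf{M}$ are controlled by a single simple object, so $\mathbf{M}$ itself is (equivalent to) a subquotient built from a cell $2$-representation — more precisely, one uses that the underlying algebra $A$ of $\overline{\mathbf{M}}(\mathtt{i})$ is such that $\mathrm{F}$ for $\mathrm{F}\in\mathcal{I}$ acts by tensoring with an $A$-$A$-bimodule that is projective as a one-sided module on each side. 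I would extract this from the general structure theory: in a fiat $2$-category, for $\mathrm{F}$ in the apex, the evaluation and coevaluation morphisms of the adjunction $(\mathrm{F}^*,\mathrm{F})$, applied inside $\mathbf{M}$, exhibit the identity functor on the additive closure of $\{\mathrm{F}\,X\}$ as a summand of $\overline{\mathbf{M}}(\mathrm{F}^*\mathrm{F})$; combined with the fact that $\mathrm{F}\,\mathrm{F}^*\,\mathrm{F}\cong\mathrm{F}^{\oplus m}\oplus(\text{lower cells})$ and that lower cells act as zero on $\mathbf{M}$, one gets that $\mathrm{F}\,X$ is a summand of $\mathrm{F}\,(\text{something})$ in a way that forces it into the projective (injective) part.

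Concretely, the argument I expect to run is: take an arbitrary object $X$ and a projective cover $P\tto X$ in $\overline{\mathbf{M}}(\mathtt{i})$; applying the exact functor $\overline{\mathbf{M}}(\mathrm{F})$ gives an epimorphism $\mathrm{F}\,P\tto\mathrm{F}\,X$. Now use adjunction to show $\mathrm{F}$ sends projectives to projectives: for projective $P$, $\mathrm{Hom}(\mathrm{F}\,P,-)\cong\mathrm{Hom}(P,\mathrm{F}^\star\,-)$ where $\mathrm{F}^\star$ is the right adjoint of $\mathrm{F}$, which is exact (as $\mathrm{F}^\star\in\mathcal{I}$ has itself both adjoints), so $\mathrm{Hom}(\mathrm{F}\,P,-)$ is exact, i.e.\ $\mathrm{F}\,P$ is projective. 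Then $\mathrm{F}\,X$ is a quotient of the projective $\mathrm{F}\,P$; to upgrade "quotient of projective" to "projective" one uses the dual statement with injectives (via the left adjoint) showing $\mathrm{F}\,X$ is also a subobject of an injective equal to a projective, or more directly one shows $\mathrm{F}\,X$ is a summand of $\mathrm{F}\,P$ by producing a splitting from the counit $\mathrm{F}\,\mathrm{F}^\star\,\mathrm{F}\,P\to\mathrm{F}\,P$ and the fact that $\mathrm{F}^\star\,\mathrm{F}$ restricted to the relevant subcategory is semisimple-like because of strong regularity at the apex. Finally, "$\mathrm{F}\,X$ projective for all $X$" plus "$\overline{\mathbf{M}}(\mathrm{F})$ exact" immediately gives that $\overline{\mathbf{M}}(\mathrm{F})$ factors through projectives, i.e.\ it is a projective functor in the sense of the cited definition.

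The main obstacle I anticipate is the step converting "$\mathrm{F}\,X$ is an epimorphic image of a projective" into "$\mathrm{F}\,X$ is genuinely projective": this requires knowing that $\mathrm{F}\,X$ has no "extra" non-projective quotients, which is where one must genuinely invoke simple transitivity of $\mathbf{M}$ and the fact that $\mathcal{I}$ is its apex — without these, a $2$-representation could certainly have $\mathrm{F}$ sending a simple to a non-projective object. I would handle this by using that, in the abelianization of a simple transitive $2$-representation, every object of the form $\mathrm{F}\,L$ with $\mathrm{F}\in\mathcal{I}$ and $L$ simple already appears (up to summands and grading shifts) as $\overline{\mathbf{M}}(\mathrm{F}')$ applied to a projective for a suitable $\mathrm{F}'$ in the cell — a fact that follows from the combinatorics of cells together with the defining property of the apex — and hence such $\mathrm{F}\,L$ is projective; the general case follows by the exactness of $\overline{\mathbf{M}}(\mathrm{F})$ and induction on the length of $X$.
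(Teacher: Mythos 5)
Your preparatory steps are fine: exactness of $\overline{\mathbf{M}}(\mathrm{F})$, the adjunction argument showing that $\mathrm{F}$ sends projectives to projectives, and the reduction (by exactness and induction on length) from ``$\mathrm{F}\,L$ projective for all simples $L$'' to the general statement, as well as deducing the second claim from the first via \cite[Lemma~13]{MM5}. But the crucial step --- upgrading ``$\mathrm{F}\,X$ is a quotient of a projective'' to ``$\mathrm{F}\,X$ is projective'' --- is exactly where your proposal has a genuine gap, and the mechanisms you offer for it do not work. First, the unit $\mathrm{id}\to\mathrm{F}^\star\mathrm{F}$ is not split in general, so the identity functor is not a summand of $\overline{\mathbf{M}}(\mathrm{F}^\star\mathrm{F})$ on the relevant subcategory; the triangle identities only split $\mathrm{F}\to\mathrm{F}\mathrm{F}^\star\mathrm{F}\to\mathrm{F}$, which gives nothing new about $\mathrm{F}\,X$. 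Second, you invoke ``strong regularity at the apex'', but strong regularity is not a hypothesis of this theorem --- the whole point is that it covers fiat $2$-categories (such as the small quotients of Soergel bimodules outside type $A$) whose apex is \emph{not} strongly regular; with it, one would be back in the setting of \cite{MM5,MM6}. Third, ``$\mathrm{F}\,X$ is a quotient of a projective and a subobject of an injective'' does not imply projectivity (simple modules over a self-injective algebra already give counterexamples), and the underlying algebra is not known to be self-injective here anyway. Finally, the assertion that every $\mathrm{F}\,L$ with $L$ simple ``already appears as $\overline{\mathbf{M}}(\mathrm{F}')$ applied to a projective'' is unsubstantiated and is essentially equivalent to what has to be proved, so this part of the argument is circular.

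For comparison, the paper's proof runs along a different and genuinely nontrivial line: it passes to the split Grothendieck group of $\mathbf{M}$, takes minimal projective resolutions $\mathcal{X}^{(i)}_{\bullet}$ of the objects $\mathrm{F}_i\,X$ for all indecomposable $\mathrm{F}_i$ in the apex, and uses \cite[Proposition~18]{KM2} (existence of an idempotent $e=\sum_i c_i[\mathrm{F}_i]$ in $A_{\ccC}$ with all $c_i>0$, available because $\mathrm{F}_i\circ\mathrm{F}_j\neq 0$ for the Duflo involution) together with transitivity to show, from $e^2=e$ and non-negativity, that applying any $\mathrm{F}_i$ to these minimal resolutions again yields minimal resolutions. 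Then the differentials $\mathcal{X}^{(l)}_1\to\mathcal{X}^{(l)}_0$ generate a proper $\cC$-invariant ideal of $\mathbf{M}$, which must vanish by simple transitivity, forcing $\mathcal{X}^{(l)}_0\cong\mathrm{F}_l\,X$. Your proposal contains no substitute for this decategorified idempotent/minimality argument, so as it stands the key implication remains unproved.
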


Our proof of this statement is based crucially on the results from \cite{KM2}. 
Theorem~\ref{thmmintrotwo} will be very useful in further studies of simple transitive $2$-representations. 
It applies directly to all cases we consider and substantially simplifies some of the arguments. 
For example, in Coxeter type $I_2(5)$, we originally had an independent
argument for a similar result which involved a very technical statement that a certain collection of 
twenty seven linear inequalities is equivalent to the fact that all these inequalities are, in fact,
equalities. This full argument was three pages long and just covered one Coxeter type. A similar
argument in other Coxeter types of the form $I_2(n)$ seemed, for a long period of time, unrealistic.

Our results form a first step towards classification of simple transitive 
$2$-rep\-re\-sen\-tations of Soergel bimodules in all types. However, for the moment, the 
technical difficulty of solving the first part of the problem (as described above) 
in the general case seems too high. Already in type $B_3$ the classification of
simple transitive $2$-representations is not complete. It is also of course very 
natural to ask what happens in positive characteristics. However, the situation there is
expected to be even more complicated.

The paper is organized as follows: In Section~\ref{s2} we collect all necessary preliminaries
on $2$-categories and $2$-representations. Section~\ref{s3} collects preliminaries on 
$2$-categories of Soergel bimodules. In Section~\ref{s4} we collect several general results
related to classification of simple transitive $2$-representations of small quotients of
Soergel bimodules. It also contains our proof of Theorem~\ref{thmmintrotwo}, see
Theorem~\ref{thmminimal} in Subsection~\ref{s5.5}. 
Section~\ref{sim} contains auxiliary results on some spectral properties of integral matrices.
Coxeter type  $I_2(n)$, for $n$ odd, is studied
in Section~\ref{s17}. Coxeter type  $I_2(n)$, for $n$ even, is studied in Section~\ref{s6}.
Section~\ref{s7} collects our study of all non-simply laced Coxeter types of rank higher than two
(all simply laced types are covered by the results of \cite{MM5,MM6}). Finally, in 
Section~\ref{s8} we propose a new general construction of finitary $2$-categories. 
The novel component of this construction is that indecomposable $2$-morphisms are,
in general, defined using decomposable functors. This allows us to give an alternative
description of one interesting example of a finitary $2$-category  from \cite[Example~8]{Xa}.
\vspace{0.5cm}

{\bf Warning.} All Soergel bimodules considered in this paper are over the coinvariant algebra,
not the polynomial algebra. Furthermore, our setup is ungraded.
\vspace{0.5cm}

{\bf Acknowledgment.} 
This research was partially supported by the Swedish Research Council (for V.~M.),
Knut and Alice Wallenbergs Foundation (for T.~K. and V.~M.) and G{\"o}ran Gustafsson Foundation (for V.~M.). 
We thank Vanessa Miemietz and Xiaoting Zhang for poining out a gap in one of the proofs. 
We thank the referee for helpful comments.
\vspace{5mm}

\section{$2$-categories and $2$-representations}\label{s2}

\subsection{Notation and conventions}\label{s2.1}
 
We work over the field $\mathbb{C}$ of complex numbers 
and denote  $\otimes_{\mathbb{C}}$ by $\otimes$. 
A module always means a {\em left} module. 
All maps are composed from right to left.
 
\subsection{Finitary and fiat $2$-categories}\label{s2.2}

For generalities on $2$-categories, we refer the reader to \cite{Le,Mc,Ma}. 

By a $2$-category we mean a category enriched over the monoidal category 
$\mathbf{Cat}$ of small categories. In other words, a $2$-category $\cC$ consists 
of objects (which we denote by Roman lower case letters in a typewriter font), 
$1$-morphisms (which we denote by capital Roman letters), and  $2$-morphisms 
(which we denote by Greek lower case letters), composition of  $1$-morphisms, 
horizontal and  vertical compositions of $2$-morphisms (denoted $\circ_0$ and 
$\circ_1$, respectively), identity $1$-morphisms and identity $2$-morphisms. 
These satisfy the obvious collection of axioms. 

For a $1$-morphism $\mathrm{F}$, 
we denote by $\mathrm{id}_{\mathrm{F}}$ the corresponding identity $2$-morphism. 
We often write $\mathrm{F}(\alpha)$ for $\mathrm{id}_{\mathrm{F}}\circ_0\alpha$
and $\alpha_{\mathrm{F}}$ for $\alpha\circ_0\mathrm{id}_{\mathrm{F}}$.

We say that a $2$-category $\cC$ is {\em finitary} if each category 
$\cC(\mathtt{i},\mathtt{j})$ is an idempotent split, additive and Krull-Schmidt 
$\mathbb{C}$-linear category with finitely many isomorphism classes of indecomposable 
objects and finite dimensional morphism spaces, moreover, all compositions are assumed to
be compatible with these additional structures, see \cite[Subsection~2.2]{MM1} for details.

If $\cC$ is a  finitary $2$-category, we say that $\cC$ is {\em fiat} provided that 
it has a weak involution $\star$ together with adjunction $2$-morphisms satisfying 
the axioms of adjoint functors, for each pair $(\mathrm{F},\mathrm{F}^{\star})$ of 
$1$-morphisms, see \cite[Subsection~2.4]{MM1} for details. Similarly, we say that $\cC$ is 
{\em weakly fiat} provided that it has a weak anti-autoequivalence $\star$  
(not necessarily involutive)  together with adjunction $2$-morphisms satisfying 
the axioms of adjoint functors, for each pair $(\mathrm{F},\mathrm{F}^{\star})$ of 
$1$-morphisms, see \cite[Subsection~2.5]{MM6} for details. 

\subsection{$2$-representations}\label{s2.3}

Let $\cC$ be a finitary $2$-category. The $2$-category $\cC$-afmod consists of all 
{\em finitary $2$-representations} of $\cC$ as defined in \cite[Subsection~2.3]{MM3}. Objects in
$\cC$-afmod are strict functorial actions of $\cC$ on idempotent split, additive and 
Krull-Schmidt $\mathbb{C}$-linear categories which have finitely many isomorphism 
classes of indecomposable objects and finite dimensional spaces of morphisms. 
In $\cC$-afmod, $1$-morphisms are strong $2$-natural transformations and $2$-morphisms 
are modifications.

Similarly, we consider the $2$-category $\cC$-mod consisting of all {\em abelian 
$2$-rep\-re\-sen\-ta\-ti\-ons} of $\cC$. These are functorial actions of $\cC$ on 
categories equivalent to module categories over finite dimensional algebras, 
we again refer to \cite[Subsection~2.3]{MM3} for details. The $2$-categories $\cC$-afmod
and $\cC$-mod are connected by the diagrammatically defined {\em abelianization $2$-functor}
\begin{displaymath}
\overline{\hspace{1mm}\cdot\hspace{1mm}}:
\cC\text{-}\mathrm{afmod}\to \cC\text{-}\mathrm{mod},
\end{displaymath}
see \cite[Subsection~4.2]{MM2} for details. For $\mathbf{M}\in \cC\text{-}\mathrm{afmod}$,
objects in $\overline{\mathbf{M}}$ are diagrams of the form $X\longrightarrow Y$ over $\mathbf{M}(\mathtt{i})$'s
and morphisms are quotients of the space of (solid) commutative squares of the form
\begin{displaymath}
\xymatrix{ 
X\ar[rr]\ar[d]&&Y\ar[d]\ar@{.>}[dll]\\
X\ar[rr]&&Y'
}
\end{displaymath}
modulo the subspace for which the right horizontal map factorizes via some dotted map. 
The action of $\cC$ is defined component-wise.

We say that two $2$-representations are  {\em equivalent} provided that there is 
a strong $2$-natural transformation between them which restricts to 
an equivalence of categories, for each object  in $\cC$.

A finitary $2$-representation $\mathbf{M}$ of $\cC$ is said to be {\em transitive} provided 
that, for any indecomposable objects $X$ and $Y$ in $\displaystyle 
\coprod_{\mathtt{i}\in\ccC}\mathbf{M}(\mathtt{i})$, there is a $1$-morphism $\mathrm{F}$ in $\cC$ 
such that the object $Y$ is isomorphic to a direct summand of the object $\mathbf{M}(\mathrm{F})\, X$. 
A transitive $2$-representation $\mathbf{M}$  is said to be {\em simple transitive } provided that 
$\displaystyle \coprod_{\mathtt{i}\in\ccC}\mathbf{M}(\mathtt{i})$ 
does not have any non-zero proper ideals which are invariant under the functorial  action of $\cC$.
Given a finitary $2$-representation $\mathbf{M}$ of $\cC$, the {\em rank} of 
$\mathbf{M}$ is the number of isomorphism classes of indecomposable objects in 
\begin{displaymath}
\coprod_{\mathtt{i}\in\ccC}\mathbf{M}(\mathtt{i}). 
\end{displaymath}

For simplicity, we will often use the ``module'' notation $\mathrm{F}\, X$ 
instead of the corresponding ``representation'' notation $\mathbf{M}(\mathrm{F})\, X$.

\subsection{Combinatorics}\label{s2.4}

Let $\cC$ be a finitary $2$-category and $\mathcal{S}[\cC]$ the corresponding 
{\em multisemigroup} in the sense of \cite[Section~3]{MM2}. 
The objects in $\mathcal{S}[\cC]$ are isomorphism classes of indecomposable 
$1$-morphisms in $\cC$. For $\mathrm{F},\mathrm{G}\in \mathcal{S}[\cC]$, we set
$\mathrm{F}\leq_L\mathrm{G}$ provided that $\mathrm{G}$ is isomorphic to 
a summand of $\mathrm{H}\circ \mathrm{F}$, for some $1$-morphism $\mathrm{H}$.
The relation $\leq_L$ is called the {\em left} (pre)-order on  $\mathcal{S}[\cC]$.
The  {\em right} and {\em two-sided} (pre)-orders are defined similarly using
multiplication on the right or on both sides.
Equivalence classes with respect to these orders 
are called {\em cells} and the corresponding equivalence relations are denoted
$\sim_L$,  $\sim_R$ and $\sim_J$, respectively.  As usual, for simplicity, 
we say ``cells of $\cC$'' instead of  ``cells of $\mathcal{S}[\cC]$''.

Given a two-sided cell $\mathcal{J}$ in $\cC$, we call $\cC$
{\em $\mathcal{J}$-simple} provided that every non-zero two-sided $2$-ideal in 
$\cC$ contains the identity $2$-morphisms for some (and hence for all) 
$1$-morphisms in $\mathcal{J}$, see \cite[Subsection~6.2]{MM2} for details. 

A two-sided cell is called {\em strongly regular}, see \cite[Subsection~4.8]{MM1}, provided that 
\begin{itemize}
\item different left (resp. right) cells inside $\mathcal{J}$ are not comparable with 
respect to $\leq_L$ (resp. $\leq_R$);
\item the intersection of any left and any right cell inside $\mathcal{J}$ consists of exactly
one element.
\end{itemize}
By \cite[Corollary~19]{KM2}, the first condition is automatically satisfied for all fiat $2$-categories.

By \cite[Subsection~3.2]{CM}, each simple transitive $2$-representation $\mathbf{M}$ of $\cC$
has an {\em apex}, which is defined as a unique two-sided cell that is maximal in the set of 
all two-sided cells whose elements are not annihilated by $\mathbf{M}$.

\subsection{Cell $2$-representations}\label{s2.5}

For every $\mathtt{i}\in \cC$, we will denote by $\mathbf{P}_{\mathtt{i}}:=\cC_A(\mathtt{i},{}_-)$
the corresponding {\em principal} $2$-representation. For a left cell $\mathcal{L}$
in $\cC$, there is  $\mathtt{i}\in \cC$ such that all $1$-morphisms in $\mathcal{L}$ start at $\mathtt{i}$.
Let $\mathbf{N}_{\mathcal{L}}$ denote the $2$-subrepresentation of $\mathbf{P}_{\mathtt{i}}$
given by the additive closure of all $1$-morphisms $\mathbf{F}$ satisfying $\mathbf{F}\geq_L \mathcal{L}$.
Then $\mathbf{N}_{\mathcal{L}}$ has a unique maximal $\cC$-invariant ideal and the corresponding quotient
$\mathbf{C}_{\mathcal{L}}$ is called the {\em cell} $2$-representation associated to $\mathcal{L}$.
We refer the reader to  \cite[Section~4]{MM1} and \cite[Subsection~6.5]{MM2} for more details.

\subsection{Matrices in the Grothendieck group}\label{s2.6}
         
Let $\cC$ be a finitary $2$-category and $\mathbf{M}$ a finitary $2$-representation of $\cC$.
Fix a complete and irredundant list of representatives in all isomorphism classes of 
indecomposable objects in $\displaystyle \coprod_{\mathtt{i}}\mathbf{M}(\mathtt{i})$.
Then, for every $1$-morphism $\mathrm{F}$ in $\cC$, we have the corresponding matrix 
$\Lparen \mathrm{F}\Rparen$ which describes multiplicities in direct sum decompositions 
of the images of indecomposable objects under $\mathbf{M}(\mathrm{F})$.

If $\cC$ is fiat, then each $\overline{\mathbf{M}}(\mathrm{F})$ is exact and we also 
have the matrix $\llbracket \mathrm{F}\rrbracket$ which describes composition multiplicities 
of the images of simple objects under $\overline{\mathbf{M}}(\mathrm{F})$. 
By adjunction, the matrix $\llbracket \mathrm{F}^{\star}\rrbracket$ is transposed
to the matrix $\Lparen \mathrm{F}\Rparen$.

\subsection{Based modules over positively based algebras}\label{s2.7}
         
In this subsection we recall some notation and results from \cite{KM2}.
Let $A$ be a finite dimensional $\mathbb{C}$-algebra with a fixed basis $\{a_1,a_2,\dots,a_n\}$.
Assume that $A$ is {\em positively based} in the sense that all structure constants $\gamma_{i,j}^s$
with respect to this basis, defined via
\begin{displaymath}
a_ia_j=\sum_{s=1}^n\gamma_{i,j}^sa_s,\quad\text{ where } \gamma_{i,j}^s\in\mathbb{C},
\end{displaymath}
are non-negative real numbers. For $s,j\in\{1,2,\dots,n\}$, we set $a_s\geq_L a_j$
provided that $\gamma_{i,j}^s\neq 0$ for some $i$. We set $a_s\sim_L a_j$ provided that 
$a_s\geq_L a_j$ and $a_j\geq_L a_s$ at the same time. This defines the {\em left order} and
the corresponding {\em left cells}. The right and two-sided orders and the right and two-sided cells
are defined similarly (using multiplication from the right or from both sides) and denoted 
$\geq_R$, $\geq_J$, $\sim_R$ and $\sim_J$, respectively. 

For each left cell $\mathcal{L}$, we have the corresponding {\em left cell} $A$-module
$C_{\mathcal{L}}$. The module $C_{\mathcal{L}}$ contains a subquotient $L_{\mathcal{L}}$, called
the {\em special subquotient}, which has composition multiplicity one in $C_{\mathcal{L}}$.
This subquotient is defined as the unique subquotient of $C_{\mathcal{L}}$ which contains the
Perron-Frobenius eigenvector of $C_{\mathcal{L}}$ for the linear operator $\sum_i a_i$.
We have $L_{\mathcal{L}}\cong L_{\mathcal{L}'}$ if $\mathcal{L}$ and $\mathcal{L}'$ belong to
the same two-sided cell. The latter justifies the notation $L_{\mathcal{J}}:=L_{\mathcal{L}}$, where
$\mathcal{J}$ is the two-sided cell containing $\mathcal{L}$.

An $A$-module $V$ with a fixed basis $\{v_1,v_2,\dots,v_k\}$ is called {\em positively based} provided
that each $a_i\cdot v_j$ is a linear combination of the $v_m$'s with non-negative real coefficients.
A positively based module $V$ is called {\em transitive} provided that, for any $v_i$ and $v_j$, there
is an $a_s$ such that $v_j$ appears in $a_s\cdot v_i$ with a non-zero coefficient. For each transitive
$A$-module $V$, there is a unique two-sided cell $\mathcal{J}(V)$ which is the maximum element
(with respect to the two-sided order) in the set of all two-sided cells whose elements 
do not annihilate $V$. The two-sided cell $\mathcal{J}(V)$ is called the {\em apex} of $V$.
The two-sided cell $\mathcal{J}(V)$ is {\em idempotent} in the sense that it contains some
$a_i$, $a_j$ and $a_s$ (non necessarily different) 
such that $\gamma_{i,j}^s\neq 0$. The simple subquotient $L_{\mathcal{J}(V)}$
has composition multiplicity one in $V$ and is also called the {\em special} subquotient of $V$.

\subsection{Decategorification}\label{s2.8}

For a finitary $2$-category $\cC$, we denote by $A_{\ccC}$ the complexification of the Grothendieck
decategorification of $\cC$. The algebra $A_{\ccC}$ is positively based with respect to the {\em natural basis}
given by the classes of indecomposable $1$-morphisms in $\cC$, see \cite[Subsection~1.2]{Ma} or 
\cite[Subsection~2.5]{KM2} for details.

Given a $2$-representation $\mathbf{M}$ of $\cC$, the Grothendieck decategorification of 
$\mathbf{M}$ is, naturally, a positively based $A_{\ccC}$-module. Moreover, the latter is transitive if
and only if $\mathbf{M}$ is transitive. This allows us to speak about the {\em apex} of $\mathbf{M}$
in the obvious way, see \cite[Subsection~3.2]{CM} for details.

We denote by $\mathrm{F}_{\mathrm{tot}}$ a multiplicity free direct sum of all indecomposable
$1$-morphisms in $\cC$. Given a $2$-representation $\mathbf{M}$ of $\cC$, we set 
$\mathtt{M}_{\mathrm{tot}}:=\Lparen \mathrm{F}_{\mathrm{tot}}\Rparen$. Then $\mathbf{M}$ is transitive 
if and only if all coefficients of $\mathtt{M}_{\mathrm{tot}}$ are non-zero (and thus positive integers).

\section{Soergel bimodules}\label{s3}

\subsection{Soergel bimodules for finite Coxeter groups}\label{s3.1}

Let $(W,S)$ be a finite irreducible Coxeter system and $\varphi:W\to\mathrm{GL}(V)$ be the geometric 
representation of $W$ as in \cite[Section~5.3]{Hu}. Here $V$ is a real vector space.
We denote by $\leq$ the Bruhat order and by $\mathbf{l}:W\to\mathbb{Z}_{\geq 0}$
the length function. For $w\in W$, we denote by $\underline{w}$ the corresponding element in
the Kazhdan-Lusztig basis of $\mathbb{Z}[W]$, see \cite{KaLu,So2,EW}. Each $\underline{w}$ is a 
linear combination of elements in $W$ with non-negative integer coefficients.

Let $\mathtt{C}$ be the (complexified) coinvariant algebra associated to $(W,S,V)$.
For $s\in S$, we denote by $\mathtt{C}^s$ the subalgebra of 
$s$-invariants in $\mathtt{C}$. A {\em Soergel $\mathtt{C}\text{-}\mathtt{C}$--bimodule} is a 
$\mathtt{C}\text{-}\mathtt{C}$--bimodule isomorphic to a bimodule from the additive closure of the
monoidal category of  $\mathtt{C}\text{-}\mathtt{C}$--bimodules generated by 
$\mathtt{C}\otimes_{\mathtt{C}^s}\mathtt{C}$, where $s$ runs through $S$, see \cite{So,So2,Li}.
We also note that by the {\em additive closure} of some $X$ we mean the the full subcategory whose
objects are isomorphic to finite direct sums of direct summands of $X$.
Isomorphism classes of indecomposable Soergel bimodules are naturally indexed by $w\in W$,
we denote by $B_w$ a fixed representative from such a class. There is an isomorphism between $\mathbb{Z}[W]$
and the Grothendieck ring of the monoidal category of Soergel bimodules for $(W,S,V)$.
This isomorphism sends $\underline{w}$ to the class of $B_w$, see \cite{So2,E,EW}. 

Let $\mathcal{C}$ be a small category equivalent to the category $\mathtt{C}\text{-}\mathrm{mod}$. Define
the $2$-category $\cS=\cS(W,S,V,\mathcal{C})$ of {\em Soergel bimodules} associated to 
$(W,S,V,\mathcal{C})$ as follows:
\begin{itemize}
\item $\cS$ has one object $\mathtt{i}$, which we can identify with $\mathcal{C}$;
\item $1$-morphisms in $\cS$ are all endofunctors of $\mathcal{C}$ which are isomorphic to endofunctors
given by tensoring with Soergel $\mathtt{C}\text{-}\mathtt{C}$--bimodules;
\item $2$-morphisms in $\cS$ are natural transformations of functors
(these correspond to homomorphisms of Soergel $\mathtt{C}\text{-}\mathtt{C}$--bimodules).
\end{itemize}
The $2$-category $\cS$ is fiat. The algebra $A_{\ccS}$ is  isomorphic to $\mathbb{C}[W]$ 
and is positively based with respect to the  Kazhdan-Lusztig basis in $\mathbb{C}[W]$.
For $w\in W$, let $\theta_w$ denote a fixed representative in the isomorphism class of 
indecomposable $1$-morphisms in $\cS$ given by tensoring with $B_w$. 

\subsection{Small quotients of Soergel bimodules}\label{s3.2}

Let $(W,S)$, $V$, $\mathcal{C}$ and $\cS$ be as above. 

\begin{lemma}\label{lem1}
For any $s,t\in S$, we have $\theta_s\sim_J\theta_t$. 
\end{lemma}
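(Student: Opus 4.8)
The plan is to work inside the Grothendieck ring, where $A_{\ccS}\cong\mathbb{C}[W]$ with the Kazhdan--Lusztig basis, and show that the classes $\underline{s}$ and $\underline{t}$ lie in the same two-sided Kazhdan--Lusztig cell of $W$; since the $J$-order on indecomposable $1$-morphisms of $\cS$ corresponds exactly to the two-sided order on KL basis elements (Subsection~\ref{s2.8}, Subsection~\ref{s3.1}), this is equivalent to $\theta_s\sim_J\theta_t$. Because $(W,S)$ is irreducible, the Coxeter graph of $S$ is connected, so it suffices to treat the case where $s$ and $t$ are adjacent in the Coxeter graph, i.e.\ $m_{st}=m\geq 3$; the general case then follows by transitivity of $\sim_J$ along a path in the graph.

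For a fixed dihedral pair with $m_{st}=m$, I would examine the parabolic subgroup $\langle s,t\rangle$ and use the well-known structure of KL cells in dihedral groups: besides the cell $\{e\}$ and the cell $\{w_0\}$ containing the longest element, there is a single remaining two-sided cell consisting of all elements $w$ with $1\le\mathbf{l}(w)\le\mathbf{l}(w_0)-1$, which contains both $s$ and $t$. Concretely, one checks $\underline{s}\le_J\underline{t}$ by exhibiting $\underline{t}$ as a summand of a product $\underline{x}\,\underline{s}\,\underline{y}$: for instance $\underline{s}\,\underline{t}\,\underline{s}$ expands (for $m\ge 3$) with $\underline{t}$ appearing with a positive coefficient among its KL-basis summands, giving $\underline{s}\ge_J\underline{t}$; the reverse inequality is symmetric. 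This step is purely a computation with the dihedral multiplication rules for the KL basis (equivalently, with the Chebyshev-type recursions for $\underline{s_1s_2s_1\cdots}$), and it only uses data internal to the rank-two parabolic, which embeds compatibly into $A_{\ccS}$.

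The translation back to $2$-morphisms is immediate: the natural basis of $A_{\ccS}$ is the image of the indecomposable $1$-morphisms $\theta_w$, and $\ge_J$ on this basis is by definition read off from non-vanishing of structure constants, which matches $\le_J$ on $\cS$ via ``$\theta_w$ is a summand of $\theta_x\circ\theta_s\circ\theta_y$''. Hence $\underline{s}\sim_J\underline{t}$ in $\mathbb{C}[W]$ yields $\theta_s\sim_J\theta_t$ in $\cS$.

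I expect the only mild subtlety — not really an obstacle — to be bookkeeping: making sure that the positivity of KL structure constants (so that ``appears with nonzero coefficient'' genuinely witnesses the $J$-order) is invoked correctly, and that the reduction to adjacent generators uses irreducibility of $(W,S)$ rather than any finiteness hypothesis. Everything else is a short, standard dihedral computation, so the lemma should admit a proof of only a few lines.
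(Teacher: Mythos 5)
Your overall strategy is the same as the paper's: use irreducibility of $(W,S)$ to reduce to a pair of non-commuting simple reflections, then verify the $J$-relation by a short dihedral computation with the Kazhdan--Lusztig (equivalently Soergel) multiplication rules; the translation between summands of compositions of $\theta$'s and positive coefficients in the positively based ring $A_{\ccS}$ is also fine. However, the one concrete computation you offer as the witness is false: for non-commuting $s,t$ one has
\begin{displaymath}
\underline{s}\,\underline{t}\,\underline{s}=\underline{sts}+\underline{s},
\end{displaymath}
so $\underline{t}$ does \emph{not} occur in $\underline{s}\,\underline{t}\,\underline{s}$ (this is exactly the relation $\theta_s\theta_t\theta_s\cong\theta_{sts}\oplus\theta_s$ used elsewhere in the paper). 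As written, your key step therefore does not establish the claimed $J$-comparison; there is also a small slip in the directions, since exhibiting $\underline{t}$ inside $\underline{x}\,\underline{s}\,\underline{y}$ gives $\underline{s}\le_J\underline{t}$ in the paper's conventions, not $\underline{s}\ge_J\underline{t}$ (harmless, since both directions are needed anyway).

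The repair is immediate and keeps your approach intact: use $\underline{t}\,\underline{s}\,\underline{t}=\underline{tst}+\underline{t}$ to see that $\underline{t}$ is a summand of a product with $\underline{s}$ in the middle, and symmetrically $\underline{s}\,\underline{t}\,\underline{s}=\underline{sts}+\underline{s}$ for the other direction. The paper itself argues slightly differently but in the same spirit: from $\theta_s\theta_t\cong\theta_{st}$ and $\theta_{st}\theta_s\cong\theta_{sts}\oplus\theta_s$ it deduces $\theta_s\sim_R\theta_{st}$, and similarly $\theta_t\sim_L\theta_{st}$, so that both $\theta_s$ and $\theta_t$ are $J$-equivalent to the intermediate element $\theta_{st}$. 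Either version works; just make sure the product you expand actually contains the basis element you need.
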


\begin{proof}
As $(W,S)$ is irreducible, we may assume that $st\neq ts$. 
As $\theta_s\theta_t\cong \theta_{st}$, we have $\theta_s\leq_R\theta_{st}$.
Further, as $st\neq ts$, we have $\theta_{st}\theta_s\cong \theta_{sts}\oplus \theta_s$.
This implies that $\theta_s\geq_R\theta_{st}$ and hence $\theta_s\sim_R\theta_{st}$.
Similarly one shows that $\theta_t\sim_L\theta_{st}$.
The claim follows.
\end{proof}

After Lemma~\ref{lem1}, we may define the two-sided cell $\mathcal{J}$ of $\cS$ as the
two-sided cell containing $\theta_s$, for all $s\in S$. As $\underline{s}$, where 
$s\in S$, generate $\mathbb{Z}[W]$, it follows that $\mathcal{J}$ is the unique minimal
element, with respect to the two-sided order, in the set of all two-sided cells of $\cS$
different from the two-sided cell corresponding to $\theta_e$.

Let $\cI$ be the unique $2$-ideal of $\cS$ which is maximal with respect to the property that
it does not contain any $\mathrm{id}_{\mathrm{F}}$, where $\mathrm{F}\in \mathcal{J}$.
The $2$-category $\cS/\cI$ will be called the {\em small quotient} of $\cS$ and denoted
by $\underline{\cS}$. By construction, the $2$-category $\underline{\cS}$ is fiat and
$\mathcal{J}$-simple. It has two two-sided cells, namely, the two-sided cell 
corresponding to $\theta_e$ and the image of $\mathcal{J}$, which we identify with 
$\mathcal{J}$, abusing notation.

\section{Generalities on simple transitive $2$-representations of $\underline{\cS}$}\label{s4}

\subsection{Basic combinatorics of $\mathcal{J}$}\label{s4.1}
Here we describe the basics of the Kazhdan-Lusztig combinatorics related to the two-sided cell
$\mathcal{J}$. Recall Lusztig's $\mathbf{a}$-function $\mathbf{a}:W\to \mathbb{Z}_{\geq 0}$,
which is defined in  \cite{Lu1}. This function has, in particular, the following properties: 
it is constant on two-sided cells in $W$, and, moreover, we have the equality $\mathbf{a}(w)=\mathbf{l}(w)$ 
provided that $w$ is the longest element in some parabolic subgroup of $W$.

\begin{proposition}\label{propJKL}
{\hspace{2mm}} 
 
\begin{enumerate}[$($i$)$]
\item\label{propJKL.1} The map $\mathcal{L}\mapsto \mathcal{L}\cap S$ is a bijection between the set of
left cells in $\mathcal{J}$ and $S$.
\item\label{propJKL.2} For any left cell $\mathcal{L}$ in $\mathcal{J}$, the unique element in $\mathcal{L}\cap S$ 
is the Duflo involution in $\mathcal{L}$.
\item\label{propJKL.3} The map $\mathcal{R}\mapsto \mathcal{R}\cap S$ is a bijection between the set of
right cells in $\mathcal{J}$ and $S$.
\item\label{propJKL.4} For any right cell $\mathcal{R}$ in $\mathcal{J}$, the unique element in $\mathcal{R}\cap S$ 
is the Duflo involution in $\mathcal{R}$.
\item\label{propJKL.5} An element $w\in W$ such that $w\neq e$ belongs to $\mathcal{J}$ if and only if $w$
has a unique reduced expression. 
\item\label{propJKL.6} An element $w\in W$ belongs to $\mathcal{J}$ if and only if $\mathbf{a}(w)=1$. 
\end{enumerate}
\end{proposition}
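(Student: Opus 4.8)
The plan is to exploit the well-known description of the two-sided cell $\mathcal{J}$ in a finite Coxeter group as the set of elements with $\mathbf{a}$-value one, and to feed this into the Kazhdan--Lusztig combinatorics of left and right cells. I would begin with part \eqref{propJKL.6}, since it is the combinatorial backbone: an element $w \neq e$ has $\mathbf{a}(w) = 1$ if and only if $w$ lies in the unique two-sided cell lying just above the cell of $e$. This is classical for Weyl groups (Lusztig) and holds for all finite Coxeter groups; one references the characterization of the ``lowest'' non-trivial two-sided cell (often denoted by the elements $w$ with a unique reduced word, or equivalently the elements whose Kazhdan--Lusztig basis element is of the form $\underline{w} = \sum_{v \le w} v$ with all coefficients $1$). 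Concretely, I would argue: if $w$ has a unique reduced expression $w = s_1 s_2 \cdots s_k$, then induction on $k$ using the known multiplication rule $\underline{s_1}\,\underline{s_2\cdots s_k} = \underline{w} + (\text{lower})$ shows $\mathbf{a}(w) = 1$; conversely, if $w$ has two distinct reduced expressions, then some braid relation applies, producing a parabolic subgroup of rank two inside which $w$ dominates the longest element $w_0$ of that parabolic, forcing $\mathbf{a}(w) \ge \mathbf{a}(w_0) = \mathbf{l}(w_0) \ge 2$ by the stated monotonicity and boundary properties of $\mathbf{a}$. This simultaneously proves \eqref{propJKL.5}.

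Next I would establish the left-cell statements \eqref{propJKL.1} and \eqref{propJKL.2} (the right-cell statements \eqref{propJKL.3}, \eqref{propJKL.4} then follow by applying the involution $w \mapsto w^{-1}$, which swaps left and right cells and fixes $S$). The key point is that for $w \in \mathcal{J}$ with unique reduced word $w = s_1 \cdots s_k$, the right descent set $\mathcal{R}(w)$ is the singleton $\{s_k\}$ (if there were a second right descent, a braid move would give a second reduced word). Since elements in the same left cell have the same right descent set, the map $\mathcal{L} \mapsto \mathcal{R}(w)$ for $w \in \mathcal{L}$ is well-defined, and I claim $\mathcal{R}(w) = \{s\}$ forces $s \in \mathcal{L}$: indeed $s \le_L w$ because $w \sim_R s_1 \cdots s_k$ can be reached from $s_k = s$ by left multiplication by the $\underline{s_i}$'s (this uses that each such product picks up the new ``long'' element as a summand, which is exactly the computation in Lemma~\ref{lem1}), and $w \le_L s$ by an analogous argument running the expression the other way, so $w \sim_L s = s_k$. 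Hence every left cell contains exactly one simple reflection, giving the bijection with $S$; and since $s = s_k$ is an involution with $\mathbf{a}(s) = 1 = \mathbf{l}(s)$, it is the Duflo involution of its left cell (the Duflo involution being the unique involution $d$ in the cell with a certain degree-one property in $\underline{d}$, satisfied by simple reflections).

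The main obstacle I anticipate is making the ``unique reduced word $\Leftrightarrow$ $\mathbf{a} = 1$'' equivalence genuinely uniform across \emph{all} finite Coxeter types, including the non-crystallographic ones $H_3$, $H_4$, and $I_2(n)$, where Lusztig's original $\mathbf{a}$-function arguments (relying on the ring $J$ and positivity/boundedness) require the more recent foundational work. In the write-up I would isolate this as a lemma, cite the appropriate source for the properties of $\mathbf{a}$ in the finite-Coxeter generality (the constancy on two-sided cells and the equality $\mathbf{a}(w_0^P) = \mathbf{l}(w_0^P)$ for parabolic longest elements are already quoted in the paragraph preceding the proposition), and keep the rest of the argument purely combinatorial in terms of reduced words and descent sets, which is type-independent. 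Everything after the $\mathbf{a}$-function input is bookkeeping with the left/right (pre)orders that mirrors the computation already carried out in the proof of Lemma~\ref{lem1}.
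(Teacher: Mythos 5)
Your proposal is correct and takes essentially the same route as the paper's proof: both exclude elements with more than one reduced expression by invoking braid relations and the fact that $\mathbf{a}$ exceeds $1$ on longest elements of rank-two parabolics, both place all unique-reduced-word elements ending in the same simple reflection into one left cell via the multiplication combinatorics of the $\theta_w$'s (the paper by induction on rank using $\theta_s\theta_t\theta_s=\theta_{sts}\oplus\theta_s$, you by descent-set invariance of left cells plus a letter-peeling chain, which amounts to the same computation), and both deduce the Duflo statements from the value $\mathbf{a}=1$ on $\mathcal{J}$. The remaining differences (treating the right-cell claims via $w\mapsto w^{-1}$, and ordering part \eqref{propJKL.6} first) are cosmetic.
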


If $W$ is a Weyl group, all these results are mentioned in \cite{Do} with references to \cite{Lu1,Lu2,Lu3}.
We note the difference in both the normalizations of the Hecke algebra 
and the choice of the Kazhdan-Lusztig basis in this paper and in \cite{Do}. 
Below, we outline a general argument.

\begin{proof}
We start by observing that different simple reflections have different left (right) descent sets
and hence are not right (left) equivalent. 

Further, if $w\in W$ has more than one reduced expression, then any such reduced expression 
can be obtained from any other by means of the braid relations, see \cite[\S IV.1.5]{Bo}.
This means that, with respect to the Bruhat order, $w$ is bigger
than or equal to the longest element in some parabolic subgroup of $W$ of rank two. 
As longest element in parabolic subgroup of $W$ of rank two are not in $\mathcal{J}$
(since the value of $\mathbf{a}$ on such elements is strictly bigger than $1=\mathbf{a}(s)$, 
where $s\in S$), we get $w\not\in\mathcal{J}$. 

By induction with respect to the rank of $W$, one checks that any two elements 
of the form $xs$ and $ys$ (resp. $sx$ and $sy$) which, moreover,  have unique reduced expressions, 
belong to the same left (resp. right) cell.
Indeed, the basis of the induction is the rank two case which is standard. The induction step
follows from the relation $\theta_s\theta_t\theta_s=\theta_{sts}\oplus \theta_s$ which 
is true for any pair $s$, $t$ of non-commuting simple reflections.
This, combined with the observation in the previous paragraph, 
yields \eqref{propJKL.1}, \eqref{propJKL.3}, \eqref{propJKL.5} and \eqref{propJKL.6}. 

That simple reflections are Duflo involutions in their cells follows
directly from the definitions since the value of $\mathbf{a}$ on $\mathcal{J}$ is $1$, 
implying \eqref{propJKL.2}, \eqref{propJKL.4}. This completes the proof.
\end{proof}

An important consequence of Proposition~\ref{propJKL}\eqref{propJKL.5} is the following: 
if $s$ and $t$ are different elements in $S$,
then the longest element in the parabolic subgroup of $W$ generated by $s$ and $t$ has two different 
reduced expressions and thus does not belong to $\mathcal{J}$.
We also refer the reader to \cite{De,BW} for related results.

\subsection{Examples and special cases}\label{s4.2}

Using Proposition~\ref{propJKL}, one can describe elements in $\mathcal{J}$ explicitly as products
of simple reflections. Here we list several examples and special cases under the following conventions:
\begin{itemize}
\item we provide a picture of the Coxeter diagram followed by the list of elements in $\mathcal{J}$
organized in a square table, each element is given by its unique reduced expression;
\item vertices of the Coxeter diagram are numbered by positive integers and we write $i$ for the
corresponding simple reflection $s_i$;
\item the left cell $\mathcal{L}_i$ containing $i$ is the $i$-th column of the diagram;
\item the right cell $\mathcal{R}_i$ containing $i$ is the $i$-th row of the diagram; 
\end{itemize}
To verify these examples one could use the following general approach:
\begin{itemize}
\item check that all elements given in the example have a unique reduced expression;
\item check that, making any step outside the list in the example, gives an element 
with more than one reduced expression;
\item note that all elements in a left cell start with the same simple reflection on the right,
and all elements in a right cell start with the same simple reflection on the left;
\item use Proposition~\ref{propJKL}.
\end{itemize}

We start with types $A_3$ and $D_4$.
\begin{displaymath}
\xymatrix{1\ar@{-}[r]&2\ar@{-}[r]&3}\qquad \qquad 
\begin{array}{c||c|c|c}
&\mathcal{L}_1&\mathcal{L}_2&\mathcal{L}_3\\
\hline\hline
\mathcal{R}_1&1&12&123\\
\hline
\mathcal{R}_2&21&2&23\\
\hline
\mathcal{R}_3&321&32&3
\end{array}
\end{displaymath}
\begin{displaymath}
\xymatrix{&4&\\1\ar@{-}[r]&2\ar@{-}[r]\ar@{-}[u]&3}\qquad \qquad 
\begin{array}{c||c|c|c|c}
&\mathcal{L}_1&\mathcal{L}_2&\mathcal{L}_3&\mathcal{L}_4\\
\hline\hline
\mathcal{R}_1&1&12&123&124\\
\hline
\mathcal{R}_2&21&2&23&24\\
\hline
\mathcal{R}_3&321&32&3&324\\
\hline
\mathcal{R}_4&421&42&423&4
\end{array}
\end{displaymath}

Similarly to the above, we have the following observation which follows directly from 
Proposition~\ref{propJKL}.

\begin{corollary}\label{corsimplylaced}
If the Coxeter diagram of $W$ is simply laced, then the cell $\mathcal{J}$ is strongly regular. Namely, 
the intersection $\mathcal{L}_i\cap \mathcal{R}_j$ consists of $s_js_{i_k}\dots s_{i_2}s_{i_1}s_i$, where 
$j-i_k-\dots- i_1-i$ is the unique path in the diagram connecting $i$ and $j$.
\end{corollary}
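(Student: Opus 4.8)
The plan is to deduce Corollary~\ref{corsimplylaced} directly from Proposition~\ref{propJKL}, using the extra structural input that a simply laced Coxeter diagram is a tree (in the finite irreducible case, of type $A$, $D$ or $E$). First I would recall from Proposition~\ref{propJKL}\eqref{propJKL.5} that the non-identity elements of $\mathcal{J}$ are exactly those $w\in W$ admitting a unique reduced expression. The key combinatorial observation is that in a Coxeter system whose diagram is a tree, an element $w\neq e$ has a unique reduced expression if and only if its reduced expression, read as a word in $S$, is a ``simple path'' in the diagram traversed once, i.e.\ it is of the form $s_{j_1}s_{j_2}\cdots s_{j_m}$ where $j_1-j_2-\cdots-j_m$ is a path in the diagram with no repeated vertices. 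Indeed, if the word repeats a vertex or skips along a branch, one can produce a commutation move (using that $s$ and $t$ commute when $i$ and $j$ are non-adjacent, which in a tree happens as soon as they are not neighbours) or encounters the longest element of a rank two parabolic, so the expression is not unique; conversely, along a simple path every pair of consecutive letters fails to commute, and non-consecutive letters give no braid or commutation relation that could shorten or alter the word, so the expression is forced.

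Next I would pin down the two descent statistics. By Proposition~\ref{propJKL}\eqref{propJKL.1}--\eqref{propJKL.4}, the left cell $\mathcal{L}_i$ is characterised by its intersection with $S$ being $\{s_i\}$, and similarly for the right cell $\mathcal{R}_j$. Combining this with the bookkeeping remark already made in the excerpt --- all elements of a fixed left cell share the same rightmost simple reflection, and all elements of a fixed right cell share the same leftmost simple reflection --- I get that $w\in\mathcal{L}_i\cap\mathcal{R}_j$ forces the unique reduced word of $w$ to begin with $s_j$ (on the left) and end with $s_i$ (on the right). By the paragraph above, this word is a simple path in the diagram from $j$ to $i$. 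Since the diagram is a tree, there is a \emph{unique} path from $i$ to $j$, say $j-i_k-\cdots-i_2-i_1-i$; hence the word is $s_j s_{i_k}\cdots s_{i_2}s_{i_1}s_i$, which is exactly the claimed element. This shows $\mathcal{L}_i\cap\mathcal{R}_j$ has at most one element.

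Finally I would check that this element actually lies in $\mathcal{J}$ and in the correct left and right cell, so that the intersection is non-empty; equivalently, that the path word has a unique reduced expression (immediate from the simple-path criterion) and has right descent $\{s_i\}$ and left descent $\{s_j\}$ (immediate, since consecutive letters of the path do not commute, so no cancellation or reordering can expose another descent). Then membership in $\mathcal{L}_i$ and $\mathcal{R}_j$ follows from Proposition~\ref{propJKL}. This simultaneously proves that every intersection $\mathcal{L}_i\cap\mathcal{R}_j$ is a singleton and that distinct left (resp.\ right) cells in $\mathcal{J}$ are incomparable --- the latter is already guaranteed for fiat $2$-categories by \cite[Corollary~19]{KM2}, or alternatively follows here because the left cell of an element is detected by its right descent set, which is a single simple reflection. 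Hence $\mathcal{J}$ is strongly regular with the stated description.

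The main obstacle is the precise justification of the simple-path criterion for uniqueness of reduced expressions in a tree-type Coxeter system: one must argue carefully that \emph{any} deviation from a simple path --- a repeated vertex, a ``backtrack'', or a jump onto a different branch --- produces either a non-adjacent commuting pair (hence a second reduced expression) or forces $w$ to dominate the longest element of some rank two parabolic (hence $w\notin\mathcal{J}$ by the $\mathbf{a}$-function argument already used in the proof of Proposition~\ref{propJKL}). Everything else is then a short deduction from Proposition~\ref{propJKL}.
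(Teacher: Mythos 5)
Your proposal is correct and follows essentially the route the paper intends: the paper derives Corollary~\ref{corsimplylaced} directly from Proposition~\ref{propJKL}, and your argument just makes explicit the two ingredients implicit there, namely that in a tree-shaped simply laced diagram the elements with a unique reduced expression are exactly the simple-path words (no commutation move, no $sts$-segment), and that the left, respectively right, cell of such an element is determined by its last, respectively first, letter, so the unique path from $j$ to $i$ gives the unique element of $\mathcal{L}_i\cap\mathcal{R}_j$, with the incomparability condition handled by \cite[Corollary~19]{KM2} exactly as in the paper.
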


Because of Corollary~\ref{corsimplylaced}, in the simply laced case, any simple transitive 
$2$-rep\-re\-sen\-ta\-ti\-on of $\underline{\cS}$ is equivalent to a cell $2$-representation by 
\cite[Theorem~18]{MM5}. We note that the original formulation of 
\cite[Theorem~18]{MM5} has an additional assumption, the 
so-called {\em numerical condition} which appeared in \cite[Formula~(10)]{MM1}. 
This additional assumption is rendered superfluous by \cite[Proposition~1]{MM6}. 

Consequently, 
\begin{center}
\bf in what follows, we assume that $W$ is not simply laced.
\end{center}

In type  $G_2$, we have the following.
\begin{displaymath}
\xymatrix{1\ar@{-}[r]^{6}&2}\qquad \qquad 
\begin{array}{c||c|c}
&\mathcal{L}_1&\mathcal{L}_2\\
\hline\hline
\mathcal{R}_1&1,121,12121&12,1212\\
\hline
\mathcal{R}_2&21,2121&2,212,21212
\end{array}
\end{displaymath}

The general type $B_n$, for $n\geq 2$, corresponds to the Coxeter diagram
\begin{displaymath}
\xymatrix{1\ar@{-}[r]^{4}&2\ar@{-}[r]&3\ar@{-}[r]&\dots\ar@{-}[r]&n} 
\end{displaymath}
and is given below:
\begin{displaymath}
\begin{array}{c||c|c|c|c|c}
&\mathcal{L}_1&\mathcal{L}_2&\mathcal{L}_3&\dots&\mathcal{L}_n\\
\hline\hline
\mathcal{R}_1&1,121&12&123&\dots&12\cdots n \\
\hline
\mathcal{R}_2&21&2,212&23, 2123&\dots&23\cdots n,212\cdots n \\
\hline
\mathcal{R}_3&321&32,3212&3,32123&\dots&34\cdots n,3212\cdots n \\
\hline
\vdots&\vdots&\vdots&\vdots&\ddots&\vdots \\
\hline
\mathcal{R}_n&n\cdots 21&n\cdots 32,n\cdots212&n\cdots43,n\cdots 2123&\dots&n,n\cdots212\cdots n 
\end{array}
\end{displaymath}

The remaining Weyl type $F_4$ looks as follows:
\begin{displaymath}
\xymatrix{1\ar@{-}[r]&2\ar@{-}[r]^{4}&3\ar@{-}[r]&4}\qquad \qquad 
\begin{array}{c||c|c|c|c}
&\mathcal{L}_1&\mathcal{L}_2&\mathcal{L}_3&\mathcal{L}_4\\
\hline\hline
\mathcal{R}_1&1,12321&12,1232&123&1234\\
\hline
\mathcal{R}_2&21,2321&2,232&23&234\\
\hline
\mathcal{R}_3&321&32&3,323&34,3234\\
\hline
\mathcal{R}_4&4321&432&43,4323&4,43234\\
\end{array}
\end{displaymath}

The exceptional Coxeter type $H_3$ is the following:
\begin{displaymath}
\xymatrix{1\ar@{-}[r]^{5}&2\ar@{-}[r]&3}\qquad \qquad 
\begin{array}{c||c|c|c}
&\mathcal{L}_1&\mathcal{L}_2&\mathcal{L}_3\\
\hline\hline
\mathcal{R}_1&1,121&12,1212&123,12123\\
\hline
\mathcal{R}_2&21,2121&2,212&23,2123\\
\hline
\mathcal{R}_3&321,32121&32,3212&3,32123
\end{array}
\end{displaymath}

The exceptional Coxeter type $H_4$ has the Coxeter diagram
\begin{displaymath}
\xymatrix{1\ar@{-}[r]^{5}&2\ar@{-}[r]&3\ar@{-}[r]&4}
\end{displaymath}
and the corresponding table looks as follows:
\begin{displaymath}
\begin{array}{c||c|c|c|c}
&\mathcal{L}_1&\mathcal{L}_2&\mathcal{L}_3&\mathcal{L}_4\\
\hline\hline
\mathcal{R}_1&1,121&12,1212&123,12123&1234,121234\\
\hline
\mathcal{R}_2&21,2121&2,212&23,2123&234,21234\\
\hline
\mathcal{R}_3&321,32121&32,3212&3,32123&34,321234\\
\hline
\mathcal{R}_4&4321,432121&432,43212&43,432123&4,4321234
\end{array}
\end{displaymath}

The general dihedral type with Coxeter diagram
\begin{displaymath}
\xymatrix{1\ar@{-}[r]^{k}&2},
\end{displaymath}
where $k\geq 3$, looks as follows:
\begin{displaymath}
\begin{array}{c||c|c}
&\mathcal{L}_1&\mathcal{L}_2\\
\hline\hline
\mathcal{R}_1&1,121,\dots,12\cdots 21&12,1212,12\cdots 12\\
\hline
\mathcal{R}_2&21,2121,\dots, 21\cdots 21&2,212,\dots,21\cdots 12
\end{array}
\end{displaymath}
Note that the length of all elements in the latter table is strictly less than $k$.
In particular, the diagonal boxes always contain $\lfloor\frac{k}{2}\rfloor$ elements
while the off-diagonal boxes contain $\lfloor\frac{k}{2}\rfloor$ elements if $k$ is odd
and $\lfloor\frac{k}{2}\rfloor-1$ elements if $k$ is even.

\subsection{The principal element in $\mathbb{Z}[W]$}\label{s4.4}

Following \cite{Zi}, we consider the element 
\begin{displaymath}
\mathbf{s}:=\sum_{s\in S} \underline{s}\in \mathbb{Z}[W],
\end{displaymath}
which we call the {\em principal} element. This element is the decategorification of 
\begin{displaymath}
\mathrm{F}_{\mathrm{pr}}:=\bigoplus_{s\in S} \theta_s.
\end{displaymath}
The main aim of this subsection is to determine the form of the matrix
\begin{displaymath}
\mathtt{M}=\mathtt{M}_{\mathrm{pr}}:=\Lparen \mathrm{F}_{\mathrm{pr}}\Rparen. 
\end{displaymath}
We start by recalling the following.

\begin{lemma}\label{lem2}
Let $\mathbf{M}$ be a transitive $2$-representation of $\underline{\cS}$ and $s\in S$. 
Then there is an ordering of indecomposable objects in  $\mathbf{M}(\mathtt{i})$ such that 
\begin{displaymath}
\Lparen \theta_s\Rparen=
\left(\begin{array}{c|c}2E&*\\\hline 0&0\end{array}\right),
\end{displaymath}
where $E$ is the identity matrix.
\end{lemma}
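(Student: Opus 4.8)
The plan is to exploit the fact that $\theta_s$ is a self-adjoint $1$-morphism (since $\underline{\cS}$ is fiat and $\theta_s^\star\cong\theta_s$) together with the fact that, inside $\underline{\cS}$, the element $\underline{s}$ satisfies $\underline{s}\cdot\underline{s}=2\underline{s}$ in the Grothendieck ring. Categorically, this last identity means $\theta_s\theta_s\cong\theta_s\oplus\theta_s$. First I would fix the transitive $2$-representation $\mathbf{M}$ and write $N:=\Lparen\theta_s\Rparen$, a square non-negative integer matrix indexed by the indecomposable objects of $\mathbf{M}(\mathtt{i})$. Applying $\Lparen\,\cdot\,\Rparen$ to the isomorphism $\theta_s\theta_s\cong\theta_s\oplus\theta_s$ yields the matrix identity $N^2=2N$. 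Thus $\tfrac12 N$ is an idempotent matrix with non-negative (hence rational, in fact in $\tfrac12\mathbb{Z}$) entries, so $N$ is diagonalizable with eigenvalues only $0$ and $2$.

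Next I would pin down the geometry of the eigenspaces using transitivity and self-adjointness. Let $r$ be the rank of $N$, i.e.\ the multiplicity of the eigenvalue $2$. Reorder the indecomposable objects so that those $X$ with $\theta_s X\neq 0$ come first; call this set of indices $I$ and the complementary set $I^c$. I claim $\theta_s X\neq 0$ forces $X$ itself to be a summand of $\theta_s X$: indeed $\theta_s$ is self-adjoint, so $\mathrm{Hom}(\theta_s X,\theta_s X)\cong\mathrm{Hom}(\theta_s\theta_s X,X)\cong\mathrm{Hom}(\theta_s X\oplus\theta_s X,X)$, and the non-vanishing of the left side shows $X$ occurs in $\theta_s X$. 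Combined with $N^2=2N$ this shows that for $X$ with index in $I$ one has $\theta_s X\cong X^{\oplus 2}\oplus(\text{summands with indices in }I)$ — more precisely, the $I\times I$ block of $N$, call it $N'$, still satisfies $N'^2=2N'$ and has no zero rows, and the diagonal entries of $N'$ are all $\geq 2$; since the eigenvalues of $N'$ are $0$ and $2$ and $\mathrm{tr}(N')\geq 2|I|\geq 2\cdot\mathrm{rank}(N')$, while $\mathrm{tr}(N')=2\cdot\mathrm{rank}(N')$, we conclude $|I|=\mathrm{rank}(N')$, every diagonal entry of $N'$ equals exactly $2$, and $N'$ has full rank. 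A full-rank matrix with $N'^2=2N'$ must equal $2E$. This gives $N'=2E$, and all remaining entries in the rows indexed by $I$ vanish except this $2E$ block; the rows indexed by $I^c$ are zero because $\theta_s X=0$ there. Relabelling so that $I$ comes first, $N$ has exactly the block form claimed, with the $*$ being the (possibly non-zero) $I\times I^c$ block coming from $\theta_s X$ for $X$ indexed in $I^c$.

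The one place that needs care — and which I expect to be the main obstacle — is justifying that $\theta_s\theta_s\cong\theta_s\oplus\theta_s$ really holds in $\underline{\cS}$ (not merely in the Grothendieck ring), and that the self-adjointness argument above is legitimate in a $2$-representation as opposed to in $\underline{\cS}$ itself. For the first point, in $\cS$ one has $B_s\otimes_{\mathtt{C}}B_s\cong B_s\oplus B_s$ as $\mathtt{C}\text{-}\mathtt{C}$--bimodules (this is the classical computation $\mathtt{C}\otimes_{\mathtt{C}^s}\mathtt{C}\otimes_{\mathtt{C}^s}\mathtt{C}\cong(\mathtt{C}\otimes_{\mathtt{C}^s}\mathtt{C})^{\oplus 2}$, using that $\mathtt{C}$ is free of rank $2$ over $\mathtt{C}^s$), and this isomorphism descends to $\underline{\cS}$. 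For the second point, one uses that the matrices $\Lparen\,\cdot\,\Rparen$ are multiplicative and additive on $1$-morphisms, and that the adjunction $\mathrm{Hom}_{\mathbf{M}(\mathtt{i})}(\theta_s X,\theta_s X)\cong\mathrm{Hom}_{\mathbf{M}(\mathtt{i})}(\theta_s\theta_s X,X)$ holds in any $2$-representation of a fiat $2$-category. With these two ingredients in place, the argument above is a short piece of linear algebra over $\mathbb{Z}$. I would present it by first establishing $N^2=2N$, then deriving diagonalizability with spectrum $\subseteq\{0,2\}$, then using the trace/diagonal-entry estimate on the restriction to $I$ to force that block to be $2E$, and finally reading off the block decomposition.
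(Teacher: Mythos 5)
Your overall skeleton is the right one and is essentially the route the paper takes (its proof simply invokes, mutatis mutandis, \cite[Lemma~6.4]{Zi}): the isomorphism $\theta_s\theta_s\cong\theta_s\oplus\theta_s$ holds already in $\cS$ and descends to $\underline{\cS}$, so $N:=\Lparen\theta_s\Rparen$ satisfies $N^2=2N$, and the task is to extract the block form from this together with non-negativity and self-adjointness. However, there is a genuine gap at your key step. The claim that $\theta_s\,X\neq 0$ forces $X$ to be a direct summand of $\theta_s\,X$ is false, and the argument offered for it is invalid: in the additive Krull--Schmidt category $\mathbf{M}(\mathtt{i})$, the non-vanishing of $\mathrm{Hom}(\theta_s\,X,X)$ (which is all the adjunction gives you) does not produce a split morphism, hence no direct summand. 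A counterexample sits inside the paper itself: in the cell $2$-representation $\mathbf{C}_{\mathcal{L}_s}$ (see the matrices displayed in Subsection~\ref{s6.7}, or already Coxeter type $A_2$) one has $\theta_s\,P_{ts}\cong P_s\oplus P_{sts}\neq 0$, yet $P_{ts}$ is not a summand of $\theta_s\,P_{ts}$. Consequently your index set $I=\{X:\theta_s\,X\neq 0\}$ is the wrong one (in that example it consists of \emph{all} indecomposables, while the $2E$-block is indexed by only half of them), the assertions that $N'$ has no zero rows and that all its diagonal entries are at least $2$ fail, and the trace argument collapses; there is also a row/column mix-up at the end, since $\theta_s\,X=0$ makes the corresponding \emph{column} of $N$ zero, not the row (recall that columns of $\Lparen\mathrm{F}\Rparen$ record the decomposition of $\mathrm{F}$ applied to an indecomposable).

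The correct partition is by the nonzero \emph{rows}: $I$ should consist of those indecomposables $Y$ which occur as direct summands of some $\theta_s\,Z$, and for such $Y$ one must show $\theta_s\,Y\cong Y^{\oplus 2}$ exactly, while no object outside $I$ occurs in the essential image of $\theta_s$. This requires a sharper use of self-adjointness than a nonzero Hom space, for instance combining the entrywise consequences of $N^2=2N$ (which give $n_{ii}\leq 2$ and kill the relevant off-diagonal products once $n_{ii}=2$) with the adjunction identity $\llbracket\theta_s\rrbracket=\Lparen\theta_s\Rparen^{\mathrm{tr}}$ in the abelianization, or with the triangle identities for the unit and counit of the self-adjunction of $\theta_s$. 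Note that pure linear algebra cannot suffice: the matrix $N=\left(\begin{smallmatrix}1&1\\1&1\end{smallmatrix}\right)$ also satisfies $N^2=2N$ with non-negative integer entries, so some categorical input is unavoidable precisely at the point where your argument breaks. Your reduction to the equation $N^2=2N$ is fine; it is the passage from that equation to the block shape that still needs to be carried out correctly.
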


\begin{proof}
Mutatis mutandis the proof of \cite[Lemma~6.4]{Zi}. 
\end{proof}

\begin{lemma}\label{lem3}
Let $\mathbf{M}$ be a transitive $2$-representation of $\underline{\cS}$ with apex $\mathcal{J}$
and $P$ an indecomposable object in $\mathbf{M}(\mathtt{i})$. Then there exists a unique
$s\in S$ having the property  $\theta_s\, P\cong P\oplus P$.
\end{lemma}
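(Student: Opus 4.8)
The plan is to combine the structural information about $\theta_s$ coming from Lemma~\ref{lem2} with the fact that the apex of $\mathbf{M}$ is $\mathcal{J}$, and then to use the explicit multiplication rules in the Kazhdan–Lusztig basis for the cell $\mathcal{J}$ together with transitivity. First I would recall that, by Lemma~\ref{lem2}, for each $s\in S$ the matrix $\Lparen\theta_s\Rparen$ has a block form $\left(\begin{smallmatrix}2E&*\\0&0\end{smallmatrix}\right)$, so the indecomposable objects of $\mathbf{M}(\mathtt{i})$ split, relative to $s$, into those sent by $\theta_s$ to (something containing) their own double and those sent to a direct sum of the former. In particular, for a fixed indecomposable $P$ and a fixed $s\in S$, either $\theta_s\,P\cong P\oplus P\oplus(\text{stuff})$ or $P$ occurs in no summand $\theta_s\,X$ on the diagonal. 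The claim is that among all $s\in S$ there is \emph{exactly one} for which the first alternative holds, and moreover that in that case $\theta_s\,P$ is \emph{exactly} $P\oplus P$, with no extra summands.

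For existence: if $P$ were annihilated, or merely not ``doubled'', by every $\theta_s$, then applying $\theta_s$ to $P$ for any $s$ would produce only objects not isomorphic to $P$; iterating and using the relations $\theta_s\theta_t\theta_s\cong\theta_{sts}\oplus\theta_s$ one sees that the $\cC$-invariant ideal generated by $\mathrm{id}_P$ would be proper, contradicting simple transitivity — or, more cleanly, one can pass to the decategorification and invoke the theory of positively based transitive modules from Subsection~\ref{s2.7}: the apex $\mathcal{J}(V)$ of the transitive $A_{\underline{\cS}}$-module $V=[\mathbf{M}(\mathtt{i})]$ is idempotent, and for $\mathcal{J}$ this idempotency, combined with Proposition~\ref{propJKL} (every left cell meets $S$ in a unique Duflo involution), forces the action of $\mathbf{s}=\sum_s\underline{s}$ on the basis vector $[P]$ to have a diagonal contribution, which must come from a single $\underline{s}$ because $\underline{s}\,\underline{t}=\underline{st}$ has no identity term when $s\neq t$. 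So at least one $s$ works, and at most one works for the same reason: if both $\theta_s\,P$ and $\theta_t\,P$ contained $P$ as a summand with $s\neq t$, then computing $\theta_s\theta_t\,P$ two ways (using $\theta_s\theta_t\cong\theta_{st}$, which lies in $\mathcal{J}$ and contains no $\theta_e$-summand) would exhibit $P$ as a summand of $\theta_{st}\,P$, i.e.\ $[P]$ would appear in $\underline{st}\cdot[P]$, while $\underline{st}$ has no identity component — contradiction.

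For the precise multiplicity $\theta_s\,P\cong P\oplus P$ (rather than $P\oplus P\oplus\cdots$): here I would use adjunction and the fact that $\theta_s$ is self-adjoint in the fiat $2$-category $\underline{\cS}$. From $\dim\mathrm{Hom}(\theta_s\,P,P)=\dim\mathrm{Hom}(P,\theta_s\,P)$ and the structure of $\mathrm{End}(\theta_s)$ (which is two-dimensional, since $\underline{s}^2=2\underline{s}$), together with Lemma~\ref{lem2} applied simultaneously to the dual ordering, one pins the full summand of $\theta_s\,P$ isomorphic to copies of $P$ to be exactly two copies; any further indecomposable summand $Q\not\cong P$ would, upon applying $\theta_s$ again and using $\theta_s^2\cong\theta_s\oplus\theta_s$, force a relation among multiplicities incompatible with the $2E$ block. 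The main obstacle I anticipate is this last bookkeeping step: showing there are \emph{no} extra summands, as opposed to merely producing the double, since the former requires genuinely using the fiat (self-adjointness) structure and the two-dimensionality of $\mathrm{End}(\theta_s)$ rather than just decategorified inequalities. Everything else is a fairly direct consequence of Proposition~\ref{propJKL}, Lemma~\ref{lem2}, and the positively-based-module formalism of Subsection~\ref{s2.7}.
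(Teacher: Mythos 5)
The decisive problem is your uniqueness step. You argue that if $\theta_s\,P$ and $\theta_t\,P$ both contain $P$ for $s\neq t$, then $P$ is a summand of $\theta_{st}\,P$, and that this is absurd because $\underline{st}$ ``has no identity component''. This is a non sequitur: there is no principle saying that a $1$-morphism $\theta_w$ with $w\neq e$ (equivalently, a Kazhdan--Lusztig basis element whose KL-expansion has no $\underline{e}$-term) cannot act with non-zero diagonal entries on a transitive $2$-representation. For instance, already in the cell $2$-representation $\mathbf{C}_{\mathcal{L}_s}$ in type $A_2$ one has $\theta_{ts}\,P_{ts}\cong P_{ts}$, so diagonal entries for elements of $\mathcal{J}$ occur all the time; and under your hypothesis one simply gets $\theta_{st}\,P\cong P^{\oplus 4}$, which by itself contradicts nothing when $st\neq ts$ (if $st=ts$ your argument does work, since then $st\notin\mathcal{J}$ and $\theta_{st}$ must annihilate $\mathbf{M}$, but that is the easy case). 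The genuine obstruction sits at the longest element $w'_0$ of the rank-two parabolic subgroup $W'=\langle s,t\rangle$: the additive closure of $P$ is invariant under $\theta_s$ and $\theta_t$, its decategorification is a one-dimensional $\mathbb{C}[W']$-module on which $\underline{s}$ and $\underline{t}$ act by $2$, hence the trivial module, which is \emph{not} annihilated by $\underline{w'_0}$; on the other hand $w'_0$ has two reduced expressions, so $w'_0\notin\mathcal{J}$ by Proposition~\ref{propJKL}\eqref{propJKL.5}, and since $\mathcal{J}$ is the apex, $\theta_{w'_0}$ must act by zero. Your sketch never invokes $w'_0$ nor the apex in the uniqueness part, and without them the argument cannot be completed.

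The existence part is also shaky as written. The lemma assumes only that $\mathbf{M}$ is transitive, so you may not appeal to simple transitivity; moreover, if no $\theta_s$ doubles $P$, the $\cC$-stable ideal generated by $\mathrm{id}_P$ is not proper (the correct phenomenon is the opposite one: by Lemma~\ref{lem2} the row of $P$ in every matrix of $\theta_s$, hence in $\mathtt{M}$ and all its powers, is zero, so $P$ is unreachable from any other indecomposable, contradicting transitivity when the rank is at least two, and forcing all of $\mathcal{J}$ to annihilate $\mathbf{M}$ when the rank is one, contradicting the apex). Your ``cleaner'' decategorified variant merely asserts that idempotency of the apex forces a diagonal contribution of $\mathbf{s}$ at $[P]$, which is exactly what has to be proved. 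Finally, your entire third paragraph is unnecessary: in Lemma~\ref{lem2} the diagonal block is $2E$ with zeros below it, so whenever $P$ occurs in $\theta_s\,P$ at all, the corresponding column is already exactly $2$ at $P$ and $0$ elsewhere, i.e.\ $\theta_s\,P\cong P\oplus P$ on the nose; no self-adjointness bookkeeping or claim about $\mathrm{End}(\theta_s)$ being two-dimensional (which is unjustified in this ungraded bimodule setting) is needed.
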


\begin{proof}
Assume that $\theta_s\, P\not \cong P\oplus P$ for every $s\in S$. From Lemma~\ref{lem2} it then
follows that $\mathtt{M}$ has a zero row. Therefore the same row will be zero in any power of $\mathtt{M}$. Note that 
any indecomposable $1$-morphism of $\underline{\cS}$ which is not isomorphic to $\theta_e$
appears as a direct summand of some power of $\mathrm{F}_{\mathrm{pr}}$. If $\mathbf{M}$ had 
rank one, it would follow that all indecomposable $1$-morphisms of $\underline{\cS}$ which 
are not isomorphic to $\theta_e$ annihilate $\mathbf{M}$. This contradicts our assumption that 
$\mathcal{J}$ is the apex of  $\mathbf{M}$. Therefore $\mathbf{M}$ has rank at least two. 
However, in this case, the same row which is zero in all powers of $\mathtt{M}$ must have a zero entry 
in $\mathtt{M}_{\mathrm{tot}}$. This contradicts transitivity of $\mathbf{M}$ and hence establishes
existence of $s\in S$ such that  $\theta_s\, P\cong P\oplus P$.

Assume that $s$ and $t$ are two different elements in $S$ having the property  that 
$\theta_s\, P\cong P\oplus P$ and $\theta_t\, P\cong P\oplus P$. Consider the parabolic subgroup $W'$ of $W$
generated by $s$ and $t$. The additive closure of $P$ is invariant under the action of the 
$2$-subcategory of $\underline{\cS}$ generated by $\theta_s$ and $\theta_t$.  The decategorification 
of the latter $2$-representation gives a $1$-dimensional $\mathbb{C}[W']$-module on which 
both $\underline{s}$ and $\underline{t}$ act as the scalar $2$. Thus this is the trivial module 
and it is not annihilated by the element $\underline{w'_0}$, where $w'_0$ is the longest element in 
$W'$. This contradicts $w'_0\not\in \mathcal{J}$, see 
Proposition~\ref{propJKL}\eqref{propJKL.5} and the remark after Proposition~\ref{propJKL}.
The assertion of the lemma follows.
\end{proof}

Write $S=\{s_1,s_2,\dots,s_n\}$. Let $\mathbf{M}$ be a transitive $2$-representation of $\underline{\cS}$
with apex $\mathcal{J}$. Choose an ordering 
\begin{equation}\label{eqo1}
P_1,P_2,\dots,P_m 
\end{equation}
on representatives of isomorphism classes
of indecomposable projectives in $\mathbf{M}(\mathtt{i})$ such that, for all $i,j\in\{1,2,\dots,n\}$ such that
$i<j$ and for all $a,b\in\{1,2,\dots,m\}$, the isomorphisms
$\theta_{s_i}\, P_a\cong P_a\oplus P_a$ and $\theta_{s_j}\, P_b\cong P_b\oplus P_b$ imply $a<b$.

\begin{proposition}\label{prop4}
Let $\mathbf{M}$ be a transitive $2$-representation of $\underline{\cS}$
with apex $\mathcal{J}$. Then, with respect to the above ordering, we have
\begin{equation}\label{eqpr4-1}
\mathtt{M}=\left(
\begin{array}{c|c|c|c|c}
2E_1& B_{12} &B_{13}&\dots & B_{1n}\\\hline
B_{21}& 2E_2 &B_{23}&\dots & B_{2n}\\\hline
B_{31}& B_{32} &2E_3 &\dots & B_{3n}\\\hline
\vdots&\vdots&\vdots&\ddots&\vdots\\\hline
B_{n1}& B_{n2} & B_{n3} &\dots & 2E_n
\end{array}
\right),
\end{equation}
where $E_i$, for $1\leq i\leq n$, are non-trivial identity matrices and, 
for $1\leq i\neq j\leq n$, the matrix $B_{ij}$ is non-zero if and only if $s_is_j\neq s_js_i$.
\end{proposition}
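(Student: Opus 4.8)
The plan is to combine the block-diagonal normal form of $\Lparen\theta_{s}\Rparen$ from Lemma~\ref{lem2} with the dichotomy of Lemma~\ref{lem3}, and then read off the off-diagonal entries from the braid combinatorics. First I would fix the ordering \eqref{eqo1} and, for each $i\in\{1,\dots,n\}$, let $E_i$ be the identity matrix on the block of those $P_a$ with $\theta_{s_i}\,P_a\cong P_a\oplus P_a$. By Lemma~\ref{lem3} every indecomposable projective lies in exactly one such block, so these blocks partition the index set $\{1,\dots,m\}$; and by transitivity each block is nonempty, so each $E_i$ is a non-trivial identity matrix. By the choice of ordering, the $i$-th block occupies consecutive rows/columns and these blocks appear in the order $1,2,\dots,n$ along the diagonal. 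Applying Lemma~\ref{lem2} to $s=s_i$, after conjugating by the permutation that moves the $i$-th block to the top we get $\Lparen\theta_{s_i}\Rparen$ with a $2E_i$ in the top-left corner and a zero row-block below; translating back, this says precisely that the $i$-th diagonal block of $\Lparen\theta_{s_i}\Rparen$ is $2E_i$ and that all other row-blocks of $\Lparen\theta_{s_i}\Rparen$ vanish. Summing over $i$ (recall $\mathtt{M}=\sum_i\Lparen\theta_{s_i}\Rparen$) then yields the displayed shape \eqref{eqpr4-1}: the $i$-th diagonal block gets $2E_i$ (contributed solely by $\theta_{s_i}$, since all other $\theta_{s_j}$ have zero $i$-th row-block), and the $(i,j)$-block of $\mathtt{M}$ for $i\neq j$ equals the $(i,j)$-block of $\Lparen\theta_{s_j}\Rparen$, i.e. it records the multiplicity of $P$'s from the $i$-th block inside $\theta_{s_j}\,P'$ for $P'$ in the $j$-th block. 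Call this block $B_{ij}$.

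It remains to show $B_{ij}\neq 0$ if and only if $s_is_j\neq s_js_i$. For the ``only if'' direction, suppose $s_is_j=s_js_i$. Then the $2$-subcategory generated by $\theta_{s_i}$ and $\theta_{s_j}$ decategorifies to $\mathbb{C}[W']$ with $W'=\langle s_i,s_j\rangle\cong C_2\times C_2$, and on the additive closure of any $P'$ in the $j$-th block this acts with $\underline{s_j}$ as the scalar $2$; since $\theta_{s_i}\theta_{s_j}\cong\theta_{s_i s_j}$ is indecomposable and $\underline{s_i s_j}=\underline{s_i}\,\underline{s_j}$, if $\theta_{s_i}\,P'$ had a summand from the $i$-th block we would again produce a one-dimensional $\mathbb{C}[W']$-module on which both $\underline{s_i}$ and $\underline{s_j}$ act as $2$, i.e. the trivial module, which is not annihilated by $\underline{w'_0}$ with $w'_0=s_is_j$ the longest element of $W'$ — but $w'_0\notin\mathcal{J}$ by Proposition~\ref{propJKL}\eqref{propJKL.5}, contradicting that $\mathcal{J}$ is the apex. (This is essentially the same argument as in the second half of the proof of Lemma~\ref{lem3}, and I would phrase it by reduction to that lemma.) For the ``if'' direction, suppose $s_is_j\neq s_js_i$ but $B_{ij}=0$, i.e. no summand of $\theta_{s_j}\,P'$ lies in the $i$-th block, for any $P'$ in the $j$-th block. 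Take such a $P'$; since $\mathcal{J}$ is the apex and $\mathbf{M}$ is transitive, $\theta_{s_j}\,P'\cong P'\oplus(\text{stuff})$ where the ``stuff'' is supported off the $j$-th block. The key relation $\theta_{s_i}\theta_{s_j}\theta_{s_i}\cong\theta_{s_is_js_i}\oplus\theta_{s_i}$ for non-commuting $s_i,s_j$ then forces, applied to $P'$, that $\theta_{s_j}\,P'$ must have a summand annihilated by the "$i$-block projection" feeding back — more precisely, I would track the action of $\theta_{s_i}$ on the summands of $\theta_{s_j}\,P'$ and use that $\theta_{s_i}\,Q\cong Q\oplus Q$ only for $Q$ in the $i$-th block to derive a contradiction with transitivity or with $\mathcal{J}$-simplicity. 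An alternative cleaner route is to pass to decategorification: the matrix of $\underline{s_j}$ restricted to the span of the $j$-th block must, together with the $\underline{s_i}$-action, generate a $\mathbb{C}[W'']$-module with apex contained in $\mathcal{J}\cap W''$ where $W''=\langle s_i,s_j\rangle$ is dihedral; the only transitive such modules with this apex have $\underline{s_j}$ acting non-diagonally, forcing $B_{ij}\neq 0$.

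I expect the ``if'' direction — showing $B_{ij}$ is actually non-zero when $s_i$ and $s_j$ do not commute — to be the main obstacle, since it is the only place where one must produce structure rather than obstruct it. The tool I would lean on is the relation $\theta_{s_i}\theta_{s_j}\theta_{s_i}\cong\theta_{s_is_js_i}\oplus\theta_{s_i}$ together with the fact (from the apex being $\mathcal{J}$ and $\underline{\cS}$ being $\mathcal{J}$-simple) that $\theta_{s_is_js_i}$ does not annihilate $\mathbf{M}$; if $\theta_{s_j}$ sent the whole $j$-th block into itself plus blocks disjoint from the $i$-th one, then no summand of $\theta_{s_i}\theta_{s_j}\theta_{s_i}\,P'$ could "see" the $i$-th block in the required way, and one checks this is incompatible with $\theta_{s_is_js_i}\,P'\neq 0$. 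Finally, I would note that this block description makes $\mathtt M$ into precisely the adjacency-type matrix attached to the Coxeter diagram of $W$, which is exactly what is needed for the spectral arguments of Section~\ref{sim}.
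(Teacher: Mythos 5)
The first half of your argument (the block partition via Lemmata~\ref{lem2} and \ref{lem3}, and the reading of $\mathtt{M}=\sum_i\Lparen\theta_{s_i}\Rparen$) matches the paper; note only that non-emptiness of each block comes from the apex condition (no $\theta_{s_i}\in\mathcal{J}$ annihilates $\mathbf{M}$, and by Lemma~\ref{lem2} any non-zero $\theta_{s_i}X$ has a summand in block $i$), not from transitivity alone. The genuine gap is in the direction you yourself call the crux, $s_is_j\neq s_js_i\Rightarrow B_{ij}\neq 0$, which you never actually carry out, and your sketch fails as oriented. If $B_{ij}=0$ then, by the shape in Lemma~\ref{lem2}, $\theta_{s_i}\,P'=0$ for every $P'$ in the $j$-th block, so applying $\theta_{s_i}\theta_{s_j}\theta_{s_i}\cong\theta_{s_is_js_i}\oplus\theta_{s_i}$ to such a $P'$ makes both sides vanish and only tells you $\theta_{s_is_js_i}\,P'=0$: no contradiction. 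The input you lean on, that $\theta_{s_is_js_i}$ does not annihilate $\mathbf{M}$, is false when $s_is_js_i=s_js_is_j$ (order $3$): then $s_is_js_i$ is the longest element of the rank-two parabolic, is not in $\mathcal{J}$, and is killed in $\underline{\cS}$; and even when the order is at least $4$, the apex only gives non-vanishing on \emph{some} object, not on your chosen $P'$. The ``cleaner decategorified route'' is likewise not an argument: the restriction to $\langle s_i,s_j\rangle$ need not be transitive, and the classification of transitive positively based modules you invoke is exactly what would have to be proved or cited (this is the role of \cite[Corollary~20]{KM2} and \cite[Theorem~25]{CM} elsewhere, e.g.\ in the proof of Proposition~\ref{prop81}).

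Your local idea can be repaired by reversing the orientation: apply $\theta_{s_j}\theta_{s_i}\theta_{s_j}\cong\theta_{s_js_is_j}\oplus\theta_{s_j}$ to $P'$ in block $j$ (or your relation to some $Q$ in block $i$, which exists since $E_i$ is non-trivial). Since $\theta_{s_j}\,P'\cong P'\oplus P'$ with all summands in block $j$, the assumption $B_{ij}=0$ forces $\theta_{s_i}\theta_{s_j}\,P'=0$, hence the left-hand side is zero, while the right-hand side contains $P'\oplus P'$; this uses only the $\theta_{s_j}$ summand and so works for any non-commuting pair. The paper's own proof of this direction is different and global: having shown that commuting pairs give zero blocks, all possibly non-zero off-diagonal blocks sit on edges of the Coxeter diagram, which is a tree; one additional vanishing $B_{ij}$ on an edge then allows a reordering of $S$ making $\mathtt{M}$ block-triangular with a zero south-west corner, which persists in all powers of $\mathtt{M}$ and contradicts transitivity. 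Finally, in your ``commuting'' direction the step producing a one-dimensional $\mathbb{C}[W']$-module is unjustified (the additive closure of $P'$ is not $\theta_{s_i}$-invariant, as summands of $\theta_{s_i}\,P'$ lie in block $i$); the correct one-line argument is the paper's: $\theta_{s_i}\theta_{s_j}\cong\theta_{s_is_j}$ with $s_is_j\notin\mathcal{J}$ is annihilated because $\mathcal{J}$ is the apex, while $\theta_{s_j}\,P'\cong P'\oplus P'$, forcing $\theta_{s_i}\,P'=0$.
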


\begin{proof}
After Lemmata~\ref{lem2} and \ref{lem3}, it remains to show that 
$B_{ij}$ is non-zero if and only if $s_is_j\neq s_js_i$. If $s_is_j= s_js_i$, then 
$B_{ij}$ is zero as $s_is_j\not\in\mathcal{J}$ and $\mathcal{J}$ is the apex of $\mathbf{M}$.

Assume that $s_is_j\neq s_js_i$ and $B_{ij}=0$. Let as assume that $i>j$, the other case
is similar. Taking into account the previous paragraph and the fact that the underlying
graph of the Coxeter diagram of $W$ is a tree, we may rearrange the order of the elements in 
$S$ such that the matrix $\mathtt{M}$ has the following block form:
\begin{displaymath}
\left(
\begin{array}{c|c}
*&*\\\hline
0&*
\end{array}
\right). 
\end{displaymath}
As any power of such a matrix will contain a non-empty zero south-west corner, we get a contradiction with the
assumption that $\mathbf{M}$ is transitive. The claim follows.
\end{proof}

Let $\{s_1,s_2,\dots,s_n\}$ be an ordering of simple reflections in $S$ such that the block $2E_i$
in \eqref{eqpr4-1} corresponds to $s_i$, for all $i=1,2,\dots,n$, then we have
{\small
\begin{displaymath}
\Lparen\theta_{s_1}\Rparen=\left(
\begin{array}{c|c|c|c|c}
2E_1& B_{12} &B_{13}&\dots & B_{1n}\\\hline
0& 0 &0&\dots & 0\\\hline
0& 0 &0 &\dots & 0\\\hline
\vdots&\vdots&\vdots&\ddots&\vdots\\\hline
0& 0 & 0 &\dots & 0
\end{array}
\right),\quad
\Lparen\theta_{s_2}\Rparen=\left(
\begin{array}{c|c|c|c|c}
0& 0 &0&\dots & 0\\\hline
B_{21}& 2E_2 &B_{23}&\dots & B_{2n}\\\hline
0& 0 &0 &\dots & 0\\\hline
\vdots&\vdots&\vdots&\ddots&\vdots\\\hline
0& 0 & 0 &\dots & 0
\end{array}
\right),
\end{displaymath}
}
and so on.

\subsection{The special $\mathbb{C}[W]$-module for $\mathcal{J}$}\label{s4.3}

Here we describe the special $\mathbb{C}[W]$-module, in the sense of \cite{KM2},
see also Subsection~\ref{s2.7}, associated to $\mathcal{J}$.

\begin{proposition}\label{propspecialJ}
The special $\mathbb{C}[W]$-module associated to $\mathcal{J}$ is 
$V\otimes\mathbb{C}_{\mathrm{sign}}$, where $\mathbb{C}_{\mathrm{sign}}$ is the sign $\mathbb{C}[W]$-module.
\end{proposition}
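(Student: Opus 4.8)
The plan is to identify the special $\mathbb{C}[W]$-module for $\mathcal{J}$ via its defining property: it is the simple subquotient $L_\mathcal{J}$ of any left cell module $C_\mathcal{L}$ with $\mathcal{L}\subseteq\mathcal{J}$, characterized (as in Subsection~\ref{s2.7}) as the unique subquotient containing the Perron--Frobenius eigenvector for the operator $\sum_{s\in S}\underline{s}$. First I would fix a left cell $\mathcal{L}=\mathcal{L}_i$ in $\mathcal{J}$; by Proposition~\ref{propJKL}\eqref{propJKL.1} these are indexed by $S$, and the cell module $C_{\mathcal{L}_i}$ has a basis indexed by the elements of $\mathcal{J}$ lying in $\mathcal{L}_i$, i.e.\ by the $i$-th column of the combinatorial tables in Subsection~\ref{s4.2}. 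The action of $\underline{s}$ on this basis is governed by the multiplication rules $\theta_s\theta_t\cong\theta_{st}$ and $\theta_s\theta_{sx}\cong\theta_x\oplus\theta_{sx}$ (for $sx$ reduced), exactly the rules used in the proof of Lemma~\ref{lem1} and Proposition~\ref{propJKL}.

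The heart of the argument is to produce an explicit $\mathbb{C}[W]$-module isomorphism between $V\otimes\mathbb{C}_{\mathrm{sign}}$ and $L_\mathcal{J}$. I would proceed by dimension count plus an eigenvalue argument. On one hand, $\dim V=|S|=n$. On the other hand, the Duflo involutions in $\mathcal{J}$ are precisely the simple reflections (Proposition~\ref{propJKL}\eqref{propJKL.2}), so there are exactly $n$ of them; since $L_\mathcal{J}$ is the special subquotient, a natural candidate basis is indexed by these Duflo involutions, giving $\dim L_\mathcal{J}=n$ as well. To pin down the module structure, I would compute how $\underline{s}$ acts: on $V\otimes\mathbb{C}_{\mathrm{sign}}$, writing $\alpha_s\in V$ for the simple root and $s\cdot v = v - \langle v,\alpha_s^\vee\rangle\alpha_s$ in the geometric representation, the element $\underline{s}=1+s$ acts on $V$ by $v\mapsto 2v-\langle v,\alpha_s^\vee\rangle\alpha_s$, so after twisting by sign, $\underline{s}$ acts by $v\mapsto \langle v,\alpha_s^\vee\rangle\alpha_s$ --- a rank-one operator with $\underline{s}\,\alpha_s = 2\alpha_s$ (using $\langle\alpha_s,\alpha_s^\vee\rangle=2$) and $\underline{s}$ killing the hyperplane $\ker\langle-,\alpha_s^\vee\rangle$. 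I would match this against the action of $\underline{s}$ on the cell module: the basis vector corresponding to the Duflo involution $s$ is a $\underline{s}$-eigenvector with eigenvalue $2$ (since $\theta_s\theta_s\cong\theta_s\oplus\theta_s$), while $\underline{s}$ sends the other Duflo basis vectors into the span according to whether the corresponding reflections commute with $s$ --- precisely the incidence pattern of the Coxeter graph, which is the same data that determines the Cartan pairing $\langle\alpha_{s_i},\alpha_{s_j}^\vee\rangle$. This gives an explicit comparison of the two representing matrices for each $\underline{s}$.

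The main obstacle I anticipate is the bookkeeping needed to identify $L_\mathcal{J}$ inside $C_\mathcal{L}$ precisely and to verify that the Duflo-indexed vectors really do span a subquotient isomorphic to $V\otimes\mathbb{C}_{\mathrm{sign}}$ rather than merely having matching Perron--Frobenius eigenvalue --- in other words, ruling out that the matrices agree only up to the filtration. To handle this cleanly I would invoke the known Kazhdan--Lusztig description of cell modules for $\mathbf{a}$-value-one cells (which by Proposition~\ref{propJKL}\eqref{propJKL.6} is exactly $\mathcal{J}$): such cells are ``constructible'' and the associated cell modules and their special quotients are classically understood (e.g.\ via Lusztig's work cited in the excerpt), and the special subquotient for the second-smallest two-sided cell is the reflection representation twisted by sign. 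Thus the cleanest write-up first computes the $\underline{s}$-action matrices on the Duflo-indexed subquotient directly from the combinatorial tables, observes that these matrices coincide with those of $\underline{s}$ on $V\otimes\mathbb{C}_{\mathrm{sign}}$ computed above, and concludes by the uniqueness clause in the definition of the special subquotient (Subsection~\ref{s2.7}). A final sanity check is that this module is simple: $V$ is the reflection representation, which is irreducible for an irreducible finite Coxeter system, and twisting by the one-dimensional sign module preserves irreducibility.
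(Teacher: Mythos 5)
There is a genuine gap at the heart of your argument: the claim that $L_{\mathcal{J}}$ has a ``natural candidate basis indexed by the Duflo involutions'', hence dimension $|S|$, on which the matrices of the $\underline{s}$ can be compared entry-by-entry with those on $V\otimes\mathbb{C}_{\mathrm{sign}}$. The special subquotient is, by definition, a subquotient of a single cell module $C_{\mathcal{L}_i}$, which contains exactly one Duflo involution and which, in every non-simply-laced type (the only nontrivial cases), has dimension strictly larger than $|S|$; for instance $\dim C_{\mathcal{L}_1}=6$ in type $H_3$, while $\dim V\otimes\mathbb{C}_{\mathrm{sign}}=3$. Nor does the span of all $\underline{t}$, $t\in S$, carry any obvious subquotient structure, since $\underline{s}\,\underline{t}=\underline{st}$ lies outside that span; so there is no well-defined ``Duflo-indexed'' piece on which to compute your matrices. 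The proposed matrix coincidence also cannot hold literally: for a bond of order $m\geq 5$ the Cartan pairing is $-2\cos(\pi/m)$, so the matrices of $\underline{s}$ on $V\otimes\mathbb{C}_{\mathrm{sign}}$ have irrational entries (see the explicit $H_3$ and $H_4$ matrices in the paper's proof) and can never equal integral matrices coming from Kazhdan--Lusztig combinatorics; only a spectral comparison is possible. Finally, your fallback---invoking the ``classically understood'' description of $\mathbf{a}$-value-one cell modules---is essentially circular here: the available references (\cite{Do}, \cite{Lu1,Lu2,Lu3}) cover Weyl groups, and the content of the proposition in types $H_3$, $H_4$ and $I_2(n)$ is precisely what must be proved by hand.

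For comparison, the paper first shows that $V\otimes\mathbb{C}_{\mathrm{sign}}$ is annihilated by every $\underline{w}$ with $w\notin\mathcal{J}\cup\{e\}$, by restricting to rank-two parabolic subgroups (where it decomposes as $V'\otimes\mathbb{C}'_{\mathrm{sign}}$ plus copies of $\mathbb{C}'_{\mathrm{sign}}$, both killed by $\underline{w'_0}$) and using Proposition~\ref{propJKL}\eqref{propJKL.5} to see that these $\underline{w'_0}$ generate the relevant ideal $I$ --- a step missing from your proposal but needed before one can compare $V\otimes\mathbb{C}_{\mathrm{sign}}$ with simple $\mathbb{C}[W]/I$-modules at all. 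It then argues case by case: simply laced types (the cell module is simple), $B_n$ and $F_4$ (uniqueness of a simple $\mathbb{C}[W]/I$-module of the appropriate dimension), dihedral types ($V\otimes\mathbb{C}_{\mathrm{sign}}\cong V$ and \cite{KM2}), and $H_3$, $H_4$ by explicitly verifying that the Perron--Frobenius eigenvalue of $\mathbf{s}$ on $C_{\mathcal{L}_1}$ also occurs, as the eigenvalue of maximal absolute value, for $\mathbf{s}$ acting on $V\otimes\mathbb{C}_{\mathrm{sign}}$. Your correct observations (the rank-one action of $\underline{s}$ on $V\otimes\mathbb{C}_{\mathrm{sign}}$ with $\underline{s}\,\alpha_s=2\alpha_s$, the eigenvalue $2$ on the Duflo vector, and the irreducibility of $V\otimes\mathbb{C}_{\mathrm{sign}}$) are consistent with this, but they do not identify the special subquotient; some version of the paper's spectral and case-by-case analysis is unavoidable.
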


If $W$ is a Weyl group, this can be derived from \cite{Do}, if one takes into account the difference in 
the normalizations of the Hecke algebra and the choice of the Kazhdan-Lusztig basis. 

\begin{proof}
Let $s$ and $t$ be two different simple reflections in $W$ and $W'$ the corresponding parabolic subgroup
of rank two, with the longest element $w'_0$. The restriction of $V\otimes\mathbb{C}_{\mathrm{sign}}$ to
$W'$ decomposes, by construction, into a direct sum of $V'\otimes\mathbb{C}'_{\mathrm{sign}}$, where 
$V'$ and $\mathbb{C}'_{\mathrm{sign}}$ are the geometric and the sign representations of $W'$, respectively,
and a number of copies of $\mathbb{C}'_{\mathrm{sign}}$. As both $V'\otimes\mathbb{C}'_{\mathrm{sign}}$
and $\mathbb{C}'_{\mathrm{sign}}$ are annihilated by $\underline{w'_0}$, it follows that 
$\underline{w'_0}$ annihilates $V\otimes\mathbb{C}_{\mathrm{sign}}$.

Let $I$ be the ideal in $\mathbb{C}[W]$ with the $\mathbb{C}$-basis $\underline{w}$, where 
$w\not\in\mathcal{J}\cup\{e\}$. From  Proposition~\ref{propJKL}\eqref{propJKL.5}, it follows that 
$I$ is generated by all $\underline{w'_0}$ as in the previous paragraph. Therefore the previous paragraph
implies that the vector space $V\otimes\mathbb{C}_{\mathrm{sign}}$ is, in fact,  a non-zero  $\mathbb{C}[W]/I$-module. 

If $W$ is simply laced, then the cell $\mathbb{C}[W]$-module corresponding to any cell in $\mathcal{J}$
is simple. Hence in this case the claim of the proposition follows directly from the previous paragraph.

Now note that the dimension of $V\otimes\mathbb{C}_{\mathrm{sign}}$ equals the number of left cells in
$\mathcal{J}$. From detailed lists of elements in $\mathcal{J}$ provided in Subsection~\ref{s4.2} we
therefore have that  $V\otimes\mathbb{C}_{\mathrm{sign}}$ is the only simple  $\mathbb{C}[W]/I$-module
of this dimension in types $B_n$ and $F_4$. This implies the claim of the proposition for these types.

For the general dihedral type, one may note that $V\otimes\mathbb{C}_{\mathrm{sign}}\cong V$ and hence
the claim of the proposition follows from \cite{KM2}. 

It remains to consider the two exceptional types $H_3$ and $H_4$. We do explicit computations in both types.
Note that, from the lists in Subsection~\ref{s4.2} we see that there are two non-isomorphic simple 
$\mathbb{C}[W]/I$-modules of the same dimension. 

Let us start with type $H_3$. The action of $\mathbf{s}$ 
(cf. Subsection~\ref{s4.4}) on $C_{\mathcal{L}_1}$ in the basis
\begin{displaymath}
1,121,21,2121,321,32121,  
\end{displaymath}
taken from the corresponding table in  Subsection~\ref{s4.2}, is given by the matrix
\begin{displaymath}
\left(
\begin{array}{cccccc}
2&0&1&0&0&0\\
0&2&1&1&0&0\\
1&1&2&0&1&0\\
0&1&0&2&0&1\\
0&0&1&0&2&0\\
0&0&0&1&0&2 
\end{array}
\right).
\end{displaymath}
The eigenvalue of maximal absolute value for this matrix is $2+\frac{\sqrt{2\sqrt{5}+10}}{2}$.
At the same time, the action of $\mathbf{s}$ on $V\otimes\mathbb{C}_{\mathrm{sign}}$ in the basis
corresponding to $1$, $2$ and $3$, is given by the matrix
\begin{displaymath}
\left(
\begin{array}{ccc}
2&-\frac{1+\sqrt{5}}{2}&0\\
-\frac{1+\sqrt{5}}{2}&2&-1\\
0&-1&2 
\end{array}
\right).
\end{displaymath}
As $2+\frac{\sqrt{2\sqrt{5}+10}}{2}$ is also an eigenvalue for this latter matrix, we obtain our 
statement in type $H_3$.

Type $H_4$ is treated similarly by noticing that the matrices
\begin{displaymath}
\left(
\begin{array}{cccccccc}
2&0&1&0&0&0&0&0\\
0&2&1&1&0&0&0&0\\
1&1&2&0&1&0&0&0\\
0&1&0&2&0&1&0&0\\
0&0&1&0&2&0&1&0\\
0&0&0&1&0&2&0&1\\ 
0&0&0&0&1&0&2&0\\ 
0&0&0&0&0&1&0&2
\end{array}
\right) \qquad\text{ and }\qquad
\left(
\begin{array}{cccc}
2&-\frac{1+\sqrt{5}}{2}&0&0\\
-\frac{1+\sqrt{5}}{2}&2&-1&0\\
0&-1&2&-1\\ 
0&0&-1&2\\ 
\end{array}
\right)
\end{displaymath}
which represent the action of $\mathbf{s}$ on $C_{\mathcal{L}_1}$ and 
$V\otimes\mathbb{C}_{\mathrm{sign}}$, respectively, have a common eigenvalue 
which is of maximal absolute value for both of them
(this eigenvalue is approximately $3.98904$). This completes the proof.
\end{proof}

\begin{remark}\label{remeigen}
{\rm  
In type $H_3$, we used the matrix calculator that can be found at www.mathportal.org.
As the outcome is given as a real number, it is easily checked by hand.
In type $H_4$ we used the matrix calculator available at
www.arndt-bruenner.de and cross-checked the result on the matrix calculator available at 
www.bluebit.gr.
}
\end{remark}

\subsection{$1$-morphisms act as projective functors}\label{s5.5}

Our aim in this subsection is to prove the following very general result which extends
and unifies  \cite[Lemma~12]{MM5}, \cite[Lemma~6.14]{Zi} and \cite[Lemma~10]{MaMa}.

\begin{theorem}\label{thmminimal}
Let $\mathbf{M}$ be a simple transitive $2$-representation  of a fiat $2$-category $\cC$.
Let, further,  $\mathcal{I}$ be the apex of $\mathbf{M}$ and $\mathrm{F}\in \mathcal{I}$.
\begin{enumerate}[$($i$)$]
\item\label{thmminimal.1} For every object $X$ in any $\overline{\mathbf{M}}(\mathtt{i})$, 
the object $\mathrm{F}\,X$ is projective. 
\item\label{thmminimal.2} The functor $\overline{\mathbf{M}}(\mathrm{F})$ is a projective functor.
\end{enumerate}
\end{theorem}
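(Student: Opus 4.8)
The plan is to reduce the statement to the combinatorial structure of the apex $\mathcal{I}$ and then invoke the machinery of \cite{KM2} on based modules over positively based algebras. First I would pass to decategorification: the Grothendieck group $[\overline{\mathbf{M}}]$ is a transitive positively based $A_{\ccC}$-module in the sense of Subsection~\ref{s2.7}, and by \cite[Subsection~3.2]{CM} its apex (in the algebraic sense) is exactly the two-sided cell corresponding to $\mathcal{I}$. The key algebraic input from \cite{KM2} is that the special subquotient $L_{\mathcal{J}(V)}$ occurs with multiplicity one in any transitive based module $V$, and more importantly that all the ``interesting'' action happens on a well-controlled piece. I would combine this with the fact that, for a simple transitive $2$-representation, the abelianization $\overline{\mathbf{M}}$ has a natural structure in which the projective objects are precisely the images $\mathrm{G}\,L$ of simple objects under $1$-morphisms $\mathrm{G}$ in the apex (this is essentially how cell $2$-representations and their abelianizations are built in \cite{MM1,MM2}, and the ideal-freeness of simple transitivity pins it down).

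Concretely, for part \eqref{thmminimal.1}, I would argue as follows. Fix $\mathrm{F}\in\mathcal{I}$ and an object $X$ of $\overline{\mathbf{M}}(\mathtt{i})$; it suffices to treat $X=L$ a simple object, since $\overline{\mathbf{M}}(\mathrm{F})$ is exact (as $\cC$ is fiat, each $\overline{\mathbf{M}}(\mathrm{F})$ has both adjoints and hence is exact, as recalled in Subsection~\ref{s2.6}) and projectivity of $\mathrm{F}\,L$ for all simple $L$ forces projectivity of $\mathrm{F}\,X$ for all $X$ via the two-step filtration defining objects of $\overline{\mathbf{M}}$. Now pick $\mathrm{F}^{\star}\in\mathcal{I}$ adjoint to $\mathrm{F}$. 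The composite $\mathrm{F}^{\star}\mathrm{F}$ lies in the additive closure of $1$-morphisms in $\mathcal{I}$ together with lower cells, but the lower-cell part annihilates $\mathbf{M}$ because $\mathcal{I}$ is the apex; hence on $\mathbf{M}$ the functor $\mathrm{F}^{\star}\mathrm{F}$ is a direct sum of $1$-morphisms from $\mathcal{I}\cup\{\text{annihilated}\}$. Using the adjunction $(\mathrm{F},\mathrm{F}^{\star})$ one shows that if $\mathrm{F}\,L$ had a non-projective summand, the corresponding piece would generate a proper nonzero $\cC$-invariant ideal in $\coprod_{\mathtt i}\mathbf{M}(\mathtt i)$ — here is where \cite{KM2} enters, guaranteeing that the ``self-adjoint'' $1$-morphisms in $\mathcal{I}$ detect projectivity, so that the cosocle/radical filtration of $\mathrm{F}\,L$ is controlled and any non-projective contribution is killed — contradicting simple transitivity. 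This forces $\mathrm{F}\,L$ to be projective.

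For part \eqref{thmminimal.2}, recall that a projective functor between module categories of finite-dimensional algebras is one isomorphic to tensoring with a projective bimodule, equivalently one that sends every object to a projective object and is exact. By part \eqref{thmminimal.1}, $\overline{\mathbf{M}}(\mathrm{F})$ sends all objects to projectives, and it is exact as noted above; together with the fact that it admits a biadjoint (again since $\cC$ is fiat, so $\mathrm{F}^{\star}$ provides the adjoint), standard characterizations of projective functors (as in \cite{MM3}) give that $\overline{\mathbf{M}}(\mathrm{F})$ is a projective functor. I would phrase this last step as: exact plus ``kills nothing into the injectives'' plus ``image is projective'' $\Rightarrow$ isomorphic to a composition of a quotient functor and an induction functor, which is the definition of projective functor used in the paper.

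The main obstacle I anticipate is the precise bookkeeping in the contradiction argument of part \eqref{thmminimal.1}: one must show that a hypothetical non-projective summand of $\mathrm{F}\,L$ really does generate a proper $\cC$-stable ideal, and this requires knowing that the ideal it generates cannot contain identity $2$-morphisms on objects — equivalently that it cannot be everything. This is exactly the point where the structural results of \cite{KM2} about special subquotients and the behaviour of transitive based modules are doing the heavy lifting, and translating those decategorified statements back into statements about the abelian $2$-representation $\overline{\mathbf{M}}$ (rather than just its Grothendieck group) is the delicate part. Everything else — exactness, reduction to simple $X$, the adjunction manipulations — is routine.
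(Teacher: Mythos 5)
Your outline reproduces the general shell of the paper's argument (exactness, reduction of everything to simple transitivity via a $\cC$-invariant ideal, an appeal to \cite{KM2}), but the crucial step is missing, and the result you gesture at from \cite{KM2} is not the one that makes the proof work. The hypothetical ``non-projective summand of $\mathrm{F}\,L$'' does not live in $\coprod_{\mathtt{i}}\mathbf{M}(\mathtt{i})$ (which consists of the projective objects of $\overline{\mathbf{M}}$), so it is not clear what $\cC$-invariant ideal it is supposed to generate, and your claim that the adjunction $(\mathrm{F},\mathrm{F}^{\star})$ together with ``special subquotient'' results controls the radical filtration of $\mathrm{F}\,L$ is not an argument. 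The paper instead takes a \emph{minimal projective resolution} $\mathcal{X}^{(i)}_{\bullet}$ of each $\mathrm{F}_i\,X$ and lets the ideal be generated by the first differentials $\mathcal{X}^{(i)}_1\to\mathcal{X}^{(i)}_0$, which are honest morphisms in $\mathbf{M}$; the whole difficulty is to show this ideal is \emph{proper}, i.e.\ that these differentials remain radical morphisms after applying arbitrary $1$-morphisms. That is exactly the point where \cite[Proposition~18]{KM2} enters, and what it supplies is not the multiplicity-one special subquotient but the existence of an idempotent $e=\sum_i c_i[\mathrm{F}_i]$ in $A_{\ccC}$ with all $c_i>0$ supported on the apex. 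One then works in the \emph{split} Grothendieck group of $\mathbf{M}$ (classes of indecomposables, with non-negative coefficients), not in the Grothendieck group of $\overline{\mathbf{M}}$ as you propose: comparing $e\,v(j)$ with $v(j)$, where $v(j)$ is the positive combination of the $j$-th terms of the minimal resolutions, one gets $e\,v(j)-v(j)\geq 0$, and $e^2=e$ together with the positivity forced by transitivity gives $e\,v(j)=v(j)$, i.e.\ applying any $\mathrm{F}_i\in\mathcal{I}$ to a minimal resolution yields again a minimal resolution. Only with this minimality-preservation in hand is the generated ideal proper, hence zero by simple transitivity, forcing the differentials to vanish and $\mathrm{F}_l\,X\cong\mathcal{X}^{(l)}_0$ to be projective.

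You yourself flag this as the ``main obstacle,'' but flagging it is not closing it, and the fix you point to (special subquotients of transitive based modules, ``self-adjoint $1$-morphisms detect projectivity'') would not close it: nothing in those statements prevents the ideal generated by your hypothetical bad piece from being all of $\mathbf{M}$. Two smaller remarks: the reduction to simple objects $X=L$ is harmless (exactness plus induction on length) but unnecessary -- the paper's argument works for arbitrary $X$ directly; and for part (ii) the paper simply quotes \cite[Lemma~13]{MM5}, which is precisely the statement that an exact functor sending every object to a projective object is a projective functor, so your appeal to ``standard characterizations'' is acceptable there, provided part (i) has actually been established.
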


\begin{proof}
Without loss of generality we may assume that $\mathcal{I}$ is the maximum
two-sided cell of $\cC$.

Denote by $Q$ the complexification of the split Grothendieck group of
\begin{displaymath}
\coprod_{\mathtt{i}\in\ccC}{\mathbf{M}}(\mathtt{i}). 
\end{displaymath}
Let $\mathbf{B}$ denote the distinguished basis in $Q$ given by classes of indecomposable modules.
Let $\mathrm{F}_1,\mathrm{F}_2,\dots,\mathrm{F}_k$ be a 
complete and irredundant list of indecomposable $1$-morphisms in $\mathcal{I}$. For $i=1,2,\dots,k$, 
let $\mathcal{X}^{(i)}_{\bullet}$ be a minimal projective resolution of $\mathrm{F}_i\, X$. Further,
for $j\geq 0$, denote by $v_j^{(i)}$ the image of $\mathcal{X}^{(i)}_{j}$ in $Q$.
Note that  $v_j^{(i)}$ is a linear combination of elements in $\mathbf{B}$
with non-negative integer coefficients.

As $\cC$ is assumed to be fiat, there exist $i,j\in\{1,2,\dots,k\}$ such that 
$\mathrm{F}_i\circ \mathrm{F}_j\neq 0$. Indeed, one can, for example, take any $i$ and let 
$\mathrm{F}_j$ be the Duflo involution in the left cell of $\mathrm{F}_i$, see \cite[Proposition~17]{MM1}.
Therefore we may apply \cite[Proposition~18]{KM2}, which asserts that the algebra $A_{\ccC}$ 
has a unique idempotent $e$ of the form 
\begin{displaymath}
\sum_{i=1}^kc_i[\mathrm{F}_i], 
\end{displaymath}
where all $c_i\in\mathbb{R}_{>0}$ and $[\mathrm{F}_i]$ denotes the image of $\mathrm{F}_i$ in $A_{\ccC}$.
The vector space $Q$ is, naturally, an $A_{\ccC}$-module. 

Let $j\geq 0$ and set
\begin{displaymath}
v(j):=\sum_{i=1}^kc_iv_j^{(i)}.
\end{displaymath}
Applying any $1$-morphism $\mathrm{G}$ to a minimal projective resolution of some object $Y$
and taking into account that the action of $\mathrm{G}$ is exact as $\cC$ is fiat, gives a
projective resolution of $\mathrm{G}\,Y$ which, a priori, does not have to be minimal.
By construction and minimality of $\mathcal{X}^{(i)}_{\bullet}$, this implies that  
$ev(j)-v(j)$ is a  linear combination of elements in $\mathbf{B}$ with non-negative 
real coefficients. Note that transitivity of $\mathbf{M}$ implies that, for any linear combination
$z$ of elements in $\mathbf{B}$ with non-negative real coefficients, the element $ez$ is also
a linear combination of elements in $\mathbf{B}$ with non-negative real coefficients, moreover,
if $z\neq 0$, then $ez\neq 0$.
Therefore the equality $e^2=e$ yields $ev(j)=v(j)$. Consequently, 
the projective resolution $\mathrm{F}_i\mathcal{X}^{(l)}_{\bullet}$ of $\mathrm{F}_i(\mathrm{F}_l\, X)$
is minimal, for all $i$ and $l$. 

Now the proof is completed by standard arguments as in \cite[Lemma~12]{MM5}. From the previous paragraph
it follows that homomorphisms $\mathcal{X}^{(l)}_{1}\to \mathcal{X}^{(l)}_{0}$,
for $l=1,2,\dots,k$, generate a $\cC$-invariant  ideal of $\mathbf{M}$ different from $\mathbf{M}$. 
Because of simple transitivity of $\mathbf{M}$, this ideal must be zero.
Therefore all homomorphisms $\mathcal{X}^{(l)}_{1}\to \mathcal{X}^{(l)}_{0}$ are zero which implies
$\mathcal{X}^{(l)}_{0}\cong \mathrm{F}_l\, X$. Claim~\eqref{thmminimal.1} follows.
Claim~\eqref{thmminimal.2} follows from claim~\eqref{thmminimal.1} and \cite[Lemma~13]{MM5}.
\end{proof}

\subsection{The matrix $\mathtt{M}$ is symmetric}\label{s4.37}

Let $\mathbf{M}$ be a simple transitive $2$-representation of $\underline{\cS}$ with apex $\mathcal{J}$.
Consider $\overline{\mathbf{M}}$ and let $P_1$, $P_2$,\dots, $P_k$ be a full list of pairwise non-isomorphic
indecomposable projectives in $\overline{\mathbf{M}}(\mathtt{i})$. Let $L_1$, $L_2$,\dots, $L_k$ be their
respective simple tops. 

\begin{proposition}\label{propn31}
For every $i\in\{1,2,\dots,k\}$, we have $\mathrm{F}_{\mathrm{pr}}\, L_i\cong P_i$.
\end{proposition}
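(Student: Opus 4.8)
The plan is to show the two sides of the isomorphism agree by a counting/positivity argument combined with Theorem~\ref{thmminimal}. First I would invoke Theorem~\ref{thmminimal}: since $\mathcal{J}$ is the apex of the simple transitive $2$-representation $\mathbf{M}$ and $\theta_s\in\mathcal{J}$ for each $s\in S$, every $\mathrm{F}\in\mathcal{J}$ acts on $\overline{\mathbf{M}}(\mathtt{i})$ as a projective functor; in particular $\mathrm{F}_{\mathrm{pr}}\,L_i$ is projective for every $i$. So $\mathrm{F}_{\mathrm{pr}}\,L_i$ is a direct sum of the $P_j$'s, and the task is to pin down the multiplicities. The matrix recording these multiplicities is exactly $\llbracket \mathrm{F}_{\mathrm{pr}}\rrbracket$ in the notation of Subsection~\ref{s2.6}, read off on simple objects; what we want is that $\llbracket \mathrm{F}_{\mathrm{pr}}\rrbracket$ is the identity-permutation-type statement $\mathrm{F}_{\mathrm{pr}}\,L_i\cong P_i$.

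Next I would bring in Proposition~\ref{prop4} and the adjunction relation from Subsection~\ref{s2.6}. By Proposition~\ref{prop4}, the matrix $\mathtt{M}=\Lparen\mathrm{F}_{\mathrm{pr}}\Rparen$ recording $\mathrm{F}_{\mathrm{pr}}$ on the indecomposable projectives has $2E_i$ blocks on the diagonal; in particular each column (and row) of $\mathtt{M}$ has diagonal entry at least $2$. Since $\underline{\cS}$ is fiat with involution $\star$ fixing each $\theta_s$ (so $\mathrm{F}_{\mathrm{pr}}^\star\cong\mathrm{F}_{\mathrm{pr}}$), the matrix $\llbracket\mathrm{F}_{\mathrm{pr}}\rrbracket$ is the transpose of $\Lparen\mathrm{F}_{\mathrm{pr}}\Rparen=\mathtt{M}$, which by the symmetry statement announced in Subsection~\ref{s4.37} (Proposition~\ref{propn31} is stated precisely to establish that $\mathtt{M}$ is symmetric) — so I must be careful not to use circularity here. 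The clean route is: $[\mathrm{F}_{\mathrm{pr}}\,L_i:P_j]=[\mathrm{F}_{\mathrm{pr}}\,P_j:L_i]$ (reciprocity for the projective functor $\mathrm{F}_{\mathrm{pr}}$ on the simple $L_i$, or equivalently the $\Lparen\cdot\Rparen$/$\llbracket\cdot\rrbracket$ adjunction), and the right-hand side is the $(i,j)$ entry of $\mathtt{M}$. So $\mathrm{F}_{\mathrm{pr}}\,L_i\cong\bigoplus_j P_j^{\oplus \mathtt{M}_{ij}}$ where $\mathtt{M}_{ij}$ is the $(i,j)$-entry; in particular $P_i$ occurs with multiplicity $\mathtt{M}_{ii}\geq 2$ unless... — this shows the statement as written cannot be literally reduced to $\mathtt{M}$, so I instead need to compare with the \emph{reduced} multiplicities.

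The correct comparison, and the heart of the proof, is between $\llbracket\mathrm{F}_{\mathrm{pr}}\rrbracket$ applied to simples and the Cartan matrix of $\overline{\mathbf{M}}(\mathtt{i})$. Since $\mathrm{F}_{\mathrm{pr}}\,L_i$ is projective, we may write $\mathrm{F}_{\mathrm{pr}}\,L_i\cong\bigoplus_j P_j^{\oplus n_{ij}}$ and we want $n_{ij}=\delta_{ij}$. I would compute $[\mathrm{F}_{\mathrm{pr}}\,L_i:L_j]$ in two ways: once as $\sum_l n_{il}[P_l:L_j]=\sum_l n_{il}c_{lj}$ where $C=(c_{lj})$ is the Cartan matrix, and once using $\Lparen\theta_s\Rparen$ from Lemma~\ref{lem2} together with the known action of $\theta_s$ on simple tops (using that $\theta_s\,L_j$ has $L_j$ in its top iff $\theta_s$ fixes the ``colour'' of $P_j$, coming from Lemma~\ref{lem3} and Proposition~\ref{prop4}). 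Combining these identities with the positivity of all entries, the lower bound $n_{ii}\geq 1$ forced by transitivity (each $L_i$ must be sent to something containing $P_i$, via the $\theta_s$ with $\theta_s\,P_i\cong P_i\oplus P_i$), and an upper bound obtained by computing the total length $\sum_j[\mathrm{F}_{\mathrm{pr}}\,L_i:L_j]$ and comparing with $\sum_j n_{ij}\dim P_j$, I expect to squeeze out $n_{ij}=\delta_{ij}$. The main obstacle will be organizing the bookkeeping of how $\theta_s$ acts on simple tops — i.e.\ knowing that $\theta_s\,L_i$ is either $P_i\oplus(\text{stuff with no }L_i\text{ in the top})$ or has no $L_i$ at all in its top — cleanly enough to turn the length count into an equality; this is exactly the kind of argument carried out in \cite[Lemma~12]{MM5} and \cite[Lemma~6.14]{Zi}, so I would model the details on those, feeding in Theorem~\ref{thmminimal} to bypass the ad hoc projectivity verifications there.
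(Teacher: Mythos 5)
Your starting point is the same as the paper's (Theorem~\ref{thmminimal} gives that $\mathrm{F}_{\mathrm{pr}}\,L_i\cong\bigoplus_j P_j^{\oplus n_{ij}}$ is projective, and you correctly avoid circular use of Corollary~\ref{corn32}), but the numerical squeeze you propose does not close. Your ``two ways'' of computing $[\mathrm{F}_{\mathrm{pr}}\,L_i:L_j]$ amount to the single identity $\sum_l n_{il}[P_l:L_j]=\mathtt{M}_{ij}$ (since $\llbracket\mathrm{F}_{\mathrm{pr}}\rrbracket=\Lparen\mathrm{F}_{\mathrm{pr}}\Rparen^{\mathrm{tr}}=\mathtt{M}^{\mathrm{tr}}$), i.e.\ $NC=\mathtt{M}$ with \emph{both} the multiplicity matrix $N=(n_{ij})$ and the Cartan matrix $C$ unknown non-negative integer matrices. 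Your ``upper bound'' from total length is just the row sums of this same identity, so it adds nothing; the lower bound $n_{ii}\geq 1$ is asserted rather than proved; and nothing you list excludes, for instance, $N=\mathtt{M}$ and $C=E$ (the underlying algebra being semisimple), or $n_{ii}=2$ with a smaller $C$. So positivity plus these counts cannot force $n_{ij}=\delta_{ij}$. (Your intermediate ``reciprocity'' $[\mathrm{F}_{\mathrm{pr}}\,L_i:P_j]=[\mathrm{F}_{\mathrm{pr}}\,P_j:L_i]$ is also not correct as stated --- adjunction gives $\dim\mathrm{Hom}(\mathrm{F}_{\mathrm{pr}}\,L_i,L_j)=\dim\mathrm{Hom}(L_i,\mathrm{F}_{\mathrm{pr}}\,L_j)$, which concerns the socle of $\mathrm{F}_{\mathrm{pr}}\,L_j$, not $\mathrm{F}_{\mathrm{pr}}\,P_j$ --- though you noticed it led nowhere and dropped it.)

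The missing idea is exactly the representation-theoretic step the paper uses. First, the block form of $\mathtt{M}$ (Lemma~\ref{lem2}, Proposition~\ref{prop4}) shows there is a unique $s\in S$ with $\theta_s\,L_i\neq 0$, so $\mathrm{F}_{\mathrm{pr}}\,L_i\cong\theta_s\,L_i$. Next, reading $\llbracket\theta_s\rrbracket=\Lparen\theta_s\Rparen^{\mathrm{tr}}$ one gets $[\theta_s\,L_i:L_i]=2$ while every other composition factor $L_j$ of $\theta_s\,L_i$ satisfies $\theta_s\,L_j=0$; hence by adjunction $\mathrm{Hom}(\theta_s\,L_i,L_j)\cong\mathrm{Hom}(L_i,\theta_s\,L_j)=0$, and dually for the socle, so the top and socle of $\theta_s\,L_i$ contain only copies of $L_i$. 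Projectivity then gives $\theta_s\,L_i\cong P_i^{\oplus n_{ii}}$ with $n_{ii}\leq 2$, and $n_{ii}=2$ is ruled out because $\theta_s\,L_i$ has length at least three (row $i$ of $\mathtt{M}$ has a non-zero off-diagonal part by Proposition~\ref{prop4} and transitivity), forcing one $L_i$ in the top and one in the socle, i.e.\ indecomposability and $\theta_s\,L_i\cong P_i$. This top--socle argument is precisely what your sketch only gestures at (``stuff with no $L_i$ in the top''), and deferring to \cite[Lemma~12]{MM5} or \cite[Lemma~6.14]{Zi} will not supply it: those are the projectivity statements that Theorem~\ref{thmminimal} already subsumes, not the identification of \emph{which} projective occurs.
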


\begin{proof}
We know from Theorem~\ref{thmminimal}\eqref{thmminimal.1} that  
$\mathrm{F}_{\mathrm{pr}}\, L_i$ is projective. From the matrix $\mathtt{M}$ described in 
Subsection~\ref{s4.4}, we see that there is a unique $s\in S$ such that $\theta_s\, L_i\neq 0$.
Therefore $\mathrm{F}_{\mathrm{pr}}\, L_i\cong \theta_s\, L_i$.
From the matrix $\llbracket \theta_s\rrbracket$, 
see Subsections~\ref{s2.6} and \ref{s4.4} in combination with the remark that 
each $\theta_s$ is self-adjoint, we see that $\theta_s\,L_i$ has two subquotients
isomorphic to $L_i$ and all other subquotients are killed by $\theta_s$. Note that $\theta_s\,L_i$
has length at least three, as each row of $\mathtt{M}$ must contain at least one non-zero block $B_{ij}$
by Proposition~\ref{prop4}. This means that $\theta_s\,L_i$ does contain some subquotients which are not isomorphic 
to $L_i$. If $L_j$ is such a subquotient, then, by adjunction,
\begin{displaymath}
\mathrm{Hom}_{\overline{\mathbf{M}}(\mathtt{i})}(\theta_s\,L_i,L_j)=
\mathrm{Hom}_{\overline{\mathbf{M}}(\mathtt{i})}(L_i,\theta_s\,L_j)
=\mathrm{Hom}_{\overline{\mathbf{M}}(\mathtt{i})}(L_i,0)=0
\end{displaymath}
and $L_j$ does not appear in the top of $\theta_s\,L_i$. Similarly, $L_j$ does not appear in the socle
of  $\theta_s\,L_i$ either. It follows that $\theta_s\,L_i$ can only have $L_i$ in top and socle.
This means that one of the $L_i$'s is in the top of $\theta_s\,L_i$ and the other one is in the socle.
In particular, $\theta_s\,L_i$ is indecomposable. Therefore
$\theta_s\,L_i\cong P_i$ by the projectivity of $\theta_s\,L_i$. This completes the proof.
\end{proof}

\begin{corollary}\label{corn32}
Let $\mathbf{M}$ be a simple transitive $2$-representation of $\underline{\cS}$ with apex $\mathcal{J}$.
Then the matrix $\mathtt{M}$ is symmetric.
\end{corollary}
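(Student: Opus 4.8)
The statement to be proved is Corollary~\ref{corn32}, that the matrix $\mathtt{M}$ is symmetric. The plan is to deduce this directly from Proposition~\ref{propn31}, which was just established. The key identity is that, working inside $\overline{\mathbf{M}}(\mathtt{i})$, the multiplicity matrix $\mathtt{M}$ recording decompositions of $\mathrm{F}_{\mathrm{pr}}\,P_j$ into indecomposable projectives coincides, via Proposition~\ref{propn31} (which gives $\mathrm{F}_{\mathrm{pr}}\,L_j\cong P_j$) and the exactness of $\overline{\mathbf{M}}(\mathrm{F}_{\mathrm{pr}})$, with the matrix $\llbracket\mathrm{F}_{\mathrm{pr}}\rrbracket$ recording composition multiplicities of $\mathrm{F}_{\mathrm{pr}}\,L_j$. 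Concretely, the $(i,j)$-entry of $\mathtt{M}$ is the multiplicity of $P_i$ in $\mathrm{F}_{\mathrm{pr}}\,P_j$; but since $P_j \cong \mathrm{F}_{\mathrm{pr}}\,L_j$, this same entry equals the multiplicity of the summand $P_i$ in $\mathrm{F}_{\mathrm{pr}}\,(\mathrm{F}_{\mathrm{pr}}\,L_j)$, hence — extracting tops — the composition multiplicity $[\mathrm{F}_{\mathrm{pr}}\,L_j : L_i]$ counted in the top, which by projectivity equals the full composition multiplicity $[\overline{\mathbf{M}}(\mathrm{F}_{\mathrm{pr}})\,L_j : L_i]$, i.e.\ the $(i,j)$-entry of $\llbracket\mathrm{F}_{\mathrm{pr}}\rrbracket$. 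Thus $\mathtt{M} = \llbracket\mathrm{F}_{\mathrm{pr}}\rrbracket$.

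Second, I would invoke the self-adjointness of $\mathrm{F}_{\mathrm{pr}}$. Each $\theta_s$ is self-adjoint (the Soergel bimodule $B_s$ is self-biadjoint, and this passes to $\underline{\cS}$), so $\mathrm{F}_{\mathrm{pr}} = \bigoplus_{s\in S}\theta_s$ is self-adjoint as well, i.e.\ $\mathrm{F}_{\mathrm{pr}}^{\star}\cong\mathrm{F}_{\mathrm{pr}}$. By the general fact recalled in Subsection~\ref{s2.6}, the matrix $\llbracket\mathrm{F}^{\star}\rrbracket$ is the transpose of $\Lparen\mathrm{F}\Rparen$. Applying this with $\mathrm{F} = \mathrm{F}_{\mathrm{pr}}$ and using $\mathrm{F}_{\mathrm{pr}}^{\star}\cong\mathrm{F}_{\mathrm{pr}}$ gives $\llbracket\mathrm{F}_{\mathrm{pr}}\rrbracket = \llbracket\mathrm{F}_{\mathrm{pr}}^{\star}\rrbracket = \Lparen\mathrm{F}_{\mathrm{pr}}\Rparen^{\mathrm{T}} = \mathtt{M}^{\mathrm{T}}$. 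Combining with the first step, $\mathtt{M} = \llbracket\mathrm{F}_{\mathrm{pr}}\rrbracket = \mathtt{M}^{\mathrm{T}}$, which is the desired symmetry.

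The only genuinely delicate point is the bookkeeping in the first step: one must be careful that the multiplicity of $P_i$ as a \emph{direct summand} of the projective object $\mathrm{F}_{\mathrm{pr}}\,P_j$ really does equal the composition multiplicity $[\mathrm{F}_{\mathrm{pr}}\,L_j : L_i]$ under the identification $P_j \cong \mathrm{F}_{\mathrm{pr}}\,L_j$. This is where I would be most careful: for a projective object $Q$ in a finite-dimensional-algebra module category, the multiplicity of the indecomposable projective $P_i$ as a summand of $Q$ equals $\dim\mathrm{Hom}(Q, L_i) = [Q : L_i]_{\mathrm{top}}$, and for projective $Q$ this also equals $[Q:L_i]$ only when $Q$ is a sum of copies of $P_i$ — so the cleaner route is simply to observe $\Lparen\mathrm{F}_{\mathrm{pr}}\Rparen$ and $\llbracket\mathrm{F}_{\mathrm{pr}}\rrbracket$ agree because, on the one hand, $\llbracket\mathrm{F}_{\mathrm{pr}}\rrbracket$ has $(i,j)$-entry $[\overline{\mathbf{M}}(\mathrm{F}_{\mathrm{pr}})L_j:L_i]$, and on the other hand, since $\overline{\mathbf{M}}(\mathrm{F}_{\mathrm{pr}})L_j \cong P_j = \bigoplus_i a_{ij}P_i$ where $a_{ij} = \Lparen\mathrm{F}_{\mathrm{pr}}\Rparen_{ij}$, projectivity forces $[P_j:L_i] = a_{ij}$ because distinct indecomposable projectives in a directed / well-chosen basis — actually one needs $[\,P_i : L_j\,]$ information. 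The safe formulation, which I would adopt in the write-up, is: $\llbracket\mathrm{F}_{\mathrm{pr}}\rrbracket_{ij} = [\,\mathrm{F}_{\mathrm{pr}} L_j : L_i\,]$ and, by Proposition~\ref{propn31}, $\mathrm{F}_{\mathrm{pr}} L_j\cong P_j$, so $\llbracket\mathrm{F}_{\mathrm{pr}}\rrbracket_{ij} = [\,P_j : L_i\,]$, and independently Proposition~\ref{propn31} and projectivity give $\Lparen\mathrm{F}_{\mathrm{pr}}\Rparen_{ij} = [\,\mathrm{F}_{\mathrm{pr}} L_j : L_i\,]$ once one notes that the Cartan-type data here is forced because each $P_i$ has simple top and socle $L_i$ (as shown in the proof of Proposition~\ref{propn31}, $\theta_s L_i \cong P_i$ has $L_i$ in both top and socle). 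Granting that, everything collapses to the short chain of equalities above, and the corollary follows.
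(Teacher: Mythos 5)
Your overall route is the same as the paper's: you combine Proposition~\ref{propn31} with the self-adjointness of $\mathrm{F}_{\mathrm{pr}}$ and the transpose relation from Subsection~\ref{s2.6} to get $\mathtt{M}=\llbracket\mathrm{F}_{\mathrm{pr}}\rrbracket=\mathtt{M}^{\mathrm{tr}}$; the paper phrases exactly this as ``row $i$ and column $i$ of $\mathtt{M}$ both record the composition multiplicities of simples in $P_i$''. The second half of your argument ($\llbracket\mathrm{F}_{\mathrm{pr}}\rrbracket=\mathtt{M}^{\mathrm{tr}}$ from $\mathrm{F}_{\mathrm{pr}}^{\star}\cong\mathrm{F}_{\mathrm{pr}}$) is fine and is the same ingredient the paper uses implicitly via the remark in the proof of Proposition~\ref{propn31} that each $\theta_s$ is self-adjoint.

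The soft spot is precisely the step you flag yourself, namely why $\Lparen\mathrm{F}_{\mathrm{pr}}\Rparen_{ij}$, the multiplicity of $P_i$ as a direct summand of $\mathrm{F}_{\mathrm{pr}}\,P_j$, equals $[\mathrm{F}_{\mathrm{pr}}\,L_j:L_i]=[P_j:L_i]$. Neither of your two attempted justifications works as stated: ``extracting tops'' is invalid, since for a projective object the multiplicities in the top are not its composition multiplicities, and the appeal to ``Cartan-type data being forced because each $P_i$ has simple top and socle $L_i$'' does not prove the identity either. The argument the paper intends when it invokes exactness is the following: take a composition series of $P_j$ and apply $\overline{\mathbf{M}}(\mathrm{F}_{\mathrm{pr}})$, which is exact because $\underline{\cS}$ is fiat; by Proposition~\ref{propn31} each subquotient $\mathrm{F}_{\mathrm{pr}}\,L_m$ of the induced filtration of $\mathrm{F}_{\mathrm{pr}}\,P_j$ is the projective object $P_m$, so the filtration splits and $\mathrm{F}_{\mathrm{pr}}\,P_j\cong\bigoplus_m P_m^{\oplus[P_j:L_m]}$, giving $\Lparen\mathrm{F}_{\mathrm{pr}}\Rparen_{ij}=[P_j:L_i]$ on the nose. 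With that sentence inserted in place of the ``tops''/``Cartan-type data'' remarks, your chain $\mathtt{M}=\llbracket\mathrm{F}_{\mathrm{pr}}\rrbracket=\mathtt{M}^{\mathrm{tr}}$ is a complete proof and coincides with the paper's.
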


\begin{proof}
By Proposition~\ref{propn31}, the row $i$ of $\mathtt{M}$ describes composition multiplicities of
simple modules in $P_i$. By definition, the column $i$ of $\mathtt{M}$ describes the direct sum decomposition
of $\mathrm{F}_{\mathrm{pr}}\,P_i$. Note that $\mathrm{F}_{\mathrm{pr}}$ is exact as $\cS$ is fiat.
Therefore, from Proposition~\ref{propn31} we have that the multiplicity of $P_j$ as a direct summand in 
$\mathrm{F}_{\mathrm{pr}}\,P_i$ coincides with the composition multiplicity of $L_j$ in $P_i$.
The claim follows.
\end{proof}

\section{Some spectral properties of integral matrices}\label{sim}

\subsection{Setup}\label{sim.1}

Denote by $\mathbb{M}$ the set of all matrices with non-negative integer coefficients, that is
\begin{displaymath}
\mathbb{M}:=\bigcup_{k,m\in\mathbb{Z}_{>0}}\mathrm{Mat}_{k\times m}(\mathbb{Z}_{\geq 0}).
\end{displaymath}
For each $X\in \mathbb{M}$, we can consider the symmetric matrices $XX^{\mathrm{tr}}$ and 
$X^{\mathrm{tr}}X$. Each of these two matrices is diagonalizable over $\mathbb{R}$, moreover,
all eigenvalues are non-negative. Furthermore, these two matrices 
have the same spectrum (considered as a multiset)
with only possible exception of the multiplicity of the eigenvalue zero. We denote by 
$\mathbf{m}_X$ the eigenvalue of $XX^{\mathrm{tr}}$ or, equivalently, $X^{\mathrm{tr}}X$
with the maximal absolute value. Note that $\mathbf{m}_X=0$ if and only if $X$ is the zero matrix.
The aim of this section is to describe all $X\in \mathbb{M}$ such that $\mathbf{m}_X<4$.

\subsection{Staircase matrices}\label{sim.2}

A {\em staircase matrix} is a matrix of the form
\begin{equation}\label{simeq1}
\left(
\begin{array}{cccccccc}
1&1&0&0&0&\dots&0&0 \\
0&1&1&0&0&\dots&0&0  \\
0&0&1&1&0&\dots&0&0 \\ 
0&0&0&1&1&\dots&0&0 \\ 
0&0&0&0&1&\dots&0&0 \\ 
\vdots&\vdots&\vdots&\vdots&\vdots&\ddots&\vdots&\vdots \\
0&0&0&0&0&\dots&1&1\\ 
0&0&0&0&0&\dots&0&1 
\end{array}
\right) \quad
\left(
\begin{array}{ccccccccc}
1&1&0&0&0&\dots&0&0&0  \\
0&1&1&0&0&\dots&0&0&0   \\
0&0&1&1&0&\dots&0&0&0  \\ 
0&0&0&1&1&\dots&0&0&0  \\ 
0&0&0&0&1&\dots&0&0&0  \\ 
\vdots&\vdots&\vdots&\vdots&\vdots&\ddots&\vdots&\vdots&\vdots \\
0&0&0&0&0&\dots&1&1&0\\
0&0&0&0&0&\dots&0&1&1 
\end{array}
\right) 
\end{equation}
or its transpose. Note that each staircase matrix is of the
size $k\times k$ or $k\times (k+1)$ or $(k+1)\times k$, for some positive integer $k$.

An {\em extended staircase matrix} is a matrix obtained from a staircase matrix by adding
one additional row (or column, but not both) as shown below. In the pictures,
the line separates the original staircase matrix from the added row (respectively, column).
The picture below shows extended staircase matrices for the left matrix
in \eqref{simeq1}.
\begin{displaymath}
\left(
\begin{array}{c|cccccccc}
1&1&1&0&0&0&\dots&0&0 \\
0&0&1&1&0&0&\dots&0&0  \\
0&0&0&1&1&0&\dots&0&0 \\ 
0&0&0&0&1&1&\dots&0&0 \\ 
0&0&0&0&0&1&\dots&0&0 \\ 
\vdots&\vdots&\vdots&\vdots&\vdots&\vdots&\ddots&\vdots&\vdots \\
0&0&0&0&0&0&\dots&1&1\\ 
0&0&0&0&0&0&\dots&0&1 
\end{array}
\right) \quad
\left(
\begin{array}{cccccccc}
1&1&0&0&0&\dots&0&0 \\
0&1&1&0&0&\dots&0&0  \\
0&0&1&1&0&\dots&0&0 \\ 
0&0&0&1&1&\dots&0&0 \\ 
0&0&0&0&1&\dots&0&0 \\ 
\vdots&\vdots&\vdots&\vdots&\vdots&\ddots&\vdots&\vdots \\
0&0&0&0&0&\dots&1&1\\ 
0&0&0&0&0&\dots&0&1 \\ \hline
0&0&0&0&0&\dots&0&1
\end{array}
\right) 
\end{displaymath}
The next picture shows extended staircase matrices for the right matrix
in \eqref{simeq1}.
\begin{displaymath}
\left(
\begin{array}{c|ccccccccc}
1&1&1&0&0&0&\dots&0&0&0  \\
0&0&1&1&0&0&\dots&0&0&0   \\
0&0&0&1&1&0&\dots&0&0&0  \\ 
0&0&0&0&1&1&\dots&0&0&0  \\ 
0&0&0&0&0&1&\dots&0&0&0  \\ 
\vdots&\vdots&\vdots&\vdots&\vdots&\vdots&\ddots&\vdots&\vdots&\vdots \\
0&0&0&0&0&0&\dots&1&1&0\\
0&0&0&0&0&0&\dots&0&1&1 
\end{array}
\right)\quad
\left(
\begin{array}{ccccccccc|c}
1&1&0&0&0&\dots&0&0&0&0   \\
0&1&1&0&0&\dots&0&0&0&0    \\
0&0&1&1&0&\dots&0&0&0&0   \\ 
0&0&0&1&1&\dots&0&0&0&0   \\ 
0&0&0&0&1&\dots&0&0&0&0   \\ 
\vdots&\vdots&\vdots&\vdots&\vdots&\ddots&\vdots&\vdots&\vdots&\vdots \\
0&0&0&0&0&\dots&1&1&0&0 \\
0&0&0&0&0&\dots&0&1&1&1 
\end{array}
\right)
\end{displaymath}
For the transposes of \eqref{simeq1}, the above pictures should also be transposed.
Note that each extended staircase matrix is of the
size $k\times (k+1)$ or $k\times (k+2)$ or $(k+1)\times k$ or $(k+2)\times k$, 
for some positive integer $k$.

\subsection{The main result}\label{sim.3}

\begin{proposition}\label{propsim}
Let $X\in\mathbb{M}$ be such that both $XX^{\mathrm{tr}}$ and $X^{\mathrm{tr}}X$
are irreducible and  $\mathbf{m}_X<4$. Then, using independent permutations
of rows and columns, $X$ can be reduced to a staircase matrix or an extended staircase 
matrix or one of the following matrices (or their transposes):
\begin{displaymath}
X_1:=\left(
\begin{array}{ccc}
1&0&0\\
1&1&1\\
0&0&1
\end{array}
\right),\quad 
X_2:=\left(
\begin{array}{cccc}
1&1&0&0\\
0&1&1&1\\
0&0&0&1
\end{array}
\right),\quad 
X_3:=\left(
\begin{array}{cccc}
1&0&0&0\\
1&1&0&0\\
0&1&1&1\\
0&0&0&1
\end{array}
\right). 
\end{displaymath}
\end{proposition}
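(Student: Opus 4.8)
The plan is to reduce the problem to a graph-theoretic statement about a bipartite graph and then invoke the classification of graphs with small spectral radius. Given $X\in\mathbb{M}$ of size $k\times m$, form the bipartite graph $\Gamma_X$ on vertex set $\{r_1,\dots,r_k\}\sqcup\{c_1,\dots,c_m\}$ with $X_{ij}$ edges between $r_i$ and $c_j$ (so $\Gamma_X$ is a bipartite multigraph). The adjacency matrix of $\Gamma_X$ is the symmetric matrix $\left(\begin{smallmatrix}0&X\\ X^{\mathrm{tr}}&0\end{smallmatrix}\right)$, whose square is $\mathrm{diag}(XX^{\mathrm{tr}},X^{\mathrm{tr}}X)$. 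Hence the largest eigenvalue of the adjacency matrix of $\Gamma_X$ equals $\sqrt{\mathbf{m}_X}$, and the irreducibility hypothesis on $XX^{\mathrm{tr}}$ and $X^{\mathrm{tr}}X$ says precisely that $\Gamma_X$ is connected. The condition $\mathbf{m}_X<4$ thus becomes: $\Gamma_X$ is a connected bipartite (multi)graph whose spectral radius is strictly less than $2$.

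First I would rule out multiple edges and vertices of large degree: a single double edge already gives a subgraph with spectral radius $2$, and $K_{1,4}$ has spectral radius $2$, so by the monotonicity of the Perron--Frobenius eigenvalue under taking subgraphs, $\Gamma_X$ must be a simple graph with maximum degree at most $3$, and moreover can contain no cycle (a cycle $C_n$ has spectral radius $2$) and at most one vertex of degree $3$ (two degree-$3$ vertices, or one of degree $\geq 4$, force a subgraph of spectral radius $\geq 2$; here one uses that the relevant subgraphs are exactly the extended Dynkin diagrams $\widetilde{D}_n$, $\widetilde{E}_6$). Therefore $\Gamma_X$ is a tree with at most one branch vertex, of degree $3$ — i.e. $\Gamma_X$ is either a path $A_n$ or a ``spider'' $D_n$ or one of $E_6,E_7,E_8$. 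This is exactly the classical ADE list of connected graphs with spectral radius below $2$; I would cite the standard reference (Smith's theorem on graphs with $\lambda_{\max}<2$) rather than reprove it.

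The remaining step is bookkeeping: for each graph $\Gamma$ in the list $A_n,D_n,E_6,E_7,E_8$, choose the bipartition and read off the corresponding $0$--$1$ matrix $X$ (unique up to independent row and column permutations, and up to transpose). A path $A_n$ gives, according to the parity of $n$ and which class the endpoints fall in, a square staircase matrix or one of the two non-square staircase matrices in \eqref{simeq1}. A spider $D_n$ — a path with one extra pendant edge at a vertex adjacent to an end — gives an extended staircase matrix, the four displayed shapes corresponding to whether the extra leaf lies in the row-class or the column-class and to the parity of the path. Finally $E_6$, $E_7$, $E_8$ give, respectively, the matrices $X_1$, $X_2$, $X_3$: one checks directly that the bipartite adjacency matrix of $E_6$ (with the two colour classes of sizes $3$ and $3$) is $X_1$ up to permutation, that of $E_7$ (sizes $3$ and $4$) is $X_2$, and that of $E_8$ (sizes $4$ and $4$) is $X_3$.

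The main obstacle I anticipate is purely organizational rather than conceptual: matching the combinatorial shape of each ADE graph to the precise normal form of its bipartite adjacency matrix, keeping careful track of the ``up to transpose and independent permutations'' equivalence so that no case is double-counted and none is missed — in particular verifying that the branch vertex of $D_n$ must be adjacent to a leaf (otherwise one gets $\widetilde D_n$ as a subgraph, contradicting $\lambda_{\max}<2$), which is what forces the ``extended staircase'' rather than some more general shape. Everything else follows from the translation to spectral radius and the cited classification.
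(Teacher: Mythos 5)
Your proposal is correct, but it takes a genuinely different route from the paper's. You translate the hypotheses into spectral graph theory: the bipartite multigraph $\Gamma_X$ with biadjacency matrix $X$ is connected (this is equivalent to the irreducibility of $XX^{\mathrm{tr}}$ and $X^{\mathrm{tr}}X$) and has spectral radius $\sqrt{\mathbf{m}_X}<2$, and you then invoke Smith's classification of connected graphs with spectral radius below $2$ (exactly the simply laced Dynkin diagrams $A_n$, $D_n$, $E_6$, $E_7$, $E_8$), finishing with the dictionary paths $\leftrightarrow$ staircase matrices, $D_n$ $\leftrightarrow$ extended staircase matrices, and $E_6,E_7,E_8$ $\leftrightarrow$ $X_1,X_2,X_3$, which you identify correctly, including the sizes of the bipartition classes. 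The paper instead argues entirely at the level of matrices and is self-contained: Perron--Frobenius forces all entries of $X$ to be $0$ or $1$, the submatrices $\left(\begin{smallmatrix}1&1\\1&1\end{smallmatrix}\right)$ and $\left(\begin{smallmatrix}1&1&1&1\end{smallmatrix}\right)$ are excluded, permutations put $X$ into a near-staircase form, and then explicit eigenvectors with eigenvalue $4$ rule out having more than one row or column with three ones and, in two further explicit computations, isolate $X_1,X_2,X_3$. Those forbidden configurations are precisely the affine diagrams $\widetilde{D}_n$, $\widetilde{E}_6$, $\widetilde{E}_7$, $\widetilde{E}_8$ in biadjacency form, so the paper is in effect reproving the fragment of Smith's theorem it needs; your version is shorter and promotes the paper's remark following the proposition (the ADE interpretation) into the actual proof, at the cost of an external citation, while the paper's version stays elementary and keeps everything in the language in which the result is later applied. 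Two small slips in your sketch: the obstruction to a spider whose branch vertex is not adjacent to a leaf is $\widetilde{E}_6$ (legs $(2,2,2)$), not $\widetilde{D}_n$, and to cut the spider list down to exactly $D_n$, $E_6$, $E_7$, $E_8$ one must also exclude $\widetilde{E}_7$ and $\widetilde{E}_8$; both points are subsumed by the Smith citation, so they do not create a gap.
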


The statement of Proposition~\ref{propsim} can be interpreted in the way that there is a
natural correspondence between matrices $X$ as in the proposition and simply laced Dynkin diagrams.
Staircase matrices correspond to type $A$, extended staircase matrices correspond to type $D$ and
the three exceptional matrices correspond to type $E$.
Starting from a matrix $X$ appearing in the classification provided 
by Proposition~\ref{propsim}, we can consider the matrix
\begin{displaymath}
\left(\begin{array}{cc}2E&X\\X^{\mathrm{tr}}&2E\end{array}\right) 
\end{displaymath}
and pretend that it appears as $\mathtt{M}$ in some $2$-representation.
If we compute the underlying algebra of that $2$-representation
as in many examples later on, see e.g. Subsection~\ref{s6.9},
we will get a doubling of a simply laced  Dynkin quiver.

\subsection{Proof of Proposition~\ref{propsim}}\label{sim.4}

Let $X\in\mathbb{M}$ be such that both $XX^{\mathrm{tr}}$ and $X^{\mathrm{tr}}X$
are irreducible and $\mathbf{m}_X<4$.

Assume that $X$ has an entry that is greater than or equal to $2$. Then 
$XX^{\mathrm{tr}}$ has a diagonal entry which is greater than or equal to $4$.
Let $\lambda$ be the Perron-Frobenius eigenvalue of $XX^{\mathrm{tr}}$.
Then, by the Perron-Frobenius Theorem, the limit
\begin{displaymath}
\lim_{i\to\infty}\frac{(XX^{\mathrm{tr}})^i}{\lambda^i} 
\end{displaymath}
exists, which means that $\lambda\geq 4$, a contradiction. Therefore $X$ is a $0$-$1$-matrix.

For a matrix $M$, a submatrix $N$ of $M$ is the matrix obtained by taking entries in the intersection
of a non-empty set of rows of $M$ and a non-empty set of columns of $M$. If $N$ is a submatrix of $M$,
then, clearly, $\mathbf{m}_N\leq \mathbf{m}_M$. An argument similar to the 
one in the previous paragraph shows that $X$ cannot have any submatrix which is equal to either
of the following matrices, nor their transposes,
\begin{equation}\label{simeq2}
\left(\begin{array}{cc}1&1\\1&1\end{array}\right),\qquad 
\left(\begin{array}{cccc}1&1&1&1\end{array}\right). 
\end{equation}
Taking the first matrix in \eqref{simeq2} into account and using the fact that both 
$XX^{\mathrm{tr}}$ and $X^{\mathrm{tr}}X$ are irreducible, we see that, by independent permutation of row
and columns, $X$ can be reduced to the form
\begin{displaymath}
\left(\begin{array}{cccccccccc}
1&\dots&1&1&0&\dots&0&0&0&\dots\\
0&\dots&0&1&1&\dots&1&1&0&\dots\\
0&\dots&0&0&0&\dots&0&1&1&\dots\\
\vdots&\vdots&\vdots&\vdots&\vdots&\vdots&\vdots&\vdots&\vdots&\vdots
\end{array}
\right). 
\end{displaymath}
So, from now on we may assume that $X$ is of the latter form.
Taking the second matrix in \eqref{simeq2} into account, we see that each row and each
column of $X$ contains at most three non-zero entries.

Next we claim that the total number of rows and columns in $X$ which contain three non-zero 
entries is at most one. For this we have to exclude, up to transposition, two types of 
possible submatrices in $X$. The first one is the matrix
\begin{displaymath}
\left(\begin{array}{cccccccccc}
1&0&0&0&\dots&\dots&0&0&0&0\\
1&0&0&0&\dots&\dots&0&0&0&0\\
1&1&0&0&\dots&\dots&0&0&0&0\\
0&1&1&0&\dots&\dots&0&0&0&0\\
\vdots&\vdots&\vdots&\vdots&\ddots&\ddots&\vdots&\vdots&\vdots&\vdots\\
\vdots&\vdots&\vdots&\vdots&\ddots&\ddots&\vdots&\vdots&\vdots&\vdots\\
0&0&0&0&\dots&\dots&1&1&0&0\\
0&0&0&0&\dots&\dots&0&1&1&1
\end{array}
\right). 
\end{displaymath}
For this matrix it is easy to check that the vector $(2,2,\dots,2,1,1)^{\mathrm{tr}}$ is an eigenvector
of $X^{\mathrm{tr}}X$ with eigenvalue $4$, leading to a contradiction. The second one is the matrix
\begin{displaymath}
\left(\begin{array}{cccccccccc}
1&1&1&0&0&\dots&0&0&0&0\\
0&0&1&1&0&\dots&0&0&0&0\\
0&0&0&1&1&\dots&0&0&0&0\\
\vdots&\vdots&\vdots&\vdots&\ddots&\ddots&\vdots&\vdots&\vdots&\vdots\\
\vdots&\vdots&\vdots&\vdots&\ddots&\ddots&\vdots&\vdots&\vdots&\vdots\\
0&0&0&0&0&\dots&1&1&0&0\\
0&0&0&0&0&\dots&0&1&1&1
\end{array}
\right). 
\end{displaymath}
For this matrix it is easy to check that the vector $(1,1,\dots,1)^{\mathrm{tr}}$ is an eigenvector
of $XX^{\mathrm{tr}}$ with eigenvalue $4$, leading again to a contradiction.

If $X$ has neither rows nor columns with three non-zero entries, then $X$ is a staircase matrix.
By the above, if $X$ has a row or a column with three non-zero entries, it is unique. Now, the claim of
the proposition follows from the observation that, for the following matrices $X$:
\begin{displaymath}
\left(\begin{array}{cccc}1&1&0&0\\0&1&0&0\\0&1&1&0\\0&0&1&1\\0&0&0&1\\\end{array}\right),\qquad 
\left(\begin{array}{ccc}1&0&0\\1&1&0\\0&1&0\\0&1&1\\0&0&1\\\end{array}\right), 
\end{displaymath}
the matrix $X^{\mathrm{tr}}X$ has $4$ as an eigenvalue.

\section{Simple transitive $2$-representations of $\cS$ in Coxeter type $I_2(n)$ with $n$ odd}\label{s17}

\subsection{Setup and the main result}\label{s17.1}

In this section we assume that $W$ is of Coxeter type $I_2(n)$, for some $n\geq 3$, and 
$S=\{s,t\}$ with the Coxeter diagram
\begin{displaymath}
\xymatrix{s\ar@{-}[r]^n&t.} 
\end{displaymath}
In this case there are three two-sided cells, namely 
\begin{itemize}
\item the two-sided cell of the identity element;
\item the two-sided cell of $w_0$;
\item the two-sided cell $\mathcal{J}$.
\end{itemize}
Our aim in this section is to prove the following.

\begin{theorem}\label{thm1700}
For $n\geq 3$ odd, every simple transitive $2$-representation of 
$\cS$ is a cell $2$-representation.
\end{theorem}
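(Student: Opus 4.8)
The plan is to follow the two-part strategy outlined in the introduction: first pin down the matrices $\Lparen\theta_s\Rparen$ and $\Lparen\theta_t\Rparen$ in any simple transitive $2$-representation $\mathbf{M}$ of $\cS$ with apex $\mathcal{J}$, and then identify $\mathbf{M}$ with a cell $2$-representation. By Corollary~\ref{corn32} the matrix $\mathtt{M}=\Lparen\mathrm{F}_{\mathrm{pr}}\Rparen$ is symmetric, and by Proposition~\ref{prop4} it has the block form $\left(\begin{smallmatrix}2E_1 & B\\ B^{\mathrm{tr}} & 2E_2\end{smallmatrix}\right)$ with $B$ a non-zero non-negative integer matrix (non-zero since $s,t$ do not commute). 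So $\Lparen\theta_s\Rparen$ and $\Lparen\theta_t\Rparen$ are governed entirely by $B$.

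The key observation connecting this to Section~\ref{sim} is that the relevant eigenvalue bound comes from Kazhdan--Lusztig combinatorics in dihedral type: in $I_2(n)$ with $n$ odd, one checks (using the explicit product rules $\theta_s\theta_t\theta_s\cdots$ and the fact that $\theta_{w_0}$ dies in $\underline{\cS}$, i.e. the a-function/relation constraints from Proposition~\ref{propJKL} and its consequences) that the operator $\mathbf{s}$ acts on the relevant module with largest eigenvalue strictly less than $4$, equivalently $\mathbf{m}_B<4$. Concretely, I would compute the spectrum of $\mathtt{M}$ (or of $\mathbf{s}$ acting on $\mathbf{M}$) in terms of $B B^{\mathrm{tr}}$: the eigenvalues of $\mathtt{M}$ are $2\pm\sqrt{\mathbf{m}}$ for $\mathbf{m}$ an eigenvalue of $BB^{\mathrm{tr}}$, and the Fibonacci-polynomial factorization (cited as \cite{Lev}) forces the relevant eigenvalues of the cell module $C_{\mathcal{L}}$ — hence the maximal eigenvalue available to $\mathbf{M}$, which must embed the special module $V\otimes\mathbb{C}_{\mathrm{sign}}\cong V$ by Proposition~\ref{propspecialJ} and the apex condition — to lie below the threshold $\mathbf{m}_B<4$. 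Since $BB^{\mathrm{tr}}$ and $B^{\mathrm{tr}}B$ are irreducible (by transitivity of $\mathbf{M}$) and $\mathbf{m}_B<4$, Proposition~\ref{propsim} applies: $B$ is, up to row/column permutation, a staircase, an extended staircase, or one of $X_1,X_2,X_3$. The oddness of $n$ is what rules out the extended-staircase (type $D$) and exceptional (type $E$) possibilities as well as the longer staircases, leaving only the staircase $B$ whose associated doubled diagram is exactly type $A_{n-1}$ — matching the cell $2$-representation. (In the even case, by contrast, the extra "$D$-type" solution survives, producing the exotic $2$-representations of Theorem~\ref{thmintro}\eqref{thmintro.2}.)

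Once the matrices are forced to coincide with those of a cell $2$-representation, the second part is the by-now-standard argument of \cite{MM5,MM6,Zi,MaMa}: using Theorem~\ref{thmminimal}, every $1$-morphism in $\mathcal{J}$ acts as a projective functor on $\overline{\mathbf{M}}$, and Proposition~\ref{propn31} gives $\mathrm{F}_{\mathrm{pr}}\,L_i\cong P_i$; one then shows the underlying algebra $\overline{\mathbf{M}}(\mathtt{i})$ is isomorphic to the one of the cell $2$-representation (a doubled $A_{n-1}$ quiver algebra with appropriate relations) and constructs an equivalence of $2$-representations intertwining the $\cS$-actions, appealing to the abstract comparison lemma (as in \cite[Theorem~18]{MM5} together with \cite[Proposition~1]{MM6}). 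I expect the main obstacle to be the first part — precisely, establishing the eigenvalue bound $\mathbf{m}_B<4$ rigorously from the dihedral Kazhdan--Lusztig data and the Fibonacci-polynomial factorization, and then doing the case analysis through Proposition~\ref{propsim} to eliminate every solution except the type-$A$ staircase; the parity of $n$ must be used in an essential way at exactly this elimination step.
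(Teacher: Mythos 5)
Your overall strategy is the paper's strategy (annihilation coming from the dihedral product rules and the vanishing of $\theta_{w_0}$ in $\underline{\cS}$, Levy's factorization of Fibonacci polynomials, the classification of Proposition~\ref{propsim}, then the standard second half via Theorem~\ref{thmminimal} and Proposition~\ref{propn31}), but the step you explicitly defer --- eliminating everything except the type~$A$ staircase of the correct size --- is exactly where the content of the proof lies, and your sketch does not contain the idea that makes it work. The paper first proves (Lemma~\ref{lem1701}, by induction on word length using the product rules and the fact that the matrix of $\theta_{w_0}$ is zero) that $BB^{\mathrm{tr}}$ and $B^{\mathrm{tr}}B$ are annihilated by the disguised Fibonacci polynomial $f_n$; for $n$ odd this polynomial has non-zero constant term, so $B$ is invertible (Corollary~\ref{cor1702}), in particular square. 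This single observation is how the parity of $n$ actually enters: it removes at once all extended staircase matrices, the matrix $X_2$, and the non-square staircases from the list of Proposition~\ref{propsim}. The two remaining square exceptional matrices are then excluded by comparing the minimal polynomial of $B^{\mathrm{tr}}B$ with the irreducible factors $\underline{f}_d$: for $X_1$ the factor $x^2-4x+1=\underline{f}_{12}(x)$ would force $12\mid n$, impossible for odd $n$, and for $X_3$ the characteristic polynomial is an irreducible quartic which is not $\underline{f}_{15}(x)$, the only candidate of degree $\frac{\phi(d)}{2}=4$ with $d$ odd. None of these arguments appears in your proposal; asserting that ``oddness rules out the other cases'' is precisely the statement that needs proof. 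Relatedly, your route to the bound $\mathbf{m}_B<4$ through the special module $V\otimes\mathbb{C}_{\mathrm{sign}}$ and Proposition~\ref{propspecialJ} is not the mechanism and is not obviously correct as stated (the Perron--Frobenius comparison you gesture at concerns a different operator); in the paper the bound is an immediate consequence of Lemma~\ref{lem1701} together with Levy's result that all roots of $\underline{f}_d$, $d>2$, lie strictly between $0$ and $4$.

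There is a second genuine gap: you claim the surviving staircase is ``exactly type $A_{n-1}$,'' i.e.\ of size $k\times k$ for $n=2k+1$, but annihilation by $f_n$ alone does not force the size. If $n$ is composite and $2m+1$ is a proper divisor of $n$, then the $m\times m$ staircase also satisfies $f_n(B^{\mathrm{tr}}B)=0$ (already for $n=9$ and $B=(1)$ one has $f_9(1)=0$), so smaller matrices pass the spectral test. The paper excludes them by the apex condition, packaged as Corollary~\ref{cor1705}: if $\mathtt{M}$ coincided with the matrix of the cell $2$-representation of type $I_2(2m+1)$, then the explicit formulas from the proof of Lemma~\ref{lem1701} show that $\theta_w$, for $w$ the alternating word of length $2m+1<n$, would act by zero, contradicting that this $w$ lies in $\mathcal{J}$ and $\mathcal{J}$ is the apex. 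This argument (or some substitute for it) must be supplied before the identification with the cell $2$-representation in the second half, which otherwise follows the paper's standard pattern and is fine as sketched.
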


If $\mathbf{M}$ is a simple transitive $2$-representation of $\cS$ whose apex is not $\mathcal{J}$, then 
$\mathbf{M}$ is a cell $2$-representation by the same argument as in \cite[Theorem~18]{MM5},
see also \cite{MaMa,Zi} for similar arguments. Taking \cite[Theorem~19]{MM2} into account, 
from now on, we may assume that $\mathbf{M}$ is a simple transitive $2$-representation of $\underline{\cS}$ 
with apex $\mathcal{J}$. Our goal is to prove that $\mathbf{M}$ is a cell $2$-representation.

\subsection{Fibonacci polynomials in disguise}\label{s17.2}

For $i=0,1,2,\dots$, we define, recursively, polynomials $f_i(x)\in\mathbb{Z}[x]$ as follows:
$f_0(x)=0$, $f_1(x)=1$, and, for $i>1$, 
\begin{displaymath}
f_i(x)=
\begin{cases}
f_{i-1}(x)-f_{i-2}(x),& i\text{ is odd};\\
xf_{i-1}(x)-f_{i-2}(x),& i\text{ is even}.
\end{cases}
\end{displaymath}
Coefficients of $f_i$ are given by Sequence~A115139 in \cite{OEIS}. 
The values of $f_i$ for small $i$ are given here:
\begin{displaymath}
\begin{array}{c||l|l}
i&f_i(x)&\text{ factorization over }\mathbb{Q}\\
\hline\hline
0&0\\
1&1&1\\
2&x&x\\
3&x-1&x-1\\
4&x^2-2x&x(x-2)\\
5&x^2-3x+1&x^2-3x+1\\
6&x^3-4x^2+3x&x(x-1)(x-3)\\
7&x^3-5x^2+6x-1&x^3-5x^2+6x-1\\
8&x^4-6x^3+10x^2-4x&x(x-2)(x^2-4x+2)\\
9&x^4-7x^3+15x^2-10x+1&(x-1)(x^3-6x^2+9x-1)\\
10&x^5-8x^4+21x^3-20x^2+5x&x(x^2-3x+1)(x^2-5x+5)\\
11&x^5-9x^4+28x^3-35x^2+15x-1&x^5-9x^4+28x^3-35x^2+15x-1\\
12&x^6-10x^5+36x^4-56x^3+35x^2-6x&x(x-1)(x-2)(x-3)(x^2-4x+1).
\end{array} 
\end{displaymath}

For $i=0,1,2,\dots$, we define, recursively, polynomials $g_i(x)\in\mathbb{Z}[x]$ in the following way:
$g_0(x)=0$, $g_1(x)=1$, and $g_i(x)=xg_{i-1}(x)+g_{i-2}(x)$, for $i>1$. These
polynomials are called {\em Fibonacci polynomials}, see Sequence~A011973 in \cite{OEIS}.
By comparing the two definitions, we have,  
\begin{displaymath}
\begin{array}{rclr}
(-1)^{\lfloor\frac{i}{2}\rfloor}f_i(-x^2)&=&g_i(x),&i\text{ is odd};\\
\frac{(-1)^{\frac{i}{2}}}{x}f_i(-x^2)&=&g_i(x),&i\text{ is even}.
\end{array}
\end{displaymath}
Now, from \cite[Lemma~5]{Lev}, it follows that, for each $i\in\{1,2,3,\dots\}$, the polynomial 
$f_i(x)$ has a unique irreducible (over $\mathbb{Q}$) factor, denoted $\underline{f}_i(x)$, which
is not an irreducible factor of any $f_j(x)$, for $j<i$. Furthermore,
\begin{displaymath}
f_i(x)=\prod_{d\vert i}\underline{f}_d(x).
\end{displaymath}
From \cite[Definition~1]{Lev}, it follows that, for $i>2$, the polynomial  $\underline{f}_i(x)$ 
has degree $\frac{\phi(i)}{2}$, where $\phi$ is Euler's totient function, and all roots of 
$\underline{f}_i(x)$ are positive real numbers less than $4$. Furthermore, 
the sequence of maximal roots of $\underline{f}_i(x)$ is strictly increasing and converges to 
$4$, when $i\to\infty$. For small $i$, the polynomials $\underline{f}_i(x)$ are given in the following table:
\begin{displaymath}
\begin{array}{c||l}
i&\underline{f}_i(x)\\
\hline\hline
0&0\\
1&1\\
2&x\\
3&x-1\\
4&x-2\\
5&x^2-3x+1\\
6&x-3\\
7&x^3-5x^2+6x-1\\
8&x^2-4x+2\\
9&x^3-6x^2+9x-1\\
10&x^2-5x+5\\
11&x^5-9x^4+28x^3-35x^2+15x-1\\
12&x^2-4x+1\\
13&x^6-11x^5+45x^4-84x^3+70x^2-21x+1\\
14&x^3-7x^2+14x-7\\
15&x^4-9x^3+26x^2-24x+1.
\end{array} 
\end{displaymath}
We refer the reader to \cite{Lev,WP} for further properties of the above polynomials.

\subsection{Disguised Fibonacci polynomials and $2$-representations of $\underline{\cS}$}\label{s17.3}

Recall that we are in Coxeter type $I_2(n)$, for some $n\geq 3$. Let $\mathbf{M}$ be a simple 
transitive $2$-representation of $\underline{\cS}$ with apex $\mathcal{J}$. By 
Subsection~\ref{s4.4} and Corollary~\ref{corn32}, we have
\begin{displaymath}
\mathtt{M}=\left( 
\begin{array}{c|c}
2E_1&B\\\hline
B^{\mathrm{tr}}&2E_2
\end{array}
\right),\quad
\Lparen\theta_s\Rparen=\left( 
\begin{array}{c|c}
2E_1&B\\\hline
0&0
\end{array}
\right),\quad
\Lparen\theta_t\Rparen=\left( 
\begin{array}{c|c}
0&0\\\hline
B^{\mathrm{tr}}&2E_2
\end{array}
\right).
\end{displaymath}

\begin{lemma}\label{lem1701}
Both $BB^{\mathrm{tr}}$ and $B^{\mathrm{tr}}B$ are annihilated by $f_n(x)$. 
\end{lemma}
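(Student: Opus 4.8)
The plan is to exploit the relations in the Hecke/group algebra $\mathbb{C}[W]$ of type $I_2(n)$ and transport them through the $2$-representation $\mathbf{M}$. Concretely, in $\mathbb{Z}[W]$ the Kazhdan--Lusztig basis elements $\underline{s}$ and $\underline{t}$ satisfy the quadratic relations $\underline{s}^2=2\underline{s}$, $\underline{t}^2=2\underline{t}$, and the products $\underline{s}\,\underline{t}$, $\underline{t}\,\underline{s}$ generate, together with $\underline{s},\underline{t}$, the subalgebra spanned by $\mathcal{J}\cup\{e\}$ modulo the ideal $I$ killed in $\underline{\cS}$. The point is that the elements $\underline{x}$ for $x\in\mathcal{J}$ of a fixed length satisfy a linear three-term recursion coming from $\theta_s\theta_t\theta_s = \theta_{sts}\oplus\theta_s$ and its variants, and the coefficients of this recursion are exactly those defining the polynomials $f_i(x)$ above (with $x$ playing the role of the operator $\theta_s\theta_t$ or $\theta_t\theta_s$ restricted appropriately). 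First I would make this precise: set $Y := \theta_t\theta_s$ acting on the ``$s$-part'' of $\mathbf{M}(\mathtt{i})$ (the rows/columns indexed by the block $2E_1$), and observe that on the decategorified level $Y$ acts as $B^{\mathrm{tr}}B$ (up to the factor coming from $\Lparen\theta_s\Rparen$, $\Lparen\theta_t\Rparen$ having the block-triangular shape displayed just before the lemma); symmetrically $\theta_s\theta_t$ acts as $BB^{\mathrm{tr}}$ on the ``$t$-part''.

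Next I would identify the vanishing relation. In $\mathbb{Z}[W]$, since the longest element $w_0$ of $W = I_2(n)$ does not lie in $\mathcal{J}$ (Proposition~\ref{propJKL}\eqref{propJKL.5} and the remark after it), the element $\underline{w_0}$ maps to zero in $\underline{\cS}$, hence acts as zero in $\mathbf{M}$. Writing $\underline{w_0}$ (or rather the relevant ``top'' KL basis element of length $n-1$ in $\mathcal{J}$, together with the fact that the length-$n$ elements are gone) in terms of alternating products of $\underline{s}$ and $\underline{t}$ via the recursion, one gets precisely that the polynomial $f_n$ evaluated at $\theta_s\theta_t$ (resp. $\theta_t\theta_s$) annihilates the corresponding part of $\mathbf{M}$. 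Decategorifying, $f_n(BB^{\mathrm{tr}}) = 0$ on the $t$-block and $f_n(B^{\mathrm{tr}}B) = 0$ on the $s$-block. I would set this up by proving the operator identity by induction on the length index $i$: the recursion for $f_i$ (with the case split on parity of $i$) matches the alternating structure of reduced words in $I_2(n)$, where every second multiplication by $\theta_s$ or $\theta_t$ introduces the ``$\oplus\theta_s$'' correction term responsible for the coefficient $2$ (encoded as the variable $x$ vs. the constant in the recursion), and after the substitution $x \mapsto \theta_s\theta_t$ (which has ``eigenvalue contribution'' related to the $2E$ blocks) this becomes exactly the stated $f_i$.

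The main obstacle I anticipate is bookkeeping: correctly matching the two-case recursion defining $f_i$ (odd steps: $f_i = f_{i-1}-f_{i-2}$; even steps: $f_i = xf_{i-1}-f_{i-2}$) to the categorified relation, keeping careful track of which of $\theta_s$, $\theta_t$ is applied at each step, on which block it acts nontrivially given the triangular forms of $\Lparen\theta_s\Rparen$, $\Lparen\theta_t\Rparen$, and how the identity summand in $\theta_s\theta_t\theta_s\cong\theta_{sts}\oplus\theta_s$ produces the ``$-f_{i-2}$'' term after rearranging. One subtlety is whether one needs $f_n(B B^{\mathrm{tr}})=0$ exactly or only that $f_n$ kills the relevant invariant submodule; since $\mathbf{M}$ has apex $\mathcal{J}$ and the action is faithful on the $\mathcal{J}$-part in the appropriate sense, I expect these to coincide, but this point should be checked against Proposition~\ref{prop4} and Corollary~\ref{corn32}. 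Once the operator identity $f_n(\theta_s\theta_t)=0$ (on the appropriate block) is established, passing to $\Lparen-\Rparen$ is immediate and gives $f_n(BB^{\mathrm{tr}})=0=f_n(B^{\mathrm{tr}}B)$, which is the claim. I would keep the write-up short by proving the identity first purely inside $\mathbb{Z}[W]/I$ (a commutative-enough computation with $\underline{s},\underline{t}$) and then applying $\mathbf{M}$.
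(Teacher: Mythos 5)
Your proposal follows essentially the same route as the paper's proof: by induction on length, using the dihedral multiplication rule $\theta_s\theta_{t_i}\cong\theta_{s_{i+1}}\oplus\theta_{s_{i-1}}$ (and its $s\leftrightarrow t$ analogue), which matches the two-case recursion defining $f_i$, one expresses the matrices of $\theta_{s_i}$ and $\theta_{t_i}$ through $f_i(BB^{\mathrm{tr}})$ and $f_i(B^{\mathrm{tr}}B)$, and the claim then follows since $w_0=s_n=t_n$ lies outside the apex $\mathcal{J}$, so $\Lparen\theta_{w_0}\Rparen=0$. The caveats you raise (which block carries $BB^{\mathrm{tr}}$ versus $B^{\mathrm{tr}}B$, and whether $f_n$ might only kill an invariant part) are harmless: carrying out the induction on the full block matrices, exactly as the paper does, yields the identities $f_n(BB^{\mathrm{tr}})=f_n(B^{\mathrm{tr}}B)=0$ on the nose.
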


\begin{proof}
Set $C=B^{\mathrm{tr}}$.
For simplicity, for $i\geq 1$, let $s_i=stst\dots$ be the element of length $i$ and 
$t_i=tsts\dots$ be the element of length $i$. Using induction on $i$ and the multiplication rule
\begin{equation}\label{eq1705}
\theta_s\theta_{t_i}\cong
\begin{cases}
\theta_{s_2}, & i=1;\\
\theta_{s_{i+1}}\oplus \theta_{s_{i-1}}, & i>1;
\end{cases}
\qquad
\theta_t\theta_{s_i}\cong
\begin{cases}
\theta_{t_2}, & i=1;\\
\theta_{t_{i+1}}\oplus \theta_{t_{i-1}}, & i>1;
\end{cases}
\end{equation}
one proves, by induction, that 
\begin{displaymath}
\Lparen\theta_{s_{i}}\Rparen=\left( 
\begin{array}{c|c}
2f_i(BC)&f_i(BC)B\\\hline
0&0
\end{array}
\right),\quad
\Lparen\theta_{t_{i}}\Rparen=\left( 
\begin{array}{c|c}
0&0\\\hline
f_i(CB)C&2f_i(CB)
\end{array}
\right),\quad
\end{displaymath}
if $i$ is odd, and
\begin{displaymath}
\Lparen\theta_{s_{i}}\Rparen=\left( 
\begin{array}{c|c}
f_i(BC)&2f_i(BC)C^{-1}\\\hline
0&0
\end{array}
\right),\quad
\Lparen\theta_{t_{i}}\Rparen=\left( 
\begin{array}{c|c}
0&0\\\hline
2f_i(CB)B^{-1}&f_i(CB)
\end{array}
\right),\quad
\end{displaymath}
if $i$ is even (here $C^{-1}$ just means that the rightmost appearances of  $C$ in $f_i(BC)$
should be deleted, and similarly for $B^{-1}$ with respect to $f_i(CB)$). Note that
$C (BC)^l = (CB)^l C$, for any $l\in\{0,1,2,\dots\}$, and therefore  $B f_i(CB) C = BC f_i(BC)$,
for any $i$.

As $\mathbf{l}(w_0)=n$, 
the claim now follows from our assumption that 
$\mathbf{M}$ has apex $\mathcal{J}$  and therefore $\Lparen\theta_{w_0}\Rparen=0$. 
\end{proof}

\begin{corollary}\label{cor1702}
If $n$ is odd, then $B$ is invertible.
\end{corollary}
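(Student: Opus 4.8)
The plan is to deduce invertibility of $B$ from Lemma~\ref{lem1701} together with the parity assumption on $n$. Since $B$ is a square matrix (both diagonal blocks $2E_1$, $2E_2$ have the same size by Corollary~\ref{corn32}, or one argues directly that $B$ is square because $BB^{\mathrm{tr}}$ and $B^{\mathrm{tr}}B$ have the same nonzero spectrum and, being annihilated by $f_n$ whose roots control both, must actually have equal size here), it suffices to show that $0$ is not an eigenvalue of $BB^{\mathrm{tr}}$, equivalently that $\det(BB^{\mathrm{tr}})\neq 0$.

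First I would recall from Subsection~\ref{s17.2} that $f_n(x)=\prod_{d\mid n}\underline{f}_d(x)$, and that for $d>2$ all roots of $\underline{f}_d(x)$ are \emph{positive} real numbers (strictly less than $4$), while $\underline{f}_1(x)=1$ and $\underline{f}_2(x)=x$. Hence $x$ divides $f_n(x)$ if and only if $2\mid n$. When $n$ is odd, the only divisors $d$ of $n$ are odd, so $f_n(x)$ is a product of the constant $\underline{f}_1=1$ and of polynomials $\underline{f}_d$ with $d\geq 3$ odd, all of whose roots are strictly positive. Therefore $f_n(0)\neq 0$, i.e. $0$ is not a root of $f_n$.

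Next, by Lemma~\ref{lem1701} the minimal polynomial of the symmetric positive-semidefinite matrix $BB^{\mathrm{tr}}$ divides $f_n(x)$. If $BB^{\mathrm{tr}}$ had $0$ as an eigenvalue, then $x$ would divide its minimal polynomial and hence divide $f_n(x)$, contradicting $f_n(0)\neq 0$. Thus $BB^{\mathrm{tr}}$ is invertible, and consequently $B$ has trivial kernel; being square, $B$ is invertible.

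I do not expect a serious obstacle here: the only point requiring a little care is justifying that $B$ is square (so that ``$B$ invertible'' is meaningful and follows from injectivity). This can be read off from the symmetry of $\mathtt{M}$ established in Corollary~\ref{corn32}, which forces $E_1$ and $E_2$ to have the same size, or alternatively from the fact that $f_n$ annihilates both $BB^{\mathrm{tr}}$ and $B^{\mathrm{tr}}B$ with $f_n(0)\neq 0$, so both products are invertible, forcing $B$ to be square of full rank. Everything else is the elementary observation that $x\nmid f_n(x)$ precisely when $n$ is odd.
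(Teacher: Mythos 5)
Your argument is correct and essentially the paper's own: for odd $n$ the polynomial $f_n(x)$ has nonzero constant term (equivalently $f_n(0)\neq 0$), so Lemma~\ref{lem1701} forces both $BB^{\mathrm{tr}}$ and $B^{\mathrm{tr}}B$ to be invertible, whence $B$ is square of full rank and hence invertible. One small caveat: the symmetry of $\mathtt{M}$ from Corollary~\ref{corn32} does not by itself force $E_1$ and $E_2$ to have the same size (the block matrix is symmetric for any rectangular $B$), but your alternative justification --- that invertibility of both $BB^{\mathrm{tr}}$ and $B^{\mathrm{tr}}B$ forces $B$ to be square --- is exactly the right one and is what the paper uses implicitly.
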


\begin{proof}
If $n$ is odd, then $f_n(x)$ has a non-zero constant term. Therefore, from Lemma~\ref{lem1701} we have that
both $BB^{\mathrm{tr}}$ and $B^{\mathrm{tr}}B$ are invertible and thus $B$ is invertible as well.
\end{proof}

\begin{corollary}\label{cor1705}
Let $n\in\{3,4,5,\dots\}$ and $\mathbf{M}$ be a simple transitive $2$-representation of 
$\underline{\cS}$ in Coxeter type $I_2(n)$ with apex $\mathcal{J}$ and with the corresponding matrix $\mathtt{M}$.
Let $n'\in\{3,4,5,\dots\}$ and $\mathbf{M}'$ be a simple transitive $2$-representation of 
$\underline{\cS}$ in Coxeter type $I_2(n')$ with apex $\mathcal{J}$ and with the corresponding matrix $\mathtt{M}'$.
If $\mathtt{M}=\mathtt{M}'$, then $n=n'$.
\end{corollary}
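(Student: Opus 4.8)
The plan is to extract $n$ from the matrix $\mathtt{M}$ by recovering the minimal polynomial of $BB^{\mathrm{tr}}$ (equivalently $B^{\mathrm{tr}}B$), and then to distinguish which $f_n$ annihilates this matrix. First I would use the block structure $\mathtt{M}=\left(\begin{smallmatrix}2E_1&B\\B^{\mathrm{tr}}&2E_2\end{smallmatrix}\right)$ from Subsection~\ref{s17.3}: once $\mathtt{M}=\mathtt{M}'$, the blocks $B$ and $B'$ coincide (after the canonical ordering is matched up, which is forced by Proposition~\ref{prop4} since the two block sizes $E_1,E_2$ are read off from $\mathtt{M}$ itself), so $BB^{\mathrm{tr}}=B'(B')^{\mathrm{tr}}$ as honest matrices, not just up to similarity. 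Hence the two representations share the same matrix $N:=BB^{\mathrm{tr}}\in\mathbb{M}$, with the same minimal polynomial $\mu(x)\in\mathbb{Q}[x]$.

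By Lemma~\ref{lem1701}, $\mu(x)$ divides $f_n(x)$ and divides $f_{n'}(x)$. The key arithmetic input is the factorization $f_i(x)=\prod_{d\mid i}\underline{f}_d(x)$ into distinct irreducibles recalled in Subsection~\ref{s17.2}, together with the fact that $\mathbf{m}_N<4$: indeed, by Lemma~\ref{lem2}, the southwest block of $\Lparen\theta_s\Rparen$ is zero and the northeast block is $B$, so the constraint on eigenvalues of $BB^{\mathrm{tr}}$ coming from $\theta_{w_0}$ acting as zero (combined with the structure of the $f_i$'s) keeps all eigenvalues of $N$ strictly below $4$; more directly, the roots of every $\underline{f}_d$ with $d>2$ are real and lie in $(0,4)$, and $\mu$ is a product of such $\underline{f}_d$'s (possibly together with $\underline{f}_1,\underline{f}_2$). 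The plan is then: since $N\ne 0$ by transitivity, $\mu\ne 1$, so $\mu$ has an irreducible factor $\underline{f}_d$ with $d\ge 2$ in common between $f_n$ and $f_{n'}$, forcing $d\mid n$ and $d\mid n'$; I would pin down the \emph{largest} such $d$, call it $d_0$, which is the largest index with $\underline{f}_{d_0}\mid\mu$. The essential point is that $d_0=n$: the largest irreducible factor of $f_n$ in the Levstein ordering is precisely $\underline{f}_n$, and I claim $\underline{f}_n\mid\mu$.

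To justify $\underline{f}_n\mid\mu$, I would argue that the top eigenvalue realized forces it. The matrix $\Lparen\theta_{w_0}\Rparen=0$ but $\Lparen\theta_{s_i}\Rparen\ne 0$ for all $i<n$ (since each $\theta_{s_i}$ with $\mathbf{l}(s_i)<n$ is a nonzero $1$-morphism of $\underline{\cS}$ and the representation is faithful on $\mathcal{J}$ by $\mathcal{J}$-simplicity); in terms of the formulas in the proof of Lemma~\ref{lem1701}, this says $f_i(BC)\ne 0$ for $i<n$ but $f_n(BC)=0$, i.e. $\mu(x)\mid f_n(x)$ but $\mu(x)\nmid f_i(x)$ for any $i<n$. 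By the divisor factorization, a polynomial dividing $f_n$ but no $f_i$ with $i<n$ must be divisible by $\underline{f}_n$ (the unique irreducible factor of $f_n$ not occurring in any earlier $f_j$). Therefore $\underline{f}_n\mid\mu$, and symmetrically $\underline{f}_{n'}\mid\mu$; since the $\underline{f}_i$ are pairwise coprime and $\mu$ has bounded degree, having both $\underline{f}_n$ and $\underline{f}_{n'}$ as factors of the same $\mu$ dividing both $f_n$ and $f_{n'}$ is only consistent if $n=n'$ — more cleanly, $\underline{f}_{n'}\mid\mu\mid f_n=\prod_{d\mid n}\underline{f}_d$ forces $n'\mid n$ by uniqueness of the factorization, and symmetrically $n\mid n'$, whence $n=n'$.

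I expect the main obstacle to be the bookkeeping in the first paragraph: making precise that $\mathtt{M}=\mathtt{M}'$ literally gives $B=B'$ rather than merely a coincidence of some spectral invariant. This requires checking that the normalization in \eqref{eqo1} and Proposition~\ref{prop4} is canonical enough — the partition of indices into the $2E_1$-block and the $2E_2$-block is intrinsic to $\mathtt{M}$ (it is where the diagonal $2$'s sit, grouped by which simple reflection doubles each projective), so the off-diagonal block $B$ is genuinely determined by $\mathtt{M}$. The second potential subtlety is confirming $f_i(BC)\ne0$ for $i<n$; this should follow from $\mathcal{J}$-simplicity of $\underline{\cS}$ together with the observation (already used in the proof of Lemma~\ref{lem3}) that every indecomposable $1$-morphism in $\mathcal{J}$, in particular $\theta_{s_i}$ for $\mathbf{l}(s_i)<n$, acts nontrivially on any simple transitive $\mathbf{M}$ with apex $\mathcal{J}$. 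Everything else is a direct appeal to Lemma~\ref{lem1701} and the Levstein factorization \cite{Lev} recalled in Subsection~\ref{s17.2}.
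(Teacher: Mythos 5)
Your overall strategy is the paper's: combine Lemma~\ref{lem1701} with the explicit $f_i$-formulas from its proof and the apex condition, applied to both $\mathbf{M}$ and $\mathbf{M}'$ through the common block $B$. Your preliminary points are fine (the bipartition of indices is recovered from $\mathtt{M}$ as the unique bipartition of the connected graph of $\mathtt{M}-2E$, so $B$ is determined up to the harmless swap $B\leftrightarrow B^{\mathrm{tr}}$; and $f_i(BB^{\mathrm{tr}})\neq 0$ for $0<i<n$ because no $\theta_w$ with $w\in\mathcal{J}$ is annihilated when $\mathcal{J}$ is the apex). The genuine gap is the polynomial lemma you invoke to get $\underline{f}_n\mid\mu$: the claim that a polynomial dividing $f_n$ but dividing no $f_i$ with $i<n$ must be divisible by $\underline{f}_n$ is false. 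For $n=6$ take $\mu=x(x-1)=\underline{f}_2\,\underline{f}_3$: since $x(x-1)$ divides $f_i$ exactly when both $2\mid i$ and $3\mid i$, this $\mu$ divides $f_6$ and no earlier $f_i$, yet $\underline{f}_6=x-3$ does not divide it (similarly $(x-2)(x-3)$ against $f_{12}$). So the pivotal assertion ``$\underline{f}_n\mid\mu$ and $\underline{f}_{n'}\mid\mu$'' is unjustified as written; establishing it for the minimal polynomials that actually occur would essentially require Propositions~\ref{prop1704} and \ref{prop42}, whose proofs in the paper rely on this very corollary, so you cannot appeal to them here.

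Fortunately the detour through $\underline{f}_n$ is unnecessary: the facts you have already assembled give the statement directly. From $\mathbf{M}$ you know $\mu\nmid f_i$ for all $0<i<n$, while Lemma~\ref{lem1701} applied to $\mathbf{M}'$ (same $B$) gives $\mu\mid f_{n'}$; hence $n'\geq n$, and by the symmetric argument $n\geq n'$, so $n=n'$. This repaired argument is precisely the paper's proof, phrased there representation-theoretically: since $f_n(BB^{\mathrm{tr}})=0$, the formulas in the proof of Lemma~\ref{lem1701} show that the $1$-morphism indexed by the alternating word of length $n$ acts by zero in $\mathbf{M}'$, so that word cannot lie in $\mathcal{J}$ for $I_2(n')$, forcing $n'\leq n$, and symmetrically $n\leq n'$.
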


\begin{proof}
Let $w$ denote the word $stst\dots$ of length $n$. Then $w=w_0$ in Coxeter type $I_2(n)$.
From Lemma~\ref{lem1701}, we have that $\mathbf{M}'(\theta_w)=0$. Therefore $w\not\in\mathcal{J}$ 
in Coxeter type $I_2(n')$ and we obtain $n'\leq n$. By symmetry, we also have $n\leq n'$ and hence $n=n'$.
\end{proof}

\subsection{The explicit form of $B$}\label{s17.4}

\begin{proposition}\label{prop1704}
If $n=2k+1$, then, by independent permutations of rows and columns, 
the matrix $B$ can be reduced to a $k\times k$ staircase matrix.
\end{proposition}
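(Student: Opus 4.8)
The plan is to combine Corollary~\ref{cor1702} with Proposition~\ref{propsim}. First I would observe that for $n=2k+1$ odd, Corollary~\ref{cor1702} tells us that $B$ is invertible, hence a square $k\times k$ matrix (recall that the sizes of the two diagonal blocks $E_1,E_2$ of $\mathtt{M}$ are the numbers of indecomposable objects $P_a$ with $\theta_s P_a\cong P_a^{\oplus 2}$ and with $\theta_t P_a\cong P_a^{\oplus 2}$ respectively, and invertibility forces these to be equal; and $k$ is identified with this common value via the explicit count of left cells in $\mathcal{J}$ from Subsection~\ref{s4.2}, where each off-diagonal box contains $\lfloor n/2\rfloor = k$ elements). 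Since $B$ has non-negative integer entries, $B\in\mathbb{M}$.

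Next I would check that $BB^{\mathrm{tr}}$ and $B^{\mathrm{tr}}B$ are irreducible and that $\mathbf{m}_B<4$, so that Proposition~\ref{propsim} applies. Irreducibility follows from transitivity of $\mathbf{M}$: if, say, $BB^{\mathrm{tr}}$ were reducible, one could split the index set of the ``$s$-objects'' into two blocks with no interaction through $\theta_s\theta_t$, contradicting that all entries of $\mathtt{M}_{\mathrm{tot}}$ are positive (equivalently, contradicting Proposition~\ref{prop4} together with connectedness of the Coxeter diagram, which here is just the single edge $s-t$). For the eigenvalue bound: by Lemma~\ref{lem1701}, $BB^{\mathrm{tr}}$ is annihilated by $f_n(x)$, so every eigenvalue of $BB^{\mathrm{tr}}$ is a root of $f_n$; since $n>2$, the roots of each irreducible factor $\underline{f}_d(x)$ with $d\mid n$ are positive reals strictly less than $4$ (as recalled in Subsection~\ref{s17.2} from \cite{Lev}), so $\mathbf{m}_B<4$. (The eigenvalue $0$ does not occur here since $B$ is invertible, but that is not needed.)

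Now Proposition~\ref{propsim} gives that, up to independent permutations of rows and columns, $B$ is a staircase matrix, an extended staircase matrix, or one of $X_1, X_2, X_3$ (or a transpose). The final step is to rule out everything except the square staircase matrices. This is immediate from shape: among the matrices listed in Proposition~\ref{propsim}, the square ones are exactly the $k\times k$ staircase matrices (the other staircase matrices are $k\times(k+1)$ or $(k+1)\times k$; extended staircase matrices are $k\times(k+1)$, $k\times(k+2)$, $(k+1)\times k$ or $(k+2)\times k$; and $X_1$ is $3\times3$ but fails a separate test while $X_2,X_3$ and all transposes are non-square). Wait --- $X_1$ \emph{is} square, so I need a further argument to exclude it: here I would invoke Corollary~\ref{corn32}, which asserts $\mathtt{M}$ is symmetric; but more directly, for $B=X_1$ one computes that $f_n(BB^{\mathrm{tr}})\neq 0$ for every $n$, since $BB^{\mathrm{tr}}$ has $4$ as an eigenvalue (the relevant computation is exactly the one already performed in the proof of Proposition~\ref{propsim} showing $X_1$-type configurations force eigenvalue $4$), contradicting Lemma~\ref{lem1701}. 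Alternatively, and more cleanly, one notes that the matrix $\big(\begin{smallmatrix}2E&X_1\\ X_1^{\mathrm{tr}}&2E\end{smallmatrix}\big)$ has the same spectrum as the Cartan-type matrix of a doubled $E$-quiver and in particular has an eigenvalue $\geq 4$, which is incompatible with $\Lparen\theta_{w_0}\Rparen=0$. Hence $B$ must be a square staircase matrix, which completes the proof.

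\textbf{Main obstacle.} I expect the genuinely delicate point to be the elimination of the exceptional square possibility $X_1$ (and, once one also wants a clean statement, confirming that nothing of $D$- or $E$-type can sneak in through coincidences of matrix size): the shape argument disposes of almost all alternatives for free, but $X_1$ has the same size as a $3\times3$ staircase matrix ($k=3$), so it must be excluded by a spectral computation rather than by counting rows and columns. Everything else --- invertibility, irreducibility, the eigenvalue bound via Fibonacci polynomials --- is a routine assembly of results already in hand.
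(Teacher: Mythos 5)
Your overall strategy---$B$ invertible hence square by Corollary~\ref{cor1702}, eigenvalues of $B^{\mathrm{tr}}B$ are roots of $f_n$ and hence lie in $(0,4)$, then Proposition~\ref{propsim} plus elimination of the non-staircase possibilities---is the same as the paper's, but your elimination step contains two genuine errors. First, $X_3$ is a $4\times 4$ matrix, so it is square and is \emph{not} removed by your shape argument; it must be excluded spectrally. The paper does this by computing that the characteristic polynomial of $X_3^{\mathrm{tr}}X_3$ is $x^4-7x^3+14x^2-8x+1$, irreducible over $\mathbb{Q}$, and observing that the only odd $i$ with $\phi(i)=8$ is $i=15$, while $\underline{f}_{15}(x)=x^4-9x^3+26x^2-24x+1$ is a different polynomial; hence $\underline{f}_i(X_3^{\mathrm{tr}}X_3)\neq 0$, and so $f_n(X_3^{\mathrm{tr}}X_3)\neq 0$, for every odd $n$. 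Second, both of your arguments against $X_1$ rest on false spectral claims: $X_1X_1^{\mathrm{tr}}$ does \emph{not} have $4$ as an eigenvalue (its spectrum is $\{1,2\pm\sqrt{3}\}$; if it had eigenvalue $4$, then $\mathbf{m}_{X_1}\geq 4$ and $X_1$ could not appear in Proposition~\ref{propsim} at all), and the matrix $\left(\begin{smallmatrix}2E&X_1\\ X_1^{\mathrm{tr}}&2E\end{smallmatrix}\right)$ has spectral radius $2+\sqrt{2+\sqrt{3}}<4$. The correct exclusion is arithmetic, not a ``$\geq 4$'' bound: the characteristic polynomial of $X_1^{\mathrm{tr}}X_1$ factors as $(x-1)(x^2-4x+1)$, so $\underline{f}_{12}(x)=x^2-4x+1$ divides any annihilating polynomial, and $\underline{f}_{12}\mid f_i$ only when $12\mid i$, which is impossible for $i$ odd.

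There is also a gap in your identification of the size as $k\times k$. Counting elements in the boxes of the $\mathcal{J}$-table of Subsection~\ref{s4.2} says nothing a priori about the rank of an arbitrary simple transitive $2$-representation; rank is genuinely representation-dependent (in the even case the non-cell $2$-representations $\mathbf{N}_s^{(n)}$ have rank different from that of the cell $2$-representations). The paper instead deduces the size from Corollary~\ref{cor1705}: a square $m\times m$ staircase matrix occurs as $B$ for the cell $2$-representation in Coxeter type $I_2(2m+1)$, so coincidence of the matrices $\mathtt{M}$ forces $n=2m+1$, i.e.\ $m=k$. Your remaining ingredients (irreducibility of $BB^{\mathrm{tr}}$ and $B^{\mathrm{tr}}B$ from transitivity, and the eigenvalue bound via the factorization of the Fibonacci-type polynomials) are fine and agree with the paper.
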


\begin{proof}
If we can prove that, by independent permutations of rows and columns, $B$ 
can be reduced to a square staircase matrix, then the fact that the size of such
matrix is $k\times k$ follows from Corollary~\ref{cor1705}.
From Corollary~\ref{cor1702}, we have that $B$ is an invertible matrix,
say of size $m\times m$. Set $Q=B^{\mathrm{tr}}B$. Then, from Lemma~\ref{lem1701}
and Subsection~\ref{s17.2}, we have that all eigenvalues of $Q$ are positive 
real numbers that are strictly less than $4$.

Therefore, by Proposition~\ref{propsim}, the matrix $B$ is either a staircase matrix
or coincides with one of the matrices $X_1$ or $X_3$. 
If $B=X_3$, then 
\begin{displaymath}
Q=\left(\begin{array}{cccc}2&1&0&0\\1&2&1&1\\0&1&1&1\\0&1&1&2\end{array}\right).
\end{displaymath}
The characteristic polynomial of $Q$ is $x^4-7x^3+14x^2-8x+1$ and is irreducible over $\mathbb{Q}$.
The only odd $i$, for which $\phi(i)=8$, is $i=15$. However, we already know that 
$\underline{f}_{15}(x)=x^4-9x^3+26x^2-24x+1$. Therefore $\underline{f}_{i}(Q)\neq 0$
for any odd $i$. This means that such $B$ is not possible. 

If $B=X_1$, we have
\begin{displaymath}
Q=\left(\begin{array}{ccc}2&1&1\\1&1&1\\1&1&2\end{array}\right).
\end{displaymath}
The characteristic polynomial of $Q$ has factorization $(x-1)(x^2-4x+1)$.
This means that $\underline{f}_{12}(x)$ is a factor of any annihilating polynomial of 
$Q$ and thus  $\underline{f}_{i}(Q)\neq 0$ for any odd $i$.  
The claim of the proposition follows.
\end{proof}

\subsection{Proof of Theorem~\ref{thm1700}}\label{s17.6}

We are now ready to prove Theorem~\ref{thm1700}. Let $\mathbf{M}$ be a simple transitive 
$2$-representation of $\underline{\cS}$ with apex $\mathcal{J}$. Proposition~\ref{prop1704}
gives explicitly the matrix $\mathtt{M}$, in particular, it shows that this matrix is
uniquely determined. Hence this matrix coincides with the corresponding matrix for the
cell $2$-representation $\mathbf{C}_{\mathcal{L}_s}$. 

Consider the abelianization $\overline{\mathbf{M}}$.
Now, starting from the simple module $L_{k}$, we can use Proposition~\ref{propn31} to get
$\theta_s\, L_k\cong P_k$. Using the explicit formulae for the actions of $\theta_s$ and
$\theta_t$ together with \eqref{eq1705}, we see that, applying $\theta_w$, for 
$w\in \mathcal{L}_s$, to $L_k$, we obtain all indecomposable projective objects in 
$\overline{\mathbf{M}}(\mathtt{i})$.

The rest of the proof is now similar to the corresponding parts of the proofs in the literature,
see \cite[Proposition~9]{MM5}, \cite[Sections~6~and~9]{MZ}  or \cite[Subsection~4.9]{MaMa}.
Let $\mathbf{N}$ be the additive $2$-representation of $\cS$ obtained by restricting the action of 
$\cS$ to the category of projective objects in $\overline{\mathbf{M}}(\mathtt{i})$. Then
$\mathbf{N}$ is equivalent to $\mathbf{M}$, see \cite[Theorem~11]{MM2}.
There is a unique strong $2$-natural transformation from $\mathbf{P}_{\mathtt{i}}$ to 
$\overline{\mathbf{M}}$ sending $\mathbbm{1}_{\mathtt{i}}$ to $L_k$. It induces 
a strong $2$-natural transformation from $\mathbf{C}_{\mathcal{L}_s}$ to $\mathbf{N}$.
Since $\mathbf{C}_{\mathcal{L}_s}(\mathtt{i})$ is simple transitive with apex $\mathcal{J}$,
everything that we established earlier for $\mathbf{M}$ (in particular, about the structure
of projective modules etc.) also holds for $\mathbf{C}_{\mathcal{L}_s}$. In particular, 
the Cartan matrices of the underlying algebras of $\mathbf{C}_{\mathcal{L}_s}(\mathtt{i})$
and $\mathbf{N}(\mathtt{i})$ agree. Therefore the above $2$-natural transformation 
is an equivalence between these two categories. This completes the proof.

\section{Simple transitive $2$-representations of $\cS$ in Coxeter type $I_2(n)$ with $n$ even}\label{s6}

\subsection{Setup and preliminaries}\label{s6.1}

In this section we assume that $W$ is of type $I_2(n)$ with $n=2k>4$ and $S=\{s,t\}$ with the Coxeter diagram
\begin{displaymath}
\xymatrix{s\ar@{-}[r]^{n}&t.} 
\end{displaymath}
Let $\mathbf{M}$ be a simple transitive $2$-representation of $\underline{\cS}$ with apex $\mathcal{J}$.
By Subsection~\ref{s4.4} and Corollary~\ref{corn32}, we have
\begin{displaymath}
\mathtt{M}=\left( 
\begin{array}{c|c}
2E_1&B\\\hline
B^{\mathrm{tr}}&2E_2
\end{array}
\right),\quad
\Lparen\theta_s\Rparen=\left( 
\begin{array}{c|c}
2E_1&B\\\hline
0&0
\end{array}
\right),\quad
\Lparen\theta_s\Rparen=\left( 
\begin{array}{c|c}
0&0\\\hline
B^{\mathrm{tr}}&2E_2
\end{array}
\right).
\end{displaymath}
From Lemma~\ref{lem1701}, we have that
both $BB^{\mathrm{tr}}$ and $B^{\mathrm{tr}}B$ are annihilated by $f_{n}(x)$. 

The main result of the section is the following statement.

\begin{theorem}\label{thm49}
Assume that $W$ is of type $I_2(n)$ with $n=2k>4$ and $n\neq 12,18,30$. 
Then every simple transitive $2$-representation of
$\cS$ is equivalent to either a cell $2$-representation or one of the $2$-representations 
$\mathbf{N}_s^{(n)}$, $\mathbf{N}_t^{(n)}$ constructed in Subsection~\ref{s6.7}. All these
$2$-representations are pairwise non-equivalent.
\end{theorem}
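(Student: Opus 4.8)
The plan is to follow the two-part strategy outlined in the introduction. \textbf{Part one} determines the matrix $B$ up to simultaneous row/column permutations, and \textbf{part two} classifies the simple transitive $2$-representations realizing each admissible $B$. For part one, recall from Lemma~\ref{lem1701} that $Q:=B^{\mathrm{tr}}B$ and $BB^{\mathrm{tr}}$ are annihilated by $f_n(x)$. Since $n=2k$ is even, $f_n(0)=0$, so unlike the odd case (Corollary~\ref{cor1702}) $B$ need not be invertible; this is the source of the extra $2$-representations. The first step is to pin down the spectral constraints: every eigenvalue of $Q$ is a root of $f_n$, hence (by the factorization $f_n=\prod_{d\mid n}\underline{f}_d$ and the properties quoted from \cite{Lev}) lies in $[0,4)$. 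Thus $\mathbf{m}_B<4$ in the notation of Section~\ref{sim}, and after discarding zero rows/columns of $B$ (which force a reducibility one handles separately via transitivity, as in Proposition~\ref{prop4}) we may assume $BB^{\mathrm{tr}}$ and $B^{\mathrm{tr}}B$ are irreducible and apply Proposition~\ref{propsim}. So $B$ is, up to permutations, a staircase matrix, an extended staircase matrix, or one of $X_1,X_2,X_3$.

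The second step of part one is to eliminate the exceptional $E$-type matrices $X_1,X_2,X_3$ and the extended ($D$-type) staircase matrices, and to determine the size of the surviving staircase matrix. For each candidate one computes the characteristic polynomial of $Q$ and checks, exactly as in the proof of Proposition~\ref{prop1704}, whether it can divide $f_n(x)$ for some \emph{even} $n\notin\{12,18,30\}$: the $E$-type characteristic polynomials have irreducible factors whose degree $\tfrac{\phi(m)}{2}$ forces $m\in\{12,18,30\}$ (this is precisely where those three exceptional types get excluded from our theorem), and for the $D$-type matrices a similar degree/root count rules them out. For the surviving square or near-square staircase matrices, Corollary~\ref{cor1705} (the matrix $\mathtt{M}$ determines $n$) pins down the size in terms of $k$, and one reads off that there are exactly three possibilities for $B$: the two ``cell'' shapes (giving $\mathbf{C}_{\mathcal{L}_s}$ and $\mathbf{C}_{\mathcal{L}_t}$, which have non-square $B$ of sizes differing by one, corresponding to the two off-diagonal box sizes $\lfloor k\rfloor$ vs.\ $\lfloor k\rfloor-1$ noted in Subsection~\ref{s4.2}) and one ``new'' square shape of size $k\times k$ (giving $\mathbf{N}_s^{(n)},\mathbf{N}_t^{(n)}$, where $B$ has a nontrivial kernel).

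For part two, with $B$ fixed one reconstructs the $2$-representation. Using Theorem~\ref{thmminimal} and Proposition~\ref{propn31}, $\mathrm{F}_{\mathrm{pr}}\,L_i\cong P_i$, and applying the $\theta_w$ for $w$ in a fixed left cell to a chosen simple $L$ produces all indecomposable projectives of $\overline{\mathbf{M}}(\mathtt{i})$; the multiplication rule \eqref{eq1705} then determines the Cartan matrix of the underlying algebra from $\mathtt{M}$ alone. Exactly as in the proof of Theorem~\ref{thm1700}, one builds a strong $2$-natural transformation from $\mathbf{P}_{\mathtt{i}}$ sending $\mathbbm{1}_{\mathtt{i}}$ to the appropriate simple, restricts it to obtain a morphism out of the relevant cell (or newly-constructed) $2$-representation, and uses agreement of Cartan matrices plus simple transitivity to upgrade it to an equivalence. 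This requires that the two extra $2$-representations $\mathbf{N}_s^{(n)},\mathbf{N}_t^{(n)}$ actually \emph{exist} and are simple transitive with apex $\mathcal{J}$ realizing the square staircase $B$ --- that is the content of the explicit construction of Subsection~\ref{s6.7}, modelled on \cite[Section~5]{MaMa}, which one invokes here. Finally, pairwise non-equivalence follows by comparing the matrices $\mathtt{M}$ (distinct ranks/shapes separate cell from non-cell and $n$ from $n'$ via Corollary~\ref{cor1705}) and, for $\mathbf{N}_s^{(n)}$ versus $\mathbf{N}_t^{(n)}$, by the $s\leftrightarrow t$ asymmetry in which simple reflection acts with the $2E$-block of full size.

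The main obstacle is the exclusion of the exceptional $E$-type (and $D$-type) matrices for general even $n$: this is a genuinely arithmetic statement about which cyclotomic-like Fibonacci factors $\underline{f}_m$ can appear, and it is exactly here that $n=12,18,30$ must be set aside because the relevant $E_6,E_7,E_8$ characteristic polynomials \emph{do} divide $f_{12},f_{18},f_{30}$ --- so the theorem cannot be pushed further without genuinely new input. A secondary difficulty, deferred to Subsection~\ref{s6.7}, is verifying that the square-$B$ candidate is not vacuous, i.e.\ constructing $\mathbf{N}_s^{(n)}$ and $\mathbf{N}_t^{(n)}$ concretely and checking simple transitivity; this is where the delicate category-theoretic machinery of \cite{MaMa} is needed, but it is orthogonal to the matrix classification carried out above.
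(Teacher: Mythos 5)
Your overall skeleton matches the paper's (Lemma~\ref{lem1701} plus the Fibonacci factorization give eigenvalues in $[0,4)$, Proposition~\ref{propsim} restricts $B$, the exceptional matrices $X_1,X_2,X_3$ are excluded via their minimal polynomials $\underline{f}_{3}\underline{f}_{12}$, $\underline{f}_{18}$, $\underline{f}_{30}$, which is exactly where $n=12,18,30$ drop out, and part two runs as in Subsection~\ref{s17.6} resp.\ \cite[Subsection~5.10]{MaMa}), but your identification of the admissible shapes of $B$ is wrong, and this is a genuine gap rather than a cosmetic one. You claim that the extended ($D$-type) staircase matrices are eliminated ``by a similar degree/root count'' and that the new $2$-representations $\mathbf{N}_s^{(n)},\mathbf{N}_t^{(n)}$ realize a square $k\times k$ staircase with nontrivial kernel. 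Both statements are false. The extended staircase matrices cannot be eliminated: already for $n=6$ the extended staircase $B=(1\;1\;1)$ has $BB^{\mathrm{tr}}=(3)$, annihilated by $\underline{f}_6(x)=x-3$, which divides $f_6$; in fact Proposition~\ref{prop42} shows precisely that the surviving shapes are the $k\times(k-1)$ staircase (and its transpose), giving $\mathbf{C}_{\mathcal{L}_s}$ and $\mathbf{C}_{\mathcal{L}_t}$, together with the extended staircase of size $\tfrac{k}{2}\times\tfrac{k+2}{2}$ ($k$ even) or $\tfrac{k+3}{2}\times\tfrac{k-1}{2}$ ($k$ odd), and Subsection~\ref{s6.7} computes that these extended staircases are exactly the decategorifications of $\mathbf{N}_s^{(n)}$ and $\mathbf{N}_t^{(n)}$ (which have rank $k+1$, coming from folding the rank-$(2k-1)$ cell $2$-representation by the involution $\Psi$, with the extra branch vertex of the quiver producing the single row or column with three nonzero entries). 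Conversely, the square $k\times k$ staircase cannot occur in type $I_2(2k)$: it is unitriangular, hence invertible (so your ``nontrivial kernel'' remark is already internally inconsistent), its $B^{\mathrm{tr}}B$ has $\underline{f}_{2k+1}$ in its minimal polynomial and so is not annihilated by $f_{2k}$, and it is realized by the cell $2$-representation in odd type $I_2(2k+1)$, which by Corollary~\ref{cor1705} excludes it for even $n$.

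Because part one lands on the wrong list, part two as you describe it cannot be carried out: you would be trying to match the rank-$(k+1)$, extended-staircase $2$-representations $\mathbf{N}_s^{(n)},\mathbf{N}_t^{(n)}$ against a hypothetical rank-$2k$ square-staircase candidate, and the Cartan-matrix comparison at the heart of the argument would not even be between matrices of the same size. Note also that if your claimed elimination of the $D$-type matrices were correct, it would contradict the existence of $\mathbf{N}_s^{(n)}$ and $\mathbf{N}_t^{(n)}$, which you yourself invoke; the correct use of Corollary~\ref{cor1705} in the paper is the opposite of elimination, namely that every staircase and extended staircase shape is realized in some type $I_2(l)$, so the corollary pins the sizes to the given $n$ and only the exceptional matrices $X_1,X_2,X_3$ need to be excluded by hand.
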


\subsection{Additional simple transitive $2$-representations}\label{s6.7}

In this subsection we just assume that $n=2k>4$.
The left cell $\mathcal{L}_s$ of the element $\theta_s\in\cS$ consists of the elements
$\theta_s$, $\theta_{ts}$, $\theta_{sts}$, $\theta_{tsts}$ and so on, with
$2k-1$ elements in total.
Consider the corresponding cell $2$-representation $\mathbf{C}_{\mathcal{L}_s}$. 
In this subsection we follow closely the approach of \cite[Subsection~5.8]{MaMa} to 
construct, starting from $\mathbf{C}_{\mathcal{L}_s}$, a new simple transitive
$2$-representation of $\cS$, which we later on will denote by $\mathbf{N}_s^{(n)}$.

Set $\mathbf{Q}:=\mathbf{C}_{\mathcal{L}_s}$ and consider $\overline{\mathbf{Q}}$. 
Let $P_w$, $w\in \mathtt{L}:=\{s,ts,sts,tsts,\dots\}$, be representatives of the 
isomorphism classes of the indecomposable projective objects in $\overline{\mathbf{Q}}(\mathtt{i})$ 
and $L_w$, $w\in\mathtt{L}$, be the respective simple tops. Note that $|\mathtt{L}|$ is odd.
With respect to this  choice of a basis, we have
\begin{displaymath}
\Lparen\theta_s\Rparen=
\left(
\begin{array}{cccccc}
2&1&0&0&0&\dots\\ 
0&0&0&0&0&\dots\\ 
0&1&2&1&0&\dots\\ 
0&0&0&0&0&\dots\\ 
0&0&0&1&2&\dots\\ 
\vdots&\vdots&\vdots&\vdots&\vdots&\ddots
\end{array}
\right)\qquad
\Lparen\theta_t\Rparen=
\left(
\begin{array}{cccccc}
0&0&0&0&0&\dots\\ 
1&2&1&0&0&\dots\\ 
0&0&0&0&0&\dots\\ 
0&0&1&2&1&\dots\\ 
0&0&0&0&0&\dots\\
\vdots&\vdots&\vdots&\vdots&\vdots&\ddots
\end{array}
\right).
\end{displaymath}
In particular, in this case the matrix $B$ is a $k\times (k-1)$ staircase matrix.
Consequently, we have the following Loewy filtrations of indecomposable projective modules:
\begin{displaymath}
\xymatrix{
L_s\ar@{-}[d]&&L_{ts}\ar@{-}[dl]\ar@{-}[dr]&&&L_{sts}\ar@{-}[dl]\ar@{-}[dr]&\\
L_{ts}\ar@{-}[d]&L_s\ar@{-}[dr]&&L_{sts}\ar@{-}[dl]&L_{ts}\ar@{-}[dr]&&L_{tsts}\ar@{-}[dl]\\
L_s&&L_{ts}&&&L_{sts}&\\
}
\end{displaymath}
and so on (the last module in the series is uniserial of length three, just like the first one).
Let $H$ denote the 
basic underlying algebra of $\overline{\mathbf{Q}}(\mathtt{i})$. Then the above implies that
$H$ is the quotient of the path algebra of 
\begin{displaymath}
\xymatrix{
\mathtt{s}\ar@/^/[r]&\mathtt{ts}\ar@/^/[r]\ar@/^/[l]&\mathtt{sts}\ar@/^/[r]\ar@/^/[l]&
\mathtt{tsts}\ar@/^/[r]\ar@/^/[l]&\dots\ar@/^/[l]
}
\end{displaymath}
modulo the relations that any path of the form $\mathtt{v}\to\mathtt{u}\to \mathtt{w}$ is zero
if $v\neq w$ and all paths of the form $\mathtt{v}\to\mathtt{u}\to \mathtt{v}$ coincide.
Let $f_w$, $w\in\mathtt{L}$, denote pairwise orthogonal primitive idempotents of $H$ corresponding 
to $P_w$.

As $\mathbf{Q}$ is simple transitive, all $\theta_w$, $w\in\mathcal{J}$, send 
simple objects in $\overline{\mathbf{Q}}(\mathtt{i})$ to
projective objects in $\overline{\mathbf{Q}}(\mathtt{i})$
and act as projective endofunctors of $\overline{\mathbf{Q}}(\mathtt{i})$, by Theorem~\ref{thmminimal}.
In terms of $H$, the action of $\theta_s$ is given by tensoring with the $H$-$H$--bimodule
\begin{displaymath}
(Hf_{s}\otimes f_{s}H)\oplus(Hf_{sts}\otimes f_{sts}H)\oplus\dots  
\end{displaymath}
while the action of $\theta_{t}$ is, similarly,  given by tensoring with the $H$-$H$--bimodule
\begin{displaymath}
(Hf_{ts}\otimes f_{ts}H)\oplus(Hf_{tsts}\otimes f_{tsts}H)\oplus\dots. 
\end{displaymath}

Set $\mathbf{Q}^{(0)}:=\mathbf{Q}$ and let $\mathbf{Q}^{(1)}$ denote the $2$-representation of $\cS$
given by the action of $\cS$ on the category of projective objects in 
$\overline{\mathbf{Q}}^{(0)}(\mathtt{i})$. Recursively, for $k\geq 1$, define 
$\mathbf{Q}^{(k)}$ as the $2$-representation of $\cS$ given by the action of $\cS$ 
on the category of projective objects in $\overline{\mathbf{Q}}^{(k-1)}(\mathtt{i})$.
For every $k\geq 0$, we have a strict $2$-natural transformation 
$\Lambda_k:{\mathbf{Q}}^{(k-1)}\to \mathbf{Q}^{(k)}$ which sends an object $X$ to the diagram
$0\to X$ and a morphism $\alpha:X\to X'$ to the diagram 
\begin{displaymath}
\xymatrix{
0\ar[rr]\ar[d] && X\ar[d]^{\alpha}\\
0\ar[rr] && X'.
} 
\end{displaymath}
Clearly, each such $\Lambda_k$ is an equivalence. 
Denote by $\mathbf{K}$ the inductive limit of the directed system
\begin{equation}\label{eq5z}
\mathbf{Q}^{(0)}\overset{\Lambda_0}{\longrightarrow}
\mathbf{Q}^{(1)}\overset{\Lambda_1}{\longrightarrow}
\mathbf{Q}^{(2)}\overset{\Lambda_2}{\longrightarrow}\dots. 
\end{equation}
Then $\mathbf{K}$ is a $2$-representation of $\cS$ which is equivalent to $\mathbf{Q}$.

Let $x'$ be the element which you get by swapping $s$ and $t$ in the reduced expression for $x$.

\begin{lemma}\label{lem43}
There is a strict $2$-natural transformation $\Psi:\mathbf{K}\to \mathbf{K}$ which is an equivalence 
and which acts on the isomorphism classes of the indecomposable projective objects by swapping
each $P_x$ with $P_{x'w_0}$, for $x\in\mathtt{L}$ such that $x'w_0\neq x$, and fixing 
the unique $P_{x}$ for which $x'w_0=x$. 
\end{lemma}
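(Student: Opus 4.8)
The plan is to obtain $\Psi$ from the order-reversing symmetry of the type $A_{2k-1}$ zig-zag quiver underlying the algebra $H$, and then to rectify it to a strict transformation by working inside the telescope $\mathbf{K}$, following closely \cite[Subsection~5.8]{MaMa}.

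First I would exploit the description of $H$ given above: its quiver has vertices indexed by $\mathtt{L}$, arranged in a line $\mathtt{s}-\mathtt{ts}-\mathtt{sts}-\cdots$ of length $|\mathtt{L}|=2k-1$, with arrows in both directions, subject to the zig-zag relations (a path $\mathtt{v}\to\mathtt{u}\to\mathtt{w}$ vanishes unless $v=w$, and all paths $\mathtt{v}\to\mathtt{u}\to\mathtt{v}$ coincide). This line carries a unique order-reversing automorphism, and as both families of relations are manifestly symmetric, it lifts to an algebra automorphism $\psi$ of $H$ with $\psi^2=\mathrm{id}$. A routine computation with reduced expressions --- using that $w_0$ is central in $I_2(2k)$, that $\mathbf{l}(xw_0)=2k-\mathbf{l}(x)$, and that $x'=\tau(x)$ with $\tau(w_0)=w_0$ for the diagram automorphism $\tau$ interchanging $s$ and $t$ --- shows that the induced permutation of vertices is $x\mapsto x'w_0$, and hence $\psi(f_x)=f_{x'w_0}$. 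In particular $\mathbf{l}(x)$ and $\mathbf{l}(x'w_0)$ have the same parity, so $\psi$ permutes $\{f_x:x\in\mathtt{L},\,\mathbf{l}(x)\text{ odd}\}$ and $\{f_x:x\in\mathtt{L},\,\mathbf{l}(x)\text{ even}\}$ among themselves, and fixes $f_x$ precisely when $\mathbf{l}(x)=k$ (the unique such $x$ exists since $2k-1$ is odd).

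Next I would consider the self-equivalence $\Phi$ of $H\text{-}\mathrm{mod}\simeq\overline{\mathbf{Q}}(\mathtt{i})$ given by restriction of scalars along $\psi$; it satisfies $\Phi(P_x)\cong P_{x'w_0}$ and $\Phi(L_x)\cong L_{x'w_0}$. The crucial point is that $\Phi$ commutes with the action of $\cS$: since $\theta_s$ and $\theta_t$ act by tensoring with the bimodules $\bigoplus_{x\in\mathtt{L},\,\mathbf{l}(x)\text{ odd}}Hf_x\otimes f_xH$ and $\bigoplus_{x\in\mathtt{L},\,\mathbf{l}(x)\text{ even}}Hf_x\otimes f_xH$ respectively, and $\psi$ preserves each of the two index sets, one obtains natural isomorphisms $\Phi\circ\theta_s\cong\theta_s\circ\Phi$ and $\Phi\circ\theta_t\cong\theta_t\circ\Phi$; as every $1$-morphism of $\cS$ outside $\mathcal{J}\cup\{e\}$ acts as zero on $\mathbf{Q}$, this yields $\Phi\circ\theta_w\cong\theta_w\circ\Phi$ for every $\theta_w$.

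Finally I would transport $\Phi$ along the directed system \eqref{eq5z}: applying $\Phi$ to each abelianization $\overline{\mathbf{Q}}^{(m)}(\mathtt{i})$ and restricting to projectives yields self-equivalences of the $\mathbf{Q}^{(m)}$ that commute with the transition $2$-natural transformations $\Lambda_m$ and with the $\cS$-action up to coherent isomorphism, and in the inductive limit these assemble into the desired strict $2$-natural transformation $\Psi:\mathbf{K}\to\mathbf{K}$; it is an equivalence because $\Phi$ is, and on isomorphism classes of indecomposable projective objects it acts by $P_x\mapsto P_{x'w_0}$, fixing exactly the $P_x$ with $x'w_0=x$. The main obstacle is precisely this last step: promoting a family of self-equivalences that only commute with the $\cS$-action up to coherent isomorphism to a genuinely \emph{strict} $2$-natural transformation. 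This rectification is exactly what the telescope $\mathbf{K}$ was introduced for; carrying it out amounts to checking that the coherence isomorphisms $\Phi\theta_w\cong\theta_w\Phi$ can be chosen compatibly along \eqref{eq5z} so as to become identities in the limit, and I would follow the treatment in \cite[Subsection~5.8]{MaMa} for this.
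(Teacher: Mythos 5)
Your construction of the symmetry is fine as far as it goes: the order-reversing automorphism of the zig-zag quiver of $H$ does lift to an algebra automorphism $\psi$ with $\psi(f_x)=f_{x'w_0}$, the parity bookkeeping is correct (since $n=2k$ is even, $x\mapsto x'w_0$ preserves the parity of $\mathbf{l}(x)$, so the index sets of the bimodules realizing $\theta_s$ and $\theta_t$ are each preserved), and twisting by $\psi$ gives a self-equivalence permuting the $P_x$ as required. But the crux of the lemma is the word \emph{strict}, and that is exactly what your argument does not deliver. Restriction along $\psi$ only commutes with the $\cS$-action up to natural isomorphism (indeed, even the identification of $\overline{\mathbf{Q}}(\mathtt{i})$ with $H$-modules and of the $\theta_w$ with tensoring by the stated projective bimodules is itself only an identification up to equivalence and isomorphism), so what you produce is a pseudo-natural self-equivalence. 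Passing to the telescope $\mathbf{K}$ does not turn those coherence isomorphisms into identities: the inductive limit along \eqref{eq5z} is taken over the strict transformations $\Lambda_k$ and has no strictifying effect whatsoever on a functor that commutes with the action only up to isomorphism. The telescope is introduced in the paper (and in \cite[Subsection~5.8]{MaMa}) for a different purpose: it converts a \emph{strict} transformation $\mathbf{Q}^{(k)}\to\mathbf{Q}^{(k+1)}$ between successive, equivalent-but-not-equal, stages into a strict self-transformation of the limit. So the step you yourself flag as ``the main obstacle'' is the entire content of the lemma, and deferring it to the telescope misreads what the telescope accomplishes.

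The mechanism the paper actually uses for strictness, and which is absent from your proposal, is the universal property of the principal $2$-representation: one takes the unique strict $2$-natural transformation $\Phi:\mathbf{P}_{\mathtt{i}}\to\overline{\mathbf{Q}}$ sending $\mathbbm{1}_{\mathtt{i}}$ to $L_{tw_0}$ (strictness is automatic here, since $\Phi$ is essentially evaluation $\theta_w\mapsto\theta_w\,L_{tw_0}$), computes $\theta_s\,L_{tw_0}\cong P_{tw_0}$, $\theta_{ts}\,L_{tw_0}\cong P_{stw_0}$, and so on, via Proposition~\ref{propn31} and the explicit matrices, and then uses the invariance of the Cartan matrix of $H$ under the permutation $P_x\leftrightarrow P_{x'w_0}$ to conclude that $\Phi$ factors through $\mathbf{C}_{\mathcal{L}_s}$ and yields a strict equivalence $\Phi^{(0)}:\mathbf{Q}^{(0)}\to\mathbf{Q}^{(1)}$; abelianization produces compatible strict equivalences $\Phi^{(k)}:\mathbf{Q}^{(k)}\to\mathbf{Q}^{(k+1)}$, and $\Psi$ is their inductive limit, which is a strict self-equivalence of $\mathbf{K}$ acting on projectives by the stated permutation. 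Your quiver symmetry only recovers the permutation of the $P_x$, which the paper reads off from the matrices anyway; to repair your proof you would have to replace the hand-waved rectification by an argument of this Yoneda type (or some other genuine strictification), since the later constructions of $\mathbf{K}'$, $\mathbf{L}$ and $\mathbf{N}_s^{(n)}$ use the strictness of $\Psi$ in an essential way.
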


\begin{proof}
Consider the unique strict $2$-natural transformation $\Phi:\mathbf{P}_{\mathtt{i}}\to \overline{\mathbf{Q}}$
which sends $\mathbb{1}_{\mathtt{i}}$ to $L_{tw_0}$. Using Proposition~\ref{propn31} and the explicit 
formulae for the matrices representing the action of all $\theta_w$, we have
\begin{displaymath}
\theta_s\, L_{tw_0}\cong P_{tw_0},\quad
\theta_{ts}\, L_{tw_0}\cong P_{stw_0},\quad
\text{ and so on}.
\end{displaymath}
As the Cartan matrix of $H$ is invariant under swapping $P_x$ with $P_{x'w_0}$, 
for $x\in\mathtt{L}$ such that $x'w_0\neq x$, and fixing 
the unique $P_{x}$ for which $x'w_0=x$, it follows by the usual arguments, see for example \cite[Subsection~4.9]{MaMa}, 
that the $2$-natural transformation $\Phi$ factors through $\mathbf{C}_{\mathcal{L}_s}$ and, therefore,  
gives a strict equivalence $\Phi^{(0)}:\mathbf{Q}^{(0)}\to \mathbf{Q}^{(1)}$. 
For $k\geq 0$, via abelianization, we get a strict equivalence 
$\Phi^{(k)}:\mathbf{Q}^{(k)}\to \mathbf{Q}^{(k+1)}$, which is
compatible with \eqref{eq5z}. Now we can take $\Psi$ as the inductive limit of $\Phi^{(k)}$.
\end{proof}

Consider a new finitary $2$-representation $\mathbf{K}'$ of $\cS$ defined as follows:
\begin{itemize}
\item Objects of $\mathbf{K}'(\mathtt{i})$ are sequences 
$(X_n,\alpha_n)_{n\in\mathbb{Z}}$, where $X_n$ is an object in 
$\mathbf{K}(\mathtt{i})$ and $\alpha_n\colon \Psi(X_n)\to X_{n+1}$ an isomorphism 
in $\mathbf{K}(\mathtt{i})$, for all $n\in\mathbb{Z}$. 
\item Morphisms in $\mathbf{K}'(\mathtt{i})$ from $(X_n,\alpha_n)_{n\in\mathbb{Z}}$
to $(Y_n,\beta_n)_{n\in\mathbb{Z}}$ are sequences of morphisms $f_n\colon X_n\to Y_n$ 
in $\mathbf{K}(\mathtt{i})$ such that   
\begin{displaymath}
\xymatrix{
\Psi(X_n)\ar[rr]^{\alpha_n}\ar[d]_{\Psi(f_n)}&&X_{n+1}\ar[d]^{f_{n+1}}\\
\Psi(Y_n)\ar[rr]^{\beta_n}&&Y_{n+1}\\
}
\end{displaymath}
commutes for all $n\in\mathbb{Z}$. 
\item The action of $\cS$ on $\mathbf{K}'(\mathtt{i})$ is inherited from the
action of $\cS$ on $\mathbf{K}(\mathtt{i})$ component-wise.
\end{itemize}
The construction of $\mathbf{K}'(\mathtt{i})$ from $\mathbf{K}(\mathtt{i})$ is the
standard construction which turns a category with an autoequivalence (in our case
$\Psi$) into an equivalent category with an automorphism, cf. \cite{Ke,BL}. 

We have the strict $2$-natural transformation $\Pi:\mathbf{K}'\to \mathbf{K}$ given by
projection onto the zero component of a sequence. This $\Pi$ is an equivalence, by construction.
We also have a strict $2$-natural transformation $\Psi':\mathbf{K}'\to \mathbf{K}'$
given by shifting the entries of the sequences by one, that is, sending 
$(X_n,\alpha_n)_{n\in\mathbb{Z}}$ to $(X_{n+1},\alpha_{n+1})_{n\in\mathbb{Z}}$,
with the similar obvious action on morphisms. Note that 
$\Psi':\mathbf{K}'(\mathtt{i})\to \mathbf{K}'(\mathtt{i})$ is an 
automorphism. The functor $\Psi'$ acts on the isomorphism classes of indecomposable 
objects in $\mathbf{K}'(\mathtt{i})$ in the same way as $\Psi$ does.
As $\Psi^2$ is isomorphic to the identity functor on 
$\mathbf{K}(\mathtt{i})$, it follows, by construction, that $(\Psi')^2$ is isomorphic to 
the identity functor on $\mathbf{K}'(\mathtt{i})$. 

Let $\mathbf{K}''$ be the $2$-representation of $\cS$ given by the action of $\cS$ on the
category of projective objects in $\overline{\mathbf{K}'}$. We denote by $\Psi'':\mathbf{K}''\to \mathbf{K}''$
the diagrammatic extension of $\Psi'$ to $\mathbf{K}''$. Again, 
$\Psi'':\mathbf{K}''(\mathtt{i})\to \mathbf{K}''(\mathtt{i})$ is an automorphism and
$(\Psi'')^2$ is isomorphic to  the identity functor on $\mathbf{K}''(\mathtt{i})$. 
We need the following stronger statement.

\begin{lemma}\label{lem44}
Let $\mathrm{Id}:\mathbf{K}''\to \mathbf{K}''$ denote the identity 
$2$-natural transformation.
\begin{enumerate}[$($i$)$]
\item\label{lem44.1} 
There is an invertible modification $\eta:\mathrm{Id}\to (\Psi'')^2$.
\item\label{lem44.2}
For any $\eta$ as in \eqref{lem44.1}, we have
$\mathrm{id}_{(\Psi'')^2}\circ_0 \eta=\eta\circ_0\mathrm{id}_{(\Psi'')^2}$.
\item\label{lem44.3}
For any $\eta'\in\mathrm{Hom}_{\ccS\text{-}\mathrm{mod}}(\mathrm{Id},(\Psi'')^2)$, we have
$\mathrm{id}_{(\Psi'')^2}\circ_0 \eta'=\eta'\circ_0\mathrm{id}_{(\Psi'')^2}$.
\end{enumerate}
\end{lemma}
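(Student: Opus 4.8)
The plan is to establish the three statements in turn, exploiting that $\Psi''$ is an automorphism with $(\Psi'')^2\cong\mathrm{Id}$ and that everything in sight is controlled by the underlying algebra $H$ and the bimodules representing $\theta_s$ and $\theta_t$. First I would prove \eqref{lem44.1}: since $(\Psi'')^2$ and $\mathrm{Id}$ are both strict $2$-natural transformations which act as the identity on isomorphism classes of indecomposable objects of $\mathbf{K}''(\mathtt{i})$, and since $(\Psi'')^2$ is isomorphic to the identity functor on $\mathbf{K}''(\mathtt{i})$, there is an isomorphism of functors $\varepsilon:\mathrm{Id}_{\mathbf{K}''(\mathtt{i})}\to(\Psi'')^2$. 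To promote $\varepsilon$ to a \emph{modification} $\eta$ (i.e.\ to check it commutes with the action of all $1$-morphisms of $\cS$, which here amounts to the two generators $\theta_s$, $\theta_t$, hence all of $\mathcal{J}$, together with $\theta_e$), I would use that $\Psi''$ is the diagrammatic extension of $\Psi'$, which in turn is literally the shift automorphism on the category of $\mathbb{Z}$-indexed sequences $\mathbf{K}'(\mathtt{i})$; on that model $(\Psi')^2$ is the double shift, and the canonical identification of the double shift with the identity provided by the isomorphisms $\alpha_n$ in the definition of $\mathbf{K}'(\mathtt{i})$ is tautologically compatible with the component-wise $\cS$-action. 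Passing through $\Pi$ and abelianization transports this to the required $\eta$ on $\mathbf{K}''$.

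For \eqref{lem44.2} and \eqref{lem44.3}, the point is that the two horizontal whiskerings $\mathrm{id}_{(\Psi'')^2}\circ_0\eta$ and $\eta\circ_0\mathrm{id}_{(\Psi'')^2}$ are both invertible modifications from $(\Psi'')^2$ to $(\Psi'')^4$, and $(\Psi'')^4\cong\mathrm{Id}$ as well; so \eqref{lem44.3} is in fact the general statement and \eqref{lem44.2} its specialization to an invertible $\eta'$. I would reduce \eqref{lem44.3} to a computation in $H$: a modification $\eta'\in\mathrm{Hom}_{\ccS\text{-}\mathrm{mod}}(\mathrm{Id},(\Psi'')^2)$ is, after choosing the identification $(\Psi'')^2\cong\mathrm{Id}$ from \eqref{lem44.1}, the same as a natural transformation of the identity functor on $\mathbf{K}''(\mathtt{i})$ commuting with the $\cS$-action, i.e.\ an element of the center of the underlying algebra which additionally is central for the bimodules representing $\theta_s$ and $\theta_t$. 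Since $H$ is the explicit quotient of the doubled $A$-type quiver written out in Subsection~\ref{s6.7} (with the $P_x\leftrightarrow P_{x'w_0}$ symmetry folded in for $\mathbf{K}''$), one checks directly that such an element is a scalar on each block and that the two ways of whiskering it with $\mathrm{id}_{(\Psi'')^2}$ agree — essentially because $\Psi'$, being the shift, \emph{commutes} with the $\cS$-action on the nose, so whiskering on either side amounts to the same reindexing.

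The main obstacle will be \eqref{lem44.2}, or rather making \eqref{lem44.3} airtight: the subtlety is that $\mathrm{Hom}_{\ccS\text{-}\mathrm{mod}}(\mathrm{Id},(\Psi'')^2)$ need not be one-dimensional, so one cannot simply argue that $\eta'$ is a scalar multiple of $\eta$ and invoke \eqref{lem44.2}. One must genuinely control this Hom-space, and that is where the explicit structure of $H$ — in particular the relations ``all paths $\mathtt{v}\to\mathtt{u}\to\mathtt{v}$ coincide'' and the fact that the quiver is a (folded) type $A$ line — is indispensable: it forces the relevant endomorphisms of the identity functor commuting with projective functors indexed by $\theta_s,\theta_t$ to be constant along the line, after which the two whiskerings visibly coincide. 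I expect this to be a short but slightly delicate bookkeeping argument, modeled on the analogous lemma in \cite[Section~5]{MaMa}; the rank-one / uniserial ends of the quiver are the places to watch.
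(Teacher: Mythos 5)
There is a genuine gap, and it sits exactly at the crux of part \eqref{lem44.1}. You claim that the structure isomorphisms $\alpha_n$ in the definition of $\mathbf{K}'(\mathtt{i})$ provide a ``canonical identification of the double shift with the identity'' which is ``tautologically'' compatible with the $\cS$-action. They do not: each $\alpha_n$ is an isomorphism $\Psi(X_n)\to X_{n+1}$, so composing them only yields canonical isomorphisms $\Psi^2(X_n)\to X_{n+2}$, i.e.\ an identification of the double shift $(\Psi')^2$ with the functor that applies $\Psi^2$ componentwise --- not with the identity. To pass from there to $\mathrm{Id}$ you still need an isomorphism $\mathrm{Id}\to\Psi^2$ on $\mathbf{K}(\mathtt{i})$ that commutes with the $\cS$-action, and that is precisely the statement being proved (up to the equivalences $\mathbf{K}\simeq\mathbf{K}'\simeq\mathbf{K}''$); your argument is therefore circular. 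The point of the lemma is exactly that an arbitrary natural isomorphism of functors $\mathrm{Id}\to(\Psi'')^2$ (which certainly exists, as you note) need not be a modification, and no ``tautological'' choice is available. The paper's proof avoids this by a pointwise whiskering construction: it fixes a single isomorphism $\alpha:L\to(\Psi')^2(L)$, where $L$ is the simple object of $\overline{\mathbf{K}'}$ corresponding to the Duflo involution, observes that $\{\theta_w\,L:w\in\mathtt{L}\}$ is a complete list of indecomposables in $\mathbf{K}''(\mathtt{i})$, and defines $\eta_{\theta_w\,L}:=\mathrm{id}_{\theta_w}\circ_0\alpha$. With this definition the modification property is essentially formal: for any $1$-morphism $\mathrm{F}$ one writes $\mathrm{F}\circ\theta_w\cong\mathrm{F}_1\oplus\mathrm{F}_2$ with $\mathrm{F}_1\,L$ in the additive closure of the $\theta_w\,L$ and $\mathrm{F}_2\,L=0$, and gets $\mathrm{F}(\eta_{\theta_w\,L})=\eta_{\mathrm{F}_1\,L}$. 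Your proposal contains no substitute for this step.

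For parts \eqref{lem44.2} and \eqref{lem44.3} your reduction (compose with $\eta^{-1}$ from \eqref{lem44.1} and study modifications $\mathrm{Id}\to\mathrm{Id}$, i.e.\ suitable central elements for the algebra $H$ and the projective bimodules realizing $\theta_s,\theta_t$) is a plausible route, and you rightly flag that one cannot assume the relevant $\mathrm{Hom}$-space is one-dimensional. But the justification you offer --- that the two whiskerings agree ``because $\Psi'$ commutes with the $\cS$-action on the nose'' --- does not address the actual issue: the identity to be proved is $\eta'_{(\Psi'')^2(X)}=(\Psi'')^2(\eta'_X)$, which concerns the interaction of $\eta'$ with $\Psi''$, not of $\Psi''$ with the $1$-morphisms of $\cS$. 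The paper handles this by the same evaluation-on-$\theta_w\,L$ technique as in \eqref{lem44.1}, following \cite[Lemma~17(2),(3)]{MaMa}; as written, your sketch for these parts also still depends on the unproved part \eqref{lem44.1}.
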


\begin{proof}
Let $L$ be a simple object in $\overline{\mathbf{K}'}$ corresponding to $P_1$
(thus $L$ is a simple corresponding to the Duflo involution in $\mathcal{L}_s$). Fix an  isomorphism
$\alpha:L\to (\Psi')^2(L)$. For $w\in \mathtt{L}$, set $\eta_{\theta_w\,L}:=\mathrm{id}_{\theta_w}\circ_0\alpha$.
As $\{\theta_w\,L\,:\,w\in\mathtt{L}\}$ is a complete list of pairwise non-isomorphic indecomposable
objects in $\mathbf{K}''(\mathtt{i})$, this uniquely defines a natural transformation 
$\eta:\mathrm{Id}\to (\Psi'')^2$.

If $\mathrm{F}$ is a $1$-morphism in $\cS$, then, for any $w\in \mathtt{L}$, we have 
$\mathrm{F}\circ \theta_w\cong \mathrm{F}_1\oplus \mathrm{F}_2$, where $\mathrm{F}_1\, L\in
\mathrm{add}(\{\theta_w\,L\,:\,w\in\mathtt{L}\})$ and $\mathrm{F}_2\, L=0$. From the definition in the
previous paragraph we thus get $\mathrm{F}(\eta_{\theta_w\,L})=\eta_{\mathrm{F}_1\,L}$ which implies
that $\eta$ is, in fact, a modification. Claim~\eqref{lem44.1} follows. Claims~\eqref{lem44.2}
and \eqref{lem44.3} are now proved similarly to \cite[Lemma~17(2) and (3)]{MaMa}.
\end{proof}
 
\begin{proposition}\label{prop45}
There is an invertible modification $\eta:\mathrm{Id}\to (\Psi'')^2$ 
for which we have the equality $\mathrm{id}_{\Psi''}\circ_0 \eta=\eta\circ_0\mathrm{id}_{\Psi''}$.
\end{proposition}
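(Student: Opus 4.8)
The plan is to carry out, in our setting, the argument that in \cite[Section~5]{MaMa} follows the analogue of Lemma~\ref{lem44}; that lemma supplies precisely the categorical input needed. I would fix once and for all the invertible modification $\eta\colon\mathrm{Id}\to(\Psi'')^2$ constructed in the proof of Lemma~\ref{lem44}\eqref{lem44.1}, retaining its explicit description there, namely $\eta_{\theta_w\,L}=\mathrm{id}_{\theta_w}\circ_0\alpha$ for a fixed isomorphism $\alpha\colon L\to(\Psi')^2(L)$ with $L$ the simple corresponding to the Duflo involution in $\mathcal{L}_s$, and put $\phi:=\mathrm{id}_{\Psi''}\circ_0\eta$ and $\psi:=\eta\circ_0\mathrm{id}_{\Psi''}$. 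These are invertible modifications $\Psi''\to(\Psi'')^3$, and the goal is to show $\phi=\psi$.

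The first step is to rewrite Lemma~\ref{lem44}\eqref{lem44.2} using $\mathrm{id}_{(\Psi'')^2}=\mathrm{id}_{\Psi''}\circ_0\mathrm{id}_{\Psi''}$ and associativity of horizontal composition; it becomes $\mathrm{id}_{\Psi''}\circ_0\phi=\psi\circ_0\mathrm{id}_{\Psi''}$. The second step is to show $\mathrm{End}_{\ccS\text{-}\mathrm{mod}}(\Psi'')=\mathbb{C}$: by transitivity of $\mathbf{K}''$, every indecomposable object is a summand of $\mathrm{F}\,P_{\star}$ for some $1$-morphism $\mathrm{F}$, where $P_{\star}$ is the unique indecomposable projective fixed by $\Psi''$ (which exists by Lemma~\ref{lem43}, since $|\mathtt{L}|$ is odd), so a modification $\Psi''\to\Psi''$ is determined by its component at $P_{\star}$; if that component had nonzero radical part, the two-sided ideal it generates in $\mathbf{K}''(\mathtt{i})$ would be a proper nonzero $\cS$-invariant ideal, contradicting simple transitivity, so the component is a scalar, and $\cS$-equivariance then forces the whole modification to be that scalar. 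Hence $\psi^{-1}\circ_1\phi=c\cdot\mathrm{id}_{\Psi''}$ for some $c\in\mathbb{C}^{\times}$; substituting $\phi=c\psi$ into the identity of the first step, and using $\mathrm{id}_{\Psi''}\circ_0\psi=\phi\circ_0\mathrm{id}_{\Psi''}$ (again associativity), gives $c^{2}=1$.

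The third step is to determine the sign by evaluating $\phi$ and $\psi$ at $P_{\star}$ using the explicit form of $\eta$: since $\Psi''$ fixes $P_{\star}$ and acts trivially on the scalar isomorphism $\alpha$, the components $\phi_{P_{\star}}$ and $\psi_{P_{\star}}$ turn out to be the same scalar multiple of the relevant structure isomorphism, whence $c=1$ and $\phi=\psi$, which is the assertion. I expect this last evaluation to be the main obstacle: one must move the chosen isomorphism $(\Psi'')^2\cong\mathrm{Id}$ in and out consistently, respect the whiskering conventions and the interchange law throughout, and verify that the explicit $\eta$ of Lemma~\ref{lem44} is genuinely $\cS$-equivariant on all of $\mathbf{K}''$ rather than merely on the objects $\theta_w\,L$. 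This is precisely the bookkeeping performed in \cite[Section~5]{MaMa}; as there, it is the only step that is not a formal manipulation.
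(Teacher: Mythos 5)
Your formal reductions are fine and do reflect part of the intended strategy: whiskering gives two invertible modifications $\phi=\mathrm{id}_{\Psi''}\circ_0\eta$ and $\psi=\eta\circ_0\mathrm{id}_{\Psi''}$, a one-dimensionality statement for modifications $\Psi''\to\Psi''$ (plausible via simple transitivity, along the lines you sketch) yields $\phi=c\,\psi$ with $c\in\mathbb{C}^\times$, and Lemma~\ref{lem44}\eqref{lem44.2} together with the interchange/associativity manipulations forces $c^2=1$. But this only reduces the proposition to the assertion $c=1$, and that is exactly where your argument has a genuine gap. Note that there is no freedom left to exploit: since $L$ is simple, $\alpha$ is unique up to a nonzero scalar, so the $\eta$ of Lemma~\ref{lem44} is unique up to a scalar (and, granting your own step two, every modification $\mathrm{Id}\to(\Psi'')^2$ is a multiple of it), while $c$ is unchanged under rescaling; hence the proposition is literally the statement that $c=1$ for this specific $\eta$, and it cannot follow formally from Lemma~\ref{lem44}.

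Your third step does not establish this. The justification ``since $\Psi''$ fixes $P_{\star}$ and acts trivially on the scalar isomorphism $\alpha$'' is unfounded: $\Psi''$ fixes the isomorphism class of $P_{\star}$ but not the object itself, and, more importantly, $\Psi''$ does not fix $L$ at all --- it sends $L=L_s$ to a simple isomorphic to $L_{tw_0}$. Consequently $\psi_{P_\star}=\eta_{\Psi''(P_\star)}$ is the component of $\eta$ at an object of the form $\theta_{w}\,\Psi''(L)$, which is not among the objects $\theta_v\,L$ on which $\eta$ was defined; evaluating it requires decomposing $\theta_w\,\Psi''(L)$ back into the $\theta_v\,L$ and chasing $\eta$ through these identifications, and comparing the result with $\Psi''(\eta_{P_\star})=\theta_{w}(\Psi''(\alpha))$. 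That comparison is precisely the non-formal computation carried out in the proof of \cite[Proposition~18]{MaMa}, which is what the paper transfers mutatis mutandis; saying that ``this is the bookkeeping performed in \cite[Section~5]{MaMa}'' defers the entire substance of the proof rather than supplying it. To complete your argument you would have to perform this evaluation explicitly (e.g.\ using the zigzag-algebra realization of $\overline{\mathbf{Q}}(\mathtt{i})$ and the concrete description of $\Psi$ from Lemma~\ref{lem43}) and show that the resulting scalar is $+1$ rather than $-1$.
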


\begin{proof}
Mutatis mutandis the proof of \cite[Proposition~18]{MaMa}.
\end{proof}

From now on we fix some invertible modification $\eta:\mathrm{Id}\to (\Psi'')^2$
as given by Proposition~\ref{prop45}. Define a small category $\mathbf{L}(\mathtt{i})$ as follows:
\begin{itemize}
\item objects  in $\mathbf{L}(\mathtt{i})$  are all $4$-tuples $(X,Y,\alpha,\beta)$, where we have
$X,Y\in \mathbf{K}''(\mathtt{i})$, while $\alpha:X\to \Psi''(Y)$ and $\beta:Y\to \Psi''(X)$ are 
isomorphism such that the following conditions are satisfied
\begin{equation}\label{eqnn9}
\left\{ 
\begin{array}{rcl} 
\eta_{Y}^{-1}\circ_1 {\Psi''}(\alpha)\circ_1\beta&=&\mathrm{id}_Y,\\
\beta\circ_1\eta_{Y}^{-1}\circ_1 {\Psi''}(\alpha)&=&\mathrm{id}_{\Psi''(X)},\\ 
\eta_{X}^{-1}\circ_1{\Psi''}(\beta)\circ_1\alpha&=&\mathrm{id}_X,\\
\alpha\circ_1\eta_{X}^{-1}\circ_1{\Psi''}(\beta)&=&\mathrm{id}_{\Psi''(Y)}. 
\end{array}
\right.
\end{equation}
\item morphisms in $\mathbf{L}(\mathtt{i})$ from $(X,Y,\alpha,\beta)$ to $(X',Y',\alpha',\beta')$
are pairs $(\zeta,\xi)$, where $\zeta:X\to X'$ and $\xi:Y\to Y'$ are morphisms in 
$\mathbf{K}''(\mathtt{i})$ such that the diagrams
\begin{displaymath} 
\xymatrix{
X\ar[rr]^{\alpha}\ar[d]_{\zeta}&&\Psi''(Y)\ar[d]^{\Psi''(\xi)}\\
X'\ar[rr]^{\alpha'}&&\Psi''(Y')\\
} \quad\text{ and }\quad
\xymatrix{
Y\ar[rr]^{\beta}\ar[d]_{\xi}&&\Psi''(X)\ar[d]^{\Psi''(\zeta)}\\
Y'\ar[rr]^{\beta'}&&\Psi''(X')\\
} 
\end{displaymath}
commute;
\item the composition and identity morphisms are the obvious ones.
\end{itemize}
The category $\mathbf{L}(\mathtt{i})$ comes equipped with an action of $\cS$, defined component-wise,
using the action of $\cS$ on $\mathbf{K}(\mathtt{i})$. This is well-defined as the
$2$-natural transformation $\Psi$ is strict by Lemma~\ref{lem43} and, moreover, $\eta$ is a modification. 
We denote the corresponding $2$-representation of $\cS$ by $\mathbf{L}$. 

\begin{lemma}\label{lem46}
Restriction to the first component of a quadruple defines a strict $2$-natural 
transformation  $\Upsilon:\mathbf{L}\to \mathbf{K}''$. This $\Upsilon$ is an equivalence, 
\end{lemma}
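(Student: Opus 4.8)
The plan is to establish the two assertions of the lemma in turn: that $\Upsilon$ is a strict $2$-natural transformation, and that its unique component $\Upsilon_{\mathtt{i}}\colon\mathbf{L}(\mathtt{i})\to\mathbf{K}''(\mathtt{i})$ is an equivalence of categories (the latter being what ``$\Upsilon$ is an equivalence'' means here, as $\cS$ has one object). Strict $2$-naturality I expect to be immediate from the construction: the $\cS$-action on $\mathbf{L}$ is defined component-wise from that on $\mathbf{K}''$, so for a $1$-morphism $\mathrm{F}$ of $\cS$ the functor $\mathbf{L}(\mathrm{F})$ sends $(X,Y,\alpha,\beta)$ to $(\mathrm{F}X,\mathrm{F}Y,\mathrm{F}\alpha,\mathrm{F}\beta)$ and acts on morphisms by $(\zeta,\xi)\mapsto(\mathrm{F}\zeta,\mathrm{F}\xi)$ — this being well defined because $\Psi''\circ\mathbf{K}''(\mathrm{F})=\mathbf{K}''(\mathrm{F})\circ\Psi''$ as $\Psi''$ is strict by Lemma~\ref{lem43}, and the defining relations \eqref{eqnn9} are preserved since $\eta$ is a modification. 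Projection onto the first component therefore commutes strictly with the $\cS$-action, which is precisely strict $2$-naturality of $\Upsilon$.

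For essential surjectivity I would, given $X\in\mathbf{K}''(\mathtt{i})$, consider the quadruple $(X,\Psi''(X),\eta_X,\mathrm{id}_{\Psi''(X)})$, where $\eta_X\colon X\to(\Psi'')^2(X)=\Psi''(\Psi''(X))$ and both structure maps are isomorphisms. One then checks \eqref{eqnn9}: with $Y=\Psi''(X)$, $\alpha=\eta_X$, $\beta=\mathrm{id}$, the first two relations both reduce to $\eta_{\Psi''(X)}^{-1}\circ_1\Psi''(\eta_X)=\mathrm{id}_{\Psi''(X)}$, that is, to $\Psi''(\eta_X)=\eta_{\Psi''(X)}$, which is exactly the coherence $\mathrm{id}_{\Psi''}\circ_0\eta=\eta\circ_0\mathrm{id}_{\Psi''}$ of Proposition~\ref{prop45} evaluated at $X$; the last two relations reduce to $\eta_X^{-1}\circ_1\eta_X=\mathrm{id}_X$ and $\eta_X\circ_1\eta_X^{-1}=\mathrm{id}$, which hold trivially. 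Since $\Upsilon_{\mathtt{i}}$ maps this quadruple to $X$, this settles essential surjectivity.

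For full faithfulness I would fix objects $(X,Y,\alpha,\beta)$ and $(X',Y',\alpha',\beta')$ of $\mathbf{L}(\mathtt{i})$ and a morphism $\zeta\colon X\to X'$ in $\mathbf{K}''(\mathtt{i})$, and look for pairs $(\zeta,\xi)$ that are morphisms in $\mathbf{L}(\mathtt{i})$. Commutativity of the second square in the definition of morphisms of $\mathbf{L}(\mathtt{i})$ gives $\beta'\circ_1\xi=\Psi''(\zeta)\circ_1\beta$, hence, as $\beta'$ is an isomorphism, forces $\xi=(\beta')^{-1}\circ_1\Psi''(\zeta)\circ_1\beta$; this yields uniqueness of a preimage, so $\Upsilon_{\mathtt{i}}$ is faithful. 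For fullness it remains to check that this $\xi$ also makes the first square commute. Applying the functor $\Psi''$ gives $\Psi''(\xi)=\Psi''(\beta')^{-1}\circ_1(\Psi'')^2(\zeta)\circ_1\Psi''(\beta)$, and, rewriting the third relation of \eqref{eqnn9} as $\Psi''(\beta)\circ_1\alpha=\eta_X$ and (for the primed data) $\alpha'=\Psi''(\beta')^{-1}\circ_1\eta_{X'}$, together with the naturality square $(\Psi'')^2(\zeta)\circ_1\eta_X=\eta_{X'}\circ_1\zeta$ of $\eta$, one computes $\Psi''(\xi)\circ_1\alpha=\Psi''(\beta')^{-1}\circ_1\eta_{X'}\circ_1\zeta=\alpha'\circ_1\zeta$, which is exactly the first commuting square. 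Hence $(\zeta,\xi)$ is a morphism of $\mathbf{L}(\mathtt{i})$ with $\Upsilon_{\mathtt{i}}(\zeta,\xi)=\zeta$, and $\Upsilon_{\mathtt{i}}$ is full.

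I do not expect a genuine obstacle: the entire argument is a diagram chase. The only substantive input is Proposition~\ref{prop45} — it is precisely the coherence $\mathrm{id}_{\Psi''}\circ_0\eta=\eta\circ_0\mathrm{id}_{\Psi''}$ that makes the canonical quadruple $(X,\Psi''(X),\eta_X,\mathrm{id})$ satisfy the defining relations \eqref{eqnn9}, so that $\mathbf{L}(\mathtt{i})$ ``sees'' all of $\mathbf{K}''(\mathtt{i})$. The remaining work is bookkeeping to confirm that the four relations of \eqref{eqnn9} and the two commuting squares in the definition of morphisms are mutually consistent with the maps produced above; this parallels the standard verification that passing from an autoequivalence with a coherent square-trivialisation to the ``twisted'' category in Lemma~\ref{lem43} and thereafter is harmless.
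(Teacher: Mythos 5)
Your argument is correct and is essentially the paper's proof: the paper simply refers to \cite[Lemma~19]{MaMa}, and that argument is exactly your direct check — essential surjectivity via the quadruple $(X,\Psi''(X),\eta_X,\mathrm{id}_{\Psi''(X)})$, which satisfies \eqref{eqnn9} precisely because of the coherence in Proposition~\ref{prop45}, and full faithfulness because the second square forces $\xi=(\beta')^{-1}\circ_1\Psi''(\zeta)\circ_1\beta$ while \eqref{eqnn9} and naturality of $\eta$ give the first square. No gaps.
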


\begin{proof}
Mutatis mutandis the proof of \cite[Lemma~19]{MaMa}.
% That the restriction in question is a strict $2$-natural transformation is clear by construction.
% We need to check that it is an equivalence. For this we need to check two things. The first one is
% the fact that $\beta$ is uniquely determined by $\alpha$. This follows directly from any of the 
% equations in \eqref{eqnn9}. The second thing to check is that, given $X$, $Y$ and $\alpha$,
% there is a $\beta$ such that \eqref{eqnn9} is satisfied.
% 
% By assumptions, we have $X\cong \Psi'(Y)$. The first two equations in \eqref{eqnn9} just say that 
% $\beta$ and $\eta_{Y}^{-1}\circ_1 {\Psi'}(\alpha)$ are mutual inverses in 
% $\mathrm{Hom}_{\mathbf{L}(\mathtt{i})}(X,\Psi'(Y))$
% and $\mathrm{Hom}_{\mathbf{L}(\mathtt{i})}(\Psi'(Y),X)$. Therefore these two equations
% describe equivalent conditions on $\alpha$ and $\beta$. Similarly, the two last equations in 
% \eqref{eqnn9} describe equivalent conditions on $\alpha$ and $\beta$. It remains to check that
% the first equation is equivalent to the last one. For this we apply $\Psi'$ to the first equation
% and compare the outcome with the last equation, taking into account that $\Psi'(\beta)$ is an
% isomorphism. We get
% \begin{equation}\label{eqnn10}
% \Psi'(\eta_{Y}^{-1})\circ_1 {\Psi'}^2(\alpha)=\alpha\circ_1\eta_{X}^{-1}. 
% \end{equation}
% As $\Psi'(\eta_{Y})=\eta_{\Psi'(Y)}$, by our choice of $\eta$ and Proposition~\ref{prop-particularPhi},
% the equality in \eqref{eqnn10} holds due to naturality of $\eta$. The claim of the lemma follows.
\end{proof}

Define an endofunctor $\Theta$ on $\mathbf{L}(\mathtt{i})$ by sending $(X,Y,\alpha,\beta)$ to 
$(Y,X,\beta,\alpha)$ with the obvious action on morphisms. From all symmetries in  the 
definition of $\mathbf{L}(\mathtt{i})$, it follows that $\Theta$ is a strict involution 
and it also strictly commutes with the action of $\cS$.

Next, consider the category $\mathbf{L}'(\mathtt{i})$ defined as follows:
\begin{itemize}
\item $\mathbf{L}'(\mathtt{i})$ has the same objects as $\mathbf{L}(\mathtt{i})$,
\item morphisms in  $\mathbf{L}'(\mathtt{i})$ are defined, for objects $X,Y\in\mathbf{L}(\mathtt{i})$, via
\begin{displaymath}
\mathrm{Hom}_{\mathbf{L}'(\mathtt{i})}(X,Y):=
\mathrm{Hom}_{\mathbf{L}(\mathtt{i})}(X,Y)\oplus  \mathrm{Hom}_{\mathbf{L}(\mathtt{i})}(X,\Theta(Y)),
\end{displaymath}
\item composition and identity morphisms in $\mathbf{L}'(\mathtt{i})$ are induced from those
in $\mathbf{L}(\mathtt{i})$ in the obvious way, see \cite[Definition~2.3]{CiMa} for details.
\end{itemize}
The fact that $\Psi$ preserves the isomorphism class of the indecomposable object $P_{sts}$ 
implies that the endomorphism algebra of the corresponding object in $\mathbf{L}'(\mathtt{i})$
contains a copy of the group algebra of the group $\{\mathrm{Id},\Theta\}$ and hence is not local.
This means that $\mathbf{L}'(\mathtt{i})$ is not idempotent split.
Denote by $\mathbf{N}_s^{(n)}(\mathtt{i})$ the idempotent completion of $\mathbf{L}'(\mathtt{i})$.

\begin{proposition}\label{prop47}
{\hspace{2mm}}

\begin{enumerate}[$($i$)$]
\item\label{prop27.1} The category  $\mathbf{N}_s^{(n)}(\mathtt{i})$ is a finitary $\mathbb{C}$-linear category.
\item\label{prop27.2} The category  $\mathbf{N}_s^{(n)}(\mathtt{i})$ is equipped with an 
action of $\cS$ induced from that on $\mathbf{L}'(\mathtt{i})$.
\item\label{prop27.3} The  obvious functor
$\Xi:\mathbf{L}(\mathtt{i})\to \mathbf{N}_s^{(n)}(\mathtt{i})$ is a strict $2$-natural transformation.
\end{enumerate}
\end{proposition}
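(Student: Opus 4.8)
The plan is to verify Proposition~\ref{prop47} by unwinding the construction of $\mathbf{N}_s^{(n)}(\mathtt{i})$ as the idempotent completion of the twisted category $\mathbf{L}'(\mathtt{i})$, and checking that the $\cS$-action and the finitariness survive each of the two steps (the $\Theta$-twist and the idempotent completion). Concretely, for \eqref{prop27.1} I would first observe that $\mathbf{L}(\mathtt{i})$ is finitary: it is equivalent to $\mathbf{K}''(\mathtt{i})$ via $\Upsilon$ by Lemma~\ref{lem46}, and $\mathbf{K}''(\mathtt{i})$ is finitary by construction (it is the category of projective objects in an abelianization of a finitary $2$-representation). Then $\mathbf{L}'(\mathtt{i})$ has finite-dimensional $\mathrm{Hom}$-spaces since each is a direct sum of two finite-dimensional $\mathrm{Hom}$-spaces in $\mathbf{L}(\mathtt{i})$, and it has finitely many isomorphism classes of indecomposables; moreover it is $\mathbb{C}$-linear, additive and has split idempotents after passing to the idempotent completion, so $\mathbf{N}_s^{(n)}(\mathtt{i})$ is Krull--Schmidt. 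The one genuinely categorical point here, which the paper has already prepared, is that $\mathbf{L}'(\mathtt{i})$ itself is not idempotent split (because $\Theta$ fixes $P_{sts}$, producing a non-local endomorphism algebra isomorphic to $\mathbb{C}[\mathbb{Z}/2]$), which is precisely why one must take the idempotent completion; after that the category is finitary by general nonsense.

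For \eqref{prop27.2} I would argue that $\Theta$ strictly commutes with the $\cS$-action on $\mathbf{L}(\mathtt{i})$, as noted right before the proposition, so the twisted $\mathrm{Hom}$-spaces $\mathrm{Hom}_{\mathbf{L}(\mathtt{i})}(X,Y)\oplus\mathrm{Hom}_{\mathbf{L}(\mathtt{i})}(X,\Theta(Y))$ carry a well-defined action of $1$-morphisms of $\cS$, functorially in the twisted composition; this is exactly the setup of \cite[Definition~2.3]{CiMa}, and one invokes that reference for the compatibility of the twisted composition with a commuting action. The action then extends canonically to the idempotent completion $\mathbf{N}_s^{(n)}(\mathtt{i})$, since any additive functor extends uniquely (up to isomorphism) to idempotent completions and $2$-morphisms extend along it; one has to check that $\circ_0$ and $\circ_1$ remain strict, which they do because the action on $\mathbf{L}(\mathtt{i})$ was strict and both the $\Theta$-twist and idempotent completion are formal, functorial constructions.

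For \eqref{prop27.3} the functor $\Xi$ is the composite of the identity-on-objects embedding $\mathbf{L}(\mathtt{i})\hookrightarrow\mathbf{L}'(\mathtt{i})$ (sending a morphism $f$ to $(f,0)$ in the direct-sum decomposition) followed by the canonical embedding $\mathbf{L}'(\mathtt{i})\hookrightarrow\mathbf{N}_s^{(n)}(\mathtt{i})$ into the idempotent completion. That $\Xi$ is a strict $2$-natural transformation amounts to checking that it intertwines the $\cS$-actions strictly: the embedding $f\mapsto(f,0)$ is $\cS$-equivariant because the action on the first summand of $\mathbf{L}'(\mathtt{i})$ is by definition the action on $\mathbf{L}(\mathtt{i})$, and the embedding into the idempotent completion is strictly $\cS$-equivariant by the functoriality just discussed. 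I would present all three parts as a single short proof: most of it is formal, and the substantive input (the equivalence $\Upsilon$, the commuting involution $\Theta$, and the machinery of \cite[Definition~2.3]{CiMa}) has already been assembled.

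The main obstacle, and the only step deserving care rather than citation, is confirming that the twisted-category construction of \cite[Definition~2.3]{CiMa} genuinely produces a $2$-representation of $\cS$ and not merely a category with a functor action --- that is, that the associativity and unit constraints of the $\cS$-action interact correctly with the $\mathbb{Z}/2$-grading on $\mathrm{Hom}$-spaces coming from $\Theta$. This is handled by the hypothesis that $\Theta$ \emph{strictly} commutes with the action (so there is no coherence isomorphism to track) together with the fact that $\eta$ is a modification, which is exactly what guarantees the well-definedness statement asserted in the text after Lemma~\ref{lem46}; once that is in place, the extension to the idempotent completion and the $2$-naturality of $\Xi$ are routine.
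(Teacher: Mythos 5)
Your argument is correct and follows essentially the same route as the paper: the paper's proof simply cites \cite[Proposition~20]{MaMa} and singles out the one new point, namely that $\mathbf{L}'(\mathtt{i})$ fails to be idempotent split and that this is remedied exactly by passing to the idempotent completion, which is also the pivot of your write-up. Your version just spells out the routine verifications (finitariness via $\Upsilon$ and Lemma~\ref{lem46}, strictness of the action through the $\Theta$-twist of \cite[Definition~2.3]{CiMa}, and the equivariance of the two embeddings defining $\Xi$) that the paper leaves to the citation.
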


\begin{proof}
The only thing which is different from  \cite[Proposition~20]{MaMa} is the fact that the
category $\mathbf{L}'(\mathtt{i})$ is not idempotent split. However, since we define 
$\mathbf{N}_s^{(n)}(\mathtt{i})$ as the idempotent completion of $\mathbf{L}'(\mathtt{i})$,
it follows that $\mathbf{N}_s^{(n)}(\mathtt{i})$ is idempotent split and hence 
$\mathbf{N}_s^{(n)}(\mathtt{i})$ is a finitary $\mathbb{C}$-linear category.
The rest is similar to \cite[Proposition~20]{MaMa}.
% It is well-known that the category $\mathbf{N}(\mathtt{i})$ is an additive 
% $\mathbb{C}$-linear category, see \cite[Page~552]{Ke} and \cite[Section~2]{CM}. 
% For each indecomposable object $X$ in $\mathbf{L}(\mathtt{i})$, the object $\Theta(X)$ is
% not isomorphic to $X$, moreover, $\mathrm{Hom}_{\mathbf{L}(\mathtt{i})}(X,\Theta(X))=0$. 
% From the construction it is now easy to see that that the endomorphism algebra of $X$
% in $\mathbf{N}(\mathtt{i})$ is the same as in $\mathbf{L}(\mathtt{i})$, in particular, it is
% local. Therefore $X$ is indecomposable in $\mathbf{N}(\mathtt{i})$ and thus the fact that 
% $\mathbf{N}(\mathtt{i})$ is idempotent split and Krull-Schmidt with finitely many indecomposable
% objects follows from the corresponding properties in $\mathbf{L}(\mathtt{i})$,
% by additivity and $\mathbb{C}$-linearity. This proves claim~\eqref{prop27.1}. 
% 
% Claims \eqref{prop27.2} and \eqref{prop27.3} follow directly by construction, 
% taking into account the fact that the $2$-natural transformation $\Theta$ is strict.
\end{proof}

The above gives us the $2$-representation $\mathbf{N}_s^{(n)}$ of $\cS$. 
From the fact that $\mathbf{C}_{\mathcal{L}_s}$ is simple transitive, it follows
that $\mathbf{N}_s^{(n)}$ is simple transitive.
The underlying algebra $\mathbf{N}_s^{(n)}$ is the quotient of the path algebra 
of the following  quiver:
\begin{displaymath}
\xymatrix{
&&&&\circ\ar@/^/[d]&\\
\bullet\ar@/^/[r]&\bullet\ar@/^/[r]\ar@/^/[l]&\dots\ar@/^/[r]\ar@/^/[l]&\bullet\ar@/^/[r]\ar@/^/[l]
&\bullet\ar@/^/[r]\ar@/^/[l]\ar@/^/[u]&\circ\ar@/^/[l]
}
\end{displaymath}
where we mod out by the relations that any path of the form $\mathtt{i}\to\mathtt{j}\to \mathtt{k}$ is zero
if $i\neq k$ and all paths of the form $\mathtt{i}\to\mathtt{j}\to \mathtt{i}$ coincide.
Here the vertices $\bullet$ correspond to pairs $\{P_x,P_{x'w_0}\}$, where $x\in\mathtt{L}$
is such that $x\neq x'w_0$, while the two vertices $\circ$ correspond to the unique 
$x\in\mathtt{L}$ for which $x'w_0=x$. 

If $k$ is odd, then the unique $x\in\mathtt{L}$ for which $x'w_0=x$ has the form $stst\dots s$.
This implies that the matrix $B$ is, up to permutation of rows and columns, 
an extended staircase matrix of size $\frac{k+3}{2}\times \frac{k-1}{2}$.
If $k$ is even, then the unique $x\in\mathtt{L}$ for which $x'w_0=x$ has the form $tst\dots s$.
This implies that the matrix $B$ is, up to permutation of rows and columns, 
an extended staircase matrix of size $\frac{k}{2}\times \frac{k+2}{2}$. The explanation why
$B$ has only one row or column with three non-zero elements is the fact that the quiver above
has only one vertex which is connected to three other vertices.

We denote by $\mathbf{N}_t^{(n)}$ the $2$-representation of $\cS$ constructed similarly 
starting from $\mathcal{L}_t$. Note that $\mathbf{C}_{\mathcal{L}_s}$ and
$\mathbf{C}_{\mathcal{L}_t}$ are not equivalent as their decategorifications contain
different one-dimensional simple $W$-modules and hence are not isomorphic. 
It is easy to check that the decategorifications of $\mathbf{N}_s^{(n)}$ and
$\mathbf{N}_t^{(n)}$ also contain different one-dimensional simple $W$-modules. 
This implies that $\mathbf{N}_s^{(n)}$ and
$\mathbf{N}_t^{(n)}$ are not equivalent. Comparing decategorifications, we, in fact, see
that the $2$-representation $\mathbf{C}_{\mathcal{L}_s}$, $\mathbf{C}_{\mathcal{L}_t}$,
$\mathbf{N}_s^{(n)}$ and $\mathbf{N}_t^{(n)}$ are pairwise not equivalent.

\subsection{The matrix $B$}\label{s6.2}

Our aim in this section is to describe the matrix $B$ from Subsection~\ref{s6.1}.

\begin{proposition}\label{prop42}
Let $\mathbf{M}$ be  a simple transitive $2$-representation of $\underline{\cS}$ with apex $\mathcal{J}$.
Assume that $n\neq 12,18,30$.
Then, up to permutation of rows and columns,  the matrix $B$ is a staircase matrix
of size $k\times (k-1)$ or an extended staircase matrix 
of size $\frac{k}{2}\times \frac{k+2}{2}$ (if $k$ is even) 
or of size $\frac{k+3}{2}\times \frac{k-1}{2}$ (if $k$ is odd).
\end{proposition}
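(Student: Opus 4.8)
The strategy is to combine the spectral classification of Proposition~\ref{propsim} with the Fibonacci-polynomial constraint of Lemma~\ref{lem1701}, eliminating the three exceptional matrices $X_1,X_2,X_3$ of type $E$ by a divisibility argument on annihilating polynomials. First I would set $Q=B^{\mathrm{tr}}B$ and $Q'=BB^{\mathrm{tr}}$. By Corollary~\ref{corn32} the matrix $\mathtt{M}$ is symmetric, so $\mathtt{M}$ has the block form displayed in Subsection~\ref{s6.1}. Since $\mathbf{M}$ is transitive, $\mathtt{M}_{\mathrm{tot}}$ has all entries positive, which forces both $Q$ and $Q'$ to be irreducible (any common block decomposition of the rows and columns of $B$ would give a zero block in some power of $\mathtt{M}$, contradicting transitivity, exactly as in the proof of Proposition~\ref{prop4}). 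By Lemma~\ref{lem1701}, both $Q$ and $Q'$ are annihilated by $f_n(x)$; since the roots of $\underline{f}_d(x)$ for $d>2$ are positive reals strictly less than $4$ and $f_n(x)=\prod_{d\mid n}\underline{f}_d(x)$, all eigenvalues of $Q$ lie in $(0,4)$ or are $0$, so $\mathbf{m}_B<4$ in the notation of Section~\ref{sim} (the eigenvalue $0$ can only occur if $B$ is non-square, but in any case $\mathbf{m}_B$ is the largest eigenvalue of $Q$, which is $<4$).

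Now Proposition~\ref{propsim} applies: up to independent permutations of rows and columns, $B$ is a staircase matrix, an extended staircase matrix, or one of $X_1,X_2,X_3$ (or a transpose). The staircase case gives the $k\times(k-1)$ option; which size of staircase occurs is then pinned down by Corollary~\ref{cor1705}, exactly as in the proof of Proposition~\ref{prop1704}: the staircase must be square or off-by-one, and since $\Lparen\theta_{w_0}\Rparen=0$ with $\mathbf{l}(w_0)=n=2k$, the recursion \eqref{eq1705} together with the formulas for $\Lparen\theta_{s_i}\Rparen$ from the proof of Lemma~\ref{lem1701} forces it to be the $k\times(k-1)$ one (a square staircase of size $m$ would give a non-vanishing $\theta_w$ for a word $w$ of length $2m$, forcing $2m \geq n$; combined with the opposite inequality coming from invertibility considerations this pins down $m=k$ and the non-square shape). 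The extended staircase case gives the two remaining options; again the precise dimensions are determined by the requirement that $f_n(Q)=0$, using the degree formula $\deg\underline{f}_d=\phi(d)/2$ to match the size of the matrix to $n=2k$, and distinguishing the parity of $k$ as in Subsection~\ref{s6.7}.

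The main obstacle — and the reason for the hypothesis $n\neq 12,18,30$ — is eliminating $X_1,X_2,X_3$. For each $X_i$ I would compute $Q_i=X_i^{\mathrm{tr}}X_i$ (and, where it differs, $X_iX_i^{\mathrm{tr}}$) and its characteristic polynomial, as is already done for $X_1$ and $X_3$ in the proof of Proposition~\ref{prop1704}: $X_1$ gives $(x-1)(x^2-4x+1)$, so $\underline{f}_{12}\mid$ any annihilating polynomial, forcing $12\mid n$; $X_3$ gives the irreducible quartic $x^4-7x^3+14x^2-8x+1$, whose irreducibility and degree $4=\phi(d)/2$ only for $d=15$ (odd) or $d=30$ rule it out unless $30\mid n$ — and one checks $\underline{f}_{30}$ does not match, or rather that the only way $f_n$ can be a multiple of this quartic is $n=30$; and $X_2$ (size $3\times 4$) requires computing both $X_2^{\mathrm{tr}}X_2$ and $X_2 X_2^{\mathrm{tr}}$, whose characteristic polynomials should similarly force $18\mid n$ via the factor $\underline{f}_{18}$ or a related cyclotomic-type factor (the quadratic $x^2-\dots$ whose roots give the relevant eigenvalue). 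The point is that outside $n\in\{12,18,30\}$ none of these divisibilities can hold simultaneously with $f_n(Q)=0$, so the exceptional matrices are excluded and only staircase / extended-staircase matrices survive. The bookkeeping of which $\underline{f}_d$ divides which characteristic polynomial, and checking that the forced $d$ is exactly $12$, $18$, or $30$ rather than a proper divisor, is the delicate part; I expect it to reduce to a short finite check using the table of $\underline{f}_i(x)$ in Subsection~\ref{s17.2} and the degree formula.
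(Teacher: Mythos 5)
Your route is the paper's route (irreducibility of $BB^{\mathrm{tr}}$ and $B^{\mathrm{tr}}B$ from transitivity, eigenvalues in $[0,4)$ from Lemma~\ref{lem1701} and the factorization of $f_n$, then Proposition~\ref{propsim}, then elimination of $X_1,X_2,X_3$), but the elimination step has a genuine gap. From the minimal polynomial $(x-1)(x^2-4x+1)=\underline{f}_3(x)\underline{f}_{12}(x)$ of $X_1X_1^{\mathrm{tr}}$ you deduce $12\mid n$ and then claim that outside $n\in\{12,18,30\}$ these divisibilities are incompatible with $f_n(Q)=0$. That is false: for $n=24$ we have $12\mid 24$, hence $\underline{f}_3\underline{f}_{12}\mid f_{24}$ and $f_{24}(X_1X_1^{\mathrm{tr}})=0$, so nothing in your argument excludes $B=X_1$ in type $I_2(24)$ (likewise $n=36,54,\dots$ for $X_2$ and $n=60,90,\dots$ for $X_3$). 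The missing ingredient is the opposite bound $n\le 12$ (resp.\ $18$, $30$), which is what the paper invokes as ``the arguments in the proof of Corollary~\ref{cor1705}'': since the minimal polynomial of $X_1X_1^{\mathrm{tr}}$ already divides $f_{12}$, the block formulas in the proof of Lemma~\ref{lem1701} give $\Lparen\theta_w\Rparen=0$ for the alternating word $w$ of length $12$; if $n>12$, this $w$ lies in $\mathcal{J}$, the apex of $\mathbf{M}$, and cannot be annihilated. Combining $n\le 12$ with $12\mid n$ gives $n=12$, and similarly $n=18$, $n=30$ for the other two matrices; only then does the hypothesis $n\ne 12,18,30$ conclude. (A smaller slip in the same step: $\deg\underline{f}_d=\phi(d)/2=4$ also for $d=16,20,24$, so identifying the quartic attached to $X_3$ requires comparing it with $\underline{f}_{30}$ itself, not a degree count over odd $d$.)

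The same loophole touches your treatment of the sizes. The paper does not pin the staircase and extended staircase shapes spectrally: it notes that every staircase and every extended staircase matrix actually occurs as $B$ for a genuine simple transitive $2$-representation in a unique type $I_2(l)$ (cell $2$-representations for staircase matrices, the $2$-representations $\mathbf{N}_s^{(l)}$, $\mathbf{N}_t^{(l)}$ of Subsection~\ref{s6.7} for extended ones), and then Corollary~\ref{cor1705} forces $l=n$; this simultaneously rules out square staircase matrices (realized only in odd types) and yields exactly the sizes in the statement. Your parenthetical direct justification via ``matching degrees'' against $f_n(Q)=0$ alone runs into the same multiples problem as above; if you want to avoid quoting the realizations, you must again use non-annihilation of $\theta_w$ for every alternating word of length strictly less than $n$, i.e.\ the apex argument, to get the upper bound on the relevant index before the divisibility pins it down.
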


For example, if $n=6$,  then $B$ is one of the following matrices:
\begin{displaymath}
\left(\begin{array}{ccc}1&1&0\\0&1&1\end{array}\right),\quad 
\left(\begin{array}{cc}1&0\\1&1\\0&1\end{array}\right),\quad
\left(\begin{array}{ccc}1&1&1\end{array}\right),\quad 
\left(\begin{array}{c}1\\1\\1\end{array}\right).
\end{displaymath}
If $n=8$,  then $B$ is one of the following matrices:
\begin{displaymath}
\left(\begin{array}{cccc}1&1&0&0\\0&1&1&0\\0&0&1&1\end{array}\right),\quad 
\left(\begin{array}{ccc}1&0&0\\1&1&0\\0&1&1\\0&0&1\end{array}\right),\quad
\left(\begin{array}{ccc}1&1&1\\0&0&1\end{array}\right),\quad 
\left(\begin{array}{cc}1&0\\1&0\\1&1\end{array}\right). 
\end{displaymath}

\begin{proof}
As $\mathbf{M}$ is simple transitive, the non-negative matrix $\mathtt{M}$ is primitive.
From the computation in the proof of Lemma~\ref{lem1701} we thus get that both 
$BB^{\mathrm{tr}}$ and $B^{\mathrm{tr}}B$ must be irreducible non-negative matrices.
The combination of  Lemma~\ref{lem1701} with Subsection~\ref{s17.2} implies that both
$BB^{\mathrm{tr}}$ and $B^{\mathrm{tr}}B$ are diagonalizable with real eigenvalues, moreover,
all eigenvalues are contained in the half-open interval $[0,4)$.

From Proposition~\ref{propsim}, we thus obtain that $B$ is either a staircase matrix
or an extended staircase matrix or coincides with $X_1$ or $X_2$ or $X_3$.
Note that all staircase and extended staircase matrices appear as $B$ for some type
$I_2(l)$, where $l$ can be arbitrary. Therefore, thanks to Corollary~\ref{cor1705}, to 
complete the proof of our proposition, it is enough to show that $B$ cannot coincide
with any of $X_1,X_2$ or $X_3$.

The minimal polynomial of the matrix $X_1X_1^{\mathrm{tr}}$ is $(x-1)(x^2-4x+1)=
\underline{f}_{3}(x)\underline{f}_{12}(x)$.
The arguments in the proof of Corollary~\ref{cor1705} imply that $B$ can be equal to 
$X_1$ only in the case $n=12$.

The minimal polynomial of the matrix $X_2X_2^{\mathrm{tr}}$ is $x^3-6x^2+9x-3=\underline{f}_{18}(x)$.
The arguments in the proof of Corollary~\ref{cor1705} imply  that $B$ can be equal to 
$X_1$ only in the case $n=18$.

The minimal polynomial of the matrix $X_3X_3^{\mathrm{tr}}$ is $x^4-7x^3+14x^2-8x+1=\underline{f}_{30}(x)$.
The arguments in the proof of Corollary~\ref{cor1705} imply  that $B$ can be equal to 
$X_1$ only in the case $n=30$.

As the cases $n=12,18,30$ are excluded, the claim of the proposition follows.
\end{proof}

\subsection{Proof of Theorem~\ref{thm49}}\label{s6.8}

Let $\mathbf{M}$ be a simple transitive $2$-representation of $\cS$. 
Since $n\neq 12,18,30$, we can apply Proposition~\ref{prop42} to get 
four possibilities for $B$ which are in a natural bijection with 
the $2$-representations $\mathbf{C}_{\mathcal{L}_s}$,  $\mathbf{C}_{\mathcal{L}_t}$,
$\mathbf{N}_s^{(n)}$ and $\mathbf{N}_t^{(n)}$. That in the first two cases we have that $\mathbf{M}$
is equivalent to $\mathbf{C}_{\mathcal{L}_s}$ or, respectively,  $\mathbf{C}_{\mathcal{L}_t}$,
is proved similarly to Subsection~\ref{s17.6}. That in the last two cases we have that $\mathbf{M}$
is equivalent to $\mathbf{N}_s^{(n)}$ or, respectively,  $\mathbf{N}_t^{(n)}$,
is proved similarly to \cite[Subsection~5.10]{MaMa}. 

\subsection{Exceptional types $I_2(12)$, $I_2(18)$ and $I_2(30)$}\label{s6.9}

In Coxeter type $I_2(12)$, the proof of Proposition~\ref{prop42} leaves the possibility of 
$B=X_1$. In this case 
\begin{displaymath}
\mathtt{M}=\left(\begin{array}{cccccc}2&0&0&1&0&0\\0&2&0&1&1&1\\0&0&2&0&0&1
\\1&1&0&2&0&0\\0&1&0&0&2&0\\0&1&1&0&0&2\end{array}\right). 
\end{displaymath}
If we assume that a simple transitive $2$-representation with such $\mathtt{M}$ exists, then the
underlying algebra of this $2$-representation must be the quotient of the path algebra of the following 
quiver (here the number of each vertex corresponds to the numbering of columns in $\mathtt{M}$):
\begin{displaymath}
\xymatrix{
&&\mathtt{5}\ar@/^/[d]&&&&\\
\mathtt{1}\ar@/^/[r]&\mathtt{4}\ar@/^/[r]\ar@/^/[l]&\mathtt{2}\ar@/^/[r]\ar@/^/[l]\ar@/^/[u]&
\mathtt{6}\ar@/^/[r]\ar@/^/[l]&\mathtt{3}\ar@/^/[l]
}
\end{displaymath}
modulo the relations that any path of the form $\mathtt{i}\to\mathtt{j}\to \mathtt{k}$ is zero
if $i\neq k$ and all paths of the form $\mathtt{i}\to\mathtt{j}\to \mathtt{i}$ coincide.
This algebra is the quadratic dual of the preprojective algebra of the underlying Dynkin quiver
of type $E_6$. This suggests a relation between type $I_2(12)$ and type $E_6$ which we do not
understand. We do not know whether this hypothetical $2$-representation exists and we see no reasons
why it should not exist. On the decategorified level, the corresponding representation of the
group algebra, on which the elements of the Kazhdan-Lusztig basis act via the corresponding 
non-negative matrices certainly does exist. Because of Subsection~\ref{s7.3}, 
there is also a possibility of connection between type $I_2(12)$ and type $F_4$.

Similarly, Coxeter types $I_2(18)$  and $I_2(30)$ are connected to Dynkin types $E_7$ and $E_8$, respectively.
Because of Subsection~\ref{s7.2}, Coxeter type $I_2(30)$ could also be connected to Coxeter type $H_4$.

\section{Simple transitive $2$-representations of $\underline{\cS}$ in other Coxeter types
of rank higher than two}\label{s7}

\subsection{Coxeter type $H_3$}\label{s7.1}

In this subsection we assume that $W$ is of Coxeter type $H_3$ and $S=\{r,s,t\}$ with the Coxeter diagram
\begin{displaymath}
\xymatrix{r\ar@{-}[r]^5&s\ar@{-}[r]&t.} 
\end{displaymath}

\begin{proposition}\label{prop81}
Assume that $W$ is of Coxeter type $H_3$. Let $\mathbf{M}$ be a simple transitive $2$-representation of
$\underline{\cS}$ with apex $\mathcal{J}$. Then the isomorphism classes of indecomposable objects in
$\mathbf{M}(\mathtt{i})$ can be ordered such that, with respect to the ordering $r,s,t$, we have
\begin{displaymath}
\mathtt{M}=\left(
\begin{array}{cccccc}
2&0&1&0&0&0\\
0&2&1&1&0&0\\
1&1&2&0&1&0\\
0&1&0&2&0&1\\
0&0&1&0&2&0\\
0&0&0&1&0&2
\end{array}
\right).
\end{displaymath}
\end{proposition}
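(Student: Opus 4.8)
The plan is to follow exactly the two-step strategy outlined in the introduction: first pin down the combinatorial matrix $\mathtt{M}$, then recognize it. By Proposition~\ref{prop4} (and Corollary~\ref{corn32}), we already know that $\mathtt{M}$ is symmetric and has the block shape $2E_i$ on the diagonal with off-diagonal blocks $B_{ij}$ that are nonzero precisely when $s_is_j\neq s_js_i$. For the $H_3$ diagram $r-s-t$ (with the bond of order $5$ between $r$ and $s$), this means $B_{rt}=0$ while $B_{rs}$ and $B_{st}$ are nonzero. So the real content is to determine the sizes of the identity blocks $E_r,E_s,E_t$ and the precise form of the coupling blocks $B_{rs}$ and $B_{st}$.

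First I would consider the two rank-two parabolic subgroups and apply the rank-two results already established. Restricting $\mathbf{M}$ to the $2$-subcategory generated by $\theta_r,\theta_s$ gives (on each transitive constituent) a situation governed by Coxeter type $I_2(5)$; the Fibonacci-polynomial analysis of Section~\ref{s17} — specifically Lemma~\ref{lem1701}, Corollary~\ref{cor1702} and Proposition~\ref{prop1704} — forces the block $B_{rs}$ to be, up to independent row/column permutation, a square staircase matrix of a size dictated by the fact that $\theta_{w}=0$ for $w$ the longest element of that parabolic (length $5$, so $B_{rs}B_{rs}^{\mathrm{tr}}$ is annihilated by $f_5(x)=x^2-3x+1=\underline f_5(x)$, whose companion matrix is $2\times 2$). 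Likewise, restricting to $\langle\theta_s,\theta_t\rangle$ gives type $I_2(3)=A_2$, where $B_{st}$ must be a $1\times 1$ staircase block, i.e.\ $B_{st}=(1)$ up to the relevant identifications. Combined with Proposition~\ref{propspecialJ} — which tells us the special $\mathbb{C}[W]$-module is $V\otimes\mathbb{C}_{\mathrm{sign}}$, of dimension equal to the number of left cells in $\mathcal{J}$, namely $3$ — and with Theorem~\ref{thmminimal}/Proposition~\ref{propn31}, which force the rank of $\mathbf{M}$ to match that of the cell $2$-representation, one concludes that the three identity blocks all have size $2$, so $\mathtt{M}$ is $6\times 6$.

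With the sizes fixed and the shapes of $B_{rs}$ (a $2\times 2$ staircase) and $B_{st}$ (a $2\times 2$ coupling supported on a single matching entry, coming from the $A_2$ block) known up to independent permutations within each group of two rows/columns, the remaining task is purely combinatorial bookkeeping: choose the ordering of the two indecomposables in each of the three groups so that $B_{rs}=\left(\begin{smallmatrix}1&0\\1&1\end{smallmatrix}\right)$ and $B_{st}$ couples the appropriate rows, and then read off that the resulting $6\times 6$ matrix is exactly the displayed $\mathtt{M}$ — which is precisely the matrix of the cell $2$-representation $\mathbf{C}_{\mathcal{L}_r}$ computed from the table for $H_3$ in Subsection~\ref{s4.2}. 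One should double-check consistency by verifying that this $\mathtt{M}$ indeed annihilates the longest element of each rank-two parabolic and has the spectral data of Proposition~\ref{propspecialJ} (the maximal eigenvalue $2+\tfrac{\sqrt{2\sqrt5+10}}{2}$ of $\mathtt{M}_{\mathrm{pr}}$ recorded there), ruling out the alternative orderings.

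The main obstacle I anticipate is the coherent gluing of the two rank-two reductions: each of $B_{rs}$ and $B_{st}$ is determined only \emph{up to independent permutations of its own rows and columns}, and these permutations act on the shared group of indecomposables indexed by $s$, so one must argue that the permutation freedom can be used simultaneously to bring both blocks to the desired normal form without conflict. This is where the non-simply-laced staircase/extended-staircase dichotomy of Proposition~\ref{propsim} has to be invoked carefully on the $3$-vertex diagram rather than on a single bond, and where one uses that the Coxeter diagram is a tree (as in the proof of Proposition~\ref{prop4}) to exclude the extended-staircase option for $B_{rs}$ — since an extended staircase would force $\mathtt{M}$ to correspond to a $D$- or $E$-type picture incompatible with the $H_3$ parabolic data. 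Once that compatibility is settled, the identification of $\mathtt{M}$ with the stated matrix is immediate.
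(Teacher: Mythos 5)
Your overall route coincides with the paper's: reduce to the two rank-two parabolic $2$-subcategories, use Theorem~\ref{thm1700} for the $I_2(5)$ pair $\{r,s\}$ and the simply laced ($A_2$) result for $\{s,t\}$, and then glue. The gap is in the gluing step, and it shows up as an internal inconsistency in your text. First, Theorem~\ref{thmminimal} and Proposition~\ref{propn31} do \emph{not} force the rank of $\mathbf{M}$ to equal that of the cell $2$-representation; nothing quoted so far bounds the rank a priori, and asserting this is essentially assuming what has to be proved. The paper only uses the weaker decategorification constraint coming from Proposition~\ref{propspecialJ} (and \cite{KM2},\cite{CM}): the underlying $\mathbb{C}[W]$-module is a sum of copies of the $3$-dimensional module $V\otimes\mathbb{C}_{\mathrm{sign}}$, so the rank is $3m$; the value $m=2$ comes out only at the end. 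Second, the rank-two classifications control each \emph{transitive constituent} of the restricted actions (this is exactly where \cite[Corollary~20]{KM2} and \cite[Theorem~25]{CM} are needed: the restricted matrices decompose into direct sums of the known $I_2(5)$-blocks, respectively $A_2$-blocks), so a priori $B_{rs}$ and $B_{st}$ are direct sums of \emph{several} such blocks, and the whole point is to determine how many of each occur and how they are matched. Your claim that $B_{st}$ is a single $1\times 1$ block $(1)$ is in fact false for the matrix you are proving: in the displayed $\mathtt{M}$ the block $B_{st}$ is the $2\times 2$ identity, i.e.\ two $A_2$-couplings (and a $1\times1$ block would contradict your own conclusion that $E_s$ has size $2$).

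The missing idea is the counting/matching argument that settles precisely the ``coherent gluing'' you flag but do not carry out. Each $A_2$-block couples exactly one indecomposable doubled by $\theta_s$ with exactly one indecomposable doubled by $\theta_t$, and each indecomposable occurs in exactly one block of each of the two restricted decompositions. Hence the two $\theta_s$-doubled objects of a single $I_2(5)$-block must each be matched with its own $\theta_t$-doubled object; this produces a configuration of $2+2+2$ indecomposables admitting no further connections, which by transitivity of $\mathbf{M}$ (irreducibility of $\mathtt{M}$) and the size constraint $3m$ must be all of $\mathbf{M}(\mathtt{i})$, and the resulting matrix is exactly the displayed one. Your proposed substitute---applying Proposition~\ref{propsim} to the three-vertex diagram and ``excluding the extended staircase via the tree structure''---does not address this multiplicity question at all. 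With the matching argument in place, the remaining normalization of $B_{rs}$ to the $2\times 2$ staircase and the identification of $\mathtt{M}$ with the matrix of the cell $2$-representation is indeed routine, as you say.
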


\begin{proof}
To prove this statement we use reduction to rank two Coxeter subgroups. 
Let $\cC_1$ denote the $2$-full $2$-subcategory of $\underline{\cS}$ which 
is monoidally generated by $\theta_r$ and $\theta_s$. By construction,
the $2$-category $\cC_1$ has a quotient which is biequivalent to 
the  $2$-category $\underline{\cS}$ in Coxeter type $I_2(5)$.
Let $\cC_2$ denote the $2$-full $2$-subcategory of $\underline{\cS}$ which 
is monoidally generated by $\theta_s$ and $\theta_t$. By construction,
the $2$-category  $\cC_2$ has a quotient which is biequivalent to 
the  $2$-category $\underline{\cS}$ in  Coxeter type $A_2$.

We can restrict the action of $\cC_1$ to the additive closure $\mathcal{X}$, in $\mathbf{M}(\mathtt{i})$,
of all indecomposable objects which are not annihilated by either $\theta_r$ or $\theta_s$.
From Theorem~\ref{thm1700}, $\cC_1$ has a unique (up to equivalence) simple transitive 
$2$-representation which does not annihilate any non-zero $1$-morphisms. In this 
$2$-representation, the actions of $\theta_r$ and $\theta_s$ are given by the following
matrices:
\begin{equation}\label{eq81-1}
\left(
\begin{array}{cccc}
2&0&1&0\\
0&2&1&1\\
0&0&0&0\\
0&0&0&0
\end{array}
\right),\qquad 
\left(
\begin{array}{cccc}
0&0&0&0\\
0&0&0&0\\
1&1&2&0\\
0&1&0&2
\end{array}
\right).
\end{equation}
By the combination of \cite[Corollary~20]{KM2} and \cite[Theorem~25]{CM}, the actions of
$\theta_r$ and $\theta_s$ on $\mathcal{X}$ are then given by a direct sum of blocks of the
form \eqref{eq81-1}.

We can restrict the action of $\cC_2$ to the additive closure $\mathcal{Y}$, in $\mathbf{M}(\mathtt{i})$,
of all indecomposable objects which are not annihilated by either $\theta_s$ or $\theta_t$.
From \cite[Theorem~18]{MM5}, $\cC_2$ has a unique (up to equivalence) simple transitive 
$2$-representation which does not annihilate any non-zero $1$-morphisms. In this 
$2$-representation, the actions of $\theta_s$ and $\theta_t$ are given by the following
matrices:
\begin{equation}\label{eq81-2}
\left(
\begin{array}{cc}
2&1\\
0&0
\end{array}
\right)\qquad 
\left(
\begin{array}{cc}
0&0\\
1&2
\end{array}
\right).
\end{equation}
By the combination of \cite[Corollary~20]{KM2} and \cite[Theorem~25]{CM}, the actions of
$\theta_s$ and $\theta_t$ on $\mathcal{Y}$ are then given by a direct sum of blocks of the
form \eqref{eq81-2}.

We need to combine blocks of the form \eqref{eq81-1} with blocks of the form \eqref{eq81-2} 
to get an irreducible non-negative matrix for the principal element $\mathbf{s}$, 
of size $3m$, for some $m$. The latter restriction is due to the fact that 
the decategorification of $\mathbf{M}$ is a sum of $3$-dimensional simple $W$-modules. 
Note that \eqref{eq81-2} allows us to ``connect'' (in the sense of having a non-zero element 
on the intersection of the corresponding row and column) one indecomposable which is not 
annihilated by $\theta_s$ with one indecomposable which is not annihilated by $\theta_t$. 
Therefore the only way is to connect each of the two indecomposables in some block of the form 
\eqref{eq81-1} which are not annihilated by $\theta_s$ with an external pair of two 
indecomposables which are not annihilated by $\theta_t$. This gives exactly  the matrices 
given in the formulation of the proposition (up to permutation of basis elements).
The claim of the proposition follows.
\end{proof}

\begin{theorem}\label{thm82}
Assume that $W$ is of Coxeter type $H_3$. Then every simple transitive $2$-representation of 
$\underline{\cS}$ is equivalent to a cell $2$-representation.
\end{theorem}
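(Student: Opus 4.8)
The plan is to leverage Proposition~\ref{prop81}, which pins down the matrix $\mathtt{M}$ uniquely, so that in particular it coincides with the corresponding matrix for the cell $2$-representation $\mathbf{C}_{\mathcal{L}_r}$ (the cell $2$-representation attached to the left cell $\mathcal{L}_r$ containing $r$). Once the combinatorics are rigid, the argument follows the pattern already used in Subsection~\ref{s17.6}: build an explicit strong $2$-natural transformation from a principal $2$-representation to $\overline{\mathbf{M}}$, factor it through the cell $2$-representation, and check it is an equivalence by comparing Cartan matrices of the underlying algebras.

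First I would identify, using the table for $H_3$ in Subsection~\ref{s4.2}, the left cell $\mathcal{L}_r = \{r, rsr, sr, srsr, tsr, tsrsr\}$ (the first column) and note that $r$ is the Duflo involution in $\mathcal{L}_r$ by Proposition~\ref{propJKL}\eqref{propJKL.2}. Consider the abelianization $\overline{\mathbf{M}}$ with indecomposable projectives $P_1,\dots,P_6$ ordered as in Proposition~\ref{prop81} and simple tops $L_1,\dots,L_6$. By Proposition~\ref{propn31}, $\mathrm{F}_{\mathrm{pr}}\,L_i \cong P_i$ for all $i$; since from $\mathtt{M}$ the simple $L_1$ is killed by $\theta_s$ and $\theta_t$ but not by $\theta_r$, we get $\theta_r\,L_1 \cong P_1$. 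Then, applying the $1$-morphisms $\theta_w$ for $w \in \mathcal{L}_r$ to $L_1$ and using the multiplication rules for $\theta_r,\theta_s,\theta_t$ (the rank-two relations $\theta_r\theta_s\theta_r = \theta_{rsr}\oplus\theta_r$ etc., together with $\theta_s\theta_t\theta_s = \theta_{sts}\oplus\theta_s$), I would check that one obtains all six indecomposable projectives of $\overline{\mathbf{M}}(\mathtt{i})$, up to isomorphism. This step is essentially bookkeeping with the known matrices from Proposition~\ref{prop81} restricted to the rank-two subcategories $\cC_1$ and $\cC_2$.

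Next, let $\mathbf{N}$ be the additive $2$-representation obtained by restricting the action of $\underline{\cS}$ to the category of projective objects in $\overline{\mathbf{M}}(\mathtt{i})$; by \cite[Theorem~11]{MM2} (as cited in Subsection~\ref{s17.6}), $\mathbf{N}$ is equivalent to $\mathbf{M}$. There is a unique strong $2$-natural transformation $\mathbf{P}_{\mathtt{i}} \to \overline{\mathbf{M}}$ sending $\mathbbm{1}_{\mathtt{i}}$ to $L_1$; since $L_1$ corresponds to the Duflo involution of $\mathcal{L}_r$ and $\mathbf{M}$ has apex $\mathcal{J}$, this transformation annihilates $\mathbf{N}_{\mathcal{L}}$ for every left cell $\mathcal{L} >_L \mathcal{L}_r$, hence factors through the cell $2$-representation $\mathbf{C}_{\mathcal{L}_r}$, inducing a strong $2$-natural transformation $\mathbf{C}_{\mathcal{L}_r} \to \mathbf{N}$. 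Because $\mathbf{C}_{\mathcal{L}_r}$ is itself simple transitive with apex $\mathcal{J}$, everything established above for $\mathbf{M}$ (in particular Proposition~\ref{prop81}, so the combinatorial matrix, and Proposition~\ref{propn31}, so the description of projectives via composition multiplicities) applies verbatim to $\mathbf{C}_{\mathcal{L}_r}$. In particular the Cartan matrices of the underlying algebras of $\mathbf{C}_{\mathcal{L}_r}(\mathtt{i})$ and $\mathbf{N}(\mathtt{i})$ coincide, so the $2$-natural transformation is an equivalence, and $\mathbf{M}$ is equivalent to the cell $2$-representation $\mathbf{C}_{\mathcal{L}_r}$.

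The main obstacle, in my view, is not any single hard computation but making precise the claim that the induced $2$-natural transformation $\mathbf{C}_{\mathcal{L}_r} \to \mathbf{N}$ is full and faithful (hence an equivalence) purely from the equality of Cartan matrices; one needs that the functor is surjective on objects up to isomorphism — which the previous paragraph's computation of the images $\theta_w\,L_1$ delivers — and that the induced maps on morphism spaces are injective, which follows from simple transitivity of $\mathbf{C}_{\mathcal{L}_r}$ together with a dimension count using the coinciding Cartan data. This is the step where one must invoke the general machinery exactly as in \cite[Proposition~9]{MM5} and Subsection~\ref{s17.6}; once granted, the rest is a direct transcription of the rank-one-cell argument to the present six-dimensional situation. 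A minor subtlety worth double-checking is the choice of left cell: one should verify that the Duflo-involution simple $L_1$ really is the one killed by both $\theta_s$ and $\theta_t$ in the ordering of Proposition~\ref{prop81}, so that $\mathbf{C}_{\mathcal{L}_r}$ (and not $\mathbf{C}_{\mathcal{L}_s}$ or $\mathbf{C}_{\mathcal{L}_t}$) is the target; but this is immediate from the shape of $\mathtt{M}$.
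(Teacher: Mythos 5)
Your proposal follows essentially the same route as the paper's proof: fix the matrix via Proposition~\ref{prop81}, use Proposition~\ref{propn31} and the rank-two multiplication rules to show that applying $\theta_w$, $w\in\mathcal{L}_r$, to the simple $L_1$ (the one killed by $\theta_s$ and $\theta_t$) yields all six indecomposable projectives, then pass to the projective restriction $\mathbf{N}$, factor the unique strong $2$-natural transformation $\mathbf{P}_{\mathtt{i}}\to\overline{\mathbf{M}}$ through the cell $2$-representation, and conclude by comparing Cartan matrices exactly as in Subsection~\ref{s17.6}. This matches the paper's argument (which, up to an apparent typo, uses the same left cell containing $r$), so the proposal is correct.
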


\begin{proof}
Let $\mathbf{M}$ be a simple transitive $2$-representation of $\underline{\cS}$ with apex $\mathcal{J}$.
We assume that the equivalence classes of indecomposable objects in $\mathbf{M}(\mathtt{i})$ are ordered
such that the decategorification matrices of $\theta_r$, $\theta_s$ and $\theta_t$ are gives as
in Proposition~\ref{prop81}. Consider $\overline{\mathbf{M}}$ and the $2$-representation $\mathbf{N}$
of $\underline{\cS}$ given by restriction of the action to the category of projective objects in 
$\overline{\mathbf{M}}(\mathtt{i})$. Then $\mathbf{M}$ and $\mathbf{N}$ are equivalent by 
\cite[Theorem~11]{MM2}. We call indecomposable projectives in $\overline{\mathbf{M}}(\mathtt{i})$,
in order, $P_1,P_2,\dots,P_6$ and their corresponding simple tops $L_1,L_2,\dots,L_6$. Using the 
explicit matrices given by Proposition~\ref{prop81} and arguments similar to the one used in
Subsection~\ref{s17.6}, one shows that 
\begin{gather*}
\theta_r\,L_1\cong P_1,\quad 
\theta_{sr}\,L_1\cong P_3,\quad
\theta_{rsr}\,L_1\cong P_2,\quad
\theta_{tsr}\,L_1\cong P_5,\\
\theta_{srsr}\,L_1\cong P_4,\quad
\theta_{tsrsr}\,L_1\cong P_6.
\end{gather*}
Now the proof is completed as in Subsection~\ref{s17.6}.
There is a unique strong $2$-natural transformation from $\mathbf{P}_{\mathtt{i}}$ to 
$\overline{\mathbf{M}}$ sending $\mathbbm{1}_{\mathtt{i}}$ to $L_1$. It induces 
a strong $2$-natural transformation from $\mathbf{C}_{\mathcal{L}_s}$ to $\mathbf{N}$.
Comparing the Cartan matrices, we see that the latter $2$-natural transformation
is, in fact, an equivalence. Therefore $\mathbf{C}_{\mathcal{L}_s}$ and $\mathbf{M}$ 
are equivalent.
\end{proof}

\begin{remark}\label{rem82-1}
{\rm
The underlying algebra of a cell $2$-representation of $\underline{\cS}$ with apex $\mathcal{J}$
is the quotient of the path algebra of the following quiver
(here the number $\mathtt{i}$ corresponds to $P_i$):
\begin{displaymath}
\xymatrix{
&\mathtt{1}\ar@/^/[d]&&&\\
\mathtt{5}\ar@/^/[r]&\mathtt{3}\ar@/^/[r]\ar@/^/[l]\ar@/^/[u]&
\mathtt{2}\ar@/^/[r]\ar@/^/[l]&\mathtt{4}\ar@/^/[r]\ar@/^/[l]&\mathtt{6}\ar@/^/[l]
}
\end{displaymath}
modulo the relations that any path of the form $\mathtt{i}\to\mathtt{j}\to \mathtt{k}$ is zero
if $i\neq k$ and all paths of the form $\mathtt{i}\to\mathtt{j}\to \mathtt{i}$ coincide.
The Loewy filtrations of the indecomposable projective modules for this algebra are:
\begin{displaymath}
\xymatrix@!=0.6pc{
1\ar@{-}[d]&&2\ar@{-}[dl]\ar@{-}[dr]&&&3\ar@{-}[d]\ar@{-}[dl]\ar@{-}[dr]
&&&4\ar@{-}[dr]\ar@{-}[dl]&&5\ar@{-}[d]&6\ar@{-}[d]\\
3\ar@{-}[d]&3\ar@{-}[dr]&&4\ar@{-}[dl]&1\ar@{-}[dr]&2\ar@{-}[d]&5\ar@{-}[dl]&2\ar@{-}[dr]&&6\ar@{-}[dl]&3\ar@{-}[d]&4\ar@{-}[d]\\
1&&2&&&3&&&4&&5&6
}
\end{displaymath}
This algebra is the quadratic dual of the preprojective algebra of the underlying Dynkin quiver
of type $D_6$, cf. \cite{Du}. For further connections between these two Coxeter groups,
see \cite{De} and references therein.
} 
\end{remark}

\subsection{Coxeter type $H_4$}\label{s7.2}

In this subsection we assume that $W$ is of Coxeter type $H_4$ and $S=\{r,s,t,u\}$ with the Coxeter  diagram
\begin{displaymath}
\xymatrix{r\ar@{-}[r]^5&s\ar@{-}[r]&t\ar@{-}[r]&u.} 
\end{displaymath}

\begin{proposition}\label{prop83}
Assume that $W$ is of  Coxeter type $H_4$. Let $\mathbf{M}$ be a simple transitive $2$-representation of
$\underline{\cS}$ with apex $\mathcal{J}$. Then the isomorphism classes of indecomposable objects in
$\mathbf{M}(\mathtt{i})$ can be ordered such that, with respect to the ordering
$r,s,t,u$, we have
\begin{displaymath}
\mathtt{M}=\left(
\begin{array}{cccccccc}
2&0&1&0&0&0&0&0\\
0&2&1&1&0&0&0&0\\
1&1&2&0&1&0&0&0\\
0&1&0&2&0&1&0&0\\
0&0&1&0&2&0&1&0\\
0&0&0&1&0&2&0&1\\
0&0&0&0&1&0&2&0\\
0&0&0&0&0&1&0&2
\end{array}
\right).
\end{displaymath}
\end{proposition}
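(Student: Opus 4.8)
The plan is to mimic the proof of Proposition~\ref{prop81}, using reduction to rank two Coxeter subgroups; the only new feature is that the Coxeter diagram of $H_4$ has length four, so three overlapping rank two pieces must be glued together rather than two. First I would introduce three $2$-full $2$-subcategories: $\cC_1$ monoidally generated by $\theta_r,\theta_s$ (whose relevant quotient is $\underline{\cS}$ in type $I_2(5)$), $\cC_2$ generated by $\theta_s,\theta_t$ (quotient $\underline{\cS}$ in type $A_2$), and $\cC_3$ generated by $\theta_t,\theta_u$ (again quotient $\underline{\cS}$ in type $A_2$). Restricting the action of $\mathbf{M}$ to the additive closures of the indecomposables not annihilated by the respective pairs, and invoking Theorem~\ref{thm1700} for the $I_2(5)$ piece together with \cite[Theorem~18]{MM5} for the $A_2$ pieces, the combination of \cite[Corollary~20]{KM2} and \cite[Theorem~25]{CM} forces the actions of each such pair to be a direct sum of copies of the standard blocks: the $4\times 4$ block \eqref{eq81-1} for $\{\theta_r,\theta_s\}$ and the $2\times 2$ block \eqref{eq81-2} for $\{\theta_s,\theta_t\}$ and for $\{\theta_t,\theta_u\}$.

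Next I would carry out the gluing combinatorics. By Lemma~\ref{lem3}, every indecomposable object $P$ in $\mathbf{M}(\mathtt{i})$ is doubled by exactly one $\theta_{s_i}$, so the index set partitions into four classes $\mathcal{P}_r,\mathcal{P}_s,\mathcal{P}_t,\mathcal{P}_u$ according to which simple reflection does the doubling. The off-diagonal block structure of $\mathtt{M}$ from Proposition~\ref{prop4} tells us that there are no ``$rt$'', ``$ru$'' or ``$su$'' interactions (those elements are not in $\mathcal{J}$), so the only couplings are $r\leftrightarrow s$, $s\leftrightarrow t$, $t\leftrightarrow u$. The key constraint is that the decategorification of $\mathbf{M}$ is a direct sum of copies of the special $\mathbb{C}[W]$-module $V\otimes\mathbb{C}_{\mathrm{sign}}$ from Proposition~\ref{propspecialJ}, which is $4$-dimensional and simple; hence $\mathtt{M}_{\mathrm{pr}}$ must be an irreducible non-negative matrix of size $4m$ for some $m\geq 1$, and in fact the block-$\star$-structure forces $m=1$ since $V\otimes\mathbb{C}_{\mathrm{sign}}$ is the only simple $\mathbb{C}[W]/I$-module of that dimension. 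Now I would argue as in the $H_3$ case: the $\{\theta_s,\theta_t\}$ block \eqref{eq81-2} connects one $\theta_s$-doubled object to one $\theta_t$-doubled object, the $\{\theta_t,\theta_u\}$ block similarly connects one $\theta_t$-doubled object to one $\theta_u$-doubled object, so the two indecomposables in the $\{\theta_r,\theta_s\}$ block \eqref{eq81-1} that are $\theta_s$-doubled must each be connected onward, and following the chain the unique compatible assembly is precisely the $8\times 8$ matrix displayed in the proposition, up to permutation of the basis. I would check explicitly that this assembled matrix is irreducible (its underlying graph is the $D_6$-shaped tree) so no smaller pieces can stand alone, and that no alternative attachment respects all three block constraints simultaneously.

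The main obstacle I expect is the rigidity of the gluing step: one has to be sure that the $\{\theta_s,\theta_t\}$ and $\{\theta_t,\theta_u\}$ $A_2$-blocks cannot each connect to more than one object on either side, and that the $\theta_t$-doubled objects appearing in the two $A_2$-blocks are forced to coincide rather than proliferate. This is where the dimension constraint coming from Proposition~\ref{propspecialJ} does the real work, ruling out reducible or larger-rank configurations; combined with the observation (from the table in Subsection~\ref{s4.2}, the $H_4$ block) that each left cell contains exactly the listed number of elements, it pins the rank to $8$. Once the matrix $\mathtt{M}$ is shown to be uniquely determined, the passage to the theorem-level statement (that $\mathbf{M}$ is a cell $2$-representation) will be entirely parallel to Theorem~\ref{thm82} and Subsection~\ref{s17.6}: pass to $\overline{\mathbf{M}}$ and the associated projective $2$-representation $\mathbf{N}$, use Proposition~\ref{propn31} and the explicit matrices to compute $\theta_w\,L_1$ for $w$ running through $\mathcal{L}_r$ and thereby realize all indecomposable projectives, obtain the canonical strong $2$-natural transformation from $\mathbf{P}_{\mathtt{i}}$ to $\overline{\mathbf{M}}$ sending $\mathbbm{1}_{\mathtt{i}}$ to $L_1$, factor it through $\mathbf{C}_{\mathcal{L}_r}$, and conclude via comparison of Cartan matrices that it is an equivalence.
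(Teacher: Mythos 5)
Your route is essentially the paper's: the paper proves Proposition~\ref{prop83} verbatim ``mutatis mutandis the proof of Proposition~\ref{prop81}'', i.e.\ by restricting to the rank two subcategories generated by $\{\theta_r,\theta_s\}$ (an $I_2(5)$ quotient), $\{\theta_s,\theta_t\}$ and $\{\theta_t,\theta_u\}$ (type $A_2$ quotients), decomposing each restriction into the standard blocks via Theorem~\ref{thm1700}, \cite[Theorem~18]{MM5}, \cite[Corollary~20]{KM2} and \cite[Theorem~25]{CM}, and then gluing by the matching/connectivity argument; your write-up reproduces exactly this.

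Two of your side remarks are, however, wrong and should be removed or repaired, even though the chain-gluing step (which is what actually does the work) does not use them. First, the size constraint coming from the decategorification only forces $\mathtt{M}$ to have size $4m$; it does not force $m=1$, and indeed the matrix in the statement is $8\times 8$, i.e.\ $m=2$, so your claim ``the block-$\star$-structure forces $m=1$'' contradicts your own conclusion. What pins down $m$ is the matching argument: each $I_2(5)$-block carries two $\theta_s$-doubled objects, the $\{\theta_s,\theta_t\}$ and $\{\theta_t,\theta_u\}$ blocks give bijections between the $s$-, $t$- and $u$-doubled objects, and since the only couplings are $r\leftrightarrow s$, $s\leftrightarrow t$, $t\leftrightarrow u$, transitivity forces a single connected component containing exactly one $I_2(5)$-block, hence exactly $2+2+2+2=8$ objects. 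Second, your justification via ``$V\otimes\mathbb{C}_{\mathrm{sign}}$ is the only simple $\mathbb{C}[W]/I$-module of that dimension'' is false in type $H_4$: the proof of Proposition~\ref{propspecialJ} explicitly notes that in types $H_3$ and $H_4$ there are two non-isomorphic simple $\mathbb{C}[W]/I$-modules of the same dimension, which is why an eigenvalue computation was needed there. Finally, a cosmetic slip: the underlying graph of the assembled $8\times 8$ matrix is the $E_8$-shaped tree (arms of lengths $1$, $2$, $4$ at the branch vertex), not $D_6$, consistent with Remark~\ref{rem84-1}; in the last paragraph the passage to Theorem~\ref{thm84} via $L_1$ and the left cell of $r$ is fine.
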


\begin{proof}
Mutatis mutandis the proof of Proposition~\ref{prop81}.
\end{proof}

\begin{theorem}\label{thm84}
Assume that $W$ is of Coxeter  type $H_4$. Then every simple transitive $2$-representation of 
$\underline{\cS}$ is equivalent to a cell $2$-representation.
\end{theorem}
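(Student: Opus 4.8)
The plan is to follow exactly the template established for Coxeter type $H_3$ in the proof of Theorem~\ref{thm82}, which in turn mirrors the argument in Subsection~\ref{s17.6}. Let $\mathbf{M}$ be a simple transitive $2$-representation of $\underline{\cS}$ with apex $\mathcal{J}$. By Proposition~\ref{prop83}, after reordering the indecomposable objects of $\mathbf{M}(\mathtt{i})$, the matrices $\Lparen\theta_r\Rparen$, $\Lparen\theta_s\Rparen$, $\Lparen\theta_t\Rparen$, $\Lparen\theta_u\Rparen$ are the ones read off from the displayed matrix $\mathtt{M}$; in particular $\mathtt{M}$ is completely determined and agrees with the matrix of the cell $2$-representation $\mathbf{C}_{\mathcal{L}_s}$. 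First I would pass to the abelianization $\overline{\mathbf{M}}$ and replace $\mathbf{M}$ by the equivalent $2$-representation $\mathbf{N}$ obtained by restricting the action of $\underline{\cS}$ to the category of projective objects in $\overline{\mathbf{M}}(\mathtt{i})$, using \cite[Theorem~11]{MM2}.

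Next I would label the indecomposable projectives $P_1,\dots,P_8$ (in the order fixed by Proposition~\ref{prop83}) and their simple tops $L_1,\dots,L_8$, and then compute, starting from $L_1$, the objects obtained by applying the $1$-morphisms $\theta_w$ for $w$ in the left cell $\mathcal{L}_r$. Using Proposition~\ref{propn31} we get $\theta_r\,L_1\cong P_1$, and then, reading off the explicit matrices from Proposition~\ref{prop83} together with the multiplication rules for Soergel bimodules in $\mathcal{J}$ (cf.\ the rank-two rules~\eqref{eq1705} applied along the arms of the $H_4$ diagram), one obtains isomorphisms
\begin{displaymath}
\theta_r\,L_1\cong P_1,\quad \theta_{sr}\,L_1\cong P_3,\quad \theta_{rsr}\,L_1\cong P_2,\quad \theta_{tsr}\,L_1\cong P_5,
\end{displaymath}
\begin{displaymath}
\theta_{srsr}\,L_1\cong P_4,\quad \theta_{tsrsr}\,L_1\cong P_7,\quad \theta_{utsr}\,L_1\cong P_6,\quad \theta_{utsrsr}\,L_1\cong P_8,
\end{displaymath}
so that $\{\theta_w\,L_1 : w\in\mathcal{L}_r\}$ exhausts the indecomposable projectives of $\overline{\mathbf{M}}(\mathtt{i})$. (The precise matching of indices to the list in $\mathcal{L}_r$ from Subsection~\ref{s4.2} is a routine bookkeeping check against the matrix.)

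Finally I would invoke the standard comparison argument: there is a unique strong $2$-natural transformation $\mathbf{P}_{\mathtt{i}}\to\overline{\mathbf{M}}$ sending $\mathbbm{1}_{\mathtt{i}}$ to $L_1$, it factors through $\mathbf{C}_{\mathcal{L}_r}$ (equivalently $\mathbf{C}_{\mathcal{L}_s}$, whichever cell one chooses to name) and induces a strong $2$-natural transformation $\mathbf{C}_{\mathcal{L}_r}\to\mathbf{N}$; since everything established for $\mathbf{M}$ holds verbatim for the cell $2$-representation, the Cartan matrices of the underlying algebras of $\mathbf{C}_{\mathcal{L}_r}(\mathtt{i})$ and $\mathbf{N}(\mathtt{i})$ coincide, forcing this transformation to be an equivalence. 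Hence $\mathbf{M}$ is equivalent to a cell $2$-representation. I do not expect any genuine obstacle here: the only case not already subsumed by apex considerations is apex $\mathcal{J}$, Proposition~\ref{prop83} does all the hard combinatorial work, and the remaining steps are formal and identical to the $H_3$ and $I_2(n)$-odd cases. The mildest point of care is verifying that the list of $\theta_w\,L_1$ really hits each $P_i$ exactly once, but this is immediate from the block structure of $\mathtt{M}$ and the rank-two multiplication rules.
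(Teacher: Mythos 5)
Your proposal is correct and is essentially the paper's own argument: the paper proves Theorem~\ref{thm84} by declaring it mutatis mutandis the proof of Theorem~\ref{thm82}, i.e.\ exactly the route you take (Proposition~\ref{prop83} pins down $\mathtt{M}$, then Proposition~\ref{propn31} plus the explicit matrices show the $\theta_w\,L_1$, $w$ in the relevant left cell, exhaust the indecomposable projectives, and the Cartan-matrix comparison forces the induced $2$-natural transformation from the cell $2$-representation to be an equivalence). The only difference is your explicit (and harmless) bookkeeping of which $P_i$ each $\theta_w\,L_1$ hits, which the paper leaves implicit.
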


\begin{proof}
Mutatis mutandis the proof of Theorem~\ref{thm82}.
\end{proof}

\begin{remark}\label{rem84-1}
{\rm
The underlying algebra of a cell $2$-representation of $\underline{\cS}$ with apex $\mathcal{J}$
is the quotient of the path algebra of the following quiver
(here the number $\mathtt{i}$ corresponds to $P_i$):
\begin{displaymath}
\xymatrix{
&&\mathtt{1}\ar@/^/[d]&&&&\\
\mathtt{7}\ar@/^/[r]&\mathtt{5}\ar@/^/[r]\ar@/^/[l]&\mathtt{3}\ar@/^/[r]\ar@/^/[l]\ar@/^/[u]&
\mathtt{2}\ar@/^/[r]\ar@/^/[l]&\mathtt{4}\ar@/^/[r]\ar@/^/[l]&\mathtt{6}\ar@/^/[l]\ar@/^/[r]&\mathtt{8}\ar@/^/[l]
}
\end{displaymath}
modulo the relations that any path of the form $\mathtt{i}\to\mathtt{j}\to \mathtt{k}$ is zero
if $i\neq k$ and all paths of the form $\mathtt{i}\to\mathtt{j}\to \mathtt{i}$ coincide.
This algebra is the quadratic dual of the preprojective algebra of the underlying Dynkin quiver
of type $E_8$. For further connections between these two Coxeter groups,
see \cite{De} and references therein.
} 
\end{remark}

\subsection{Weyl type $F_4$}\label{s7.3}

In this subsection we assume that $W$ is of Weyl type $F_4$ and $S=\{r,s,t,u\}$ with the Coxeter diagram
\begin{displaymath}
\xymatrix{r\ar@{-}[r]&s\ar@{-}[r]^4&t\ar@{-}[r]&u.} 
\end{displaymath}

\begin{proposition}\label{prop85}
Assume that $W$ is of Weyl type $F_4$. Let $\mathbf{M}$ be a simple transitive $2$-representation of
$\underline{\cS}$ with apex $\mathcal{J}$. Then the isomorphism classes of indecomposable objects in
$\mathbf{M}(\mathtt{i})$ can be ordered such that, with respect to the ordering
$r,s,t,u$, we have either
\begin{displaymath}
\mathtt{M}=\left(
\begin{array}{cccccc}
2&0&1&0&0&0\\
0&2&0&1&0&0\\
1&0&2&0&1&0\\
0&1&0&2&1&0\\
0&0&1&1&2&1\\
0&0&0&0&1&2
\end{array}
\right)\quad\text{ or }\quad
\mathtt{M}=\left(
\begin{array}{cccccc}
2&1&0&0&0&0\\
1&2&1&1&0&0\\
0&1&2&0&1&0\\
0&1&0&2&0&1\\
0&0&1&0&2&0\\
0&0&0&1&0&2
\end{array}
\right).
\end{displaymath}
\end{proposition}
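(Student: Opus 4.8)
The plan is to reduce the analysis to rank-two subsystems, exactly as in the proofs of Propositions~\ref{prop81} and \ref{prop83}, and then to classify the possible ways to glue together the rank-two pieces. First I would set $S=\{r,s,t,u\}$ with the Coxeter diagram $r-s\overset{4}{-}t-u$, and consider the three $2$-full $2$-subcategories $\cC_1$, $\cC_2$, $\cC_3$ of $\underline{\cS}$ monoidally generated by $\{\theta_r,\theta_s\}$, $\{\theta_s,\theta_t\}$ and $\{\theta_t,\theta_u\}$ respectively. Each of these has a quotient biequivalent to the small quotient of Soergel bimodules in types $A_2$, $I_2(4)$ and $A_2$. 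By Proposition~\ref{prop4}, every indecomposable object $P$ in $\mathbf{M}(\mathtt{i})$ is doubled by exactly one $\theta_x$, $x\in S$; restricting the action of each $\cC_i$ to the additive closure of the indecomposables not annihilated by the two relevant $\theta$'s, and invoking Theorem~\ref{thm1700} for type $A_2$ (equivalently \cite[Theorem~18]{MM5}) and \cite{Zi} for type $I_2(4)$, together with the combination of \cite[Corollary~20]{KM2} and \cite[Theorem~25]{CM}, I would conclude that the action of each adjacent pair $\{\theta_r,\theta_s\}$, $\{\theta_s,\theta_t\}$, $\{\theta_t,\theta_u\}$ is a direct sum of the ``standard'' $2\times2$-blocks appearing in the cell $2$-representations of the corresponding rank-two types.

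Next I would carry out the combinatorial gluing argument. The $A_2$-blocks (as in \eqref{eq81-2}) connect one indecomposable doubled by $\theta_r$ with one doubled by $\theta_s$, and similarly one doubled by $\theta_t$ with one doubled by $\theta_u$; the $I_2(4)$-block connects the two indecomposables doubled by $\theta_s$ inside one $A_2$-block on the $r,s$ side with two external indecomposables doubled by $\theta_t$, coming from the $t,u$ side. The key numerical constraint is that the decategorification of $\mathbf{M}$ must be a direct sum of copies of the special $\mathbb{C}[W]$-module $V\otimes\mathbb{C}_{\mathrm{sign}}$ attached to $\mathcal{J}$ by Proposition~\ref{propspecialJ}, which in type $F_4$ is $4$-dimensional; so the principal matrix $\mathtt{M}=\mathtt{M}_{\mathrm{pr}}$ must be irreducible (by simple transitivity, via Subsection~\ref{s2.8}) of size $4m$. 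I would enumerate, up to permutation, the ways to assemble a connected non-negative integral matrix of this prescribed local shape whose associated $\mathbb{C}[W]$-module is $V\otimes\mathbb{C}_{\mathrm{sign}}$; the asymmetry of the Coxeter diagram (the double bond sits in the middle, between two pendant $A_2$ arms) forces exactly two configurations, which are the two matrices displayed in the statement, corresponding to attaching the two ``legs'' of the doubled $I_2(4)$-block to the $r$-arm or to the $u$-arm. Symmetry of $\mathtt{M}$, guaranteed by Corollary~\ref{corn32}, and the eigenvalue/Perron--Frobenius constraints coming from Section~\ref{sim} (all eigenvalues of the off-diagonal Gram matrix lie in $[0,4)$, matching the Fibonacci-polynomial factorizations relevant to $I_2(4)$ and $A_2$) rule out all other assemblies, in particular any configuration producing a forbidden submatrix from \eqref{simeq2}.

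The main obstacle I anticipate is the bookkeeping in the gluing step: unlike the $H_3$ and $H_4$ cases, where both rank-two pieces were a single standard block, in $F_4$ we must glue two $A_2$-chains through one $I_2(4)$-block, and a priori one could try to attach several $A_2$-blocks to each side, or to leave some indecomposables unconnected to the $I_2(4)$-block. The irreducibility of $\mathtt{M}$ and the rank constraint $\dim = 4$ are what close this off — I would argue that any such larger or disconnected configuration either produces a reducible $\mathtt{M}$ (contradicting simple transitivity) or forces the decategorification to contain a simple $W$-module different from $V\otimes\mathbb{C}_{\mathrm{sign}}$, contradicting that $\mathcal{J}$ is the apex (Proposition~\ref{propspecialJ}). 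Once the two matrices are pinned down, the remainder of the classification — showing each is realized by a cell $2$-representation and nothing else — will be handled as in Theorem~\ref{thm82}, which is why the present statement is isolated as just the matrix classification; that part I would defer to the subsequent theorem (cf.\ \ref{thm86}).
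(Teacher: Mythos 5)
Your overall route is the paper's (the paper indeed proves this proposition ``mutatis mutandis'' Proposition~\ref{prop81}): restrict to the three rank-two $2$-subcategories generated by $\{\theta_r,\theta_s\}$, $\{\theta_s,\theta_t\}$, $\{\theta_t,\theta_u\}$, use the known classifications in types $A_2$ and $I_2(4)$ together with \cite[Corollary~20]{KM2} and \cite[Theorem~25]{CM} to write the restricted actions as direct sums of standard blocks, then glue. However, your ``key numerical constraint'' is false and, taken literally, destroys the argument: the decategorification of $\mathbf{M}$ is \emph{not} a direct sum of copies of $V\otimes\mathbb{C}_{\mathrm{sign}}$, and $\mathtt{M}$ need not have size $4m$. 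The two matrices in the statement are $6\times 6$, and $6$ is not a multiple of $4$; already for the cell $2$-representations the left cells of $\mathcal{J}$ in type $F_4$ have six elements (see Subsection~\ref{s4.2}), and the corresponding $6$-dimensional cell module decomposes as $V\otimes\mathbb{C}_{\mathrm{sign}}$ plus a two-dimensional simple module over $\mathbb{C}[W]/I$, where $I$ is the ideal spanned by the $\underline{w}$ with $w\notin\mathcal{J}\cup\{e\}$ as in the proof of Proposition~\ref{propspecialJ}. That proposition only identifies the special \emph{subquotient}, which occurs with multiplicity one; it does not make the decategorification isotypic, and the presence of a simple constituent different from $V\otimes\mathbb{C}_{\mathrm{sign}}$ does not contradict $\mathcal{J}$ being the apex, since the two-dimensional simples of $\mathbb{C}[W]/I$ are also not annihilated by $\mathcal{J}$. (Even in type $H_3$, which you are modelling this on, the constraint used is ``a sum of three-dimensional simple $W$-modules'' --- there are two such simples --- not ``copies of the special module''.) So your constraint would rule out precisely the two matrices you are supposed to produce.

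What actually pins down $\mathtt{M}$ in type $F_4$ is different, and your proposal does not supply it. Each $A_2$-block matches one $\theta_r$-doubled object with one $\theta_s$-doubled object with multiplicity one (similarly for $\{t,u\}$), so these edges form matchings and can never join two distinct $I_2(4)$-blocks; irreducibility of $\mathtt{M}$ therefore forces exactly one $I_2(4)$-block, of shape $(2s,1t)$ or $(1s,2t)$, with the $r$-objects and $u$-objects attached as pendants to its $s$-, respectively $t$-, objects. Besides the two $6\times6$ matrices this still leaves two $5\times5$ configurations (only one $r$-object, respectively only one $u$-object, attached), and these must be excluded by a representation-theoretic step: the simples of $\mathbb{C}[W]/I$ have dimensions $1,2,2,4$, so a $5$-dimensional decategorification would have to contain the sign module, on which the principal element acts by zero, whereas the $5\times5$ candidate matrices are non-singular. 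Your text contains neither this step nor any correct substitute for the discarded ``size $4m$'' constraint. A further small point: an $A_2$-block contains a single $\theta_s$-doubled object, so the phrase about ``the two indecomposables doubled by $\theta_s$ inside one $A_2$-block'' is garbled; what connects to two $s$-doubled objects is a $(2s,1t)$-type $I_2(4)$-block. The deferral of the equivalence statement to Theorem~\ref{thm86} is fine and matches the paper's division of labour.
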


\begin{proof}
Mutatis mutandis the proof of Proposition~\ref{prop81}.
\end{proof}

\begin{theorem}\label{thm86}
Assume that $W$ is of Weyl type $F_4$. Then every simple transitive $2$-representation of 
$\underline{\cS}$ is equivalent to a cell $2$-representation.
\end{theorem}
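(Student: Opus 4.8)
The plan is to mimic the proof of Theorem~\ref{thm82} in the previous subsection, now working with the two matrices $\mathtt{M}$ produced by Proposition~\ref{prop85}. Let $\mathbf{M}$ be a simple transitive $2$-representation of $\underline{\cS}$ with apex $\mathcal{J}$; after reordering the indecomposable objects we may assume that $\llbracket\theta_r\rrbracket$, $\llbracket\theta_s\rrbracket$, $\llbracket\theta_t\rrbracket$, $\llbracket\theta_u\rrbracket$ are the four matrices extracted from one of the two $\mathtt{M}$'s in Proposition~\ref{prop85}. Passing to $\overline{\mathbf{M}}$ and restricting the action to the category of projectives gives a $2$-representation $\mathbf{N}$ equivalent to $\mathbf{M}$ by \cite[Theorem~11]{MM2}. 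Write $P_1,\dots,P_6$ for the indecomposable projectives in $\overline{\mathbf{M}}(\mathtt{i})$ (ordered as in the proposition) and $L_1,\dots,L_6$ for their simple tops.

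First I would pin down the images of a chosen simple $L_i$ under the $1$-morphisms in a fixed left cell $\mathcal{L}$ of $\mathcal{J}$. By Proposition~\ref{propn31}, applying $\mathrm{F}_{\mathrm{pr}}$ to $L_i$ yields $P_i$, and since the row of $\mathtt{M}$ indexed by $i$ has a unique block $2E$ one gets $\theta_{s_i}\,L_i\cong P_i$ for the appropriate simple reflection $s_i$. From here, using the multiplication rules $\theta_s\theta_t\theta_s\cong\theta_{sts}\oplus\theta_s$ for non-commuting $s,t$ together with the explicit matrices, one shows (exactly as in the displayed computation in the proof of Theorem~\ref{thm82}) that the $\theta_w\,L_i$, as $w$ ranges over $\mathcal{L}$, exhaust the indecomposable projectives $P_1,\dots,P_6$, each appearing once. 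The choice of the starting index $i$ (equivalently, which left cell $\mathcal{L}$ one uses) will be dictated by which of the two matrices $\mathtt{M}$ one is in — in the first case one should start from the simple corresponding to the block $2E$ for $r$ (or $s$), and in the second case symmetrically — so this step is really two parallel bookkeeping computations, one per matrix.

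Next I would invoke the standard comparison argument: there is a unique strong $2$-natural transformation $\mathbf{P}_{\mathtt{i}}\to\overline{\mathbf{M}}$ sending $\mathbbm{1}_{\mathtt{i}}$ to the chosen $L_i$, and it factors through the cell $2$-representation $\mathbf{C}_{\mathcal{L}}$ (since $\mathbf{C}_{\mathcal{L}}$ is the universal such quotient with the right combinatorics), inducing a strong $2$-natural transformation $\mathbf{C}_{\mathcal{L}}\to\mathbf{N}$. Because everything established for $\mathbf{M}$ — in particular the shape of the projectives and hence the Cartan matrix of the underlying algebra — applies verbatim to $\mathbf{C}_{\mathcal{L}}$, the two Cartan matrices coincide, so the $2$-natural transformation is an equivalence on each hom-category, i.e.\ $\mathbf{M}\simeq\mathbf{C}_{\mathcal{L}}$. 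This is the same endgame as in Subsections~\ref{s17.6} and in the proof of Theorem~\ref{thm82}, and I would simply cite those.

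The only genuinely new point, and the one I expect to be the main obstacle, is handling the \emph{two} admissible matrices in Proposition~\ref{prop85}: one must check that each of them is actually realized by a cell $2$-representation (so that no possibility is spuriously excluded) and that in each case the left cell used for the comparison is the correct one. Concretely, the two matrices should correspond to the two left cells $\mathcal{L}_r$ and, say, $\mathcal{L}_u$ of $\mathcal{J}$ (reading off the Loewy structure from the $F_4$ table in Subsection~\ref{s4.2}), and one needs the projective-resolution/Cartan computation to come out matching the appropriate cell $2$-representation in each case; the bookkeeping is slightly more delicate than in $H_3$ and $H_4$ because $F_4$ has the symmetry swapping the two ends of the diagram, which is exactly what produces the two matrices. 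Beyond that, the argument is "mutatis mutandis" the proof of Theorem~\ref{thm82}, so I would keep the write-up short and refer back to it.
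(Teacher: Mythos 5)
Your proposal is correct and follows essentially the same route as the paper: the paper's proof of Theorem~\ref{thm86} is literally ``mutatis mutandis the proof of Theorem~\ref{thm82}'', i.e.\ exactly the argument you describe (use Proposition~\ref{prop85} to fix $\mathtt{M}$, apply the $\theta_w$ for $w$ in a suitable left cell to a chosen simple to recover all projectives, then compare with the corresponding cell $2$-representation via the unique strong $2$-natural transformation out of $\mathbf{P}_{\mathtt{i}}$ and a Cartan matrix comparison), carried out once for each of the two admissible matrices, which indeed correspond to the two non-equivalent cell $2$-representations (cf.\ Remark~\ref{rem86-1}).
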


\begin{proof}
Mutatis mutandis the proof of Theorem~\ref{thm82}.
\end{proof}

\begin{remark}\label{rem86-1}
{\rm
We have two different cell $2$-representations of $\underline{\cS}$ with apex $\mathcal{J}$ 
in Weyl  type $F_4$. However, the underlying algebras of these  cell $2$-representation are
isomorphic. This common algebra is the quotient of the path algebra of the following 
quiver (here the number $\mathtt{i}$ corresponds to $P_i$ in the left matrix
in Proposition~\ref{prop85}):
\begin{displaymath}
\xymatrix{
&&\mathtt{6}\ar@/^/[d]&&&&\\
\mathtt{1}\ar@/^/[r]&\mathtt{3}\ar@/^/[r]\ar@/^/[l]&\mathtt{5}\ar@/^/[r]\ar@/^/[l]\ar@/^/[u]&
\mathtt{4}\ar@/^/[r]\ar@/^/[l]&\mathtt{2}\ar@/^/[l]
}
\end{displaymath}
modulo the relations that any path of the form $\mathtt{i}\to\mathtt{j}\to \mathtt{k}$ is zero
if $i\neq k$ and all paths of the form $\mathtt{i}\to\mathtt{j}\to \mathtt{i}$ coincide.
This algebra is the quadratic dual of the preprojective algebra of the underlying Dynkin quiver
of type $E_6$. 
} 
\end{remark}

\subsection{Coxeter type $B_n$}\label{s7.4}

In this subsection we assume that $W$ is of type $B_n$ and $S=\{r,s,t,u,\dots,v\}$ with Coxeter
diagram
\begin{displaymath}
\xymatrix{
r\ar@{-}[r]^{4}&s\ar@{-}[r]&t\ar@{-}[r]&u\ar@{-}[r]&\dots\ar@{-}[r]&v.
}
\end{displaymath}

\begin{proposition}\label{prop87}
Assume that $W$ is of type $B_n$. Let $\mathbf{M}$ be a simple transitive $2$-representation of
$\underline{\cS}$ with apex $\mathcal{J}$. Then the isomorphism classes of indecomposable objects in
$\mathbf{M}(\mathtt{i})$ can be ordered such that, with respect to the ordering
$r,s,t,u,\dots,v$, we have either
\begin{displaymath}
\mathtt{M}=\left(
\begin{array}{cccccccc}
2&0&1&0&0&\dots&0&0\\
0&2&1&0&0&\dots&0&0\\
1&1&2&1&0&\dots&0&0\\
0&0&1&2&1&\dots&0&0\\
0&0&0&1&2&\dots&0&0\\
\vdots&\vdots&\vdots&\vdots&\vdots&\ddots&\vdots&\vdots\\
0&0&0&0&0&\dots&2&1\\
0&0&0&0&0&\dots&1&2
\end{array}
\right)
\end{displaymath}
or
\begin{displaymath}
\mathtt{M}=\left(
\begin{array}{cccccccccccc}
2&1&1&0&0&0&0&\dots&0&0&0&0\\
1&2&0&1&0&0&0&\dots&0&0&0&0\\
1&0&2&0&1&0&0&\dots&0&0&0&0\\
0&1&0&2&0&1&0&\dots&0&0&0&0\\
0&0&1&0&2&0&1&\dots&0&0&0&0\\
0&0&0&1&0&2&0&\dots&0&0&0&0\\
0&0&0&0&1&0&2&\dots&0&0&0&0\\
\vdots&\vdots&\vdots&\vdots&\vdots&\vdots&\vdots&\ddots&\vdots&\vdots&\vdots&\vdots\\
0&0&0&0&0&0&0&\dots&2&0&1&0\\
0&0&0&0&0&0&0&\dots&0&2&0&1\\
0&0&0&0&0&0&0&\dots&1&0&2&0\\
0&0&0&0&0&0&0&\dots&0&1&0&2
\end{array}
\right).
\end{displaymath}
\end{proposition}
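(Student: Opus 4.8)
The plan is to mimic, mutatis mutandis, the proof of Proposition~\ref{prop81}, replacing the single rank two reduction used there by a chain of rank two reductions whose length grows with $n$. By Subsection~\ref{s4.4}, Proposition~\ref{prop4} and Corollary~\ref{corn32}, the matrix $\mathtt{M}$ is a primitive symmetric non-negative integer matrix whose diagonal blocks are the $2E_i$, one for each $s_i\in S$, and whose off-diagonal block $B_{ij}$ is non-zero precisely when $s_i$ and $s_j$ are neighbours in the Coxeter diagram. For each edge $\{s_i,s_{i+1}\}$ of the diagram I would let $\cC_i$ be the $2$-full $2$-subcategory of $\underline{\cS}$ monoidally generated by $\theta_{s_i}$ and $\theta_{s_{i+1}}$; it has a quotient biequivalent to $\underline{\cS}$ in Coxeter type $I_2(4)$ when $\{s_i,s_{i+1}\}=\{r,s\}$, and in Coxeter type $A_2$ otherwise. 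Restricting $\mathbf{M}$ to $\cC_i$, and then to the additive closure $\mathcal{X}_i$ of the indecomposables which are not annihilated by both $\theta_{s_i}$ and $\theta_{s_{i+1}}$, the combination of \cite[Corollary~20]{KM2} and \cite[Theorem~25]{CM} shows that this restriction decomposes as a direct sum of simple transitive $2$-representations with apex the relevant two-sided cell. By \cite[Theorem~18]{MM5}, the only such $2$-representation in type $A_2$ which does not annihilate non-zero $1$-morphisms has $\theta_{s_i}$ and $\theta_{s_{i+1}}$ acting via the $2\times 2$ blocks in \eqref{eq81-2}, which couple one indecomposable killed by $\theta_{s_{i+1}}$ with one killed by $\theta_{s_i}$; and by \cite{Zi} the only such $2$-representations in type $I_2(4)$ are the two cell $2$-representations $\mathbf{C}_{\mathcal{L}_r}$ and $\mathbf{C}_{\mathcal{L}_s}$, for which the coupling between the two generators is given by a single column $(1,1)^{\mathrm{tr}}$, respectively a single row $(1,1)$. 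Thus on each $\mathcal{X}_i$ the pair $(\theta_{s_i},\theta_{s_{i+1}})$ acts as a direct sum of copies of one fixed ``standard block'': the $2\times 2$ block for the $A_2$-edges, and one of the two $3$-dimensional blocks for the edge $\{r,s\}$.

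The next step is to glue these local pictures together. By Lemma~\ref{lem3}, every indecomposable $P$ in $\mathbf{M}(\mathtt{i})$ satisfies $\theta_{s_i}\,P\cong P\oplus P$ for a unique $i$, so one may speak of the $s_i$-objects; an $A_2$-block at the edge $\{s_i,s_{i+1}\}$ then consumes exactly one $s_i$-object and one $s_{i+1}$-object, while the $I_2(4)$-block at $\{r,s\}$ consumes either two $r$-objects and one $s$-object (the $\mathbf{C}_{\mathcal{L}_r}$-block) or one $r$-object and two $s$-objects (the $\mathbf{C}_{\mathcal{L}_s}$-block). Since $\theta_r\in\mathcal{J}$ is not annihilated by $\mathbf{M}$, there is at least one $I_2(4)$-block; and since the $r$-objects belonging to distinct $I_2(4)$-blocks are never coupled to one another (the actions of $\theta_t,\theta_u,\dots$ only couple $s_i$-objects with $i\geq 2$), the presence of more than one $I_2(4)$-block would split the underlying graph of $\mathtt{M}$ into several connected components, contradicting primitivity of $\mathtt{M}$. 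Hence there is exactly one $I_2(4)$-block. Because every $t$-object is coupled to an $s$-object, every $u$-object to a $t$-object, and so on, the rest of $\mathbf{M}(\mathtt{i})$ then hangs rigidly off that one block: the $\mathbf{C}_{\mathcal{L}_r}$-block yields a single $s$-object, hence a single $t$-object, a single $u$-object, \dots, a single $v$-object, producing one tail attached at a fork of two $r$-objects, which is the first matrix in the statement; the $\mathbf{C}_{\mathcal{L}_s}$-block yields two $s$-objects and hence two parallel tails emanating from the single $r$-object, which is the second matrix. In both cases the resulting $\mathtt{M}$ is primitive and its symmetric factorisation $BB^{\mathrm{tr}}$, $B^{\mathrm{tr}}B$ is irreducible, and the decategorification contains $V\otimes\mathbb{C}_{\mathrm{sign}}$ with multiplicity one, in accordance with Proposition~\ref{propspecialJ}. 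Finally, that an abstract simple transitive $\mathbf{M}$ realising either matrix is equivalent to the corresponding cell $2$-representation would be deduced exactly as in Subsection~\ref{s17.6}.

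The hard part will be the combinatorial bookkeeping in the gluing step: one must follow, along the whole chain of $n-1$ edges, which indecomposables are $s_i$-objects, verify that the two possibilities for the $I_2(4)$-block at $\{r,s\}$ propagate uniquely down the tail $s-t-u-\dots-v$ to exactly the two displayed matrices and to nothing else, and, in particular, rule out every configuration with several $I_2(4)$-blocks or with a non-trivial overall multiplicity by invoking primitivity of $\mathtt{M}$. Since the two $I_2(4)$-blocks already have different shapes ($3=2+1$ versus $3=1+2$), the two cases genuinely branch at the very first edge, so the remaining argument must be run separately for each; this is done along the lines of the proof of Proposition~\ref{prop81}.
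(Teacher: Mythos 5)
Your proposal is correct and is essentially the paper's own argument: the paper proves this proposition simply as ``mutatis mutandis the proof of Proposition~\ref{prop81}'', i.e.\ exactly your chain of rank-two restrictions (type $I_2(4)$ at the edge $\{r,s\}$ handled by \cite{Zi}, type $A_2$ at the remaining edges handled by \cite[Theorem~18]{MM5}), the block decomposition via \cite[Corollary~20]{KM2} and \cite[Theorem~25]{CM}, and the gluing/connectivity argument that forces a unique $I_2(4)$-block whose two possible shapes propagate down the tail to the two displayed matrices. Your text just spells out the mutanda, so no further comment is needed.
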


\begin{proof}
Mutatis mutandis the proof of  Proposition~\ref{prop81}.

\end{proof}

\begin{theorem}\label{thm88}
Assume that $W$ is of type $B_n$. Then every simple transitive $2$-representation of 
$\underline{\cS}$ is equivalent to a cell $2$-representation.
\end{theorem}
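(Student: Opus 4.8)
The plan is to run exactly the argument used in Coxeter types $H_3$ and $H_4$, that is, the proofs of Theorems~\ref{thm82} and~\ref{thm84}, now fed with the matrices supplied by Proposition~\ref{prop87}. Let $\mathbf{M}$ be a simple transitive $2$-representation of $\underline{\cS}$. As at the beginning of Section~\ref{s17}, if the apex of $\mathbf{M}$ is not $\mathcal{J}$, then $\mathbf{M}$ is a cell $2$-representation by the argument of \cite[Theorem~18]{MM5} together with \cite[Theorem~19]{MM2}, so from now on I assume the apex is $\mathcal{J}$. By Proposition~\ref{prop87}, after reordering the indecomposable objects of $\mathbf{M}(\mathtt{i})$, the matrix $\mathtt{M}$, and hence (by Subsection~\ref{s4.4}) each $\Lparen\theta_{s_i}\Rparen$, is one of the two displayed matrices. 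The first of them has rank $n+1$ and a single trivalent vertex, and coincides with the matrix $\mathtt{M}$ of the cell $2$-representation $\mathbf{C}_{\mathcal{L}_r}$ (whose underlying quiver is of type $D_{n+1}$); the second has rank $2n-1$ and coincides with the matrix $\mathtt{M}$ of $\mathbf{C}_{\mathcal{L}_v}$ (whose quiver is of type $A_{2n-1}$). That these really are the matrices of these cell $2$-representations is read off directly from the list of elements of $\mathcal{J}$ in Subsection~\ref{s4.2}, using that in $\mathbf{C}_{\mathcal{L}}$ the indecomposable projective $P_w$, $w\in\mathcal{L}$, satisfies $\theta_{s_i}\,P_w\cong P_w\oplus P_w$ exactly when $s_i$ is the first letter of the (by uniqueness of reduced words in $\mathcal{J}$, unique) reduced word of $w$.

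Next I would pass to the abelianization $\overline{\mathbf{M}}$, let $P_1,\dots,P_m$ be the indecomposable projectives of $\overline{\mathbf{M}}(\mathtt{i})$ in the order from Proposition~\ref{prop87} and $L_1,\dots,L_m$ their simple tops, and let $\mathbf{N}$ be the $2$-representation obtained by restricting the action of $\underline{\cS}$ to the additive category of these projectives; then $\mathbf{M}\simeq\mathbf{N}$ by \cite[Theorem~11]{MM2}. By Proposition~\ref{propn31} we have $\mathrm{F}_{\mathrm{pr}}\,L_i\cong P_i$, so $\theta_s\,L_i\cong P_i$ for the unique $s\in S$ with $\theta_s\,L_i\neq 0$. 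Combining this with the explicit form of the matrices $\Lparen\theta_{s_i}\Rparen$ and the relation $\theta_s\theta_t\theta_s\cong\theta_{sts}\oplus\theta_s$, one shows, just as in Subsection~\ref{s17.6} and in the proof of Theorem~\ref{thm82}, that in each of the two cases there is a simple $L_d$, corresponding to the Duflo involution $d$ of the pertinent left cell $\mathcal{L}$ (namely $\mathcal{L}_r$ or $\mathcal{L}_v$), such that applying the $\theta_w$, $w\in\mathcal{L}$, to $L_d$ produces every indecomposable projective of $\overline{\mathbf{M}}(\mathtt{i})$ exactly once, through an explicit bijection $P_w\leftrightarrow\theta_w\,L_d$.

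Finally, there is a unique strong $2$-natural transformation $\mathbf{P}_{\mathtt{i}}\to\overline{\mathbf{M}}$ sending $\mathbbm{1}_{\mathtt{i}}$ to $L_d$; by the previous step and the usual argument (as in \cite[Proposition~9]{MM5} and \cite[Subsection~4.9]{MaMa}) it factors through, hence induces, a strong $2$-natural transformation $\mathbf{C}_{\mathcal{L}}\to\mathbf{N}$. Since $\mathbf{C}_{\mathcal{L}}$ is itself simple transitive with apex $\mathcal{J}$, everything established above — Proposition~\ref{propn31}, the shape of $\mathtt{M}$, the description of the projectives — applies verbatim to $\mathbf{C}_{\mathcal{L}}$, so the underlying algebras of $\mathbf{N}(\mathtt{i})$ and of $\mathbf{C}_{\mathcal{L}}(\mathtt{i})$ have the same Cartan matrix. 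Therefore the above $2$-natural transformation is an equivalence and $\mathbf{M}\simeq\mathbf{C}_{\mathcal{L}}$ is a cell $2$-representation, as claimed.

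The hard part is not conceptual but combinatorial bookkeeping. Since $\mathcal{J}$ is not strongly regular in type $B_n$ (the left cell $\mathcal{L}_r$ has $n+1$ elements while every other left cell has $2n-1$), the two matrices of Proposition~\ref{prop87} genuinely produce cell $2$-representations of different ranks, so the identification of the projectives must be carried out separately in the two cases, tracking the unique trivalent vertex of the quiver through the induction in the $D_{n+1}$-case. The more delicate point is the $A_{2n-1}$-case: among the simples of $\overline{\mathbf{M}}(\mathtt{i})$ one has to single out the one playing the role of $L_d$ for $\mathcal{L}_v$ (equivalently, locate $d$ inside $\mathbf{M}$), and the only input available for this is the explicit description of $\mathcal{J}$ from Subsection~\ref{s4.2}. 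None of this is difficult, but it is lengthy, which is why the proof can in practice only be summarised as ``mutatis mutandis the proof of Theorem~\ref{thm82}''.
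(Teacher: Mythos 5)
Your proposal is correct and follows exactly the route the paper takes: its proof of Theorem~\ref{thm88} is literally ``mutatis mutandis the proof of Theorem~\ref{thm82}'', i.e.\ feed the two matrices of Proposition~\ref{prop87} into the abelianization/Proposition~\ref{propn31} argument, build the strong $2$-natural transformation from $\mathbf{P}_{\mathtt{i}}$ through the appropriate cell $2$-representation, and compare Cartan matrices, which is precisely what you spell out (including the two cases of ranks $n+1$ and $2n-1$ matching the $D_{n+1}$ and $A_{2n-1}$ quivers of Remark~\ref{rem88-1}).
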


\begin{proof}
Mutatis mutandis the proof of Theorem~\ref{thm82}.
\end{proof}

\begin{remark}\label{rem88-1}
{\rm
We have two different cell $2$-representations of $\underline{\cS}$ with apex $\mathcal{J}$ 
in type $B_n$. For the first one (which corresponds to the first choice in 
Proposition~\ref{prop87}) the underlying algebra is the quotient of the path algebra 
of the following  quiver:
\begin{displaymath}
\xymatrix{
&\mathtt{2}\ar@/^/[d]&&&&\\
\mathtt{1}\ar@/^/[r]&\mathtt{3}\ar@/^/[r]\ar@/^/[l]\ar@/^/[u]&\mathtt{4}\ar@/^/[r]\ar@/^/[l]&
\dots\ar@/^/[r]\ar@/^/[l]&\mathtt{n+1}.\ar@/^/[l]
}
\end{displaymath}
For the second one (which corresponds to the second choice in 
Proposition~\ref{prop87}) the underlying algebra is the quotient of the path algebra 
of the following  quiver:
\begin{displaymath}
\xymatrix{
\mathtt{2n-1}\ar@/^/[r]&
\dots\ar@/^/[r]\ar@/^/[l]&
\mathtt{5}\ar@/^/[r]\ar@/^/[l]&
\mathtt{3}\ar@/^/[r]\ar@/^/[l]&
\mathtt{1}\ar@/^/[r]\ar@/^/[l]&
\mathtt{2}\ar@/^/[r]\ar@/^/[l]&
\mathtt{4}\ar@/^/[r]\ar@/^/[l]&
\dots\ar@/^/[r]\ar@/^/[l]&
\mathtt{2n-2}.\ar@/^/[l]
}
\end{displaymath}
In both cases, we mod out by the relations that any path of the form $\mathtt{i}\to\mathtt{j}\to \mathtt{k}$ is zero
if $i\neq k$ and all paths of the form $\mathtt{i}\to\mathtt{j}\to \mathtt{i}$ coincide.
The first algebra is the quadratic dual of the preprojective algebra of the underlying Dynkin quiver
of type $D_{n+1}$. The second algebra is the quadratic dual of the preprojective algebra of 
the underlying Dynkin quiver of type $A_{2n-1}$.
} 
\end{remark}

\section{New examples of finitary $2$-categories}\label{s8}

\subsection{Symmetric modules}\label{s8.1}

Let $\Bbbk$ be an algebraically closed field. In this subsection we assume that $\mathrm{char}(\Bbbk)\neq 2$. 
Let $A$ be a connected finite dimensional $\Bbbk$-algebra with
a fixed automorphism $\iota:A\to A$ such that $\iota^2=\mathrm{id}_A$. 
For $M\in A\text{-}\mathrm{mod}$, denote by ${}^{\iota}M$ the $A$-module with the same 
underlying space as $M$ but with the new action  $\bullet$ of $A$ twisted by $\iota$:
\begin{displaymath}
a\bullet m:=\iota(a)\cdot m,\quad\text{ for all }\quad a\in A\,\,\text{ and }\,\, m\in M.
\end{displaymath}

For a fixed $A$-module $Q$, consider the category $\mathcal{Q}:=\mathrm{add}(Q\oplus {}^{\iota}Q)$.
Define the category $\hat{\mathcal{Q}}=\hat{\mathcal{Q}}(A,{\iota},Q)$ in the following way:
\begin{itemize}
\item the objects  in $\hat{\mathcal{Q}}$ are all diagrams of the form
\begin{equation}\label{eq92}
\xymatrix{M\ar[rr]^{\alpha}&&{}^{\iota}M} 
\end{equation}
where $M\in \mathcal{Q}$ and $\alpha:M\to {}^{\iota}M$ is an isomorphism in 
$A\text{-}\mathrm{mod}$ such that $\alpha^2\cong \mathrm{id}_M$;
\item morphisms in $\hat{\mathcal{Q}}$ are all commutative diagrams  of the form
\begin{displaymath}
\xymatrix{M\ar[d]_{\varphi}\ar[rr]^{\alpha}&&{}^{\iota}M\ar[d]^{\varphi}\\
N\ar[rr]^{\beta}&&{}^{\iota}N,
} 
\end{displaymath}
where $\varphi:M\to N$ is a homomorphism in $A\text{-}\mathrm{mod}$
(note that $\varphi:{}^{\iota}M\to {}^{\iota}N$ is a homomorphism in $A\text{-}\mathrm{mod}$ as well); 
\item identity morphisms in $\hat{\mathcal{Q}}$ are given by the identity maps;
\item composition in $\hat{\mathcal{Q}}$ is induced from composition in $\mathcal{Q}$
in the obvious way.
\end{itemize}
We will call $\hat{\mathcal{Q}}$ the category of {\em $\iota$-symmetric $A$-modules} over $\mathcal{Q}$. 
Directly from the definitions it follows that $\hat{\mathcal{Q}}$ is additive, $\Bbbk$-linear
and idempotent split.

We note that, if $M$ is an $A$-module such that $M\cong {}^{\iota}M$, then an isomorphism
$\alpha:M\to {}^{\iota}M$ can always be chosen such that $\alpha^2=\mathrm{id}_M$. Indeed,
if $\alpha:M\to {}^{\iota}M$ is any isomorphism, then $\iota^2=\mathrm{id}_A$ implies
that $\alpha^2$ is an automorphism of $M$. As $\alpha^{-2}$ is invertible, there exists
an automorphism $\beta$ of $M$ which is a polynomial in $\alpha^{-2}$ such that $\beta^2=\alpha^{-2}$.
In particular, $\beta$ commutes with $\alpha$.
Then $(\alpha\beta):M\to {}^{\iota}M$ is an isomorphism and $(\alpha\beta)^2=\mathrm{id}_M$.

\begin{proposition}\label{prop91}
The category $\hat{\mathcal{Q}}$ is Krull-Schmidt and has finitely many 
isomorphism classes of indecomposable objects.
\end{proposition}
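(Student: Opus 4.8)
The plan is to reduce the Krull--Schmidt property of $\hat{\mathcal{Q}}$ to that of the (clearly Krull--Schmidt) category $\mathcal{Q}$, and to control the indecomposables of $\hat{\mathcal{Q}}$ by analysing how the involution $\iota$ acts on the finitely many indecomposable summands of $Q\oplus{}^{\iota}Q$. First I would record that $\hat{\mathcal{Q}}$ is additive, $\Bbbk$-linear and idempotent split (this is already stated just before the proposition), so it suffices to show that the endomorphism ring of every object is semiperfect, equivalently that every object decomposes into finitely many indecomposables with local endomorphism rings, and that such decompositions are essentially unique. Since $\mathcal{Q}$ has only finitely many isomorphism classes of indecomposables and all morphism spaces are finite dimensional, $\hat{\mathcal{Q}}$ has finite dimensional morphism spaces as well; a finite dimensional algebra is automatically semiperfect, so the Krull--Schmidt property follows once we know each object of $\hat{\mathcal{Q}}$ has a finite dimensional endomorphism algebra, which it does.

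The remaining content is therefore the finiteness of the set of isomorphism classes of indecomposables. Here I would proceed as follows. Decompose the underlying module of an object $(M,\alpha)$ as $M=\bigoplus_i M_i^{\oplus n_i}$ into indecomposables $M_i$ drawn from the finite list of indecomposable summands of $Q\oplus{}^{\iota}Q$. The isomorphism $\alpha\colon M\to{}^{\iota}M$ with $\alpha^2=\mathrm{id}_M$ makes $M$ into a module over the group algebra $\Bbbk[\mathbb{Z}/2\mathbb{Z}]$ inside the category $A\text{-}\mathrm{mod}$, twisted by $\iota$; since $\mathrm{char}(\Bbbk)\neq 2$, this group algebra is semisimple, so $(M,\alpha)$ decomposes as a direct sum of objects on which $\alpha$ is "isotypic" in the sense that each indecomposable building block is either an indecomposable $A$-module $N$ with $N\cong{}^{\iota}N$ equipped with one of the two essentially unique involutive isomorphisms (by the last paragraph of the excerpt, such an $\alpha$ always exists and, up to the sign coming from $\alpha\mapsto-\alpha$ and conjugation, there are finitely many classes), or a "swap" block $N\oplus{}^{\iota}N$ with $N\not\cong{}^{\iota}N$, on which $\alpha$ interchanges the two summands in the standard way and is unique up to isomorphism of objects. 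In either case the isomorphism type of the block is determined by a finite amount of data (which indecomposable $A$-module, and in the fixed case which of finitely many involutions), so there are only finitely many indecomposable objects of $\hat{\mathcal{Q}}$ up to isomorphism.

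The main obstacle I anticipate is the bookkeeping needed to show that every object is a direct sum of the two types of blocks just described, and that these blocks are themselves indecomposable in $\hat{\mathcal{Q}}$. Concretely: given $(M,\alpha)$, one must produce an $\alpha$-equivariant decomposition of $M$ refining its decomposition into $A$-indecomposables, and show the equivariant summands have the claimed form. The tool is averaging over $\mathbb{Z}/2\mathbb{Z}$: given an idempotent $e\in\mathrm{End}_A(M)$, the element $\tfrac12(e+\alpha^{-1}\iota(e)\alpha)$ is an $\alpha$-invariant (hence $\hat{\mathcal{Q}}$-) idempotent, allowing one to split off $\alpha$-stable summands; iterating and using finite dimensionality terminates. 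For an $A$-indecomposable $N$ with $N\cong{}^{\iota}N$, the two involutions $\alpha$ and $-\alpha$ on $N$ may or may not give isomorphic objects of $\hat{\mathcal{Q}}$ depending on whether $\mathrm{End}_A(N)/\mathrm{rad}$ contains a square root of $-1$, but in all cases the count stays finite. Once this case analysis is in place the statement follows, and I would then invoke the semiperfectness remark above to conclude that $\hat{\mathcal{Q}}$ is Krull--Schmidt.
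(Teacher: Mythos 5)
Your reduction of the Krull--Schmidt property to idempotent splitness plus finite-dimensional morphism spaces is fine, and the list of indecomposables you aim for (swap blocks $N\oplus{}^{\iota}N$ when $N\not\cong{}^{\iota}N$, and two signed objects when $N\cong{}^{\iota}N$) is exactly the one the paper establishes. The genuine gap lies in the tool you propose for the key step, namely that every object $(M,\alpha)$ decomposes into such blocks. First, the appeal to semisimplicity of $\Bbbk[\mathbb{Z}/2\mathbb{Z}]$ does not apply as stated: $\alpha$ is $\iota$-semilinear, $\alpha(am)=\iota(a)\alpha(m)$, so its $\pm1$-eigenspaces are not $A$-submodules; $(M,\alpha)$ is a module over the skew group algebra $A\rtimes\mathbb{Z}/2\mathbb{Z}$, not over $A\otimes_{\Bbbk}\Bbbk[\mathbb{Z}/2\mathbb{Z}]$, and there is no isotypic decomposition inside $A$-mod. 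Second, and more seriously, your averaging formula fails: for an idempotent $e\in\mathrm{End}_A(M)$ the element $\tfrac12\bigl(e+\alpha^{-1}e\alpha\bigr)$ does commute with $\alpha$, but it is in general not an idempotent, since the average of two non-commuting idempotents is almost never idempotent. Maschke-type averaging yields an idempotent only when one averages a projection onto an $\alpha$-stable submodule, and the whole difficulty here is precisely that the $A$-summands you want to split off need not be $\alpha$-stable. So the crucial claim that every object is a direct sum of the two kinds of blocks is not actually proved by your argument.

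The short repair is the paper's mechanism: since $\mathrm{char}(\Bbbk)\neq 2$, the maps $m\mapsto(m,\alpha(m))$ and $(m,m')\mapsto\tfrac12\bigl(m+\alpha(m')\bigr)$ are morphisms in $\hat{\mathcal{Q}}$ between $(M,\alpha)$ and the induced object $\bigl(M\oplus{}^{\iota}M,\mathrm{swap}\bigr)$ composing to the identity, so $(M,\alpha)$ is a direct summand of the induced object; the latter splits in $\hat{\mathcal{Q}}$ according to any decomposition of $M$ into indecomposable $A$-modules, so every indecomposable object of $\hat{\mathcal{Q}}$ is a summand of some $(Q_i\oplus{}^{\iota}Q_i,\mathrm{swap})$, and each of these is either indecomposable (when $Q_i\not\cong{}^{\iota}Q_i$) or has exactly two indecomposable summands, the signed objects $(Q_i,\pm\beta_i)$ (when $Q_i\cong{}^{\iota}Q_i$). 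This gives finiteness at once. A further small inaccuracy, harmless for the present statement: over the algebraically closed field $\Bbbk$ the objects $(Q_i,\beta_i)$ and $(Q_i,-\beta_i)$ are never isomorphic, because an isomorphism would be an $A$-automorphism $\varphi$ of $Q_i$ with $\beta_i\varphi\beta_i^{-1}=-\varphi$, while conjugation by $\beta_i$ is an algebra automorphism of the local ring $\mathrm{End}_A(Q_i)$ and hence induces the identity on its residue field $\Bbbk$, forcing the scalar part of $\varphi$ to vanish; so the square-root-of-$-1$ dichotomy you mention does not occur.
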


\begin{proof}
Let $Q_1$, $Q_2$,\dots, $Q_n$ be a complete list of pairwise non-isomorphic indecomposable 
objects in $\mathcal{Q}$. For every $Q_i$, we either have $Q_i\cong {}^{\iota}Q_i$ or
we have $Q_i\cong {}^{\iota}Q_j$, for some $j\neq i$. For each $i$, set 
$Q_i^{(\iota)}:=Q_i\oplus {}^{\iota}Q_i$ and let $\alpha_i:Q_i^{(\iota)}\to {}^{\iota}Q_i^{(\iota)}$
be the homomorphism which swaps the components of the direct sum. Then 
\begin{equation}\label{eqqeenn2}
\xymatrix{Q_i^{(\iota)}\ar[rr]^{\alpha_i}&&{}^{\iota}Q_i^{(\iota)}} 
\end{equation}
is an object in $\hat{\mathcal{Q}}$ which we denote by $(Q_i^{(\iota)},\alpha_i)$.

If $Q_i\cong {}^{\iota}Q_j$, for some $j\neq i$, then $(Q_i^{(\iota)},\alpha_i)$ is, clearly, indecomposable.
Moreover, it is easy to see that $(Q_i^{(\iota)},\alpha_i)$ and $(Q_j^{(\iota)},\alpha_j)$ are isomorphic.

Assume now that $\varphi:Q_i\cong {}^{\iota}Q_i$ is an isomorphism such that 
$\varphi^2=\mathrm{id}_{Q_i}$. Then $\iota^2=\mathrm{id}_A$ implies that
$\varphi:{}^{\iota}Q_i\cong Q_i$ is an isomorphism as well and hence the matrix
\begin{displaymath}
\Phi:=\left(\begin{array}{cc}0&\varphi\\\varphi&0\end{array}\right) 
\end{displaymath}
gives rise to an endomorphism of the object $\alpha_i:Q_i^{(\iota)}\to {}^{\iota}Q_i^{(\iota)}$.
Note that  $\Phi^2$ is the identity on this object.
Write $Q_i^{(\iota)}\cong X_i\oplus Y_i$, where 
$X_i$ denotes the eigenspace of $\Phi$ for the eigenvalue $1$ and 
$Y_i$ denotes the eigenspace of $\Phi$ for the eigenvalue $-1$. Clearly,
$X_i\cong Y_i\cong Q_i$, as $A$-modules.  At the same time, the objects
\begin{displaymath}
\xymatrix{X_i\ar[rr]^{(\alpha_i)|_{X_i}}&&{}^{\iota}X_i} 
\qquad\text{ and }\qquad
\xymatrix{Y_i\ar[rr]^{(\alpha_i)|_{Y_i}}&&{}^{\iota}Y_i} 
\end{displaymath}
are both in $\hat{\mathcal{Q}}$ and are, clearly, indecomposable. 

For each $i$ such that $Q_i\cong {}^{\iota}Q_i$, we fix an isomorphism
$\beta_i:Q_i\to {}^{\iota}Q_i$ such that $(\beta_i)^2=\mathrm{id}_{Q_i}$.
We claim that each indecomposable object in $\hat{\mathcal{Q}}$ is isomorphic to 
$(Q_i^{(\iota)},\alpha_i)$, for some $i$ such that $Q_i\not\cong {}^{\iota}Q_i$,
or is isomorphic to one of the objects
\begin{equation}\label{eqeqnn1}
\xymatrix{Q_i\ar[rr]^{\beta_i}&&{}^{\iota}Q_i}\qquad\text{ or }\qquad
\xymatrix{Q_i\ar[rr]^{-\beta_i}&&{}^{\iota}Q_i}, 
\end{equation}
for some $i$ such that $Q_i\cong {}^{\iota}Q_i$. Indeed, consider an indecomposable 
object of the form \eqref{eq92}. If $M$ contains, as a direct summand, some $N\cong Q_i$ such that
$Q_i\not\cong {}^{\iota}Q_i$, then $N\oplus \alpha(N)$ is a direct summand of $M$
isomorphic to $(Q_i^{(\iota)},\alpha_i)$ and hence $M$ is isomorphic to the latter module.
If $M$ contains, as a direct summand, some $N\cong Q_i$ such that
$Q_i\cong {}^{\iota}Q_i$, then from the previous paragraph it follows that 
$M$ is isomorphic to $\xymatrix{Q_i\ar[rr]^{\alpha}&&{}^{\iota}Q_i}$, for some isomorphism $\alpha$ such that 
$\alpha^2=\mathrm{id}_{Q_i}$. We claim that each such object is isomorphic to one from the list
\eqref{eqeqnn1}. From the commutative diagram
\begin{displaymath}
\xymatrix{ 
Q_i\ar[rrrr]^{\alpha}
\ar[dd]_{\left(\begin{array}{c}\mathrm{id}_{Q_i}\\\alpha\end{array}\right)}
&&&&{}^{\iota}Q_i\ar[dd]^{\left(\begin{array}{c}\mathrm{id}_{{}^{\iota}Q_i}\\\alpha\end{array}\right)}\\\\
Q_i\oplus {}^{\iota}Q_i
\ar[rrrr]^{\left(\begin{array}{cc}0&\mathrm{id}_{{}^{\iota}Q_i}\\\mathrm{id}_{Q_i}&0\end{array}\right)}
\ar[dd]_{\left(\begin{array}{cc}\mathrm{id}_{Q_i}&\alpha\end{array}\right)}
&&&&
{}^{\iota}Q_i\oplus Q_i
\ar[dd]^{\left(\begin{array}{cc}\mathrm{id}_{{}^{\iota}Q_i}&\alpha\end{array}\right)}\\\\
Q_i\ar[rrrr]^{\alpha}&&&&{}^{\iota}Q_i
}
\end{displaymath}
and the fact that $\mathrm{char}(\Bbbk)\neq 2$, it follows that $\xymatrix{Q_i\ar[rr]^{\alpha}&&{}^{\iota}Q_i}$
is a summand of \eqref{eqqeenn2}. As we already established in the previous paragraph, 
\eqref{eqqeenn2} has two direct summands. So, it is enough to argue that the two objects in the 
list \eqref{eqeqnn1} are not isomorphic. The latter follows easily from the fact that $\beta$,
being an automorphism of an indecomposable module, has only one eigenvalue and this eigenvalue is non-zero
and hence $\beta$ cannot be conjugate to $-\beta$ whose unique eigenvalue is different. 

The claim of the proposition follows.
\end{proof}

\subsection{Symmetric projective bimodules}\label{s8.2}

Let $\Bbbk$ be an algebraically closed field and $A$ a finite dimensional $\Bbbk$-algebra with
a fixed automorphism $\iota:A\to A$ such that $\iota^2=\mathrm{id}_A$. We extend 
$\iota$ to an automorphism $\underline{\iota}$ of $A\otimes_{\Bbbk}A^{\mathrm{op}}$ component-wise
and have $\underline{\iota}^2=\mathrm{id}_{A\otimes_{\Bbbk}A^{\mathrm{op}}}$. As usual,
we identify $A\otimes_{\Bbbk}A^{\mathrm{op}}$-modules and $A$-$A$--bimodules. 

Consider the $A$-$A$--bimodule $Q:=A\oplus \left(A\otimes_{\Bbbk}A\right)$ and the corresponding
categories $\mathcal{Q}$ and $\hat{\mathcal{Q}}$. Note that $Q$ is isomorphic to its twist by
$\underline{\iota}$, as the latter amounts to simultaneously twisting both the left and the right actions of
$A$ on $Q$ by $\iota$. The category $\hat{\mathcal{Q}}$ is additive, $\Bbbk$-linear, idempotent split, 
Krull-Schmidt and has finitely many  isomorphism classes of indecomposable objects by 
Proposition~\ref{prop91}. The category of $A$-$A$--bimodules has the natural structure of a tensor
category given by tensor product over $A$. This structure, applied component-wise, turns
$\hat{\mathcal{Q}}$ into a tensor category. We denote by $\cC_{(A,\iota)}$ the strictification 
of $\hat{\mathcal{Q}}$ as described, for example, in \cite[Subsection~2.3]{Le}. Then $\cC_{(A,\iota)}$ 
is a strict  tensor category or, equivalently, a $2$-category with one object which we call $\mathtt{i}$.

\begin{proposition}\label{prop93}
The $2$-category  $\cC_{(A,\iota)}$ is finitary, moreover, it is weakly fiat provided that $A$ is self-injective.
\end{proposition}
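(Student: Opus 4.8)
The plan is to verify the two assertions separately. First, I would establish that $\cC_{(A,\iota)}$ is finitary. The underlying category $\hat{\mathcal{Q}}(A,\underline{\iota},Q)$ with $Q=A\oplus(A\otimes_{\Bbbk}A)$ is additive, $\Bbbk$-linear, idempotent split, Krull-Schmidt and has finitely many isomorphism classes of indecomposable objects by Proposition~\ref{prop91}; strictification does not destroy any of these properties. The only additional things to check are that morphism spaces are finite dimensional and that horizontal composition (tensoring over $A$ applied componentwise) is $\Bbbk$-bilinear and compatible with the additive/idempotent structure. Finite dimensionality of $\mathrm{Hom}_{\hat{\mathcal{Q}}}$-spaces is inherited from the finite dimensionality of $\mathrm{Hom}_A(M,N)$ for $M,N\in\mathcal{Q}$, which in turn follows from $A$ being finite dimensional and $Q$ being a finitely generated bimodule. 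Compatibility of the tensor structure with the additive structure is immediate since $\otimes_A$ is additive in each variable; one also needs that the twist ${}^{\underline\iota}(-)$ commutes appropriately with $\otimes_A$, which holds because $\underline{\iota}$ is an algebra automorphism and so $({}^{\underline\iota}M)\otimes_A({}^{\underline\iota}N)\cong {}^{\underline\iota}(M\otimes_A N)$ canonically. Hence $\cC_{(A,\iota)}$ is a finitary $2$-category.

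For the second assertion I would assume $A$ self-injective and construct the weak anti-autoequivalence $\star$ together with the adjunction $2$-morphisms. The natural candidate for $\star$ on $1$-morphisms is the bimodule-level duality: $A$ is sent to $A$, and $A\otimes_{\Bbbk}A$ is sent to $A\otimes_{\Bbbk}A$ as well (since $(A\otimes_{\Bbbk}A)^{\star}\cong \mathrm{Hom}_{A}(A\otimes_{\Bbbk}A, A)\cong A\otimes_{\Bbbk}A$ when $A$ is self-injective, using that $\mathrm{Hom}_A(A,{}_AA)\cong A$ and that $\mathrm{Hom}_{A^{\mathrm{op}}}(A,A_A)\cong A$); on the symmetrizing isomorphism data $\alpha\colon M\to{}^{\underline\iota}M$ one takes the transpose/inverse-dual isomorphism, which again squares to the identity. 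One then checks this is a weak anti-autoequivalence of $\cC_{(A,\iota)}$, and that the standard adjunction morphisms (coevaluation and evaluation) for projective $A$-$A$-bimodules restrict to $2$-morphisms in $\hat{\mathcal{Q}}$, i.e.\ are compatible with the chosen symmetrizing isomorphisms. Note that $\star$ need not be involutive on the nose after strictification, which is exactly why one only claims weak fiat-ness and not fiat-ness.

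The main obstacle I anticipate is the second assertion: making sure the adjunction $2$-morphisms for the projective generator $A\otimes_{\Bbbk}A$ are genuinely morphisms in $\hat{\mathcal{Q}}$, i.e.\ that the coevaluation $A\to (A\otimes_{\Bbbk}A)\otimes_A(A\otimes_{\Bbbk}A)^{\star}$ and evaluation $(A\otimes_{\Bbbk}A)^{\star}\otimes_A(A\otimes_{\Bbbk}A)\to A$ commute with the respective diagrams $M\to{}^{\underline\iota}M$ attached to the objects involved. This forces a careful choice of the isomorphism $(A\otimes_{\Bbbk}A)^{\star}\cong A\otimes_{\Bbbk}A$ and of the symmetrizing data on $\star$-objects, using self-injectivity of $A$ (which guarantees that $A$ is a projective-injective $A$-$A$-bimodule summand behaviour and that $(A\otimes_{\Bbbk}A)^\star$ is again of the required form). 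Once the right conventions are fixed, the triangle identities follow from the corresponding identities at the level of $A$-$A$-bimodules, where they are classical. I expect the finitariness part to be essentially routine given Proposition~\ref{prop91}.
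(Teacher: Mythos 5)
Your treatment of the finitary part is fine and matches the paper, which simply invokes Proposition~\ref{prop91} and the construction. The gap is in the weakly fiat part: you correctly identify that the whole issue is whether the adjunction $2$-morphisms are compatible with the symmetrizing isomorphisms $\alpha\colon M\to{}^{\underline{\iota}}M$, but you leave exactly this point unresolved (``this forces a careful choice \dots once the right conventions are fixed, the triangle identities follow''). That verification is the actual content of the proof, and it does not require any delicate choice of identification $(A\otimes_{\Bbbk}A)^{\star}\cong A\otimes_{\Bbbk}A$. The paper's argument is: $\cC_A$ is already weakly fiat for $A$ self-injective by \cite[Subsection~7.3]{MM1}, and both the anti-autoequivalence and the adjunction data \emph{lift} to $\cC_{(A,\iota)}$ by a one-line symmetrization trick. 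Namely, for any $X\in\mathrm{add}\bigl(A\oplus(A\otimes_{\Bbbk}A)\bigr)$ and any bimodule homomorphism $\varphi\colon X\to A$, the map given by $\varphi$ on the first summand and $\iota\circ\varphi$ on the second defines a morphism from the $\iota$-symmetric object $\bigl(X\oplus{}^{\iota}X^{\iota},\ \text{swap}\bigr)$ to the identity $1$-morphism $(A,\iota)$: it intertwines the swap with $\iota$ precisely because $\iota^2=\mathrm{id}_A$. Applying this to the adjunction morphisms of $\cC_A$ (and its dual version for maps out of $A$) produces adjunction $2$-morphisms in $\cC_{(A,\iota)}$, and adjunction data passes to direct summands, covering all indecomposable $1$-morphisms, including those of the form $(Q_i,\pm\beta_i)$. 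Your plan never produces this idea, so as written it does not close the step you yourself flag as the main obstacle.

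Two smaller points. First, your isomorphism $(A\otimes_{\Bbbk}A)^{\star}\cong\mathrm{Hom}_A(A\otimes_{\Bbbk}A,A)\cong A\otimes_{\Bbbk}A$ glosses over the fact that $\mathrm{Hom}_A(A\otimes_{\Bbbk}A,A)\cong A^{*}\otimes_{\Bbbk}A$, so self-injectivity enters through $A^{*}\cong A$ (as a one-sided module, and as a bimodule only up to the Nakayama automorphism). Second, your explanation of why one only gets \emph{weakly} fiat is off: in the paper's terminology ``fiat'' already only asks for a weak involution, so strictification is irrelevant; the reason one cannot claim fiatness in general is that the duality involves the Nakayama automorphism, so $\star$ need not be involutive, exactly as for $\cC_A$ itself.
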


\begin{proof}
The fact that  $\cC_{(A,\iota)}$ is finitary follows directly from the construction and
Proposition~\ref{prop91}. If $A$ is self-injective, then the $2$-category $\cC_A$ is weakly
fiat, cf. \cite[Subsection~7.3]{MM1}. The weak anti-automorphism lifts from $\cC_A$ to
$\cC_{(A,\iota)}$ in the obvious way. We claim that even adjunction morphisms can be lifted from 
$\cC_A$ to $\cC_{(A,\iota)}$. Indeed, let $X\in\mathrm{add}(A\oplus \big(A\otimes_{\Bbbk}A\big))$
and $\varphi:X\to A$ be a homomorphism of bimodules. Then the following diagram commutes:
\begin{displaymath}
\xymatrix{
X\oplus {}^{\iota}X^{\iota}\ar[dd]_{\left(\begin{array}{c}
\varphi\\\iota\circ \varphi\end{array}\right)}
\ar[rrr]^{\left(\begin{array}{cc}
0&\mathrm{Id}_X\\\mathrm{Id}_X&0
\end{array}\right)}&&&X\oplus {}^{\iota}X^{\iota}\ar[dd]^{\left(\begin{array}{c}
\varphi\\\iota\circ \varphi\end{array}\right)}\\\\
A\ar[rrr]^{\iota}&&&A
} 
\end{displaymath}
and hence the vertical arrows give $2$-morphisms in $\cC_{(A,\iota)}$. Taking $\varphi$ to be adjunction 
morphisms in $\cC_A$ gives rise, in this way, to adjunction morphisms in $\cC_{(A,\iota)}$. This implies
that $\cC_{(A,\iota)}$ is weakly fiat.
\end{proof}

The novel component of this example compared to various examples which can be found in \cite{MM1}--\cite{MM6},
especially to the $2$-category $\cC_A$ from \cite[Subsection~7.3]{MM1}, is the fact that 
{\em indecomposable} $1$-morphisms in $\cC_{(A,\iota)}$ are given, in general, by {\em decomposable}
endofunctors of $A$-mod. This, in particular, allows us to give an alternative construction 
for \cite[Example~8]{Xa}, see the next subsection.

\subsection{An example}\label{s8.3}

Here we use Subsection~\ref{s8.2} to construct an example of a fiat $2$-category
which has a left cell for which the Duflo involution (see \cite[Proposition~17]{MM1}) is not 
self-adjoint. The example is essentially the same as \cite[Example~8]{Xa}, however, 
it is constructed using completely different methods.

Let  $A$ be the quotient of the path algebra of the quiver
\begin{displaymath}
\xymatrix{\mathtt{1}\ar@/^/[rr]^{\alpha}&&\mathtt{2}\ar@/^/[ll]^{\beta}}
\end{displaymath}
modulo the ideal generated by the relations $\alpha\beta=\beta\alpha=0$. 
Let $\varepsilon_{\mathtt{1}}$ be the trivial path at $\mathtt{1}$ and
$\varepsilon_{\mathtt{2}}$ be the trivial path at $\mathtt{2}$.
Let $\iota$ be the automorphism of $A$ given by swapping 
$\varepsilon_{\mathtt{1}}$ with $\varepsilon_{\mathtt{2}}$
and $\alpha$ with $\beta$. Consider the corresponding $2$-category $\cC_{(A,\iota)}$.

From Subsection~\ref{s8.1} it follows that indecomposable $1$-morphisms in 
$\cC_{(A,\iota)}$ are given by $\mathrm{F}_1:=(A,\iota)$ and $\mathrm{F}_2:=(A,-\iota)$ together with 
\begin{displaymath}
\mathrm{G}_1:=\left(
\big(A\varepsilon_{\mathtt{1}}\otimes_{\Bbbk}\varepsilon_{\mathtt{1}}A\big)\oplus
\big(A\varepsilon_{\mathtt{2}}\otimes_{\Bbbk}\varepsilon_{\mathtt{2}}A\big),
\left(\begin{array}{cc}0&\underline{\iota}\\\underline{\iota}&0\end{array}\right)
\right),
\end{displaymath}
\begin{displaymath}
\mathrm{G}_2:=\left(
\big(A\varepsilon_{\mathtt{1}}\otimes_{\Bbbk}\varepsilon_{\mathtt{2}}A\big)\oplus
\big(A\varepsilon_{\mathtt{2}}\otimes_{\Bbbk}\varepsilon_{\mathtt{1}}A\big),
\left(\begin{array}{cc}0&\underline{\iota}\\\underline{\iota}&0\end{array}\right)
\right).
\end{displaymath}
It is easy to see that we have two two-sided cells, namely,  $\{\mathrm{F}_1,\mathrm{F}_2\}$
and $\{\mathrm{G}_1,\mathrm{G}_2\}$. They both are, at the same time, both left and right cells.
From \cite[Subsection~7.3]{MM1} it follows that the objects $\mathrm{G}_1$ and
$\mathrm{G}_2$ are, in fact, biadjoint to each other. Therefore, $\cC_{(A,\iota)}$ is a fiat $2$-category
and the Duflo involution of the left cell $\{\mathrm{G}_1,\mathrm{G}_2\}$ cannot be self-adjoint
as neither $\mathrm{G}_1$ nor $\mathrm{G}_2$ is.

%\vspace{2mm}

\noindent
T.~K.: Department of Mathematics, Uppsala University, Box. 480,
SE-75106, Uppsala, SWEDEN. Present address: Department of Mathematics, {\AA}rhus University,
Ny Munkegade 118, 8000 Aarhus C, DENMARK,
email: {\tt kildetoft\symbol{64}math.au.dk}

\noindent
M.~M.: Center for Mathematical Analysis, Geometry, and Dynamical Systems, Departamento de Matem{\'a}tica, 
Instituto Superior T{\'e}cnico, 1049-001 Lisboa, PORTUGAL \& Departamento de Matem{\'a}tica, FCT, 
Universidade do Algarve, Campus de Gambelas, 8005-139 Faro, PORTUGAL, email: {\tt mmackaay\symbol{64}ualg.pt}

\noindent
V.~M.: Department of Mathematics, Uppsala University, Box. 480,
SE-75106, Uppsala, SWEDEN, email: {\tt mazor\symbol{64}math.uu.se}

\noindent
J.~Z.: Department of Mathematics, Uppsala University, Box. 480,
SE-75106, Uppsala, SWEDEN, email: {\tt jakob.zimmermann\symbol{64}math.uu.se}

\end{document}